\pdfoutput=1
\documentclass[11pt]{amsart}
\synctex=1
\usepackage{latexsym,exscale,enumerate,enumitem,amsfonts,mathtools}
\usepackage{amsfonts,amscd,textcomp,xcolor}
\usepackage{stmaryrd}
\SetSymbolFont{stmry}{bold}{U}{stmry}{m}{n}
\usepackage{amssymb}
\usepackage[normalem]{ulem}
\usepackage{young,youngtab}
\usepackage{thmtools}
\usepackage{easybmat}
\usepackage{pinlabel}


\addtolength{\hoffset}{-1.6cm}
\addtolength{\textwidth}{3cm}

\usepackage[all]{xy}
\SelectTips{cm}{}

\usepackage{tikz}
\usetikzlibrary{decorations.markings}
\usetikzlibrary{decorations.pathreplacing}
\usetikzlibrary{arrows,shapes,positioning}
\tikzstyle directed=[postaction={decorate,decoration={markings,
    mark=at position #1 with {\arrow{>}}}}]
\tikzstyle rdirected=[postaction={decorate,decoration={markings,
    mark=at position #1 with {\arrow{<}}}}]
\tikzset{anchorbase/.style={baseline={([yshift=-0.5ex]current bounding box.center)}},
}

\newcommand{\qbinn}[2]{\genfrac{[}{]}{0pt}{}{#1}{#2}}
    
\newcommand{\comm}[1]{}
\def\cal#1{\mathcal{#1}}%
\newcommand{\sln}{\mathfrak{sl}_N}
\newcommand{\slnn}[1]{\mathfrak{sl}_{#1}}
\newcommand{\gln}{\mathfrak{gl}_N}
\newcommand{\glm}{\mathfrak{gl}_m}
\newcommand{\glmn}{\mathfrak{gl}_{m|n}}
\newcommand{\glMN}{\mathfrak{gl}_{N|M}}
\newcommand{\glNM}{\mathfrak{gl}_{M|N}}
\newcommand{\glnn}[1]{\mathfrak{gl}_{#1}}
\newcommand{\bV}{\raisebox{0.03cm}{\mbox{\footnotesize$\textstyle{\bigwedge}$}}}

\newcommand{\Gam}{\Gamma_{N}}
\newcommand{\Gami}{\Gamma_{\In}}
\newcommand{\Gams}{\Gamma_{N}^{\mathrm{sort}}}
\newcommand{\Lad}{\Upsilon^{m|n}_{\mathrm{su}}}
\newcommand{\AltSp}{N\text{-}\!\textbf{Web}_{\mathrm{g}}}
\newcommand{\CKM}{N\text{-}\!\textbf{Web}_{\mathrm{CKM}}}
\newcommand{\SymSp}{N\text{-}\!\textbf{Web}_{\mathrm{r}}}
\newcommand{\Sp}[2]{{#1}\text{-}\!\textbf{Web}_{\mathrm{{#2}}}}
\newcommand{\SuSp}{N\text{-}\!\textbf{Web}_{\mathrm{gr}}}
\newcommand{\MNSp}{N|M\text{-}\!\textbf{Web}_{\mathrm{gr}}}
\newcommand{\InSp}{\In\text{-}\!\textbf{Web}_{\mathrm{gr}}}
\newcommand{\InSpf}{\In\text{-}\!\textbf{Web}_{\mathrm{gr}}^f}
\newcommand{\SoSuSp}{N\text{-}\!\textbf{Web}_{\mathrm{gr}}^{\mathrm{sort}}}

\newcommand{\SoMNSp}{N|M\text{-}\!\textbf{Web}_{\mathrm{gr}}^{\mathrm{sort}}}

\newcommand{\Sym}{\mathrm{Sym}}
\newcommand{\Sup}{\bullet}
\newcommand{\Su}{\mathrm{su}}
\newcommand{\tr}{\mathrm{tr}}
\newcommand{\T}{\mathrm{T}}
\newcommand{\triv}{\;\raisebox{-0.02cm}{\mbox{{\tiny$\yng(2)$}}\;}}
\newcommand{\sign}{\;\raisebox{0.05cm}{\mbox{{\tiny\Yvcentermath1$\yng(1,1)$}}\;}}

\newcommand{\Uq}{\textbf{U}_q}
\newcommand{\Ud}{\dot{\textbf{U}}_q}
\newcommand{\Us}{\textbf{U}_q(\glmn)}
\newcommand{\Usu}{\textbf{U}_q(\glMN)}
\newcommand{\Usd}{\dot{\textbf{U}}_q(\glmn)}
\newcommand{\Uun}{\textbf{U}_q(\mathfrak{gl}_N)}

\newcommand{\Hee}{\check{\textbf{H}}}

\newcommand{\Repe}[1]{\mathfrak{gl}_{{#1}}\text{-}\textbf{Mod}_{e}}
\newcommand{\Reps}[1]{\mathfrak{gl}_{{#1}}\text{-}\textbf{Mod}_{s}}
\newcommand{\Repas}{\mathfrak{gl}_{N}\text{-}\textbf{Mod}_{\mathrm{es}}}
\newcommand{\Repsas}{\mathfrak{gl}_{N}\text{-}\textbf{Mod}^{\mathrm{sort}}_{\mathrm{es}}}
\newcommand{\RepMN}{\mathfrak{gl}_{N|M}\text{-}\textbf{Mod}_{\mathrm{es}}}
\newcommand{\RepMNs}{\mathfrak{gl}_{N|M}\text{-}\textbf{Mod}^{\mathrm{sort}}_{\mathrm{es}}}

\newcommand{\repe}[1]{\mathfrak{sl}_{{#1}}\text{-}\textbf{Mod}_{e}}

\newcommand{\repas}{\mathfrak{sl}_{N}\text{-}\textbf{Mod}_{\mathrm{es}}}

\newcommand{\Mat}{\textbf{Mat}}
\newcommand{\Hom}{\mathrm{Hom}}
\newcommand{\End}{\mathrm{End}}

\newcommand{\In}{\infty}

\newcommand{\MNw}{N|M\text{-}\!\textbf{Web}^{\mathrm{sort}}_{m+n}}

\def\C{{\mathbb C}}

\def\Ca{{\mathbb C}_{a,q}}

\def\Z{{\mathbb Z}}

\def\one{\mathrm{1}}

\theoremstyle{plain}
\newtheorem{thm}{Theorem}[section]
\newtheorem{cor}[thm]{Corollary}
\newtheorem{lem}[thm]{Lemma}
\newtheorem{prop}[thm]{Proposition}

\theoremstyle{definition}
\newtheorem{rem}[thm]{Remark}

\newtheorem*{thmm}{Theorem}
\newtheorem*{propp}{Proposition}

\declaretheorem[style=definition,name=Example,qed=$\triangleleft$,numberlike=thm]{ex}
\declaretheorem[style=definition,name=Definition,qed=$\Diamond$,numberlike=thm]{defn}

\definecolor{mycolor}{rgb}{0.9,0,0}
\definecolor{orchid}{RGB}{143,40,194}
\definecolor{lava}{RGB}{207,16,32}


%
%
%

\setcounter{tocdepth}{2}
\setcounter{secnumdepth}{3}
\numberwithin{equation}{section}
\usepackage[final]{hyperref, bookmark}
\usepackage{cleveref}
\hypersetup{
    pdftoolbar=true,        
    pdfmenubar=true,        
    pdffitwindow=false,     
    pdfstartview={FitH},    
    pdftitle={Super \texorpdfstring{\MakeLowercase{q}}{q}-Howe duality and web categories},    
    pdfauthor={Daniel Tubbenhauer, Pedro Vaz and Paul Wedrich},     
    pdfsubject={},   
    pdfcreator={Daniel Tubbenhauer, Pedro Vaz and Paul Wedrich},   
    pdfproducer={Daniel Tubbenhauer, Pedro Vaz and Paul Wedrich}, 
    pdfkeywords={}, 
    pdfnewwindow=true,      
    colorlinks=true,       
    linkcolor=lava,          
    citecolor=teal,        
    filecolor=magenta,      
    urlcolor=orchid,          
    linkbordercolor=lava,
    citebordercolor=teal,
    urlbordercolor=orchid,  
    linktocpage=true
}

\let\fullref\autoref
%
\def\makeautorefname#1#2{\expandafter\def\csname#1autorefname\endcsname{#2}}
%
%
\makeautorefname{equation}{Equation}%
\makeautorefname{footnote}{footnote}%
\makeautorefname{item}{item}%
\makeautorefname{figure}{Figure}%
\makeautorefname{table}{Table}%
\makeautorefname{part}{Part}%
\makeautorefname{appendix}{Appendix}%
\makeautorefname{chapter}{Chapter}%
\makeautorefname{section}{Section}%
\makeautorefname{subsection}{Section}%
\makeautorefname{subsubsection}{Section}%
\makeautorefname{paragraph}{Paragraph}%
\makeautorefname{subparagraph}{Paragraph}%
\makeautorefname{theorem}{Theorem}%
\makeautorefname{theo}{Theorem}%
\makeautorefname{thm}{Theorem}%
\makeautorefname{addendum}{Addendum}%
\makeautorefname{addend}{Addendum}%
\makeautorefname{add}{Addendum}%
\makeautorefname{maintheorem}{Main theorem}%
\makeautorefname{mainthm}{Main theorem}%
\makeautorefname{corollary}{Corollary}%
\makeautorefname{corol}{Corollary}%
\makeautorefname{coro}{Corollary}%
\makeautorefname{cor}{Corollary}%
\makeautorefname{lemma}{Lemma}%
\makeautorefname{lemm}{Lemma}%
\makeautorefname{lem}{Lemma}%
\makeautorefname{sublemma}{Sublemma}%
\makeautorefname{sublem}{Sublemma}%
\makeautorefname{subl}{Sublemma}%
\makeautorefname{proposition}{Proposition}%
\makeautorefname{proposit}{Proposition}%
\makeautorefname{propos}{Proposition}%
\makeautorefname{propo}{Proposition}%
\makeautorefname{prop}{Proposition}%
\makeautorefname{property}{Property}
\makeautorefname{proper}{Property}
\makeautorefname{scholium}{Scholium}%
\makeautorefname{step}{Step}%
\makeautorefname{conjecture}{Conjecture}%
\makeautorefname{conject}{Conjecture}%
\makeautorefname{conj}{Conjecture}%
\makeautorefname{question}{Question}
\makeautorefname{questn}{Question}
\makeautorefname{quest}{Question}
\makeautorefname{ques}{Question}
\makeautorefname{qn}{Question}
\makeautorefname{definition}{Definition}%
\makeautorefname{defin}{Definition}%
\makeautorefname{defi}{Definition}%
\makeautorefname{def}{Definition}%
\makeautorefname{dfn}{Definition}%
\makeautorefname{notation}{Notation}
\makeautorefname{nota}{Notation}
\makeautorefname{notn}{Notation}
\makeautorefname{remark}{Remark}%
\makeautorefname{rema}{Remark}%
\makeautorefname{rem}{Remark}%
\makeautorefname{rmk}{Remark}%
\makeautorefname{rk}{Remark}%
\makeautorefname{remarks}{Remarks}%
\makeautorefname{rems}{Remarks}%
\makeautorefname{rmks}{Remarks}%
\makeautorefname{rks}{Remarks}%
\makeautorefname{example}{Example}%
\makeautorefname{examp}{Example}%
\makeautorefname{exmp}{Example}%
\makeautorefname{exam}{Example}%
\makeautorefname{exa}{Example}%
\makeautorefname{algorithm}{Algorith}%
\makeautorefname{algo}{Algorith}%
\makeautorefname{alg}{Algorith}%
\makeautorefname{axiom}{Axiom}%
\makeautorefname{axi}{Axiom}%
\makeautorefname{ax}{Axiom}%
\makeautorefname{case}{Case}%
\makeautorefname{claim}{Claim}%
\makeautorefname{clm}{Claim}%
\makeautorefname{assumption}{Assumption}%
\makeautorefname{assumpt}{Assumption}%
\makeautorefname{conclusion}{Conclusion}%
\makeautorefname{concl}{Conclusion}%
\makeautorefname{conc}{Conclusion}%
\makeautorefname{condition}{Condition}%
\makeautorefname{condit}{Condition}%
\makeautorefname{cond}{Condition}%
\makeautorefname{construction}{Construction}%
\makeautorefname{construct}{Construction}%
\makeautorefname{const}{Construction}%
\makeautorefname{cons}{Construction}%
\makeautorefname{criterion}{Criterion}%
\makeautorefname{criter}{Criterion}%
\makeautorefname{crit}{Criterion}%
\makeautorefname{exercise}{Exercise}%
\makeautorefname{exer}{Exercise}%
\makeautorefname{exe}{Exercise}%
\makeautorefname{problem}{Problem}%
\makeautorefname{problm}{Problem}%
\makeautorefname{probm}{Problem}%
\makeautorefname{prob}{Problem}%
\makeautorefname{solution}{Solution}%
\makeautorefname{soln}{Solution}%
\makeautorefname{sol}{Solution}%
\makeautorefname{summary}{Summary}%
\makeautorefname{summ}{Summary}%
\makeautorefname{sum}{Summary}%
\makeautorefname{operation}{Operation}%
\makeautorefname{oper}{Operation}%
\makeautorefname{observation}{Observation}%
\makeautorefname{observn}{Observation}%
\makeautorefname{obser}{Observation}%
\makeautorefname{obs}{Observation}%
\makeautorefname{ob}{Observation}%
\makeautorefname{convention}{Convention}%
\makeautorefname{convent}{Convention}%
\makeautorefname{conv}{Convention}%
\makeautorefname{cvn}{Convention}%
\makeautorefname{warning}{Warning}%
\makeautorefname{warn}{Warning}%
\makeautorefname{note}{Note}%
\makeautorefname{fact}{Fact}%


%
\begin{document}
\vbadness=10001
\hbadness=10001
\title[Super \texorpdfstring{\MakeLowercase{q}}{q}-Howe duality and web categories]{Super \texorpdfstring{\MakeLowercase{q}}{q}-Howe duality and web categories}

\author[D. Tubbenhauer]{Daniel Tubbenhauer}
\address{D.T.: Mathematisches Institut, Universit\"at Bonn, Endenicher Allee 60, Room 1.003, 53115 Bonn, Germany}
\email{dtubben@math.uni-bonn.de}

\author[P. Vaz]{Pedro Vaz}
\address{P.V.: Institut de recherche en math\'{e}matique et physique (IRMP), Universit\'{e} catholique de Louvain, Chemin du Cyclotron 2, building L7.01.02, 1348 Louvain-la-Neuve, Belgium}
\email{pedro.vaz@uclouvain.be}

\author[P. Wedrich]{Paul Wedrich}
\address{P.W.: Imperial College London, Department of Mathematics, 6M50 Huxley Building, South Kensington Campus London SW7 2AZ, United Kingdom}
\email{p.wedrich@gmail.com}

\begin{abstract}
We use super $q$-Howe duality to provide diagrammatic 
presentations of an idempotented form of the Hecke 
algebra and of categories of $\gln$-modules 
(and, more generally, $\glMN$-modules) whose 
objects are tensor generated by exterior and symmetric 
powers of the vector representations. As an application, 
we give a representation theoretic explanation 
and a diagrammatic version of a 
known symmetry of colored HOMFLY--PT polynomials.
\end{abstract}

\maketitle

\tableofcontents

\renewcommand{\theequation}{\thesection-\arabic{equation}}
%
\section{Introduction}\label{sec-intro}
Let $\Uun$ be the 
quantum enveloping $\C_q=\C(q)$-algebra for 
$\gln$ with $q$ being generic. 
Let $\Repas$ denote the braided monoidal 
category of 
$\Uun$-modules\footnote{We only consider finite-dimensional, left modules 
(of type $1$) throughout the paper.} tensor 
generated by 
\textit{exterior} $\bV^k_q\C_q^N$ \textit{and} 
\textit{symmetric} $\Sym_q^l\C_q^N$ powers and 
$\Uun$-intertwiners between them.

We denote by 
$\Hee$ an \textit{idempotented version} of 
the direct sum of all 
Iwahori--Hecke algebras 
$H_{\In}(q)\!=\!\bigoplus_{K\in\Z_{\geq 0}}\! H_K(q)$ of type $A$. 
Roughly, $\Hee$ is the category obtained from 
the one-object category $H_{\In}(q)$ by adding formal 
Gyoja-Aiston idempotents corresponding to column 
and row Young diagrams as new objects\footnote{Adding only column idempotents, one obtains the type A Schur algebroids introduced in \cite{Wil}.}. By 
\textit{quantum Schur--Weyl duality}, the categories 
$\Repas$ are quotients of $\Hee$ and the 
added idempotents can be thought of as lifts of 
the exterior $\bV^k_q\C_q^N$ and the 
symmetric $\Sym_q^l\C_q^N$ powers.

We construct diagrammatic presentations of $\Hee$ and $\Repas$ 
by using the \textit{green--red} web categories $\InSp$ and $\SuSp$. 
Morphisms in these $\C_q$-linear categories 
are combinations of planar, upward-oriented, 
trivalent graphs with edges labeled by positive integers 
and colored black, green or red\footnote{We use colored diagrams 
in this paper. The colors (black, green and red) are 
important and we recommend to read the paper in color. 
If the reader has a black-and-white version, then green 
will appear lightly shaded and black and red can be 
distinguished since black edges are always labeled $1$.} 
modulo local relations. 
Objects are boundaries of such green--red webs, i.e.\ 
finite sequences of positive integers, each of 
which additionally carries the color black, green or red, 
indicated either by an actual coloring or by a subscript.

An example of a green--red web is:
\[
\xy
(0,0)*{
\begin{tikzpicture}[scale=.3]
	\draw [very thick, mycolor, directed=.55] (1,.75) to (1,2);
	\draw [very thick, black] (2,-1) to [out=90,in=320] (1,.75);
	\draw [very thick, black, rdirected=0.65] (2,-1) to (2,-2);
	\draw [very thick, mycolor, directed=0.1] (-1,-2) to (-1,-.25);
	\draw [very thick, mycolor] (-1,-0.25) to [out=150,in=270] (-2,1.5);
	\draw [very thick, mycolor, directed=0.35] (-2,1.5) to (-2,2);
	\draw [very thick, mycolor, directed=.55] (-1,-.25) to (1,.75);
	\draw [very thick, green] (5,-3.25) to (5,-2);
	\draw [very thick, green] (6,-5) to [out=90,in=320] (5,-3.25);
	\draw [very thick, green, rdirected=0.65] (6,-5) to (6,-6);
	\draw [very thick, green, directed=.55] (3,-6) to (3,-4.25);
	\draw [very thick, black] (3,-4.25) to [out=150,in=270] (2,-2.5);
	\draw [very thick, black] (2,-2.5) to (2,-2);
	\draw [very thick, green, directed=.55] (3,-4.25) to (5,-3.25);
	\draw [very thick, mycolor] (-1,-3.25) to (-1,-2);
	\draw [very thick, mycolor] (0,-5) to [out=90,in=320] (-1,-3.25);
	\draw [very thick, mycolor, rdirected=0.65] (0,-5) to (0,-6);
	\draw [very thick, green, directed=.55] (-3,-6) to (-3,-4.25);
	\draw [very thick, green] (-3,-4.25) to [out=150,in=270] (-4,-2.5);
	\draw [very thick, green, directed=0.35] (-4,-2.5) to (-4,-2);
	\draw [very thick, black, directed=.55] (-3,-4.25) to (-1,-3.25);
	\draw [very thick, green, directed=.2] (5,-2) to (5,-.25);
	\draw [very thick, green, directed=.65] (5,-.25) to [out=30,in=270] (6,1.5);
	\draw [very thick, black, directed=.65] (5,-.25) to [out=150,in=270] (4,1.5); 
	\draw [very thick, green] (6,1.5) to (6,2);
	\draw [very thick, black] (4,1.5) to (4,2);
	\draw [very thick, green, directed=.55] (-4,-2) to [out=30,in=330] (-4,1.5);
	\draw [very thick, green, directed=.55] (-4,-2) to [out=150,in=210] (-4,1.5);
	\draw [very thick, green, directed=.95] (-4,1.5) to (-4,2);
	\node at (-4,2.35) {\tiny $5$};
	\node at (-2,2.35) {\tiny $2$};
	\node at (1,2.35) {\tiny $6$};
	\node at (4,2.35) {\tiny $1$};
	\node at (6,2.35) {\tiny $7$};
	\node at (-3,-6.45) {\tiny $6$};
	\node at (0,-6.45) {\tiny $6$};
	\node at (3,-6.45) {\tiny $7$};
	\node at (6,-6.45) {\tiny $2$};
	\node at (-3.4,-3) {\tiny $5$};
	\node at (-0.45,-2) {\tiny $7$};
	\node at (2.35,-2) {\tiny $1$};
	\node at (5.6,-2) {\tiny $8$};
	\node at (-5.35,0) {\tiny $2$};
	\node at (-2.65,0) {\tiny $3$};
	\node at (0,0.75) {\tiny $5$};
	\node at (-2,-3.25) {\tiny $1$};
	\node at (4,-3.25) {\tiny $6$};
\end{tikzpicture}
};
\endxy
\]
A green integer $k$ in a boundary sequence is 
meant to correspond to the $\Uun$-module
$\bV^k_q\C_q^N$, a red integer 
$l$ to $\Sym_q^l\C_q^N$, and sequences of integers 
correspond to tensor products of such. Vertical edges 
are identities on these $\Uun$-modules and trivalent 
vertices encode more interesting $\Uun$-intertwiners. 
The integer $1$ should be 
$\C_q^N\cong\bV^1_q\C_q^N\cong\Sym_q^1\C_q^N$ independent 
of the color green or red, so we color it black.

Our main result is:

\begin{thmm}(\textbf{The diagrammatic presentation})
The additive closures of $\InSp$ and of $\SuSp$ are 
braided monoidally equivalent to $\Hee$ and $\Repas$ respectively.
\end{thmm}

We will see that $\InSp$ admits an involution interchanging the colors 
green and red. An almost direct consequence of this is a symmetry
between the HOMFLY--PT polynomial 
$\cal{P}^{a,q}(\cdot)$ of a link $\cal{L}$ colored with
$\vec{\lambda}=(\lambda^1,\dots,\lambda^d)$ and the HOMFLY--PT polynomial of 
$\cal{L}$ colored with 
$\vec{\lambda}^{\phantom{.}\T}=((\lambda^1)^{\T},\dots,(\lambda^d)^{\T})$:

\begin{propp} (\textbf{The colored HOMFLY--PT symmetry})
We have
\begin{equation}\label{eq-conjecture}
\cal{P}^{a,q}(\cal{L}(\vec{\lambda}))\;=\;(-1)^{c}\;
\cal{P}^{a,q^{-1}}(\cal{L}(\vec{\lambda}^{\phantom{.}\T})).
\end{equation}
Here $c$ is the sum of the number of nodes in the 
Young diagrams $\lambda^i$ for $1\leq i\leq d$.
\end{propp}

Our results might help to understand symmetries observed within the homologies that categorify the colored HOMFLY--PT polynomials, see \cite[Section 5]{gs}.

Moreover, we show that a straightforward 
generalization of our approach also leads to diagrammatic 
presentations for categories $\RepMN$ of $\Usu$-modules 
tensor generated by exterior and symmetric powers of the 
vector representation. The presentations are given by 
quotients $\MNSp$ of $\InSp$, which are obtained by 
killing Gyoja--Aiston idempotents corresponding to box-shaped Young diagrams.

\subsection{The framework}\label{sub-intropart1}

A prototypical diagrammatic presentation result (with roots in the work of Rumer, Teller 
and Weyl \cite{rtw}) states that the 
\textit{Temperley--Lieb category} 
gives a presentation of  
the full subcategory of $\Uq(\slnn{2})$-modules 
tensor generated by the vector representation 
$\comm{\bV_q^1\C_q^2\cong}\C_q^2$.
Kuperberg \cite{kup} extended this 
to all rank $2$ Lie algebras. 
In particular, he described a 
presentation
of the full subcategory 
of $\Uq(\slnn{3})$-modules tensor 
generated by the \textit{exterior 
powers} $\bV_q^1\C_q^3\cong\C_q^3$ and 
$\bV_q^2\C_q^3$. 
More generally, Cautis, Kamnitzer and Morrison \cite{ckm} gave a 
presentation of $\Repe{N}$, the full subcategory of 
$\Uun$-modules tensor generated by the \textit{exterior 
powers} $\bV_q^k\C_q^N$ for $k=0,\dots,N$.

One of their key ideas in \cite{ckm}, 
is the usage of 
\textit{skew quantum Howe duality} (or short, \textit{skew $q$-Howe duality}). 
In order to explain their approach, let $\vec{k}\in\Z^m_{\geq 0}$ be
such that $k_1+\cdots+k_m=K$. 
By skew $q$-Howe duality, the commuting actions of $\Uq(\glm)$ and $\Uq(\gln)$ on
\[
\bV_q^K(\C_q^m\otimes\C_q^N)\cong{\textstyle\bigoplus_{\vec{k}\in\Z^m_{\geq 0}}}\bV_q^{k_1}\C_q^N
\otimes\cdots\otimes\bV_q^{k_m}\C_q^N
\]
give rise to a functor
$\Phi_{\mathrm{skew}}^{m}\colon\Ud(\mathfrak{gl}_m)\to\Repe{N}$, 
where $\Ud(\mathfrak{gl}_m)$ is the idempotented form of 
$\Uq(\mathfrak{gl}_m)$.
Then Cautis, Kamnitzer and Morrison construct a commutative diagram, which takes the following form in our notation
\footnote{We consider $\Repas$ 
instead of $\repas$, see also \fullref{rem-whatwenotdo}.}:
\begin{equation}\label{eq-intro}
\begin{gathered}
\xymatrix{
\Ud(\mathfrak{gl}_m) \ar[r]^{\Phi_{\mathrm{skew}}^{m}} \ar[dr]_{\Upsilon_{\mathrm{skew}}^m} & \Repe{N} \\
& \Sp{N}{g} \ar[u]_{\Gamma}
}
\end{gathered}
\end{equation}
Here $\Upsilon_{\mathrm{skew}}^m$ is 
a certain \textit{ladder functor} realizing an 
action of $\Ud(\mathfrak{gl}_m)$ on 
the diagram category $\Sp{N}{g}$. 
The \textit{presentation functor} $\Gamma$ is 
constructed so that \eqref{eq-intro} commutes. 
The functor 
$\Phi_{\mathrm{skew}}^{m}$ 
is full and
its kernel is generated by
killing $\glm$-weights with 
entries not in $\{0,\dots,N\}$. That $\Gamma$ is 
an equivalence follows since $\Sp{N}{g}$ is defined 
to be the quotient of a ``free'' web category by relations 
coming from $\Ud(\mathfrak{gl}_m)$ (to make the ladder 
functor $\Upsilon_{\mathrm{skew}}^m$ well-defined) and the 
$\Upsilon_{\mathrm{skew}}^m$ image of the kernel of $\Phi_{\mathrm{skew}}^{m}$. $\repe{N}$ can be recovered 
by identifying $\bV_q^k\C_q^N\cong(\bV_q^{N-k}\C_q^N)^*$ as 
$\Uq(\slnn{N})$-modules.

Rose and the first-named author \cite{rt} studied
the situation of \textit{symmetric quantum Howe 
duality} (for short, symmetric $q$-Howe duality) was studied\footnote{In fact, the observations made in the paper \cite{rt} were one of the main motivations to start this project.}. That is, there is 
an analogue of \eqref{eq-intro} where $\Repe{N}$ is replaced by $\Reps{N}$, the 
full subcategory of $\Uun$-modules tensor generated by 
the \textit{symmetric powers} $\Sym_q^l\C_q^N$ for $l\in\Z_{\geq 0}$.
In the $N=2$ case, the kernel of $\Phi_{\mathrm{sym}}^{m}$ is generated by killing $\glm$-weights 
with negative entries and one additional \textit{dumbbell relation}, which 
encodes the relation 
$\C_q^2\otimes \C_q^2\cong \C_q\oplus \Sym_q^2\C_q^2$ in $\Reps{2}$. 
A direct generalization 
for $N>2$ would require additional complicated relations besides killing $\glm$-weights.

In this paper we give a diagrammatic presentation 
of the category $\Repas$, the 
full subcategory of $\Uun$-modules 
tensor generated by both 
exterior \textit{and} symmetric powers of the vector 
representation. This diagrammatic presentation gives a common 
generalization of the web categories of \cite{ckm} (only black--green webs) 
and \cite{rt} (only black--red webs).
We see Cautis, 
Kamnitzer and Morrison's approach 
as a \textit{machine} that takes dualities 
and produces diagrammatic presentations of the related 
representation theoretical categories.
Specifically, we 
start with \textit{super quantum Howe duality} 
(for short, super $q$-Howe duality) 
between the superalgebra $\Us$ and $\Uun$. 
We obtain a full super $q$-Howe functor 
$\Phi_{\Su}^{m|n}$, which we attempt to factor 
as a composite of a ladder functor $\Lad$---mapping 
into an appropriate web category---and a diagrammatic 
presentation functor $\Gam$, to give an analogue of 
\eqref{eq-intro}\footnote{Here the superscript ``$\mathrm{sort}$'' indicates subcategories in which exterior powers are \emph{sorted} to the left of symmetric powers in tensor products. This small technical restriction stems from the use of super q-Howe duality, but will be removed later on.}:
\[
\xymatrix{
\Usd \ar[r]^{\Phi_{\Su}^{m|n}} \ar[dr]_{\Lad} & \Repsas \\
& \SoSuSp \ar[u]_{\Gams}
}
\]

Having decided to 
follow this strategy, the definition of the appropriate web 
category
is already determined. Two aspects are important:

\begin{enumerate}[label=(\Roman*)]
\item In order to make $\Lad$ well-defined, 
the web category needs to satisfy ladder images of $\Usd$ 
relations. Remarkably, it suffices to consider 
relations coming from the subalgebra $\Ud(\glm)\oplus\Ud(\glnn{n})$ 
and only one additional \textit{super commutation} relation $[2]\one_{\vec{k}}=F_mE_m\one_{\vec{k}}+E_mF_m\one_{\vec{k}}$ for $\glmn$-weights with $k_m=k_{m+1}=1$. 
This corresponds to the \textit{dumbbell relation} on webs and to 
$\C_q^N\otimes\C_q^N\cong \bV_q^2\C_q^N\oplus\Sym_q^2\C_q^N$ in 
$\Repas$.

\item In order to make the diagrammatic presentation 
functor an equivalence, we need to impose the ladder 
image of $\ker(\Phi_{\Su}^{m|n})$ as relations in the 
web category. In fact, $\ker(\Phi_{\Su}^{m|n})$ is 
spanned by idempotents corresponding to $\glmn$-weights $\vec{k}=(k_1,\dots,k_{m+n})$ 
with $k_1,\dots,k_m\notin\{0,\dots,N\}$ or 
$k_{m+1},\dots,k_{m+n}\notin\Z_{\geq 0}$. It is 
remarkable that no extra relations, aside 
from killing these $\glmn$-weights, 
are necessary.
\end{enumerate}

We impose the ladder images of 
$\ker(\Phi_{\Su}^{m|n})$ in two steps: first we 
kill all $\glmn$-weights with negative entries by 
allowing only non-negative labels on web edges. This 
produces the web category $\InSp$, which is symmetric 
under exchanging green and red. On this we further quotient by 
setting $\glmn$-weights $\vec{k}=(k_1,\dots,k_{m+n})$ to zero 
if one of $k_1,\dots,k_m$ is greater than $N$. This produces 
the web category $\SuSp$ and in \fullref{thm-equi} we show 
that its additive closure is equivalent to $\Repas$. 
Note that, although our graphical calculus is finer than the one in \cite{ckm} in the sense that it contains more objects, the Karoubi envelopes of these diagrammatic categories agree for each $N$.

In \fullref{thm-equi2} we use \textit{quantum Schur--Weyl duality} 
to derive from \fullref{thm-equi} that $\InSp$ gives a 
diagrammatic presentation of the idempotented 
Iwahori--Hecke algebra $\Hee$ from above.

\begin{rem}\label{rem-whatwenotdo}
We describe $\Repas$ and not 
$\repas$ because of the algebraic form of super $q$-Howe 
duality. In particular, our web categories do not contain 
duality isomorphisms $\bV_q^k\C_q^N\cong(\bV_q^{N-k}\C_q^N)^*$, 
which would be necessary for a diagrammatic presentation of $\repas$. 
In $\Repas$, on the other hand, there are no such hidden duals, 
as we have $\bV_q^k\C_q^N\cong \bV_q^N\C_q^N \otimes(\bV_q^{N-k}\C_q^N)^*$ 
as $\Uun$-modules. Here 
$\bV_q^N\C_q^N \cong L((1,\dots,1))$ is the $\Uun$-module 
of highest weight $\lambda=(1,\dots,1)\in\Z_{\geq 0}^N$.
\end{rem}

Last, but not least, we use the more general \textit{super} 
$q$\textit{-Howe duality between} $\Us$ \textit{and} $\Usu$ 
to describe $\RepMN$. Feeding this duality into the 
``diagrammatic presentation machine'' shows that this 
representation category is equivalent to the quotient 
$\MNSp$ of $\InSp$, which is obtained by killing the Gyoja--Aiston 
idempotent corresponding to the size $(N+1)\times(M+1)$ box-shaped 
Young diagram. This is a generalization, since for $M=0$, $\RepMN$ 
is equivalent to $\Repas$ and $\MNSp$ is equal to $\SuSp$, 
because the box idempotent corresponds exactly to an $(N+1)$-labeled green edge.

This generalizes Grant's \cite{g} 
and Sartori's \cite{sar} presentations of the category 
$\mathfrak{gl}_{1|1}\text{-}\textbf{Mod}_{\mathrm{e}}$, 
and the diagrammatic calculus 
for $\mathfrak{gl}_{N|M}\text{-}\textbf{Mod}_{\mathrm{e}}$ given 
in \cite{qs} (see also \cite{g2}). Compared to the latter, our 
generalization, which also takes the symmetric powers of $\C_q^{N|M}$ 
into account, does not need any extra relations aside from the dumbbell 
relation. In fact, the one extra relation needed to make the 
diagrammatic calculus given in \cite{qs} faithful, 
see \cite[Remark 6.19]{qs}, has a very compact and natural 
description in our green--red web category $\MNSp$.

Finally, we sketch how our presentation 
of $\RepMN$ extends to take duals of exterior and symmetric 
powers into account. This closely follows \cite[Section 6]{qs}. 
The resulting diagrammatic category allows the computation of 
the colored Reshetikhin--Turaev $\glMN$-link invariants. 
In \fullref{cor-supersym}, we interpret the colored 
HOMFLY--PT symmetry \eqref{eq-conjecture} as a stable 
version of a symmetry between colored Reshetikhin--Turaev 
$\glMN$- and $\glNM$-link invariants.

\subsection{Outline of the paper}\label{sub-intropart2}

\fullref{sec-super} is the diagrammatic heart of 
our paper where we introduce 
$\InSp$ and 
its subquotients $\SuSp$, $\AltSp$ and $\SymSp$.

\fullref{sec-proof} contains the proof of our 
main theorems and splits into three subsections: 
We first introduce super $q$-Howe duality. 
Then we show an equivalence between ``sorted'' 
subcategories of $\SuSp$ and $\Repas$. These subcategories are 
induced by the algebraic form of super $q$-Howe duality.
By using the ``sorted'' equivalence and
the fact that the braiding 
gives a way to ``shuffle'' 
the ``sorted'' subcategories, we prove our main theorems.

In \fullref{sec-applications} we discuss 
one application of 
our diagrammatic presentation: we give a procedure to recover the 
colored HOMFLY--PT polynomial from $\InSp$. A direct consequence 
of the green--red symmetry is a symmetry within the 
colored HOMFLY--PT polynomial obtained by 
transposing Young diagrams, see \eqref{eq-conjecture}.
The colored Reshetikhin--Turaev $\sln$-link polynomials can be 
recovered from our approach as well, as we sketch in the last subsection.

Finally, in \fullref{sec-gen} we generalize the diagrammatic presentation of $\Repas$ to the super case $\RepMN$, and we sketch an extension of our diagrammatic calculus to include dual representations.
The required arguments are---mutatis mutandis---contained in the previous sections and in \cite[Section 6]{qs}, which allows a very compact exposition in \fullref{sec-gen}.
\newline\newpage
\noindent \textbf{Acknowledgements:} We especially thank 
Antonio Sartori for a careful reading of a draft version 
of this paper and many helpful comments, and David Rose -- some 
of the ideas underlying this paper came up in the joint 
work between him and D.T. We also thank Jonathan Brundan, 
Jonathan Grant, Jonathan Kujawa, Marco Mackaay, Weiwei Pan, 
Jake Rasmussen, Marko Sto\v{s}i\'{c}, Catharina Stroppel, Geordie 
Williamson and Oded Yacobi for helpful discussion, comments, and 
probing questions, and two referees for some further helpful 
comments. We also like to thank Skype for many useful conversations.

D.T. was supported by a research funding of the ``Deutsche Forschungsgemeinschaft (DFG)'' during the main part of this work. D.T. and P.W. thank the center of excellence grant ``Centre for Quantum Geometry of Moduli Spaces (QGM)'' from the ``Danish National Research Foundation (DNRF)'' for sponsoring a research visit which started this project. P.V. was financially supported by the ``Universit\'{e} catholique de Louvain, Fonds Sp\'{e}ciaux de Recherche (FSR) 12 J.A.''. P.W. was supported by an EPSRC doctoral training grant.
%
\section{The diagrammatic categories}\label{sec-super}
In the present section we 
introduce the category 
$\InSp$ and its quotient  
$\SuSp$.
These provide diagrammatic 
presentations of $\Hee$ 
and its quotient categories $\Repas$ respectively. Other subquotients of $\InSp$ are 
$\AltSp$ and $\SymSp$ (and later in \fullref{sec-gen}, $\MNSp$) which are 
related to categories studied in \cite{ckm} and \cite{rt} respectively.

\subsection{Definition of the category \texorpdfstring{$\InSp$}{inftyWebgr} and its subquotients}\label{sub-defsuper}

We first introduce the \textit{free 
green--red web category} $\InSpf$.
To this end, we denote by $X$ the set
\[
X=X_b\cup X_g\cup X_r=\{0_b,1_b\}\cup\{2_g,3_g,\dots\}\cup \{2_r,3_r,\dots\},
\]
where we think of the elements of $X_b$ as being colored \textit{black}, 
of the elements of $X_g$ as being colored \textit{green} and of the 
elements of $X_r$ as being colored \textit{red}. We usually omit the 
subscripts, since the colors on 
the boundary can be read off from the diagrams.

\begin{defn}\label{defn-fspid} 
The \textit{free green--red web category}, 
which we denote by $\InSpf$, is the category determined by the following data.

\begin{itemize}
\item The objects of $\InSpf$ are finite (possibly empty) 
sequences $\vec{k}\in X^L$ with entries from $X$ 
for some $L\in\Z_{\geq 0}$, together with a zero object. 
We display the entries of $\vec{k}$ ordered from 
left to right according to their appearance in $\vec{k}$.

\item The morphism space $\Hom_{\InSpf}(\vec{k},\vec{l})$ from 
$\vec{k}$ to $\vec{l}$ is the $\C_q$-vector 
space spanned by 
isotopy classes\footnote{We require that 
isotopies preserve the upward orientations and the 
boundary of green--red webs.} of 
planar, upward-oriented, trivalent graphs 
with edges labeled by positive integers 
and colored black, green or red, with bottom boundary $\vec{k}$ and 
top boundary $\vec{l}$. More precisely, 
we only allow webs that can be obtained 
by composition $\circ$ (vertical gluing) 
and taking the monoidal product $\otimes$ 
(horizontal juxtaposition) of the following 
basic pieces (including the empty diagram).

Let $k,l\in\Z_{\geq 2}$, then the generators are
\begin{equation}\label{eq-symgen}
\xy
(0,0)*{
\begin{tikzpicture} [scale=1]
\node at (0,-.15) {\tiny $0$};
\node at (0,1.15) {\tiny $0$};
\end{tikzpicture}
}
\endxy
\quad , \quad
\xy
(0,0)*{
\begin{tikzpicture} [scale=1]
\draw[very thick, directed=.55] (0,0) to (0,1);
\node at (0,-.15) {\tiny $1$};
\node at (0,1.15) {\tiny $1$};
\end{tikzpicture}
}
\endxy
\quad , \quad
\xy
(0,0)*{
\begin{tikzpicture} [scale=1]
\draw[very thick, green, directed=.55] (0,0) to (0,1);
\node at (0,-.15) {\tiny $k$};
\node at (0,1.15) {\tiny $k$};
\end{tikzpicture}
}
\endxy
\quad , \quad
\xy
(0,0)*{
\begin{tikzpicture} [scale=1]
\draw[very thick, mycolor, directed=.55] (0,0) to (0,1);
\node at (0,-.15) {\tiny $k$};
\node at (0,1.15) {\tiny $k$};
\end{tikzpicture}
}
\endxy
\quad , \quad
\xy
(0,0)*{
\begin{tikzpicture}[scale=.3]
	\draw [very thick, green, directed=.55] (0, .75) to (0,2.5);
	\draw [very thick, green, directed=.55] (1,-1) to [out=90,in=330] (0,.75);
	\draw [very thick, green, directed=.55] (-1,-1) to [out=90,in=210] (0,.75); 
	\node at (0, 3) {\tiny $k{+}l$};
	\node at (-1,-1.5) {\tiny $k$};
	\node at (1,-1.5) {\tiny $l$};
\end{tikzpicture}
};
\endxy
\quad , \quad
\xy
(0,0)*{
\begin{tikzpicture}[scale=.3]
	\draw [very thick, green, directed=.55] (0,-1) to (0,.75);
	\draw [very thick, green, directed=.55] (0,.75) to [out=30,in=270] (1,2.5);
	\draw [very thick, green, directed=.55] (0,.75) to [out=150,in=270] (-1,2.5); 
	\node at (0, -1.5) {\tiny $k{+}l$};
	\node at (-1,3) {\tiny $k$};
	\node at (1,3) {\tiny $l$};
\end{tikzpicture}
};
\endxy
\quad , \quad
\xy
(0,0)*{
\begin{tikzpicture}[scale=.3]
	\draw [very thick, mycolor, directed=.55] (0, .75) to (0,2.5);
	\draw [very thick, mycolor, directed=.55] (1,-1) to [out=90,in=330] (0,.75);
	\draw [very thick, mycolor, directed=.55] (-1,-1) to [out=90,in=210] (0,.75); 
	\node at (0, 3) {\tiny $k{+}l$};
	\node at (-1,-1.5) {\tiny $k$};
	\node at (1,-1.5) {\tiny $l$};
\end{tikzpicture}
};
\endxy
\quad , \quad
\xy
(0,0)*{
\begin{tikzpicture}[scale=.3]
	\draw [very thick, mycolor, directed=.55] (0,-1) to (0,.75);
	\draw [very thick, mycolor, directed=.55] (0,.75) to [out=30,in=270] (1,2.5);
	\draw [very thick, mycolor, directed=.55] (0,.75) to [out=150,in=270] (-1,2.5); 
	\node at (0, -1.5) {\tiny $k{+}l$};
	\node at (-1,3) {\tiny $k$};
	\node at (1,3) {\tiny $l$};
\end{tikzpicture}
};
\endxy
\end{equation}
called (from left to right) \textit{empty identity},
\textit{black identity}, \textit{green identity}, 
\textit{red identity}, \textit{green merge},  
\textit{green split}, \textit{red merge} and 
\textit{red split}, together with (here $k,l\in\Z_{\geq 0}$)
\begin{equation}\label{eq-symgen2}
\xy
(0,0)*{
\begin{tikzpicture}[scale=.3]
	\draw [very thick, green, directed=.55] (0, .75) to (0,2.5);
	\draw [very thick, directed=.55] (1,-1) to [out=90,in=330] (0,.75);
	\draw [very thick, green, directed=.55] (-1,-1) to [out=90,in=210] (0,.75); 
	\node at (0, 3) {\tiny $k{+}1$};
	\node at (-1,-1.5) {\tiny $k$};
	\node at (1,-1.5) {\tiny $1$};
\end{tikzpicture}
};
\endxy
\;\; , \;\;
\xy
(0,0)*{
\begin{tikzpicture}[scale=.3]
	\draw [very thick, green, directed=.55] (0,-1) to (0,.75);
	\draw [very thick, directed=.55] (0,.75) to [out=30,in=270] (1,2.5);
	\draw [very thick, green, directed=.55] (0,.75) to [out=150,in=270] (-1,2.5); 
	\node at (0, -1.5) {\tiny $k{+}1$};
	\node at (-1,3) {\tiny $k$};
	\node at (1,3) {\tiny $1$};
\end{tikzpicture}
};
\endxy
\;\; , \;\;
\xy
(0,0)*{
\begin{tikzpicture}[scale=.3]
	\draw [very thick, green, directed=.55] (0, .75) to (0,2.5);
	\draw [very thick, green, directed=.55] (1,-1) to [out=90,in=330] (0,.75);
	\draw [very thick, directed=.55] (-1,-1) to [out=90,in=210] (0,.75); 
	\node at (0, 3) {\tiny $l{+}1$};
	\node at (-1,-1.5) {\tiny $1$};
	\node at (1,-1.5) {\tiny $l$};
\end{tikzpicture}
};
\endxy
\;\; , \;\;
\xy
(0,0)*{
\begin{tikzpicture}[scale=.3]
	\draw [very thick, green, directed=.55] (0,-1) to (0,.75);
	\draw [very thick, green, directed=.55] (0,.75) to [out=30,in=270] (1,2.5);
	\draw [very thick, directed=.55] (0,.75) to [out=150,in=270] (-1,2.5); 
	\node at (0, -1.5) {\tiny $l{+}1$};
	\node at (-1,3) {\tiny $1$};
	\node at (1,3) {\tiny $l$};
\end{tikzpicture}
};
\endxy
\;\; , \;\;
\xy
(0,0)*{
\begin{tikzpicture}[scale=.3]
	\draw [very thick, mycolor, directed=.55] (0, .75) to (0,2.5);
	\draw [very thick, mycolor, directed=.55] (1,-1) to [out=90,in=330] (0,.75);
	\draw [very thick, directed=.55] (-1,-1) to [out=90,in=210] (0,.75); 
	\node at (0, 3) {\tiny $l{+}1$};
	\node at (-1,-1.5) {\tiny $1$};
	\node at (1,-1.5) {\tiny $l$};
\end{tikzpicture}
};
\endxy
\;\; , \;\;
\xy
(0,0)*{
\begin{tikzpicture}[scale=.3]
	\draw [very thick, mycolor, directed=.55] (0,-1) to (0,.75);
	\draw [very thick, mycolor, directed=.55] (0,.75) to [out=30,in=270] (1,2.5);
	\draw [very thick, directed=.55] (0,.75) to [out=150,in=270] (-1,2.5); 
	\node at (0, -1.5) {\tiny $l{+}1$};
	\node at (-1,3) {\tiny $1$};
	\node at (1,3) {\tiny $l$};
\end{tikzpicture}
};
\endxy
\;\; , \;\;
\xy
(0,0)*{
\begin{tikzpicture}[scale=.3]
	\draw [very thick, mycolor, directed=.55] (0, .75) to (0,2.5);
	\draw [very thick, directed=.55] (1,-1) to [out=90,in=330] (0,.75);
	\draw [very thick, mycolor, directed=.55] (-1,-1) to [out=90,in=210] (0,.75); 
	\node at (0, 3) {\tiny $k{+}1$};
	\node at (-1,-1.5) {\tiny $k$};
	\node at (1,-1.5) {\tiny $1$};
\end{tikzpicture}
};
\endxy
\;\; , \;\;
\xy
(0,0)*{
\begin{tikzpicture}[scale=.3]
	\draw [very thick, mycolor, directed=.55] (0,-1) to (0,.75);
	\draw [very thick, directed=.55] (0,.75) to [out=30,in=270] (1,2.5);
	\draw [very thick, mycolor, directed=.55] (0,.75) to [out=150,in=270] (-1,2.5); 
	\node at (0, -1.5) {\tiny $k{+}1$};
	\node at (-1,3) {\tiny $k$};
	\node at (1,3) {\tiny $1$};
\end{tikzpicture}
};
\endxy
\end{equation}
called \textit{mixed merges} and \textit{mixed splits} respectively. 
(We also include versions of these involving edges labeled $0$ which we, 
as in \eqref{eq-symgen}, do not illustrate.)
\end{itemize}
We call webs 
obtained by composition of generators 
with only black and green edges or only black and red edges \textit{monochromatic}, 
cf. \eqref{eq-morethanonecolor}.
\end{defn}

\begin{rem}\label{rem-conventions}
Note the following conventions and properties of $\InSpf$.
\begin{itemize}

\item The category is $\C_q$-linear, i.e.\ the spaces 
$\Hom_{\InSpf}(\vec{k},\vec{l})$ are $\C_q$-vector spaces and
 the composition $\circ$ is $\C_q$-bilinear. Moreover, the category is monoidal by juxtaposition $\otimes$ of objects and morphisms. $\otimes$ is also $\C_q$-bilinear on morphism spaces.

\item It is sometimes convenient 
in illustrations to allow 
green and red edges with label $1$. 
By convention, these edges are to be read as being black: 
\begin{equation}\label{eq-morethanonecolor}
\xy
(0,0)*{
\begin{tikzpicture} [scale=1]
\draw[very thick, directed=.55] (0,0) to (0,1);
\node at (0,-.15) {\tiny $1$};
\node at (0,1.15) {\tiny $1$};
\end{tikzpicture}
}
\endxy
=
\xy
(0,0)*{
\begin{tikzpicture} [scale=1]
\draw[very thick, green, directed=.55] (0,0) to (0,1);
\node at (0,-.15) {\tiny $1$};
\node at (0,1.15) {\tiny $1$};
\end{tikzpicture}
}
\endxy
=
\xy
(0,0)*{
\begin{tikzpicture} [scale=1]
\draw[very thick, mycolor, directed=.55] (0,0) to (0,1);
\node at (0,-.15) {\tiny $1$};
\node at (0,1.15) {\tiny $1$};
\end{tikzpicture}
}
\endxy
\quad\text{and}\quad
\xy
(0,0)*{
\begin{tikzpicture}[scale=.3]
	\draw [very thick, green, directed=.55] (0, .75) to (0,2.5);
	\draw [very thick, directed=.55] (1,-1) to [out=90,in=330] (0,.75);
	\draw [very thick, directed=.55] (-1,-1) to [out=90,in=210] (0,.75); 
	\node at (0, 3) {\tiny $2$};
	\node at (-1,-1.5) {\tiny $1$};
	\node at (1,-1.5) {\tiny $1$};
\end{tikzpicture}
};
\endxy
\quad , \quad
\xy
(0,0)*{
\begin{tikzpicture}[scale=.3]
	\draw [very thick, green, directed=.55] (0,-1) to (0,.75);
	\draw [very thick, directed=.55] (0,.75) to [out=30,in=270] (1,2.5);
	\draw [very thick, directed=.55] (0,.75) to [out=150,in=270] (-1,2.5); 
	\node at (0, -1.5) {\tiny $2$};
	\node at (-1,3) {\tiny $1$};
	\node at (1,3) {\tiny $1$};
\end{tikzpicture}
};
\endxy
\quad , \quad
\xy
(0,0)*{
\begin{tikzpicture}[scale=.3]
	\draw [very thick, mycolor, directed=.55] (0, .75) to (0,2.5);
	\draw [very thick, directed=.55] (1,-1) to [out=90,in=330] (0,.75);
	\draw [very thick, directed=.55] (-1,-1) to [out=90,in=210] (0,.75); 
	\node at (0, 3) {\tiny $2$};
	\node at (-1,-1.5) {\tiny $1$};
	\node at (1,-1.5) {\tiny $1$};
\end{tikzpicture}
};
\endxy
\quad , \quad
\xy
(0,0)*{
\begin{tikzpicture}[scale=.3]
	\draw [very thick, mycolor, directed=.55] (0,-1) to (0,.75);
	\draw [very thick, directed=.55] (0,.75) to [out=30,in=270] (1,2.5);
	\draw [very thick, directed=.55] (0,.75) to [out=150,in=270] (-1,2.5); 
	\node at (0,-1.5) {\tiny $2$};
	\node at (-1,3) {\tiny $1$};
	\node at (1,3) {\tiny $1$};
\end{tikzpicture}
};
\endxy
\end{equation}
are generators.
For example, the diagrams on the right
are obtained by setting $k=1$ or $l=1$ in \eqref{eq-symgen2}.

\item The reading conventions for all 
webs are from \textit{bottom to top} and \textit{left to right}: 
if $u,v$ are two webs, then 
$v \circ u$ is obtained by gluing $v$ on top of $u$ and 
$u\otimes v$ is given by putting $v$ 
to the right of $u$. Moreover, if any of the top boundary labels of $u$ 
differs from the corresponding bottom boundary label of 
$v$, then, by convention, $v\circ u=0$.

\item For $j\in\Z_{\geq 1}$, we define the 
so-called \textit{monochromatic} $F^{(j)}\one_{(k,l)}$ \textit{and} 
$E^{(j)}\one_{(k,l)}$\textit{-ladders} as
\begin{equation}\label{eq-ladders1}
F^{(j)} \one_{(k,l)}=
\xy
(0,0)*{
\begin{tikzpicture}[scale=.3]
	\draw [very thick, green, directed=.65] (1,.75) to (1,2);
	\draw [very thick, green] (2,-1) to [out=90,in=320] (1,.75);
	\draw [very thick, green, rdirected=.1] (2,-1) to (2,-2);
	\draw [very thick, green, directed=.75] (-1,-2) to (-1,-.25);
	\draw [very thick, green] (-1,-0.25) to [out=150,in=270] (-2,1.5);
	\draw [very thick, green, directed=.01] (-2,1.5) to (-2,2);
	\draw [very thick, green, directed=.55] (-1,-.25) to (1,.75);
	\node at (-1,-2.5) {\tiny $k$};
	\node at (2,-2.5) {\tiny $l$};
	\node at (-2,2.5) {\tiny $k{-}j$};
	\node at (1,2.5) {\tiny $l{+}j$};
	\node at (0,.85) {\tiny $j$};
\end{tikzpicture}
};
\endxy\quad,\quad
E^{(j)}\one_{(k,l)}=
\xy
(0,0)*{\reflectbox{
\begin{tikzpicture}[scale=.3]
	\draw [very thick, green, directed=.65] (1,.75) to (1,2);
	\draw [very thick, green] (2,-1) to [out=90,in=320] (1,.75);
	\draw [very thick, green, rdirected=.1] (2,-1) to (2,-2);
	\draw [very thick, green, directed=.75] (-1,-2) to (-1,-.25);
	\draw [very thick, green] (-1,-0.25) to [out=150,in=270] (-2,1.5);
	\draw [very thick, green, directed=.01] (-2,1.5) to (-2,2);
	\draw [very thick, green, directed=.55] (-1,-.25) to (1,.75);
	\node at (-1,-2.5) {\reflectbox{\tiny $l$}};
	\node at (2,-2.5) {\reflectbox{\tiny $k$}};
	\node at (-2,2.5) {\reflectbox{\tiny $l{-}j$}};
	\node at (1,2.5) {\reflectbox{\tiny $k{+}j$}};
	\node at (-0.25,.85) {\reflectbox{\tiny $j$}};
\end{tikzpicture}
}};
\endxy
\end{equation}
and analogously in red. (The notation $\one_{(k,l)}$ 
is motivated by the ``dual side'' as we will see in \fullref{sub-superHowe}. 
For the green--red web calculus it is just
a shorthand to indicated the underlying objects.) Sometimes we 
draw such ladder rungs horizontally. We also have the 
\textit{mixed} $F\one_{(k,l)}$ \textit{and} $E\one_{(k,l)}$\textit{-ladders}
\begin{equation}\label{eq-ladders3}
F\one_{(k,l)}=
\xy
(0,0)*{
\begin{tikzpicture}[scale=.3]
	\draw [very thick, mycolor, directed=.65] (1,.75) to (1,2);
	\draw [very thick, mycolor] (2,-1) to [out=90,in=320] (1,.75);
	\draw [very thick, mycolor, rdirected=.1] (2,-1) to (2,-2);
	\draw [very thick, green, directed=.75] (-1,-2) to (-1,-.25);
	\draw [very thick, green] (-1,-0.25) to [out=150,in=270] (-2,1.5);
	\draw [very thick, green, directed=.01] (-2,1.5) to (-2,2);
	\draw [very thick, directed=.55] (-1,-.25) to (1,.75);
	\node at (-1,-2.5) {\tiny $k$};
	\node at (2,-2.5) {\tiny $l$};
	\node at (-2,2.5) {\tiny $k{-}1$};
	\node at (1,2.5) {\tiny $l{+}1$};
	\node at (0,.75) {\tiny $1$};
\end{tikzpicture}
};
\endxy\quad,\quad
E\one_{(k,l)}=
\xy
(0,0)*{\reflectbox{
\begin{tikzpicture}[scale=.3]
	\draw [very thick, green, directed=.65] (1,.75) to (1,2);
	\draw [very thick, green] (2,-1) to [out=90,in=320] (1,.75);
	\draw [very thick, green, rdirected=.1] (2,-1) to (2,-2);
	\draw [very thick, mycolor, directed=.75] (-1,-2) to (-1,-.25);
	\draw [very thick, mycolor] (-1,-0.25) to [out=150,in=270] (-2,1.5);
	\draw [very thick, mycolor, directed=.01] (-2,1.5) to (-2,2);
	\draw [very thick, directed=.55] (-1,-.25) to (1,.75);
	\node at (-1,-2.5) {\reflectbox{\tiny $l$}};
	\node at (2,-2.5) {\reflectbox{\tiny $k$}};
	\node at (-2,2.5) {\reflectbox{\tiny $l{-}1$}};
	\node at (1,2.5) {\reflectbox{\tiny $k{+}1$}};
	\node at (-0.25,.8) {\reflectbox{\tiny $1$}};
\end{tikzpicture}
}};
\endxy
\end{equation}
and similarly by exchanging green and red. 
Note that the 
ladders from \eqref{eq-ladders1} 
exist for all $j\in\Z_{\geq 1}$, while the mixed 
ladders from \eqref{eq-ladders3} exist only for $j=1$.

\item We usually omit the
object $0$ as well as edges labeled $0$ from illustrations, cf. \eqref{eq-symgen}.

\end{itemize}
\end{rem}

\begin{defn}\label{defn-spid}
The \textit{green--red web category} $\InSp$ 
is the quotient of $\InSpf$ obtained by 
imposing the following local relations on morphisms.
The \textit{monochromatic relations}, which hold for green webs as well as for red webs: 
\textit{(co)associativity}
\begin{equation}\label{eq-frob}
\xy
(0,0)*{
\begin{tikzpicture}[scale=.3]
	\draw [very thick, green, directed=.55] (0,.75) to [out=90,in=220] (1,2.5);
	\draw [very thick, green, directed=.55] (1,-1) to [out=90,in=330] (0,.75);
	\draw [very thick, green, directed=.55] (-1,-1) to [out=90,in=210] (0,.75);
	\draw [very thick, green, directed=.55] (3,-1) to [out=90,in=330] (1,2.5);
	\draw [very thick, green, directed=.55] (1,2.5) to (1,4.25);
	\node at (-1,-1.5) {\tiny $h$};
	\node at (1,-1.5) {\tiny $k$};
	\node at (-1.325,1.5) {\tiny $h{+}k$};
	\node at (3,-1.5) {\tiny $l$};
	\node at (1,4.75) {\tiny $h{+}k{+}l$};
\end{tikzpicture}
};
\endxy=
\xy,
(0,0)*{\reflectbox{
\begin{tikzpicture}[scale=.3]
	\draw [very thick, green, directed=.55] (0,.75) to [out=90,in=220] (1,2.5);
	\draw [very thick, green, directed=.55] (1,-1) to [out=90,in=330] (0,.75);
	\draw [very thick, green, directed=.55] (-1,-1) to [out=90,in=210] (0,.75);
	\draw [very thick, green, directed=.55] (3,-1) to [out=90,in=330] (1,2.5);
	\draw [very thick, green, directed=.55] (1,2.5) to (1,4.25);
	\node at (-1,-1.5) {\tiny \reflectbox{$l$}};
	\node at (1,-1.5) {\tiny \reflectbox{$k$}};
	\node at (-1.325,1.5) {\tiny \reflectbox{$k{+}l$}};
	\node at (3,-1.5) {\tiny \reflectbox{$h$}};
	\node at (1,4.75) {\tiny \reflectbox{$h{+}k{+}l$}};
\end{tikzpicture}
}};
\endxy\quad,\quad
\xy
(0,0)*{\rotatebox{180}{
\begin{tikzpicture}[scale=.3]
	\draw [very thick, green, rdirected=.55] (0,.75) to [out=90,in=220] (1,2.5);
	\draw [very thick, green, rdirected=.55] (1,-1) to [out=90,in=330] (0,.75);
	\draw [very thick, green, rdirected=.55] (-1,-1) to [out=90,in=210] (0,.75);
	\draw [very thick, green, rdirected=.55] (3,-1) to [out=90,in=330] (1,2.5);
	\draw [very thick, green, rdirected=.55] (1,2.5) to (1,4.25);
	\node at (-1,-1.5) {\rotatebox{180}{\tiny $l$}};
	\node at (1,-1.5) {\rotatebox{180}{\tiny $k$}};
	\node at (-1.325,1.5) {\rotatebox{180}{\tiny $k{+}l$}};
	\node at (3,-1.5) {\rotatebox{180}{\tiny $h$}};
	\node at (1,4.75) {\rotatebox{180}{\tiny $h{+}k{+}l$}};
\end{tikzpicture}
}};
\endxy=
\xy
(0,0)*{\reflectbox{\rotatebox{180}{
\begin{tikzpicture}[scale=.3]
	\draw [very thick, green, rdirected=.55] (0,.75) to [out=90,in=220] (1,2.5);
	\draw [very thick, green, rdirected=.55] (1,-1) to [out=90,in=330] (0,.75);
	\draw [very thick, green, rdirected=.55] (-1,-1) to [out=90,in=210] (0,.75);
	\draw [very thick, green, rdirected=.55] (3,-1) to [out=90,in=330] (1,2.5);
	\draw [very thick, green, rdirected=.55] (1,2.5) to (1,4.25);
	\node at (-1,-1.5) {\reflectbox{\rotatebox{180}{\tiny $h$}}};
	\node at (1,-1.5) {\reflectbox{\rotatebox{180}{\tiny $k$}}};
	\node at (-1.325,1.5) {\reflectbox{\rotatebox{180}{\tiny $h{+}k$}}};
	\node at (3,-1.5) {\reflectbox{\rotatebox{180}{\tiny $l$}}};
	\node at (1,4.75) {\reflectbox{\rotatebox{180}{\tiny $h{+}k{+}l$}}};
\end{tikzpicture}
}}};
\endxy
\end{equation} 
where we use the shorthand 
notation from \eqref{eq-morethanonecolor} 
if some of the labels are $1$. 
Next, the \textit{digon removal relations}
\begin{equation}\label{eq-simpler1}
\xy
(0,0)*{
\begin{tikzpicture}[scale=.3]
	\draw [very thick, green, directed=.55] (0,.75) to (0,2.5);
	\draw [very thick, green, directed=.55] (0,-2.75) to [out=30,in=330] (0,.75);
	\draw [very thick, green, directed=.55] (0,-2.75) to [out=150,in=210] (0,.75);
	\draw [very thick, green, directed=.55] (0,-4.5) to (0,-2.75);
	\node at (0,-5) {\tiny $k{+}l$};
	\node at (0,3) {\tiny $k{+}l$};
	\node at (-1.5,-1) {\tiny $k$};
	\node at (1.5,-1) {\tiny $l$};
\end{tikzpicture}
};
\endxy=\qbinn{k+l}{l}
\xy
(0,0)*{
\begin{tikzpicture}[scale=.3]
	\draw [very thick, green, directed=.55] (0,-4.5) to (0,2.5);
	\node at (0,-5) {\tiny $k{+}l$};
	\node at (0,3) {\tiny $k{+}l$};
\end{tikzpicture}
};
\endxy
\end{equation}
for which $k$ and $l$ might be $1$. 
In these relations the 
$s,t$\textit{-quantum binomial} is given by 
\[
\qbinn{s}{t}=\frac{[s][s-1]\cdots[s-t+2][s-t+1]}{[t]!}\in\C_q.
\]
Here $[s]=\frac{q^s-q^{-s}}{q-q^{-1}}\in\C_q$ is the 
\textit{quantum number} and $[t]!=[1][2]\cdots [t]\in\C_q$ is the 
\textit{quantum factorial} for $s\in\Z$ and $t\in\Z_{\geq 0}$. 
Finally, the \textit{square switch} relations
\begin{equation}\label{eq-almostsimpler}
\xy
(0,0)*{
\begin{tikzpicture}[scale=.3]
	\draw [very thick, green, directed=.55] (-2,-4) to (-2,-2);
	\draw [very thick, green, directed=.55] (-2,-2) to (-2,2);
	\draw [very thick, green, directed=.55] (2,-4) to (2,-2);
	\draw [very thick, green, directed=.55] (2,-2) to (2,2);
	\draw [very thick, green, directed=.55] (-2,-2) to (2,-2);
	\draw [very thick, green, directed=.55] (-2,2) to (-2,4);
	\draw [very thick, green, directed=.55] (2,2) to (2,4);
	\draw [very thick, green, rdirected=.55] (-2,2) to (2,2);
	\node at (-2,-4.5) {\tiny $k$};
	\node at (2,-4.5) {\tiny $l$};
	\node at (-2.25,4.5) {\tiny $k{-}j_1{+}j_2$};
	\node at (2.25,4.5) {\tiny $l{+}j_1{-}j_2$};
	\node at (-3.5,0) {\tiny $k{-}j_1$};
	\node at (3.5,0) {\tiny $l{+}j_1$};
	\node at (0,-1.25) {\tiny $j_1$};
	\node at (0,2.75) {\tiny $j_2$};
\end{tikzpicture}
};
\endxy=\sum_{j^{\prime}\geq 0}\qbinn{k-j_1-l+j_2}{j^{\prime}}\xy
(0,0)*{
\begin{tikzpicture}[scale=.3]
	\draw [very thick, green, directed=.55] (-2,-4) to (-2,-2);
	\draw [very thick, green, directed=.55] (-2,-2) to (-2,2);
	\draw [very thick, green, directed=.55] (2,-4) to (2,-2);
	\draw [very thick, green, directed=.55] (2,-2) to (2,2);
	\draw [very thick, green, rdirected=.55] (-2,-2) to (2,-2);
	\draw [very thick, green, directed=.55] (-2,2) to (-2,4);
	\draw [very thick, green, directed=.55] (2,2) to (2,4);
	\draw [very thick, green, directed=.55] (-2,2) to (2,2);
	\node at (-2,-4.5) {\tiny $k$};
	\node at (2,-4.5) {\tiny $l$};
	\node at (-2.25,4.5) {\tiny $k{-}j_1{+}j_2$};
	\node at (2.25,4.5) {\tiny $l{+}j_1{-}j_2$};
	\node at (-4.5,0) {\tiny $k{+}j_2{-}j^{\prime}$};
	\node at (4.5,0) {\tiny $l{-}j_2{+}j^{\prime}$};
	\node at (0,-1.25) {\tiny $j_2{-}j^{\prime}$};
	\node at (0,2.75) {\tiny $j_1{-}j^{\prime}$};
\end{tikzpicture}
};
\endxy
\end{equation}
Here we allow $j_1$ or $j_2$ to be $1$ (we will get 
\textit{mixed} square switch relations, with 
one green and one red side, in \fullref{lem-secondcq}).

To write these relations in a uniform manner, 
we allow negative labels on edges and set webs with such edges equal to zero.

The defining relation between green and red edges is
\begin{equation}\label{eq-dumb}
[2]
\xy
(0,0)*{
\begin{tikzpicture}[scale=.3] 
	\draw [very thick, directed=.55] (1,-2.75) to (1,2.5);
	\draw [very thick, directed=.55] (-1,-2.75) to (-1,2.5);
	\node at (-1,3) {\tiny $1$};
	\node at (1,3) {\tiny $1$};
	\node at (-1,-3.15) {\tiny $1$};
	\node at (1,-3.15) {\tiny $1$};
\end{tikzpicture}
};
\endxy
=
\xy
(0,0)*{
\begin{tikzpicture}[scale=.3]
	\draw [very thick, green, directed=.55] (0,-1) to (0,.75);
	\draw [very thick, directed=.55] (0,.75) to [out=30,in=270] (1,2.5);
	\draw [very thick, directed=.55] (0,.75) to [out=150,in=270] (-1,2.5); 
	\draw [very thick, directed=.55] (1,-2.75) to [out=90,in=330] (0,-1);
	\draw [very thick, directed=.55] (-1,-2.75) to [out=90,in=210] (0,-1);
	\node at (-1,3) {\tiny $1$};
	\node at (1,3) {\tiny $1$};
	\node at (-1,-3.15) {\tiny $1$};
	\node at (1,-3.15) {\tiny $1$};
	\node at (-0.5,0) {\tiny $2$};
\end{tikzpicture}
};
\endxy
+
\xy
(0,0)*{
\begin{tikzpicture}[scale=.3]
	\draw [very thick, mycolor, directed=.55] (0,-1) to (0,.75);
	\draw [very thick, directed=.55] (0,.75) to [out=30,in=270] (1,2.5);
	\draw [very thick, directed=.55] (0,.75) to [out=150,in=270] (-1,2.5); 
	\draw [very thick, directed=.55] (1,-2.75) to [out=90,in=330] (0,-1);
	\draw [very thick, directed=.55] (-1,-2.75) to [out=90,in=210] (0,-1);
	\node at (-1,3) {\tiny $1$};
	\node at (1,3) {\tiny $1$};
	\node at (-1,-3.15) {\tiny $1$};
	\node at (1,-3.15) {\tiny $1$};
	\node at (-0.5,0) {\tiny $2$};
\end{tikzpicture}
};
\endxy
\end{equation}
which we call the \textit{dumbbell relation}.
\end{defn}

\begin{rem}\label{rem-bysymmetry}
The category $\InSp$ is symmetric under exchanging 
green and red. In the following we will often refer to this symmetry to shorten arguments.
\end{rem}

\begin{defn}\label{defn-spid2}
The category $\SuSp$ 
is the quotient category obtained from 
the category $\InSp$ by imposing the 
\textit{exterior relations}, that is,
\begin{equation}\label{eq-alt}
\xy
(0,0)*{
\begin{tikzpicture} [scale=1]
\draw[very thick, green, directed=.55] (0,0) to (0,1);
\node at (0.2,.5) {\tiny $k$};
\end{tikzpicture}
}
\endxy=0\quad,\quad\text{if }k>N.
\end{equation}
The exterior relations hold only for green edges. 
These relations mean that any web $u$ with a green edge 
labeled $k>N$ is zero. In 
contrast, red edges labeled $k>N$ are usually not zero.

The sorted web category $\SoSuSp$ is the full 
(non-monoidal) subcategory of $\SuSp$ whose object set consists of 
$\vec{k}\in X^L$ with no red boundary 
point left of a green boundary point: if $k_i\in X_r$ for some $i$, then $k_{>i}\in X_b\cup X_r$.
\end{defn}

\begin{rem}\label{rem-altsym}
The relations \eqref{eq-alt} are diagrammatic 
versions of
$
\bV_q^{>N}\C_q^N\cong 0.
$
\end{rem}

\begin{defn}\label{defn-spid3} 
The category $\AltSp$ is the subcategory of $\SuSp$ consisting of only 
black and green objects and whose morphism spaces are spanned 
as $\C_q$-vector spaces 
by webs that contain only black or green edges.

Similarly, the category $\SymSp$ 
is the subcategory of $\SuSp$ consisting of only 
black and red objects and whose morphism spaces are spanned 
as $\C_q$-vector spaces 
by webs that contain only black or red edges.

We call these categories \textit{monochromatic}.
\end{defn}

\begin{rem}\label{rem-ckmwebs}
We will see in \fullref{cor-green2} that $\AltSp$ is equivalent to the web category given in \cite[Definition 2.2]{ckm} 
(without tags and downward-pointing arrows). 
The category $\SymSp$ is a generalization of the 
one given in \cite[Definition 1.4]{rt}. In fact, 
\fullref{prop-green} shows that both monochromatic subcategories are full in $\SuSp$.
\end{rem}

\subsection{The diagrammatic super relations}\label{sub-lemmata}

We show in this subsection that 
diagrammatic 
versions of the relations \eqref{eq-super1} 
in the Howe dual quantum group $\Usd$ 
from \fullref{defn-glmn} hold in 
our diagrammatic categories $\InSp$ and $\SuSp$.

\begin{lem}\label{lem-green}
We have the relations
\[
\xy
(0,0)*{
\begin{tikzpicture}[scale=.3]
	\draw [very thick, green, directed=.55] (0,.75) to (0,2.5);
	\draw [very thick, directed=.55] (0,-2.75) to [out=30,in=330] (0,.75);
	\draw [very thick, directed=.55] (0,-2.75) to [out=150,in=210] (0,.75);
	\draw [very thick, mycolor, directed=.55] (0,-4.5) to (0,-2.75);
	\node at (0,-5) {\tiny $k$};
	\node at (0,3) {\tiny $k$};
	\node at (-1.5,-1) {\tiny $1$};
	\node at (1.5,-1) {\tiny $1$};
	\node at (0.1,-1) {\tiny $\cdots$};
\end{tikzpicture}
};
\endxy
=0=
\xy
(0,0)*{
\begin{tikzpicture}[scale=.3]
	\draw [very thick, mycolor, directed=.55] (0,.75) to (0,2.5);
	\draw [very thick, directed=.55] (0,-2.75) to [out=30,in=330] (0,.75);
	\draw [very thick, directed=.55] (0,-2.75) to [out=150,in=210] (0,.75);
	\draw [very thick, green, directed=.55] (0,-4.5) to (0,-2.75);
	\node at (0,-5) {\tiny $k$};
	\node at (0,3) {\tiny $k$};
	\node at (-1.5,-1) {\tiny $1$};
	\node at (1.5,-1) {\tiny $1$};
	\node at (0.1,-1) {\tiny $\cdots$};
\end{tikzpicture}
};
\endxy
\]
where the dots indicate $k$ parallel black edges with label $1$ which split off 
the bottom and merge with the top in any order (the order does not matter 
because of \eqref{eq-frob}).
\end{lem}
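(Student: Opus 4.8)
The plan is to reduce the statement to the case $k=2$ and to settle that case with the dumbbell relation~\eqref{eq-dumb} together with digon removal~\eqref{eq-simpler1}. By the green-red symmetry of Remark~\ref{rem-bysymmetry} it suffices to treat one of the two displayed webs, say the web $W_k$ with a green $k$-labeled edge at the bottom and a red $k$-labeled edge at the top. I would first use associativity~\eqref{eq-frob} to reorder the $k$ black strands and present $W_k$ as
\[
W_k\;=\;\rho\circ\big(W_2\otimes v\big)\circ\tau ,
\]
where $\tau$ splits the bottom green $k$-edge into a green $2$-edge and a green $(k-2)$-edge, $\rho$ merges a red $2$-edge with a red $(k-2)$-edge, $v$ is a suitable web on $k-2$ strands, and $W_2$ is the $k=2$ instance: a green $2$-edge split into two black strands which then merge into a red $2$-edge. (This uses~\eqref{eq-frob} to know that the iterated splits and merges really do factor through a green $2$-edge at the bottom and a red $2$-edge at the top, together with the interchange law for $\otimes$ and $\circ$.) Since $\otimes$ and $\circ$ are $\C_q$-bilinear, it then remains to prove $W_2=0$.

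For $k=2$, write $W_2=m_r\circ s_g$, where $s_g\colon 2_g\to(1,1)$ is the green split into two black strands and $m_r\colon(1,1)\to 2_r$ the red merge, and let $m_g\colon(1,1)\to 2_g$, $s_r\colon 2_r\to(1,1)$ be the green merge and red split. Multiplying by $[2]$ and inserting the dumbbell relation~\eqref{eq-dumb} in the form $[2]\,\mathrm{id}_{(1,1)}=s_g\circ m_g+s_r\circ m_r$ gives, after regrouping,
\[
[2]\,W_2\;=\;\big(m_r\circ s_g\big)\circ\big(m_g\circ s_g\big)\;+\;\big(m_r\circ s_r\big)\circ\big(m_r\circ s_g\big).
\]
Now digon removal~\eqref{eq-simpler1} with $k=l=1$ yields $m_g\circ s_g=[2]\,\mathrm{id}_{2_g}$ and $m_r\circ s_r=[2]\,\mathrm{id}_{2_r}$, so both summands collapse to $[2]\,W_2$. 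Hence $[2]\,W_2=2[2]\,W_2$, so $[2]\,W_2=0$, and since $[2]=q+q^{-1}\neq 0$ in $\C_q$ we conclude $W_2=0$. The second displayed web of the lemma then also vanishes by the green-red symmetry, and by the factorization above $W_k=0$ for every $k\geq 2$.

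I expect the reduction step to be the only place requiring care: one has to be sure that, after rearranging with~\eqref{eq-frob}, a localized $k=2$ configuration (green $2$ split into two black strands, then merged into red $2$) genuinely appears as a tensor factor, so that $W_2=0$ forces $W_k=0$. Beyond that the argument is a short computation using only~\eqref{eq-frob}, \eqref{eq-simpler1} and~\eqref{eq-dumb}; in particular the lemma already holds in $\InSp$ (and hence in its quotient $\SuSp$).
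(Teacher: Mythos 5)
Your proof is correct and takes essentially the same approach as the paper: reduce to $k=2$ via associativity~\eqref{eq-frob}, then a short computation using the dumbbell relation~\eqref{eq-dumb} and digon removal~\eqref{eq-simpler1}, with the second web handled by green-red symmetry. The only difference is a mild reorganization of the $k=2$ step --- you multiply by $[2]$ and expand the dumbbell to get $[2]\,W_2=2[2]\,W_2$, whereas the paper first inserts a digon, applies the dumbbell, and arrives at $W_2=W_2-W_2$ --- but the ingredients and the reduction are identical.
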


\begin{proof}
It suffices by associativity \eqref{eq-frob} to show the statement for $k=2$. We have
\begin{align*}
\xy
(0,0)*{
\begin{tikzpicture}[scale=.3]
	\draw [very thick, green, directed=.55] (0,.75) to (0,8);
	\draw [very thick, directed=.55] (0,-2.75) to [out=30,in=330] (0,.75);
	\draw [very thick, directed=.55] (0,-2.75) to [out=150,in=210] (0,.75);
	\draw [very thick, mycolor, directed=.55] (0,-4.5) to (0,-2.75);
	\node at (0,-5) {\tiny $2$};
	\node at (0,8.5) {\tiny $2$};
	\node at (-1.5,-1) {\tiny $1$};
	\node at (1.5,-1) {\tiny $1$};
\end{tikzpicture}
};
\endxy
\stackrel{\eqref{eq-simpler1}}{=} 
\frac{1}{[2]}\xy
(0,0)*{
\begin{tikzpicture}[scale=.3]
	\draw [very thick, green, directed=.55] (0,.75) to (0,2.5);
	\draw [very thick, directed=.55] (0,-2.75) to [out=30,in=330] (0,.75);
	\draw [very thick, directed=.55] (0,-2.75) to [out=150,in=210] (0,.75);
	\draw [very thick, mycolor, directed=.55] (0,-4.5) to (0,-2.75);
	\draw [very thick, green, directed=.55] (0,6) to (0,8);
	\draw [very thick, directed=.55] (0,2.5) to [out=30,in=330] (0,6);
	\draw [very thick, directed=.55] (0,2.5) to [out=150,in=210] (0,6);
	\node at (0,-5) {\tiny $2$};
	\node at (0,8.5) {\tiny $2$};
	\node at (-1.5,-1) {\tiny $1$};
	\node at (1.5,-1) {\tiny $1$};
	\node at (-1.5,4) {\tiny $1$};
	\node at (1.5,4) {\tiny $1$};
\end{tikzpicture}
};
\endxy
\stackrel{\eqref{eq-dumb}}{=} 
\xy
(0,0)*{
\begin{tikzpicture}[scale=.3]
	\draw [very thick, directed=.55] (.9,-1) to (.9,4.25);
	\draw [very thick, directed=.55] (-.9,-1) to (-.9,4.25);
	\draw [very thick] (0,-2.75) to [out=30,in=270] (.9,-1);
	\draw [very thick] (0,-2.75) to [out=150,in=270] (-.9,-1);
	\draw [very thick, mycolor, directed=.55] (0,-4.5) to (0,-2.75);
	\draw [very thick, green, directed=.55] (0,6) to (0,8);
	\draw [very thick] (.9,4.25) to [out=90,in=330] (0,6);
	\draw [very thick] (-.9,4.25) to [out=90,in=210] (0,6);
	\node at (0,-5) {\tiny $2$};
	\node at (0,8.5) {\tiny $2$};
	\node at (-1.5,1.75) {\tiny $1$};
	\node at (1.5,1.75) {\tiny $1$};
\end{tikzpicture}
};
\endxy
-
\frac{1}{[2]}\xy
(0,0)*{
\begin{tikzpicture}[scale=.3]
	\draw [very thick, mycolor, directed=.55] (0,.75) to (0,2.5);
	\draw [very thick, directed=.55] (0,-2.75) to [out=30,in=330] (0,.75);
	\draw [very thick, directed=.55] (0,-2.75) to [out=150,in=210] (0,.75);
	\draw [very thick, mycolor, directed=.55] (0,-4.5) to (0,-2.75);
	\draw [very thick, green, directed=.55] (0,6) to (0,8);
	\draw [very thick, directed=.55] (0,2.5) to [out=30,in=330] (0,6);
	\draw [very thick, directed=.55] (0,2.5) to [out=150,in=210] (0,6);
	\node at (0,-5) {\tiny $2$};
	\node at (0,8.5) {\tiny $2$};
	\node at (-1.5,-1) {\tiny $1$};
	\node at (1.5,-1) {\tiny $1$};
	\node at (-1.5,4) {\tiny $1$};
	\node at (1.5,4) {\tiny $1$};
\end{tikzpicture}
};
\endxy
\stackrel{\eqref{eq-simpler1}}{=} 
\xy
(0,0)*{
\begin{tikzpicture}[scale=.3]
	\draw [very thick, directed=.55] (.9,-1) to (.9,4.25);
	\draw [very thick, directed=.55] (-.9,-1) to (-.9,4.25);
	\draw [very thick] (0,-2.75) to [out=30,in=270] (.9,-1);
	\draw [very thick] (0,-2.75) to [out=150,in=270] (-.9,-1);
	\draw [very thick, mycolor, directed=.55] (0,-4.5) to (0,-2.75);
	\draw [very thick, green, directed=.55] (0,6) to (0,8);
	\draw [very thick] (.9,4.25) to [out=90,in=330] (0,6);
	\draw [very thick] (-.9,4.25) to [out=90,in=210] (0,6);
	\node at (0,-5) {\tiny $2$};
	\node at (0,8.5) {\tiny $2$};
	\node at (-1.5,1.75) {\tiny $1$};
	\node at (1.5,1.75) {\tiny $1$};
\end{tikzpicture}
};
\endxy -
\xy
(0,0)*{
\begin{tikzpicture}[scale=.3]
	\draw [very thick, mycolor, directed=.55] (0,-4.5) to (0,2.5);
	\draw [very thick, green, directed=.55] (0,6) to (0,8);
	\draw [very thick, directed=.55] (0,2.5) to [out=30,in=330] (0,6);
	\draw [very thick, directed=.55] (0,2.5) to [out=150,in=210] (0,6);
	\node at (0,-5) {\tiny $2$};
	\node at (0,8.5) {\tiny $2$};
	\node at (-1.5,4) {\tiny $1$};
	\node at (1.5,4) {\tiny $1$};
\end{tikzpicture}
};
\endxy
=0.
\end{align*}
The other $k=2$ relation follows by symmetry.
\end{proof}

\begin{lem}\label{lem-secondcq}
\begin{itemize}
\item[(a)] We have for all $k,l\in\Z_{\geq 0}$
\[
\xy
(0,0)*{
\begin{tikzpicture}[scale=.3]
	\draw [very thick, green, directed=.55] (-2,-4) to (-2,-2);
	\draw [very thick, green, directed=.55] (-2,-2) to (-2,2);
	\draw [very thick, mycolor, directed=.55] (2,-4) to (2,-2);
	\draw [very thick, mycolor, directed=.55] (2,-2) to (2,2);
	\draw [very thick, directed=.55] (-2,-2) to (2,-2);
	\draw [very thick, green, directed=.55] (-2,2) to (-2,4);
	\draw [very thick, mycolor, directed=.55] (2,2) to (2,4);
	\draw [very thick, directed=.55] (-2,2) to (2,2);
	\node at (-2,-4.5) {\tiny $k$};
	\node at (2,-4.5) {\tiny $l$};
	\node at (-2.25,4.5) {\tiny $k{-}2$};
	\node at (2.25,4.5) {\tiny $l{+}2$};
	\node at (-3.5,0) {\tiny $k{-}1$};
	\node at (3.5,0) {\tiny $l{+}1$};
	\node at (0,-1.25) {\tiny $1$};
	\node at (0,2.75) {\tiny $1$};
\end{tikzpicture}
};
\endxy
\;
=\;
\xy
(0,0)*{
\begin{tikzpicture}[scale=.3]
	\draw [very thick, mycolor, directed=.55] (-2,-4) to (-2,-2);
	\draw [very thick, mycolor, directed=.55] (-2,-2) to (-2,2);
	\draw [very thick, green, directed=.55] (2,-4) to (2,-2);
	\draw [very thick, green, directed=.55] (2,-2) to (2,2);
	\draw [very thick, directed=.55] (-2,-2) to (2,-2);
	\draw [very thick, mycolor, directed=.55] (-2,2) to (-2,4);
	\draw [very thick, green, directed=.55] (2,2) to (2,4);
	\draw [very thick, directed=.55] (-2,2) to (2,2);
	\node at (-2,-4.5) {\tiny $k$};
	\node at (2,-4.5) {\tiny $l$};
	\node at (-2.25,4.5) {\tiny $k{-}2$};
	\node at (2.25,4.5) {\tiny $l{+}2$};
	\node at (-3.5,0) {\tiny $k{-}1$};
	\node at (3.5,0) {\tiny $l{+}1$};
	\node at (0,-1.25) {\tiny $1$};
	\node at (0,2.75) {\tiny $1$};
\end{tikzpicture}
};
\endxy
\;=\; 0\;=\;
\xy
(0,0)*{
\begin{tikzpicture}[scale=.3]
	\draw [very thick, mycolor, directed=.55] (-2,-4) to (-2,-2);
	\draw [very thick, mycolor, directed=.55] (-2,-2) to (-2,2);
	\draw [very thick, green, directed=.55] (2,-4) to (2,-2);
	\draw [very thick, green, directed=.55] (2,-2) to (2,2);
	\draw [very thick, rdirected=.55] (-2,-2) to (2,-2);
	\draw [very thick, mycolor, directed=.55] (-2,2) to (-2,4);
	\draw [very thick, green, directed=.55] (2,2) to (2,4);
	\draw [very thick, rdirected=.55] (-2,2) to (2,2);
	\node at (-2,-4.5) {\tiny $k$};
	\node at (2,-4.5) {\tiny $l$};
	\node at (-2.25,4.5) {\tiny $k{+}2$};
	\node at (2.25,4.5) {\tiny $l{-}2$};
	\node at (-3.5,0) {\tiny $k{+}1$};
	\node at (3.5,0) {\tiny $l{-}1$};
	\node at (0,-1.25) {\tiny $1$};
	\node at (0,2.75) {\tiny $1$};
\end{tikzpicture}
};
\endxy
\;
=
\;
\xy
(0,0)*{
\begin{tikzpicture}[scale=.3]
	\draw [very thick, green, directed=.55] (-2,-4) to (-2,-2);
	\draw [very thick, green, directed=.55] (-2,-2) to (-2,2);
	\draw [very thick, mycolor, directed=.55] (2,-4) to (2,-2);
	\draw [very thick, mycolor, directed=.55] (2,-2) to (2,2);
	\draw [very thick, rdirected=.55] (-2,-2) to (2,-2);
	\draw [very thick, green, directed=.55] (-2,2) to (-2,4);
	\draw [very thick, mycolor, directed=.55] (2,2) to (2,4);
	\draw [very thick, rdirected=.55] (-2,2) to (2,2);
	\node at (-2,-4.5) {\tiny $k$};
	\node at (2,-4.5) {\tiny $l$};
	\node at (-2.25,4.5) {\tiny $k{+}2$};
	\node at (2.25,4.5) {\tiny $l{-}2$};
	\node at (-3.5,0) {\tiny $k{+}1$};
	\node at (3.5,0) {\tiny $l{-}1$};
	\node at (0,-1.25) {\tiny $1$};
	\node at (0,2.75) {\tiny $1$};
\end{tikzpicture}
};
\endxy
\]

\item[(b)] We have for all $k,l\in\Z_{\geq 0}$
\[
[k+l]
\xy
(0,0)*{
\begin{tikzpicture}[scale=.3]
	\draw [very thick, green, directed=.55] (-2,-4) to (-2,4);
	\draw [very thick, mycolor, directed=.55] (2,-4) to (2,4);
	\node at (-2,-4.5) {\tiny $k$};
	\node at (2,-4.5) {\tiny $l$};
	\node at (-2,4.5) {\tiny $k$};
	\node at (2,4.5) {\tiny $l$};
\end{tikzpicture}
};
\endxy
\;
=
\;
\xy
(0,0)*{
\begin{tikzpicture}[scale=.3]
	\draw [very thick, green, directed=.55] (-2,-4) to (-2,-2);
	\draw [very thick, green, directed=.55] (-2,-2) to (-2,2);
	\draw [very thick, mycolor, directed=.55] (2,-4) to (2,-2);
	\draw [very thick, mycolor, directed=.55] (2,-2) to (2,2);
	\draw [very thick, rdirected=.55] (-2,-2) to (2,-2);
	\draw [very thick, green, directed=.55] (-2,2) to (-2,4);
	\draw [very thick, mycolor, directed=.55] (2,2) to (2,4);
	\draw [very thick, directed=.55] (-2,2) to (2,2);
	\node at (-2,-4.5) {\tiny $k$};
	\node at (2,-4.5) {\tiny $l$};
	\node at (-2,4.5) {\tiny $k$};
	\node at (2,4.5) {\tiny $l$};
	\node at (-3.5,0) {\tiny $k{+}1$};
	\node at (3.5,0) {\tiny $l{-}1$};
	\node at (0,-1.25) {\tiny $1$};
	\node at (0,2.75) {\tiny $1$};
\end{tikzpicture}
};
\endxy
\;
+
\;
\xy
(0,0)*{
\begin{tikzpicture}[scale=.3]
	\draw [very thick, green, directed=.55] (-2,-4) to (-2,-2);
	\draw [very thick, green, directed=.55] (-2,-2) to (-2,2);
	\draw [very thick, mycolor, directed=.55] (2,-4) to (2,-2);
	\draw [very thick, mycolor, directed=.55] (2,-2) to (2,2);
	\draw [very thick, directed=.55] (-2,-2) to (2,-2);
	\draw [very thick, green, directed=.55] (-2,2) to (-2,4);
	\draw [very thick, mycolor, directed=.55] (2,2) to (2,4);
	\draw [very thick, rdirected=.55] (-2,2) to (2,2);
	\node at (-2,-4.5) {\tiny $k$};
	\node at (2,-4.5) {\tiny $l$};
	\node at (-2,4.5) {\tiny $k$};
	\node at (2,4.5) {\tiny $l$};
	\node at (-3.5,0) {\tiny $k{-}1$};
	\node at (3.5,0) {\tiny $l{+}1$};
	\node at (0,-1.25) {\tiny $1$};
	\node at (0,2.75) {\tiny $1$};
\end{tikzpicture}
};
\endxy
\]
and similarly for exchanged roles of green and red.

\item[(c)] We have for all $k,l\in\Z_{\geq 0}$
\begin{align*}
\raisebox{-.1cm}{$[2]
\xy
(0,0)*{
\begin{tikzpicture}[scale=.3]
	\draw [very thick, green, directed=.25, directed=.75] (-4.5,-2) to (-4.5,6);
	\draw [very thick, green, directed=.125, directed=.375, directed=.625, directed=.875] (-1.5,-2) to (-1.5,6);
	\draw [very thick, mycolor, directed=.125, directed=.375, directed=.625, directed=.875] (1.5,-2) to (1.5,6);
	\draw [very thick, mycolor, directed=.25, directed=.75] (4.5,-2) to (4.5,6);
	\draw [very thick, directed=.55] (-1.5,0) to (1.5,0);
	\draw [very thick, green, directed=.55] (-4.5,2) to (-1.5,2);
	\draw [very thick, mycolor, directed=.55] (1.5,2) to (4.5,2);
	\draw [very thick, directed=.55] (-1.5,4) to (1.5,4);
	\node at (-4.5,-2.5) {\tiny $k_1$};
	\node at (-1.5,-2.5) {\tiny $k_2$};
	\node at (1.5,-2.5) {\tiny $k_3$};
	\node at (4.5,-2.5) {\tiny $k_4$};
	\node at (0,0.75) {\tiny $1$};
	\node at (0,4.75) {\tiny $1$};
	\node at (-3,2.75) {\tiny $1$};
	\node at (3,2.75) {\tiny $1$};
	\node at (-4.5,6.5) {\tiny $k_1{-}1$};
	\node at (-1.5,6.5) {\tiny $k_2{-}1$};
	\node at (1.5,6.5) {\tiny $k_3{+}1$};
	\node at (4.5,6.5) {\tiny $k_4{+}1$};
\end{tikzpicture}
};
\endxy$} =&
\xy
(0,0)*{
\begin{tikzpicture}[scale=.3]
	\draw [very thick, green, directed=.125, directed=.875] (-4.5,-2) to (-4.5,6);
	\draw [very thick, green, directed=.125, directed=.365, directed=.625, directed=.875] (-1.5,-2) to (-1.5,6);
	\draw [very thick, mycolor, directed=.125, directed=.51, directed=.7, directed=.875] (1.5,-2) to (1.5,6);
	\draw [very thick, mycolor, directed=.125, directed=.875] (4.5,-2) to (4.5,6);
	\draw [very thick, directed=.55] (-1.5,1.333) to (1.5,1.333);
	\draw [very thick, mycolor, directed=.55] (1.5,2.666) to (4.5,2.666);
	\draw [very thick, green, directed=.55] (-4.5,0) to (-1.5,0);
	\draw [very thick, directed=.55] (-1.5,4) to (1.5,4);
	\node at (-4.5,-2.5) {\tiny $k_1$};
	\node at (-1.5,-2.5) {\tiny $k_2$};
	\node at (1.5,-2.5) {\tiny $k_3$};
	\node at (4.5,-2.5) {\tiny $k_4$};
	\node at (0,2.09) {\tiny $1$};
	\node at (0,4.75) {\tiny $1$};
	\node at (-3,0.75) {\tiny $1$};
	\node at (3,3.42) {\tiny $1$};
	\node at (-4.5,6.5) {\tiny $k_1{-}1$};
	\node at (-1.5,6.5) {\tiny $k_2{-}1$};
	\node at (1.5,6.5) {\tiny $k_3{+}1$};
	\node at (4.5,6.5) {\tiny $k_4{+}1$};
\end{tikzpicture}
};
\endxy
+
\xy
(0,0)*{
\begin{tikzpicture}[scale=.3]
	\draw [very thick, green, directed=.125, directed=.875] (-4.5,-2) to (-4.5,6);
	\draw [very thick, green, directed=.125, directed=.425, directed=.7, directed=.875] (-1.5,-2) to (-1.5,6);
	\draw [very thick, mycolor, directed=.125, directed=.36, directed=.51, directed=.875] (1.5,-2) to (1.5,6);
	\draw [very thick, mycolor, directed=.125, directed=.875] (4.5,-2) to (4.5,6);
	\draw [very thick, directed=.55] (-1.5,0) to (1.5,0);
	\draw [very thick, green, directed=.55] (-4.5,4) to (-1.5,4);
	\draw [very thick, mycolor, directed=.55] (1.5,1.333) to (4.5,1.333);
	\draw [very thick, directed=.55] (-1.5,2.666) to (1.5,2.666);
	\node at (-4.5,-2.5) {\tiny $k_1$};
	\node at (-1.5,-2.5) {\tiny $k_2$};
	\node at (1.5,-2.5) {\tiny $k_3$};
	\node at (4.5,-2.5) {\tiny $k_4$};
	\node at (0,0.75) {\tiny $1$};
	\node at (0,3.42) {\tiny $1$};
	\node at (-3,4.75) {\tiny $1$};
	\node at (3,2.09) {\tiny $1$};
	\node at (-4.5,6.5) {\tiny $k_1{-}1$};
	\node at (-1.5,6.5) {\tiny $k_2{-}1$};
	\node at (1.5,6.5) {\tiny $k_3{+}1$};
	\node at (4.5,6.5) {\tiny $k_4{+}1$};
\end{tikzpicture}
};
\endxy
\\
&+
\xy
(0,0)*{
\begin{tikzpicture}[scale=.3]
	\draw [very thick, green, directed=.125, directed=.875] (-4.5,-2) to (-4.5,6);
	\draw [very thick, green, directed=.125, directed=.36, directed=.51, directed=.875] (-1.5,-2) to (-1.5,6);
	\draw [very thick, mycolor, directed=.125, directed=.425, directed=.7, directed=.875] (1.5,-2) to (1.5,6);
	\draw [very thick, mycolor, directed=.125, directed=.875] (4.5,-2) to (4.5,6);
	\draw [very thick, directed=.55] (-1.5,0) to (1.5,0);
	\draw [very thick, mycolor, directed=.55] (1.5,4) to (4.5,4);
	\draw [very thick, green, directed=.55] (-4.5,1.333) to (-1.5,1.333);
	\draw [very thick, directed=.55] (-1.5,2.666) to (1.5,2.666);
	\node at (-4.5,-2.5) {\tiny $k_1$};
	\node at (-1.5,-2.5) {\tiny $k_2$};
	\node at (1.5,-2.5) {\tiny $k_3$};
	\node at (4.5,-2.5) {\tiny $k_4$};
	\node at (0,0.75) {\tiny $1$};
	\node at (0,3.42) {\tiny $1$};
	\node at (-3,2.09) {\tiny $1$};
	\node at (3,4.75) {\tiny $1$};
	\node at (-4.5,6.5) {\tiny $k_1{-}1$};
	\node at (-1.5,6.5) {\tiny $k_2{-}1$};
	\node at (1.5,6.5) {\tiny $k_3{+}1$};
	\node at (4.5,6.5) {\tiny $k_4{+}1$};
\end{tikzpicture}
};
\endxy+
\xy
(0,0)*{
\begin{tikzpicture}[scale=.3]
	\draw [very thick, green, directed=.125, directed=.875] (-4.5,-2) to (-4.5,6);
	\draw [very thick, green, directed=.125, directed=.51, directed=.7, directed=.875] (-1.5,-2) to (-1.5,6);
	\draw [very thick, mycolor, directed=.125, directed=.365, directed=.625, directed=.875] (1.5,-2) to (1.5,6);
	\draw [very thick, mycolor, directed=.125, directed=.875] (4.5,-2) to (4.5,6);
	\draw [very thick, directed=.55] (-1.5,1.333) to (1.5,1.333);
	\draw [very thick, green, directed=.55] (-4.5,2.666) to (-1.5,2.666);
	\draw [very thick, mycolor, directed=.55] (1.5,0) to (4.5,0);
	\draw [very thick, directed=.55] (-1.5,4) to (1.5,4);
	\node at (-4.5,-2.5) {\tiny $k_1$};
	\node at (-1.5,-2.5) {\tiny $k_2$};
	\node at (1.5,-2.5) {\tiny $k_3$};
	\node at (4.5,-2.5) {\tiny $k_4$};
	\node at (0,2.09) {\tiny $1$};
	\node at (0,4.75) {\tiny $1$};
	\node at (-3,3.42) {\tiny $1$};
	\node at (3,0.75) {\tiny $1$};
	\node at (-4.5,6.5) {\tiny $k_1{-}1$};
	\node at (-1.5,6.5) {\tiny $k_2{-}1$};
	\node at (1.5,6.5) {\tiny $k_3{+}1$};
	\node at (4.5,6.5) {\tiny $k_4{+}1$};
\end{tikzpicture}
};
\endxy
\end{align*}
and similarly for 
exchanged roles of green and red and flipped 
horizontal orientations.
\end{itemize}
\end{lem}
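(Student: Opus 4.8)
The three families of identities are the diagrammatic manifestations of the super-commutation and super Serre relations at the odd node of $\Usd$, and the plan is to deduce each of them directly from the defining relations of $\InSp$ in Definition~\ref{defn-spid} --- associativity~\eqref{eq-frob}, digon removal~\eqref{eq-simpler1}, the monochromatic square switch~\eqref{eq-almostsimpler} and the dumbbell relation~\eqref{eq-dumb} --- together with Lemma~\ref{lem-green}. Throughout, the green-red symmetry of $\InSp$ (Remark~\ref{rem-bysymmetry}) and the upside-down flip of webs (which interchanges merges with splits, and $F$-ladders with $E$-ladders) cut the number of cases down to one representative per identity, and one may always assume every intermediate edge label is $\geq 0$, since otherwise both sides vanish by the convention that webs with negative labels are zero.

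\emph{Part (a).} Here the assertion is that a doubled mixed ladder rung is zero, i.e.\ $F_m^2\one = 0 = E_m^2\one$ diagrammatically; it suffices to treat the case in which the decreasing column carries a label $\geq 2$, as otherwise there is nothing to prove. In each of the four configurations I would use the (co)associativity relation~\eqref{eq-frob} to fuse the two successive splits on one column into a single split green-$k\,\to\,$green-$(k-2)$ followed by a split of a green-$2$ edge into two black-$1$ edges, and dually fuse the two merges on the other column; after an isotopy this exhibits in the middle the sub-web ``green-$2$ $\to$ two parallel black-$1$ edges $\to$ red-$2$'' (or its colour-reversed form), which is the $k=2$ case of Lemma~\ref{lem-green} and hence vanishes.

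\emph{Part (b).} The case $k=l=1$ is exactly the dumbbell relation~\eqref{eq-dumb}, since $[k+l]=[2]$ and $F_mE_m\one_{(1,1)}$, $E_mF_m\one_{(1,1)}$ are precisely the green-$2$ and red-$2$ resolutions appearing there. For general $(k,l)$ I would induct on $k+l$, using the green-red symmetry to assume $k\geq 1$: strip a black-$1$ off the green-$k$ column with digon removal~\eqref{eq-simpler1}, reorganise the two mixed rungs with associativity~\eqref{eq-frob} (and, where needed, the square switch~\eqref{eq-almostsimpler}) so as to expose a weight-$(k-1,l)$ sub-web, apply the inductive hypothesis there, and collect terms; the unwanted doubled-rung contributions are killed by part (a), and the coefficient $[k+l]$ emerges from a routine quantum-number identity.

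\emph{Part (c).} This is the quantum super Serre relation at the odd node, and it is where the real work lies. My approach would be to expand every black-$1$ ($F_m$-type) rung in sight by means of the dumbbell relation~\eqref{eq-dumb}, replacing each pair of neighbouring black-$1$ edges by a green-$2$ or a red-$2$ edge; this rewrites both sides entirely within the monochromatic ladder calculi of $\Ud(\glm)$ and $\Ud(\glnn{n})$, where the relations of Definition~\ref{defn-spid} already hold and where the $F^{(j)}_{m-1}$- and $F^{(j)}_{m+1}$-rungs slide through in the obvious way, modulo error terms --- each of which either carries an adjacent doubled $F_m$-rung (zero by part (a)), or a ``green $\to$ black $\to$ red'' bigon (zero by Lemma~\ref{lem-green}), or reassembles via part (b). The only genuine obstacle is the bookkeeping: there are many terms on each side, and matching them requires several quantum-binomial identities. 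To keep this under control I would first establish the identity for all boundary labels $k_i\in\{0,1\}$, where the webs collapse substantially, and then bootstrap to arbitrary $k_i$ by fattening one column at a time with digon removal~\eqref{eq-simpler1}, checking at each step that the extra coefficients combine correctly. As before, the variants with exchanged colours and flipped orientations then follow from Remark~\ref{rem-bysymmetry} and the upside-down flip.
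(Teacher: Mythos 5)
Your part~(a) is essentially the paper's argument: reduce to the two-strand case by associativity \eqref{eq-frob} and invoke Lemma~\ref{lem-green}; the paper's one-line ``directly from \eqref{eq-frob}, Lemma~\ref{lem-green} and symmetry'' is exactly what you spell out, so no complaints there.

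For part~(b) you propose an induction on $k+l$, with the dumbbell relation \eqref{eq-dumb} as the base case. The paper instead does a direct computation: it expands each of the two summands on the right-hand side using the monochromatic square switch \eqref{eq-almostsimpler} (on the intermediate labels $k+1$ and $l+1$), collapses the resulting digons, applies \eqref{eq-dumb} once on the central edge, and reads off the scalar $[2][k][l]+[k][1-l]-[k-1][l]=[k+l]$. Your inductive route is plausible, but the ``strip a black-$1$, reorganise, apply the inductive hypothesis, collect terms'' step is carrying essentially all the weight and is left unexamined; in particular, splitting $k=1+(k-1)$ introduces a third column, and at this point in the paper the pitchfork lemma is not yet available to slide things past each other, so the reorganisation would have to be done by hand with \eqref{eq-frob}, \eqref{eq-simpler1} and \eqref{eq-almostsimpler}. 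If you carry that out carefully you will essentially reproduce the direct square-switch computation, with the ``routine quantum-number identity'' at the end being precisely the paper's $[2][k][l]+[k][1-l]-[k-1][l]=[k+l]$. The direct route is both shorter and more transparent.

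For part~(c) your core idea --- expand the two black-$1$ ($F_m$-type) rungs via \eqref{eq-dumb} and then work monochromatically --- is the right one and is exactly what the paper does. But your description of what happens afterwards diverges from the actual mechanism and, I think, overestimates the difficulty. After the dumbbell, the left-hand side becomes a sum of exactly two webs, each containing one monochromatic (green or red) square; a single application of the monochromatic square switch \eqref{eq-almostsimpler} to each, with $(j_1,j_2)=(1,1)$, produces exactly two terms per web, and \eqref{eq-frob} reassembles the resulting four webs into the four summands on the right-hand side. There are no error terms, no need to invoke parts (a) or (b), and no intricate bookkeeping. In particular the ``bootstrap from $k_i\in\{0,1\}$ by fattening columns'' step you propose as a fallback is not needed, and as stated it is also not a well-defined reduction: the relation is a local identity on webs with rungs attached, and expressing a label-$k$ column via digon removal does not by itself transport the identity from small $k_i$ to larger ones without an additional (unstated) argument for pushing the split/merge through the rungs. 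I'd drop that step entirely and replace it with the explicit dumbbell $\to$ square-switch $\to$ associativity chain.
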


\begin{proof}
\textbf{(a)}: 
This follows directly from \eqref{eq-frob}, \fullref{lem-green} and symmetry.

\textbf{(b)}: Let $u$ and $v$ denote the two webs on the 
right-hand side of (b) above.
Using \eqref{eq-almostsimpler} for the edges labeled $k+1$ 
and $l+1$ in $u$ respectively $v$, we get
\[
u=
\xy
(0,0)*{
\begin{tikzpicture}[scale=.3]
	\draw [very thick, green, directed=.55] (-2,-4) to (-2,-2);
	\draw [very thick, green, directed=.55] (-2,-2) to (-2,2);
	\draw [very thick, mycolor, directed=.55] (2,-4) to (2,-2);
	\draw [very thick, mycolor, directed=.55] (2,-2) to (2,2);
	\draw [very thick, directed=.55] (-2,-2) to (0,-2);
	\draw [very thick, rdirected=.55] (0,-2) to (2,-2);
	\draw [very thick, green, directed=.55] (-2,2) to (-2,4);
	\draw [very thick, mycolor, directed=.55] (2,2) to (2,4);
	\draw [very thick, rdirected=.55] (-2,2) to (0,2);
	\draw [very thick, directed=.55] (0,2) to (2,2);
	\draw [very thick, green, directed=.55] (0,-2) to (0,2);
	\node at (-2,-4.5) {\tiny $k$};
	\node at (2,-4.5) {\tiny $l$};
	\node at (-2,4.5) {\tiny $k$};
	\node at (2,4.5) {\tiny $l$};
	\node at (-3.5,0) {\tiny $k{-}1$};
	\node at (3.5,0) {\tiny $l{-}1$};
	\node at (-1,-1.25) {\tiny $1$};
	\node at (-1,2.75) {\tiny $1$};
	\node at (1,-1.25) {\tiny $1$};
	\node at (1,2.75) {\tiny $1$};
	\node at (-0.6,0) {\tiny $2$};
\end{tikzpicture}
};
\endxy
-[k-1][l]
\xy
(0,0)*{
\begin{tikzpicture}[scale=.3]
	\draw [very thick, green, directed=.55] (-2,-4) to (-2,4);
	\draw [very thick, mycolor, directed=.55] (2,-4) to (2,4);
	\node at (-2,-4.5) {\tiny $k$};
	\node at (2,-4.5) {\tiny $l$};
	\node at (-2,4.5) {\tiny $k$};
	\node at (2,4.5) {\tiny $l$};
\end{tikzpicture}
};
\endxy
\quad,\quad
v=
\xy
(0,0)*{
\begin{tikzpicture}[scale=.3]
	\draw [very thick, green, directed=.55] (-2,-4) to (-2,-2);
	\draw [very thick, green, directed=.55] (-2,-2) to (-2,2);
	\draw [very thick, mycolor, directed=.55] (2,-4) to (2,-2);
	\draw [very thick, mycolor, directed=.55] (2,-2) to (2,2);
	\draw [very thick, directed=.55] (-2,-2) to (0,-2);
	\draw [very thick, rdirected=.55] (0,-2) to (2,-2);
	\draw [very thick, green, directed=.55] (-2,2) to (-2,4);
	\draw [very thick, mycolor, directed=.55] (2,2) to (2,4);
	\draw [very thick, rdirected=.55] (-2,2) to (0,2);
	\draw [very thick, directed=.55] (0,2) to (2,2);
	\draw [very thick, mycolor, directed=.55] (0,-2) to (0,2);
	\node at (-2,-4.5) {\tiny $k$};
	\node at (2,-4.5) {\tiny $l$};
	\node at (-2,4.5) {\tiny $k$};
	\node at (2,4.5) {\tiny $l$};
	\node at (-3.5,0) {\tiny $k{-}1$};
	\node at (3.5,0) {\tiny $l{-}1$};
	\node at (-1,-1.25) {\tiny $1$};
	\node at (-1,2.75) {\tiny $1$};
	\node at (1,-1.25) {\tiny $1$};
	\node at (1,2.75) {\tiny $1$};
	\node at (-0.6,0) {\tiny $2$};
\end{tikzpicture}
};
\endxy
+[k][1-l]
\xy
(0,0)*{
\begin{tikzpicture}[scale=.3]
	\draw [very thick, green, directed=.55] (-2,-4) to (-2,4);
	\draw [very thick, mycolor, directed=.55] (2,-4) to (2,4);
	\node at (-2,-4.5) {\tiny $k$};
	\node at (2,-4.5) {\tiny $l$};
	\node at (-2,4.5) {\tiny $k$};
	\node at (2,4.5) {\tiny $l$};
\end{tikzpicture}
};
\endxy
\]
after collapsing appearing digons. By 
using \eqref{eq-dumb} on the central 
vertical edges in the expansions, we 
see that $u+v=s\cdot\mathrm{id}_{(k,l)}$. The scalar is 
$s=[2][k][l]+[k][1-l]-[k-1][l]=[k+l]$. 
The other cases follow by symmetry.

\textbf{(c)}: We start with the web 
on the left-hand side and first use \eqref{eq-dumb} on 
the middle two horizontal edges. Thus, we 
obtain (our drawings are simplified and the 
orientations pointing down could be isotoped to point up)
\[
[2]
\xy
(0,0)*{
\begin{tikzpicture}[scale=.3]
	\draw [very thick, green, directed=.25, directed=.75] (-4.5,-2) to (-4.5,6);
	\draw [very thick, green, directed=.125, directed=.375, directed=.625, directed=.875] (-1.5,-2) to (-1.5,6);
	\draw [very thick, mycolor, directed=.125, directed=.375, directed=.625, directed=.875] (1.5,-2) to (1.5,6);
	\draw [very thick, mycolor, directed=.25, directed=.75] (4.5,-2) to (4.5,6);
	\draw [very thick, directed=.55] (-1.5,0) to (1.5,0);
	\draw [very thick, green, directed=.55] (-4.5,2) to (-1.5,2);
	\draw [very thick, mycolor, directed=.55] (1.5,2) to (4.5,2);
	\draw [very thick, directed=.55] (-1.5,4) to (1.5,4);
	\node at (-4.5,-2.5) {\tiny $k_1$};
	\node at (-1.5,-2.5) {\tiny $k_2$};
	\node at (1.5,-2.5) {\tiny $k_3$};
	\node at (4.5,-2.5) {\tiny $k_4$};
	\node at (0,0.75) {\tiny $1$};
	\node at (0,4.75) {\tiny $1$};
	\node at (-3,2.75) {\tiny $1$};
	\node at (3,2.75) {\tiny $1$};
	\node at (-4.5,6.5) {\tiny $k_1{-}1$};
	\node at (-1.5,6.5) {\tiny $k_2{-}1$};
	\node at (1.5,6.5) {\tiny $k_3{+}1$};
	\node at (4.5,6.5) {\tiny $k_4{+}1$};
\end{tikzpicture}
};
\endxy
=
\xy
(0,0)*{
\begin{tikzpicture}[scale=.3]
	\draw [very thick, green, directed=.25, directed=.75] (-4.5,-2) to (-4.5,6);
	\draw [very thick, green, directed=.125, directed=.375, directed=.625, directed=.875] (-1.5,-2) to (-1.5,6);
	\draw [very thick, mycolor, directed=.125, directed=.375, directed=.625, directed=.875] (1.5,-2) to (1.5,6);
	\draw [very thick, mycolor, directed=.25, directed=.75] (4.5,-2) to (4.5,6);
	\draw [very thick, green, directed=.55] (-4.5,2) to (-1.5,2);
	\draw [very thick, mycolor, directed=.55] (1.5,2) to (4.5,2);
	\draw [very thick, directed=.65] (-1.5,0) to (-0.5,2);
	\draw [very thick, directed=.55] (0.5,2) to (1.5,0);
	\draw [very thick, directed=.65] (-1.5,4) to (-0.5,2);
	\draw [very thick, directed=.55] (0.5,2) to (1.5,4);
	\draw [very thick, green, directed=.7] (-0.5,2) to (0.5,2);
	\draw [dashed] (-2,-.25) rectangle (-0.25,4.25);
	\node at (-4.5,-2.5) {\tiny $k_1$};
	\node at (-1.5,-2.5) {\tiny $k_2$};
	\node at (1.5,-2.5) {\tiny $k_3$};
	\node at (4.5,-2.5) {\tiny $k_4$};
	\node at (0,2.7) {\tiny $2$};
	\node at (-3,2.75) {\tiny $1$};
	\node at (3,2.75) {\tiny $1$};
	\node at (-4.5,6.5) {\tiny $k_1{-}1$};
	\node at (-1.5,6.5) {\tiny $k_2{-}1$};
	\node at (1.5,6.5) {\tiny $k_3{+}1$};
	\node at (4.5,6.5) {\tiny $k_4{+}1$};
\end{tikzpicture}
};
\endxy
+
\xy
(0,0)*{
\begin{tikzpicture}[scale=.3]
	\draw [very thick, green, directed=.25, directed=.75] (-4.5,-2) to (-4.5,6);
	\draw [very thick, green, directed=.125, directed=.375, directed=.625, directed=.875] (-1.5,-2) to (-1.5,6);
	\draw [very thick, mycolor, directed=.125, directed=.375, directed=.625, directed=.875] (1.5,-2) to (1.5,6);
	\draw [very thick, mycolor, directed=.25, directed=.75] (4.5,-2) to (4.5,6);
	\draw [very thick, green, directed=.55] (-4.5,2) to (-1.5,2);
	\draw [very thick, mycolor, directed=.55] (1.5,2) to (4.5,2);
	\draw [very thick, directed=.65] (-1.5,0) to (-0.5,2);
	\draw [very thick, directed=.55] (0.5,2) to (1.5,0);
	\draw [very thick, directed=.65] (-1.5,4) to (-0.5,2);
	\draw [very thick, directed=.55] (0.5,2) to (1.5,4);
	\draw [very thick, mycolor, directed=.7] (-0.5,2) to (0.5,2);
	\draw [dashed] (0.25,-.25) rectangle (2,4.25);
	\node at (-4.5,-2.5) {\tiny $k_1$};
	\node at (-1.5,-2.5) {\tiny $k_2$};
	\node at (1.5,-2.5) {\tiny $k_3$};
	\node at (4.5,-2.5) {\tiny $k_4$};
	\node at (0,2.7) {\tiny $2$};
	\node at (-3,2.75) {\tiny $1$};
	\node at (3,2.75) {\tiny $1$};
	\node at (-4.5,6.5) {\tiny $k_1{-}1$};
	\node at (-1.5,6.5) {\tiny $k_2{-}1$};
	\node at (1.5,6.5) {\tiny $k_3{+}1$};
	\node at (4.5,6.5) {\tiny $k_4{+}1$};
\end{tikzpicture}
};
\endxy
\]
The two marked parts above are monochromatic squares, which can be switched to give:
\[
\xy
(0,0)*{
\begin{tikzpicture}[scale=.3]
	\draw [very thick, green, directed=.55] (-2,-4) to (-2,-2);
	\draw [very thick, green, directed=.55] (-2,-2) to (-2,2);
	\draw [very thick, green, directed=.55] (2,-4) to (2,-2);
	\draw [very thick, green, directed=.55] (2,-2) to (2,2);
	\draw [very thick, green, rdirected=.55] (-2,-2) to (2,-2);
	\draw [very thick, green, directed=.55] (-2,2) to (-2,4);
	\draw [very thick, green, directed=.55] (2,2) to (2,4);
	\draw [very thick, green, directed=.55] (-2,2) to (2,2);
	\node at (-2,-4.5) {\tiny $1$};
	\node at (2,-4.5) {\tiny $k_2$};
	\node at (-2.25,4.5) {\tiny $k_2{-}1$};
	\node at (2,4.5) {\tiny $2$};
	\node at (-2.85,0) {\tiny $k_2$};
	\node at (2.6,0) {\tiny $1$};
	\node at (0,-1.25) {\tiny $k_2{-}1$};
	\node at (0,2.75) {\tiny $1$};
\end{tikzpicture}
};
\endxy
=
\xy
(0,0)*{
\begin{tikzpicture}[scale=.3]
	\draw [very thick, green, directed=.55] (-2,-4) to (-2,-2);
	\draw [very thick, green, directed=.55] (2,-4) to (2,-2);
	\draw [very thick, green, directed=.55] (2,-2) to (2,2);
	\draw [very thick, green, directed=.55] (-2,-2) to (2,-2);
	\draw [very thick, green, directed=.55] (-2,2) to (-2,4);
	\draw [very thick, green, directed=.55] (2,2) to (2,4);
	\draw [very thick, green, rdirected=.55] (-2,2) to (2,2);
	\node at (-2,-4.5) {\tiny $1$};
	\node at (2,-4.5) {\tiny $k_2$};
	\node at (-2.25,4.5) {\tiny $k_2{-}1$};
	\node at (2,4.5) {\tiny $2$};
	\node at (0.4,0) {\tiny $k_2{+}1$};
	\node at (0,-1.25) {\tiny $1$};
	\node at (0,2.75) {\tiny $k_2{-}1$};
\end{tikzpicture}
};
\endxy
+
\xy
(0,0)*{
\begin{tikzpicture}[scale=.3]
	\draw [very thick, green, directed=.55] (-2,-4) to (-2,-2);
	\draw [very thick, green] (-2,-2) to (-2,2);
	\draw [very thick, green, directed=.55] (2,-4) to (2,-2);
	\draw [very thick, green] (2,-2) to (2,2);
	\draw [very thick, green, directed=.55] (-2,2) to (-2,4);
	\draw [very thick, green, directed=.55] (2,2) to (2,4);
	\draw [very thick, green, rdirected=.55] (-2,2) to (2,2);
	\node at (-2,-4.5) {\tiny $1$};
	\node at (2,-4.5) {\tiny $k_2$};
	\node at (-2.25,4.5) {\tiny $k_2{-}1$};
	\node at (2,4.5) {\tiny $2$};
	\node at (0,2.75) {\tiny $k_2{-}2$};
\end{tikzpicture}
};
\endxy\quad
,
\quad
\xy
(0,0)*{
\begin{tikzpicture}[scale=.3]
	\draw [very thick, mycolor, directed=.55] (-2,-4) to (-2,-2);
	\draw [very thick, mycolor, directed=.55] (-2,-2) to (-2,2);
	\draw [very thick, mycolor, directed=.55] (2,-4) to (2,-2);
	\draw [very thick, mycolor, directed=.55] (2,-2) to (2,2);
	\draw [very thick, mycolor, directed=.55] (-2,-2) to (2,-2);
	\draw [very thick, mycolor, directed=.55] (-2,2) to (-2,4);
	\draw [very thick, mycolor, directed=.55] (2,2) to (2,4);
	\draw [very thick, mycolor, rdirected=.55] (-2,2) to (2,2);
	\node at (-2,-4.5) {\tiny $2$};
	\node at (2,-4.5) {\tiny $k_3$};
	\node at (-2.25,4.5) {\tiny $k_3{+}1$};
	\node at (2,4.5) {\tiny $1$};
	\node at (-2.6,0) {\tiny $1$};
	\node at (3.7,0) {\tiny $k_3{+}1$};
	\node at (0,-1.25) {\tiny $1$};
	\node at (0,2.75) {\tiny $k_3$};
\end{tikzpicture}
};
\endxy
=
\xy
(0,0)*{
\begin{tikzpicture}[scale=.3]
	\draw [very thick, mycolor, directed=.55] (-2,-4) to (-2,-2);
	\draw [very thick, mycolor, directed=.55] (-2,-2) to (-2,2);
	\draw [very thick, mycolor, directed=.55] (2,-4) to (2,-2);
	\draw [very thick, mycolor, rdirected=.55] (-2,-2) to (2,-2);
	\draw [very thick, mycolor, directed=.55] (-2,2) to (-2,4);
	\draw [very thick, mycolor, directed=.55] (2,2) to (2,4);
	\draw [very thick, mycolor, directed=.55] (-2,2) to (2,2);
	\node at (-2,-4.5) {\tiny $2$};
	\node at (2,-4.5) {\tiny $k_3$};
	\node at (-2.25,4.5) {\tiny $k_3{+}1$};
	\node at (2,4.5) {\tiny $1$};
	\node at (-0.3,0) {\tiny $k_3{+}2$};
	\node at (0,-1.25) {\tiny $k_3$};
	\node at (0,2.75) {\tiny $1$};
\end{tikzpicture}
};
\endxy
+
\xy
(0,0)*{
\begin{tikzpicture}[scale=.3]
	\draw [very thick, mycolor, directed=.55] (-2,-4) to (-2,-2);
	\draw [very thick, mycolor] (-2,-2) to (-2,2);
	\draw [very thick, mycolor, directed=.55] (2,-4) to (2,-2);
	\draw [very thick, mycolor] (2,-2) to (2,2);
	\draw [very thick, mycolor, rdirected=.55] (-2,-2) to (2,-2);
	\draw [very thick, mycolor, directed=.55] (-2,2) to (-2,4);
	\draw [very thick, mycolor, directed=.55] (2,2) to (2,4);
	\node at (-2,-4.5) {\tiny $2$};
	\node at (2,-4.5) {\tiny $k_3$};
	\node at (-2.25,4.5) {\tiny $k_3{+}1$};
	\node at (2,4.5) {\tiny $1$};
	\node at (0,-1.25) {\tiny $k_3{-}1$};
\end{tikzpicture}
};
\endxy
\]
Plugging these four terms back in, we get the four 
webs from the right-hand side 
of the equation in (c) (in the indicated order) which 
can be seen by using \eqref{eq-frob} as for example
\[
\xy
(0,0)*{
\begin{tikzpicture}[scale=.3]
	\draw [very thick, green, directed=.125, directed=.875] (-4.5,-2) to (-4.5,6);
	\draw [very thick, green, directed=.125, directed=.365, directed=.625, directed=.875] (-1.5,-2) to (-1.5,6);
	\draw [very thick, mycolor, directed=.125, directed=.51, directed=.7, directed=.875] (1.5,-2) to (1.5,6);
	\draw [very thick, mycolor, directed=.125, directed=.875] (4.5,-2) to (4.5,6);
	\draw [very thick, directed=.65] (0,1.333) to (1.5,1.333);
	\draw [very thick, green, directed=.65] (-1.5,1.333) to (0,1.333);
	\draw [very thick, mycolor, directed=.55] (1.5,2.666) to (4.5,2.666);
	\draw [very thick, green, directed=.55] (-4.5,0) to (-1.5,0);
	\draw [very thick, directed=.65] (0,4) to (1.5,4);
	\draw [very thick, directed=.55] (0,1.333) to (0,4);
	\node at (-4.5,-2.5) {\tiny $k_1$};
	\node at (-1.5,-2.5) {\tiny $k_2$};
	\node at (1.5,-2.5) {\tiny $k_3$};
	\node at (4.5,-2.5) {\tiny $k_4$};
	\node at (-0.75,2.09) {\tiny $2$};
	\node at (0.75,2.09) {\tiny $1$};
	\node at (0.75,4.75) {\tiny $1$};
	\node at (-3,0.75) {\tiny $1$};
	\node at (3,3.42) {\tiny $1$};
	\node at (-4.5,6.5) {\tiny $k_1{-}1$};
	\node at (-1.5,6.5) {\tiny $k_2{-}1$};
	\node at (1.5,6.5) {\tiny $k_3{+}1$};
	\node at (4.5,6.5) {\tiny $k_4{+}1$};
\end{tikzpicture}
};
\endxy
=
\xy
(0,0)*{
\begin{tikzpicture}[scale=.3]
	\draw [very thick, green, directed=.125, directed=.875] (-4.5,-2) to (-4.5,6);
	\draw [very thick, green, directed=.125, directed=.365, directed=.625, directed=.875] (-1.5,-2) to (-1.5,6);
	\draw [very thick, mycolor, directed=.125, directed=.51, directed=.7, directed=.875] (1.5,-2) to (1.5,6);
	\draw [very thick, mycolor, directed=.125, directed=.875] (4.5,-2) to (4.5,6);
	\draw [very thick, directed=.55] (-1.5,1.333) to (1.5,1.333);
	\draw [very thick, mycolor, directed=.55] (1.5,2.666) to (4.5,2.666);
	\draw [very thick, green, directed=.55] (-4.5,0) to (-1.5,0);
	\draw [very thick, directed=.55] (-1.5,4) to (1.5,4);
	\node at (-4.5,-2.5) {\tiny $k_1$};
	\node at (-1.5,-2.5) {\tiny $k_2$};
	\node at (1.5,-2.5) {\tiny $k_3$};
	\node at (4.5,-2.5) {\tiny $k_4$};
	\node at (0,2.09) {\tiny $1$};
	\node at (0,4.75) {\tiny $1$};
	\node at (-3,0.75) {\tiny $1$};
	\node at (3,3.42) {\tiny $1$};
	\node at (-4.5,6.5) {\tiny $k_1{-}1$};
	\node at (-1.5,6.5) {\tiny $k_2{-}1$};
	\node at (1.5,6.5) {\tiny $k_3{+}1$};
	\node at (4.5,6.5) {\tiny $k_4{+}1$};
\end{tikzpicture}
};
\endxy
\]
The other three
cases in (c) follow by symmetry.
\end{proof}

\subsection{Green and red clasps}
\label{sub-jw}

We show now that our calculus 
contains web analogues of the 
\textit{Jones--Wenzl projectors} of the Temperley--Lieb algebra. We call them \textit{clasps},
following \cite{kup}.

From now on, we denote by capital vectors as 
$\vec{K}\in X^K$ special objects of $\InSp$ 
of the form $\vec{K}=(1_b,\dots,1_b)$ with $K$ entries equal $1_b$ 
and no other entries.

\begin{defn}\label{defn-symJW}
Let $K\in\Z_{>0}$. We define the \textit{$K^{\text{th}}$ green clasp}
$\cal{CL}^g_K\in \End_{\InSp}(\vec{K})$ recursively: 
$\cal{CL}^g_1$ is the black identity strand and for $K\in\Z_{>1}$ set
\[
\xy
(0,0)*{
\begin{tikzpicture}[scale=.3]
	\draw [very thick] (-2,-8) to (-2,-6);
	\draw [very thick] (0,-8) to (0,-6);
	\draw [very thick] (2,-8) to (2,-6);
	\draw [very thick] (4,-8) to (4,-6);
	\draw [very thick, ->] (-2,-4) to (-2,-2);
	\draw [very thick, ->] (0,-4) to (0,-2);
	\draw [very thick, ->] (2,-4) to (2,-2);
	\draw [very thick, ->] (4,-4) to (4,-2);
	\draw [very thick, green] (-2.3,-6) rectangle (4.3,-4);
	\node at (-2,-8.45) {\tiny $1$};
	\node at (0,-8.45) {\tiny $1$};
	\node at (2,-8.45) {\tiny $1$};
	\node at (4,-8.45) {\tiny $1$};
	\node at (-2,-1.55) {\tiny $1$};
	\node at (0,-1.55) {\tiny $1$};
	\node at (2,-1.55) {\tiny $1$};
	\node at (4,-1.55) {\tiny $1$};
	\node at (-1,-2.75) {\tiny $\cdots$};
	\node at (-1,-7.5) {\tiny $\cdots$};
	\node at (1,-5) {\tiny $\cal{CL}^g_K$};
\end{tikzpicture}
};
\endxy
=
\xy
(0,0)*{
\begin{tikzpicture}[scale=.3]
	\draw [very thick] (-2,-8) to (-2,-6);
	\draw [very thick] (0,-8) to (0,-6);
	\draw [very thick] (2,-8) to (2,-6);
	\draw [very thick, ->] (-2,-4) to (-2,-2);
	\draw [very thick, ->] (0,-4) to (0,-2);
	\draw [very thick, ->] (2,-4) to (2,-2);
	\draw [very thick, ->] (4,-8) to (4,-2);
	\draw [very thick, green] (-2.3,-6) rectangle (2.3,-4);
	\node at (-2,-8.45) {\tiny $1$};
	\node at (0,-8.45) {\tiny $1$};
	\node at (2,-8.45) {\tiny $1$};
	\node at (4,-8.45) {\tiny $1$};
	\node at (-2,-1.55) {\tiny $1$};
	\node at (0,-1.55) {\tiny $1$};
	\node at (2,-1.55) {\tiny $1$};
	\node at (4,-1.55) {\tiny $1$};
	\node at (-1,-2.75) {\tiny $\cdots$};
	\node at (-1,-7.5) {\tiny $\cdots$};
	\node at (0,-5) {\tiny $\cal{CL}^g_K$};
\end{tikzpicture}
};
\endxy
-\frac{[K-1]}{[K]}
\xy
(0,0)*{
\begin{tikzpicture}[scale=.3]
	\draw [very thick] (-2,-8) to (-2,-7);
	\draw [very thick] (0,-8) to (0,-7);
	\draw [very thick] (2,-8) to (2,-7);
	\draw [very thick] (4,-8) to (4,-5.85);
	\draw [very thick, ->] (-2,-3) to (-2,-2);
	\draw [very thick, ->] (0,-3) to (0,-2);
	\draw [very thick, ->] (2,-3) to (2,-2);
	\draw [very thick, ->] (4,-4.15) to (4,-2);
	\draw [very thick] (-2,-5.85) to (-2,-4.15);
	\draw [very thick] (0,-5.85) to (0,-4.15);
	\draw [very thick] (3,-4.5) to (2,-4.15);
	\draw [very thick] (2,-5.85) to (3,-5.5);
	\draw [very thick, mycolor, ->] (3,-5.5) to (3,-4.5);
	\draw [very thick] (4,-5.85) to (3,-5.5);
	\draw [very thick] (3,-4.5) to (4,-4.15);
	\draw [very thick, green] (-2.3,-7) rectangle (2.3,-5.85);
	\draw [very thick, green] (-2.3,-4.15) rectangle (2.3,-3);
	\node at (-2,-8.45) {\tiny $1$};
	\node at (0,-8.45) {\tiny $1$};
	\node at (2,-8.45) {\tiny $1$};
	\node at (4,-8.45) {\tiny $1$};
	\node at (-2,-1.55) {\tiny $1$};
	\node at (0,-1.55) {\tiny $1$};
	\node at (2,-1.55) {\tiny $1$};
	\node at (4,-1.55) {\tiny $1$};
	\node at (2.75,-3.8) {\tiny $1$};
	\node at (2.75,-6.2) {\tiny $1$};
	\node at (3.65,-5) {\tiny $2$};
	\node at (-2.35,-5) {\tiny $1$};
	\node at (0.35,-5) {\tiny $1$};
	\node at (-1,-2.75) {\tiny $\cdots$};
	\node at (-1,-5) {\tiny $\cdots$};
	\node at (-1,-7.5) {\tiny $\cdots$};
	\node at (0,-3.63) {\tiny $\cal{CL}^g_K$};
	\node at (0,-6.475) {\tiny $\cal{CL}^g_K$};
\end{tikzpicture}
};
\endxy
\]
and similarly for 
the \textit{red clasp} $\cal{CL}^r_K$ by exchanging green and red.
\end{defn}

The following lemma identifies the clasps avoiding the recursive definition.

\begin{lem}\label{lem-recursion}
We have for all $K\in\Z_{>0}$
\[
\cal{CL}^g_K
=\frac{1}{[K]!}
\xy
(0,0)*{
\begin{tikzpicture}[scale=.3]
	\draw [very thick, green, directed=.55] (0,-1) to (0,.75);
	\draw [very thick, directed=.55] (0,.75) to [out=30,in=270] (1,2.5);
	\draw [very thick, directed=.55] (0,.75) to [out=150,in=270] (-1,2.5); 
	\draw [very thick, directed=.55] (1,-2.75) to [out=90,in=330] (0,-1);
	\draw [very thick, directed=.55] (-1,-2.75) to [out=90,in=210] (0,-1);
	\node at (-1,3) {\tiny $1$};
	\node at (0.1,3) {$\cdots$};
	\node at (1,3) {\tiny $1$};
	\node at (-1,-3.15) {\tiny $1$};
	\node at (0.1,-3.15) {$\cdots$};
	\node at (1,-3.15) {\tiny $1$};
	\node at (-0.65,0) {\tiny $K$};
\end{tikzpicture}
};
\endxy\quad,\quad
\cal{CL}^r_K
=\frac{1}{[K]!}
\xy
(0,0)*{
\begin{tikzpicture}[scale=.3]
	\draw [very thick, mycolor, directed=.55] (0,-1) to (0,.75);
	\draw [very thick, directed=.55] (0,.75) to [out=30,in=270] (1,2.5);
	\draw [very thick, directed=.55] (0,.75) to [out=150,in=270] (-1,2.5); 
	\draw [very thick, directed=.55] (1,-2.75) to [out=90,in=330] (0,-1);
	\draw [very thick, directed=.55] (-1,-2.75) to [out=90,in=210] (0,-1);
	\node at (-1,3) {\tiny $1$};
	\node at (0.1,3) {$\cdots$};
	\node at (1,3) {\tiny $1$};
	\node at (-1,-3.15) {\tiny $1$};
	\node at (0.1,-3.15) {$\cdots$};
	\node at (1,-3.15) {\tiny $1$};
	\node at (-0.65,0) {\tiny $K$};
\end{tikzpicture}
};
\endxy
\]
where we repeatedly split an edge labeled $K$ until all of 
the top and bottom edges are black.
\end{lem}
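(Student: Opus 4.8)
\emph{Plan.} I would prove the formula by induction on $K$, checking that the claimed closed form satisfies the recursion of Definition~\ref{defn-symJW} (in which, in the quadratic summand, the two nested boxes must of course read $\cal{CL}^g_{K-1}$, spanning only the first $K-1$ strands). Write $m_K$ for the web merging $K$ black strands into a single green $K$-labeled edge and $s_K$ for the web splitting it back; by associativity~\eqref{eq-frob} neither depends on the order of the elementary merges resp.\ splits, so the assertion is $\cal{CL}^g_K=\tfrac{1}{[K]!}\,s_K\circ m_K$. Two bookkeeping facts will be used throughout: iterating the digon relation~\eqref{eq-simpler1} gives $m_K\circ s_K=[K]!\,\mathrm{id}$, so $\tfrac{1}{[K]!}s_K\circ m_K$ is idempotent; and $m_K$ (resp.\ $s_K$) factors as a mixed merge of a green $(K-1)$-edge with a black strand composed with $m_{K-1}\otimes\mathrm{id}_{1_b}$ (resp.\ its mirror image). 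The base case $K=1$ is immediate.

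For the inductive step, substitute $\cal{CL}^g_{K-1}=\tfrac{1}{[K-1]!}\,s_{K-1}\circ m_{K-1}$ into the recursion and rewrite the red degree-$2$ generator on the last two strands — the web $s^r_2\circ m^r_2$ merging them into a red $2$-edge and splitting it back — via the dumbbell relation~\eqref{eq-dumb}, i.e.\ as $[2]\,\mathrm{id}$ minus its green analogue $s^g_2\circ m^g_2$. Squeezed between two copies of $\cal{CL}^g_{K-1}\otimes\mathrm{id}_{1_b}$, the $[2]\,\mathrm{id}$-part collapses by idempotency of $\cal{CL}^g_{K-1}$; combining the result with the linear summand of the recursion and simplifying $1-\tfrac{[2][K-1]}{[K]}$ via $[2][K-1]=[K]+[K-2]$ leaves
\[
\cal{CL}^g_K=-\tfrac{[K-2]}{[K]}\,(\cal{CL}^g_{K-1}\otimes\mathrm{id}_{1_b})+\tfrac{[K-1]}{[K]}\,(\cal{CL}^g_{K-1}\otimes\mathrm{id}_{1_b})\circ(\mathrm{id}\otimes s^g_2 m^g_2)\circ(\cal{CL}^g_{K-1}\otimes\mathrm{id}_{1_b}),
\]
an expression with only black and green edges.

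It then remains to evaluate the middle green web $M=(m_{K-1}\otimes\mathrm{id}_{1_b})\circ(\mathrm{id}\otimes s^g_2 m^g_2)\circ(s_{K-1}\otimes\mathrm{id}_{1_b})$. Using the factorizations from the first paragraph, the size-$(K-2)$ merge and split on the untouched strands commute past $s^g_2 m^g_2$ and collapse by iterating~\eqref{eq-simpler1} to the scalar $[K-2]!$, leaving a single green square on the object ``$(K-1)$-green, $1$-black''. That square is literally an instance of the square switch~\eqref{eq-almostsimpler} with $k=K-1$, $l=1$, $j_1=j_2=1$; since each binomial ${K-2\brack j'}$ is attached to rungs of label $1-j'$, only $j'=0,1$ survive, giving $M=[K-2]!\bigl(s^g m^g+[K-2]\,\mathrm{id}\bigr)$, where $s^g m^g$ is the mixed merge-then-split of the $(K-1)$-green and $1$-black edges. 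Plugging this back, the $s^g m^g$-term reassembles $\tfrac{1}{[K]!}s_K\circ m_K$ (the scalars combine as $\tfrac{[K-1]}{[K]}\cdot\tfrac{[K-2]!}{[K-1]!^{2}}=\tfrac{1}{[K]!}$), while the $[K-2]$-term reproduces $+\tfrac{[K-2]}{[K]}(\cal{CL}^g_{K-1}\otimes\mathrm{id}_{1_b})$, which cancels the linear term above; hence $\cal{CL}^g_K=\tfrac{1}{[K]!}s_K\circ m_K$. The main obstacle is precisely this last step: recognizing $M$ as a square switch and carrying the quantum-number arithmetic precisely enough that the two stray $(\cal{CL}^g_{K-1}\otimes\mathrm{id}_{1_b})$ contributions annihilate each other. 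The formula for $\cal{CL}^r_K$ then follows by the green--red symmetry of Remark~\ref{rem-bysymmetry}.
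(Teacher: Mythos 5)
Your proof is correct, and every step checks out: the typo-corrected recursion, the dumbbell substitution and $[2][K-1]=[K]+[K-2]$ simplification, the reduction of the middle web to a green square switch with $k=K-1$, $l=1$, $j_1=j_2=1$ after pulling out $[K-2]!$, and the scalar arithmetic $\tfrac{[K-1]}{[K]}\cdot\tfrac{[K-2]!}{[K-1]!^2}=\tfrac{1}{[K]!}$ and cancellation of the two stray $\cal{CL}^g_{K-1}\otimes\mathrm{id}_{1_b}$ terms. The paper itself does not spell out a proof --- it simply cites~\cite[Lemma~2.12]{rt} and leaves the translation of signs and conventions to the reader --- so your argument supplies precisely the induction-on-$K$ verification that is being deferred there, and it is in the same spirit as the referenced lemma rather than a genuinely different route.
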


\begin{proof}
Up to signs and drawing conventions as in \cite[Lemma 2.12]{rt} and left to the reader.
\end{proof}

\begin{cor}\label{cor-green}
For all $K\in\Z_{>0}$: the projector $\cal{CL}^g_K$ can be expressed 
as a linear combination of webs with 
only black and red edges of label $2$, and 
similarly for $\cal{CL}^r_K$.
\end{cor}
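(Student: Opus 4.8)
The plan is to combine Lemma~\ref{lem-recursion}, which writes $\cal{CL}^g_K$ as a rescaled web built entirely from black edges and green merges/splits along a single green edge labeled $K$, with the dumbbell relation~\eqref{eq-dumb} together with Lemma~\ref{lem-green}. First I would take the expression
\[
\cal{CL}^g_K
=\frac{1}{[K]!}
\xy
(0,0)*{
\begin{tikzpicture}[scale=.3]
	\draw [very thick, green, directed=.55] (0,-1) to (0,.75);
	\draw [very thick, directed=.55] (0,.75) to [out=30,in=270] (1,2.5);
	\draw [very thick, directed=.55] (0,.75) to [out=150,in=270] (-1,2.5);
	\draw [very thick, directed=.55] (1,-2.75) to [out=90,in=330] (0,-1);
	\draw [very thick, directed=.55] (-1,-2.75) to [out=90,in=210] (0,-1);
	\node at (-1,3) {\tiny $1$};
	\node at (0.1,3) {$\cdots$};
	\node at (1,3) {\tiny $1$};
	\node at (-1,-3.15) {\tiny $1$};
	\node at (0.1,-3.15) {$\cdots$};
	\node at (1,-3.15) {\tiny $1$};
	\node at (-0.65,0) {\tiny $K$};
\end{tikzpicture}
};
\endxy
\]
and use associativity~\eqref{eq-frob} to rewrite the bottom half as the composite of first merging the $K$ black strands pairwise into $\lfloor K/2\rfloor$ (and possibly one leftover) strands of label $2$ and $1$, and then merging these up to the green edge labeled $K$; similarly for the top half. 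The key observation is that by Lemma~\ref{lem-green} (the relation forcing a green edge through which $k$ black strands pass to vanish when read via the dumbbell), or more directly by applying the dumbbell relation~\eqref{eq-dumb} to each of the pairwise merges of two black strands, every merge of two black edges into a label-$2$ edge can be replaced by the dumbbell expansion, which is a sum of a term factoring through a green label-$2$ edge and a term factoring through a red label-$2$ edge.

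The cleaner route, which I would pursue, is to argue by induction on $K$ using the recursive definition in Definition~\ref{defn-symJW} rather than Lemma~\ref{lem-recursion} directly. The base case $K=1$ is the black identity strand, which is (vacuously) a web with only black edges. For the inductive step, the recursion expresses $\cal{CL}^g_K$ in terms of $\cal{CL}^g_{K-1}$ together with one extra green box on the right at two heights and a small gadget involving a single label-$2$ green or red edge; applying the dumbbell relation~\eqref{eq-dumb} to the black--black merge that produces that label-$2$ piece turns the green label-$2$ strand into a sum of a green-$2$ term and a red-$2$ term, and by Lemma~\ref{lem-green} the green-$2$ piece (which has $K-1$ black strands passing through it via $\cal{CL}^g_{K-1}$, hence in particular a black--black merge sitting against a green edge) can be simplified. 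In fact, the quickest path is: take the Lemma~\ref{lem-recursion} formula, resolve the green edge labeled $K$ step by step into label-$2$ green edges using~\eqref{eq-frob}, and then apply the dumbbell relation~\eqref{eq-dumb} to each black--black merge/split pair; after the dumbbell expansion, Lemma~\ref{lem-green} kills every term containing a remaining green label-$2$ edge stacked against a pair of black strands, leaving only webs whose non-black edges are red and labeled $2$.

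More precisely, I would proceed as follows. Step one: using Lemma~\ref{lem-recursion} and~\eqref{eq-frob}, write $\cal{CL}^g_K$ as $\frac{1}{[K]!}$ times a web $W$ in which the central green edge labeled $K$ has been split into a comb of green edges whose only labels are $2$ (and label-$1$ black edges at the extremities if $K$ is odd), with the black boundary strands attached at the bottom by merges and at the top by splits. Step two: observe that each label-$2$ green edge in this comb that is adjacent to exactly two label-$1$ black edges at its foot is precisely the left-hand side of the dumbbell relation~\eqref{eq-dumb} (read as a green-$2$ edge sandwiched between a black merge below and a black split above); apply~\eqref{eq-dumb} to replace $[2]$ times such a configuration by the two-strand identity which is then re-expanded, or equivalently apply~\eqref{eq-dumb} in the form that expresses two parallel black strands as a green-$2$ term plus a red-$2$ term. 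Step three: in any resulting term that still contains a green edge with two black strands splitting off below and merging above, invoke Lemma~\ref{lem-green} to conclude that this term vanishes. Step four: iterate until no green edges remain, at which point all non-black edges are red of label $2$. The statement for $\cal{CL}^r_K$ is identical after exchanging green and red, using Remark~\ref{rem-bysymmetry}.

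The main obstacle I anticipate is purely bookkeeping: one must be careful that after applying the dumbbell relation to an \emph{inner} label-$2$ green edge in the comb (one flanked not by bare black strands but by further merges into larger label-$2$ structures), the resulting green edges still have the black--black merge/split configuration required to invoke Lemma~\ref{lem-green}. This is where associativity~\eqref{eq-frob} does the real work: it lets one always isotope the green comb so that any chosen green-$2$ edge locally looks like a black merge below and a black split above before applying~\eqref{eq-dumb} and then Lemma~\ref{lem-green}. Once this local normal form is available, the induction (on, say, the number of green edges in the web, or equivalently on $K$) goes through mechanically, so no further genuinely new idea is needed beyond Lemmas~\ref{lem-recursion} and~\ref{lem-green} and the dumbbell relation.
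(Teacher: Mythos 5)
Your underlying strategy — express the Jones--Wenzl clasp via the recursion of Definition~\ref{defn-symJW} (equivalently Lemma~\ref{lem-recursion}) and replace green-$2$ dumbbells by red-$2$ dumbbells via~\eqref{eq-dumb}, then induct on $K$ — is the right one and matches the paper's intent. However, two of your concrete steps are wrong as stated.

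First, the claim that "the central green edge labeled $K$ has been split into a comb of green edges whose only labels are $2$" is impossible. Associativity~\eqref{eq-frob} reorganizes the binary tree of merges and splits but never changes edge labels; any factorization of the web from Lemma~\ref{lem-recursion} into elementary pieces necessarily contains green edges with all labels $2,\ldots,K$. Moreover~\eqref{eq-dumb} is a relation about a green-$2$ \emph{dumbbell}, i.e.\ a green-$2$ edge between a black--black merge below and a black--black \emph{split} above; in the Lemma~\ref{lem-recursion} web each green-$2$ edge flows upward into a green edge of larger label, not into a split, so you cannot apply~\eqref{eq-dumb} locally without first manipulating the web.

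Second, Lemma~\ref{lem-green} is being invoked incorrectly. That lemma kills a configuration in which a \emph{red} edge passes through $k$ black strands into a \emph{green} edge (or vice versa) — a colour mismatch between bottom and top. A "green edge with two black strands splitting off below and merging above" has no mismatch; it is a monochromatic green digon, which by~\eqref{eq-simpler1} equals $[2]$ times the green-$2$ identity, not zero. The fix is simply to drop Lemma~\ref{lem-green} from the argument and run your "cleaner route" carefully: by induction on $K$, $\cal{CL}^g_1=\one_1$, and for $K>1$ Definition~\ref{defn-symJW} presents $\cal{CL}^g_K$ using copies of $\cal{CL}^g_{K-1}$ and exactly one green-$2$ dumbbell; substitute that dumbbell by $[2]\one_{(1,1)}$ minus the red-$2$ dumbbell via~\eqref{eq-dumb}, replace each $\cal{CL}^g_{K-1}$ by a black-and-red-$2$ expression using the inductive hypothesis, and the induction closes with no further relations needed.
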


\begin{proof}
This follows directly from \eqref{eq-dumb} and \fullref{lem-recursion}.
\end{proof}

\begin{ex}\label{ex-JW}
The projector $\cal{CL}_1^r$ is just the black 
identity strand, the projector $\cal{CL}_2^r$ is 
$\frac{1}{[2]}$ times the red dumbbell as in \eqref{eq-dumb}
and
\[
\cal{CL}^r_3 \! =\!
\frac{1}{[3]!}
\xy
(0,0)*{
\begin{tikzpicture}[scale=.3]
	\draw [very thick, mycolor, directed=.55] (0,-1) to (0,.75);
	\draw [very thick, directed=.55] (0,.75) to [out=30,in=270] (1,2.5);
	\draw [very thick, directed=.55] (0,.75) to [out=150,in=270] (-1,2.5); 
	\draw [very thick, directed=.55] (1,-2.75) to [out=90,in=330] (0,-1);
	\draw [very thick, directed=.55] (-1,-2.75) to [out=90,in=210] (0,-1);
	\node at (-1,3) {\tiny $1$};
	\node at (0.1,3) {$\cdots$};
	\node at (1,3) {\tiny $1$};
	\node at (-1,-3.15) {\tiny $1$};
	\node at (0.1,-3.15) {$\cdots$};
	\node at (1,-3.15) {\tiny $1$};
	\node at (-0.6,0) {\tiny $3$};
\end{tikzpicture}
};
\endxy
=
\xy
(0,0)*{
\begin{tikzpicture}[scale=.3] 
	\draw [very thick, directed=.55] (1,-2.75) to (1,2.5);
	\draw [very thick, directed=.55] (-1,-2.75) to (-1,2.5);
	\draw [very thick, directed=.55] (-3,-2.75) to (-3,2.5);
	\node at (-3,3) {\tiny $1$};
	\node at (-1,3) {\tiny $1$};
	\node at (1,3) {\tiny $1$};
	\node at (-3,-3.15) {\tiny $1$};
	\node at (-1,-3.15) {\tiny $1$};
	\node at (1,-3.15) {\tiny $1$};
\end{tikzpicture}
};
\endxy
-\frac{[2]}{[3]}
\xy
(0,0)*{
\begin{tikzpicture}[scale=.3] 
	\draw [very thick, green, directed=.55] (0,-1) to (0,.75);
	\draw [very thick, directed=.55] (0,.75) to [out=30,in=270] (1,2.5);
	\draw [very thick, directed=.55] (0,.75) to [out=150,in=270] (-1,2.5);
	\draw [very thick, directed=.55] (1,-2.75) to [out=90,in=330] (0,-1);
	\draw [very thick, directed=.55] (-1,-2.75) to [out=90,in=210] (0,-1); 
	\draw [very thick, directed=.55] (-3,-2.75) to (-3,2.5);
	\node at (-3,3) {\tiny $1$};
	\node at (-1,3) {\tiny $1$};
	\node at (1,3) {\tiny $1$};
	\node at (-3,-3.15) {\tiny $1$};
	\node at (-1,-3.15) {\tiny $1$};
	\node at (1,-3.15) {\tiny $1$};
	\node at (-0.5,0) {\tiny $2$};
\end{tikzpicture}
};
\endxy
+\frac{1}{[3]}
\left(
\xy
(0,0)*{
\begin{tikzpicture}[scale=.3] 
	\draw [very thick, green, directed=.65] (0,.625) to (0,1.75);
	\draw [very thick, directed=.55] (1,-0.125) to (0,.625);
	\draw [very thick, directed=.55] (-1,-0.125) to (0,.625);
	\draw [very thick, directed=.65] (0,1.75) to (1,2.5);
	\draw [very thick, directed=.65] (0,1.75) to (-1,2.5);
	\draw [very thick, green, directed=.65] (-2,-2) to (-2,-0.875);
	\draw [very thick] (-2,-0.875) to (-1,-0.125);
	\draw [very thick] (-2,-0.875) to (-3,-0.125);
	\draw [very thick, directed=.55] (-1,-2.75) to (-2,-2);
	\draw [very thick, directed=.55] (-3,-2.75) to (-2,-2);
	\draw [very thick, directed=.55] (1,-2.75) to (1,-0.125);
	\draw [very thick, directed=.55] (-3,-0.125) to (-3,2.5);
	\node at (-3,3) {\tiny $1$};
	\node at (-1,3) {\tiny $1$};
	\node at (1,3) {\tiny $1$};
	\node at (-3,-3.15) {\tiny $1$};
	\node at (-1,-3.15) {\tiny $1$};
	\node at (1,-3.15) {\tiny $1$};
	\node at (-1.3,0.125) {\tiny $1$};
	\node at (-0.55,1.1875) {\tiny $2$};
	\node at (-2.55,-1.4375) {\tiny $2$};
\end{tikzpicture}
};
\endxy
+
\reflectbox{
\xy
(0,0)*{
\begin{tikzpicture}[scale=.3] 
	\draw [very thick, green, directed=.65] (0,.625) to (0,1.75);
	\draw [very thick, directed=.55] (1,-0.125) to (0,.625);
	\draw [very thick, directed=.55] (-1,-0.125) to (0,.625);
	\draw [very thick, directed=.65] (0,1.75) to (1,2.5);
	\draw [very thick, directed=.65] (0,1.75) to (-1,2.5);
	\draw [very thick, green, directed=.65] (-2,-2) to (-2,-0.875);
	\draw [very thick] (-2,-0.875) to (-1,-0.125);
	\draw [very thick] (-2,-0.875) to (-3,-0.125);
	\draw [very thick, directed=.55] (-1,-2.75) to (-2,-2);
	\draw [very thick, directed=.55] (-3,-2.75) to (-2,-2);
	\draw [very thick, directed=.55] (1,-2.75) to (1,-0.125);
	\draw [very thick, directed=.55] (-3,-0.125) to (-3,2.5);
	\node at (-3,3) {\tiny \reflectbox{$1$}};
	\node at (-1,3) {\tiny \reflectbox{$1$}};
	\node at (1,3) {\tiny \reflectbox{$1$}};
	\node at (-3,-3.15) {\tiny \reflectbox{$1$}};
	\node at (-1,-3.15) {\tiny \reflectbox{$1$}};
	\node at (1,-3.15) {\tiny \reflectbox{$1$}};
	\node at (-1.3,0.125) {\tiny \reflectbox{$1$}};
	\node at (0.45,1.1875) {\tiny \reflectbox{$2$}};
	\node at (-1.55,-1.4375) {\tiny \reflectbox{$2$}};
\end{tikzpicture}
};
\endxy
}
\right)
-\frac{1}{[2][3]}
\xy
(0,0)*{
\begin{tikzpicture}[scale=.3] 
	\draw [very thick, green, directed=.75] (-2,1.25) to (-2,2);
	\draw [very thick, directed=.15] (-1,0.75) to (-2,1.25);
	\draw [very thick] (-3,0.75) to (-2,1.25);
	\draw [very thick, directed=.65] (-2,2) to (-1,2.5);
	\draw [very thick, directed=.65] (-2,2) to (-3,2.5);
	\draw [very thick, green, directed=.75] (-2,-2.25) to (-2,-1.5);
	\draw [very thick, ->] (-2,-1.5) to (-1,-1);
	\draw [very thick] (-2,-1.5) to (-3,-1);
	\draw [very thick, directed=.55] (-1,-2.75) to (-2,-2.25);
	\draw [very thick, directed=.55] (-3,-2.75) to (-2,-2.25);
	\draw [very thick, green, directed=.75] (0,-0.5) to (0,.25);
	\draw [very thick] (1,-1) to (0,-.5);
	\draw [very thick] (-1,-1) to (0,-.5);
	\draw [very thick] (0,.25) to (1,0.75);
	\draw [very thick] (0,.25) to (-1,0.75);
	\draw [very thick, directed=.55] (1,0.75) to (1,2.5);
	\draw [very thick, rdirected=.55] (1,-1) to (1,-2.75);
	\draw [very thick, rdirected=.55] (-3,0.75) to (-3,-1);
	\node at (-3,3) {\tiny $1$};
	\node at (-1,3) {\tiny $1$};
	\node at (1,3) {\tiny $1$};
	\node at (-3,-3.15) {\tiny $1$};
	\node at (-1,-3.15) {\tiny $1$};
	\node at (1,-3.15) {\tiny $1$};
	\node at (-2.55,1.625) {\tiny $2$};
	\node at (-2.55,-1.875) {\tiny $2$};
	\node at (-.55,-0.125) {\tiny $2$};
	\node at (-3.5,-0.25) {\tiny $1$};
	\node at (-0.65,1.25) {\tiny $1$};
	\node at (-0.65,-1.3) {\tiny $1$};
\end{tikzpicture}
};
\endxy
\]
Note that all edges appearing on the right-hand side are black or green with label $2$.
\end{ex}

\begin{prop}\label{prop-green}
Let $\vec{k}$ and $\vec{l}$ be sequences of black and green boundary points. 
Every web $u\in\Hom_{\InSp}(\vec{k},\vec{l})$ can
be expressed as a sum of webs with only black and green edges. 
Similarly by exchanging green and red.
\end{prop}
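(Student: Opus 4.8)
The plan is to argue by induction on the number of red edges in a web $u \in \Hom_{\InSp}(\vec{k},\vec{l})$ whose boundary sequences $\vec k,\vec l$ consist only of black and green points. Since $u$ is a composition and tensor product of generators, and the boundary is red-free, any red edge in $u$ must ``begin'' and ``end'' at merges and splits — in particular every red edge can be traced until one hits a region where a red edge of label $k$ is created out of, or absorbed into, black and green neighbours. The idea is to locate an innermost red edge and use the relations of $\InSp$ to remove it while only introducing black and green edges (and possibly more black/green edges than before, but strictly fewer red ones). Formally: choose a red edge $e$ in $u$ of minimal label, isotope the diagram so that a neighbourhood of $e$ is in a standard position, and rewrite that neighbourhood.

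First I would reduce to the case where every red edge has label $2$. By Corollary~\ref{cor-green}, any clasp $\cal{CL}^g_K$ and $\cal{CL}^r_K$ is a linear combination of webs with only black and green edges of label $2$; more relevantly, Lemma~\ref{lem-recursion} expresses a green edge of label $K$ as $\frac{1}{[K]!}$ times a ``splitter–merger'' through $K$ black strands, and the defining relation~\eqref{eq-dumb} lets us trade a pair of black strands for a sum of a green label-$2$ edge and a red label-$2$ edge. Running this in reverse: any red edge of label $k\ge 3$ sits between a red split and a red merge (after using associativity~\eqref{eq-frob} to normalize), and by Lemma~\ref{lem-recursion} applied in red we can resolve such a red edge into a sum of configurations built from black strands and red label-$2$ edges, strictly lowering the maximal red label. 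So WLOG $u$ has only red edges of label $2$. Now each red label-$2$ edge is, up to the scalar $[2]$, a ``red dumbbell'' built on two black strands by~\eqref{eq-dumb}; equivalently $[2]\,\mathrm{id}\otimes\mathrm{id} - (\text{green dumbbell}) = (\text{red dumbbell})$. Thus every red label-$2$ edge can be rewritten as $[2]$ times a pair of parallel black strands minus a green label-$2$ bubble — a purely black/green expression — provided the two black strands at the top and the two at the bottom of the red dumbbell can be freed, which they can since $e$ was chosen innermost (nothing red is attached to those four black stubs). Substituting this for each red label-$2$ edge one at a time and expanding multilinearly produces a sum of black/green webs, completing the induction.

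The main obstacle I expect is the bookkeeping needed to justify that an ``innermost'' red edge really does have black/green neighbours on all sides that can be isotoped into the standard dumbbell (or splitter) position — i.e.\ that there is no obstruction from other red edges threading through. This is handled by the minimality choice together with the fact (Remark~\ref{rem-bysymmetry}) that $\InSp$ is symmetric under green$\leftrightarrow$red, so one may freely use the monochromatic relations~\eqref{eq-frob}, \eqref{eq-simpler1}, \eqref{eq-almostsimpler} in red to drag red structure around until each red edge is sandwiched between exactly one red split below and one red merge above with black/green legs; the mixed relations of Lemma~\ref{lem-secondcq} and Lemma~\ref{lem-green} then guarantee that no new obstructions to the rewriting are created when two red strands meet a green one. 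The ``similarly by exchanging green and red'' clause is immediate from Remark~\ref{rem-bysymmetry}.
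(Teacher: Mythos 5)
Your overall strategy is in the same spirit as the paper's --- explode edges and exploit Corollary~\ref{cor-green}, which in turn rests on the dumbbell relation --- but the intermediate ``reduce to red label $2$'' step is based on a misreading of Corollary~\ref{cor-green} and does not actually hold. That corollary says $\cal{CL}^g_K$ is a combination of webs with black and \emph{red} edges of label $2$, while $\cal{CL}^r_K$ is a combination of black and \emph{green} edges of label $2$ (you state both are black/green). Since the explosion of a red edge of label $K$ produces the \emph{red} clasp $\cal{CL}^r_K$ (times a scalar, via Lemma~\ref{lem-recursion}), replacing it by Corollary~\ref{cor-green} lands you directly in black and green --- there is no stage at which you are ``built from black strands and red label-$2$ edges.'' So the intermediate reduction is both incorrect as stated and unnecessary: if you carry out the clasp replacement honestly, step~2 has nothing left to do.

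There is a second, more structural gap in the inductive framing. You cannot resolve an isolated red edge $e$: inserting a split--merge digon in the middle of $e$ produces a scalar and two shorter red $K$-stubs still attached to the neighbouring vertices, so nothing is removed. What actually makes progress is to explode \emph{all} edges incident to a purely-red trivalent vertex $v$ and then use associativity~\eqref{eq-frob} to absorb the explosion trees into $v$, at which point the composite ``merge from $\vec{1}^{K}$, then split to $\vec{1}^{K}$'' is recognised as $[K]!\,\cal{CL}^r_K$ by Lemma~\ref{lem-recursion}. This vertex-centred fusion is exactly what the paper's proof does (explode everything, recognise a clasp at each red internal vertex, replace by Corollary~\ref{cor-green}); your innermost-edge choice and appeals to Lemmas~\ref{lem-green} and~\ref{lem-secondcq} gesture at this but do not supply it. If you rephrase the induction as running over red trivalent vertices rather than red edges and drop the ``label $2$'' detour, the argument collapses to the paper's proof.
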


\begin{proof}
We start by exploding\footnote{We ``explode'' by using \eqref{eq-simpler1} 
(the order does not matter by \eqref{eq-frob}). We 
indicate ``explosions'' with dots.} every red edge. Around 
internal vertices of $u$ with no outgoing green edges we get
\[
\xy
(0,0)*{
\begin{tikzpicture}[scale=.5]
	\draw [very thick, mycolor, directed=.55] (-1,3) to (-1,6);
	\draw [very thick, mycolor, rdirected=.55] (-1,3) to (-3,0);
	\draw [very thick, mycolor, rdirected=.55] (-1,3) to (1,0);
	\node at (-1,6.25) {\tiny $k{+}l$};
	\node at (-3,-.25) {\tiny $k$};
	\node at (1,-.25) {\tiny $l$};
\end{tikzpicture}
};
\endxy
=
\frac{1}{[k]!}\frac{1}{[l]!}\frac{1}{[k+l]!}
\xy
(0,0)*{
\begin{tikzpicture}[scale=.5]
	\draw [very thick, mycolor, directed=.7] (-1,5.5) to (-1,6);
	\draw [very thick, mycolor, directed=.65] (-1,3) to (-1,3.5);
	\draw [very thick, directed=.55] (-1,3.5) to [out=150,in=210] (-1,5.5);
	\draw [very thick, directed=.55] (-1,3.5) to [out=30,in=330] (-1,5.5);
	\draw [very thick, mycolor, rdirected=.65] (-1,3) to (-1.45,2.333);
	\draw [very thick, mycolor, rdirected=.55] (-2.55,.67) to (-3,0);
	\draw [very thick, directed=.55] (-2.55,.67) to [out=116.3,in=176.3] (-1.45,2.333);
	\draw [very thick, directed=.55] (-2.55,.67) to [out=-3.7,in=296.3] (-1.45,2.333);
	\draw [very thick, mycolor, rdirected=.65] (-1,3) to (-.55,2.333);
	\draw [very thick, mycolor, rdirected=.55] (0.55,.67) to (1,0);
	\draw [very thick, directed=.55] (0.55,.67) to [out=187.7,in=243.7] (-.55,2.333);
	\draw [very thick, directed=.55] (0.55,.67) to [out=63.7,in=363.7] (-.55,2.333);
	\draw [dashed] (-3,0.9) rectangle (1,5);
	\node at (-1,6.25) {\tiny $k{+}l$};
	\node at (-3,-.25) {\tiny $k$};
	\node at (1,-.25) {\tiny $l$};
	\node at (-1.8,3.25) {\tiny $k{+}l$};
	\node at (-1.6,2.75) {\tiny $k$};
	\node at (-0.4,2.75) {\tiny $l$};
	\node at (-1.85,4.5) {\tiny $1$};
	\node at (-0.15,4.5) {\tiny $1$};
	\node at (-0.95,4.5) {\tiny $\cdots$};
	\node at (0,1.5) {\tiny \rotatebox{33.7}{$\cdots$}};
	\node at (-0.7,1.15) {\tiny $1$};
	\node at (0.7,1.85) {\tiny $1$};
	\node at (-2,1.5) {\tiny \rotatebox{-33.7}{$\cdots$}};
	\node at (-1.3,1.15) {\tiny $1$};
	\node at (-2.7,1.85) {\tiny $1$};
\end{tikzpicture}
};
\endxy
\]
Note that the marked part above is 
$\cal{CL}^r_{k+l}$ up to a non-zero scalar. 
This can be seen by using (co)associativity \eqref{eq-frob} and 
the expression in \fullref{lem-recursion}.
Thus, we can use \fullref{cor-green} to 
replace $\cal{CL}^r_{k+l}$ by a non-zero sum 
of webs with only black and green edges. Repeating this 
for all purely red internal vertices shows the statement, since 
all outer edges are assumed to be black or green. 
The other statement follows by symmetry.
\end{proof}

Denote by $\CKM$ the subcategory given in \cite[Definition 2.2]{ckm} 
with only upward-pointing strands, tags replaced by (untruncated) 
$N$-labeled edges and additionally allowing $0$-labeled objects.
As a consequence of \fullref{prop-green} we see that 
interpreting webs in $\CKM$ as green webs in $\SuSp$ 
gives a full functor $\iota^{\infty}_{1}$ between these categories. 
In \fullref{lem-faithful} we will see that it is also faithful and 
we get the following corollary.

\begin{cor}\label{cor-green2}
The functor $\iota^{\infty}_{1}\colon\CKM\to\SuSp$, given by coloring webs green, is an inclusion of a full, monoidal subcategory. In particular, $\CKM$ and 
$\AltSp$ are equivalent as monoidal categories.
\end{cor}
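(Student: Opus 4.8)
The plan is to assemble the statement from three ingredients: well-definedness of $\iota^{\infty}_{1}$, its fullness, and its faithfulness. For well-definedness I would first check that coloring the generators of $\CKM$ green lands among the green generators of $\SuSp$ (green merges, green splits, and black/green identities), so that $\iota^{\infty}_{1}$ is defined on the free web category, and is clearly injective on objects and compatible with $\otimes$. Then I would verify that each defining relation of $\CKM$ from \cite[Definition~2.2]{ckm} --- associativity, digon removal with the quantum binomial coefficients, the square-switch relation, and the vanishing of edges labeled greater than $N$ --- becomes, after coloring everything green, literally one of the defining relations of $\SuSp$, namely \eqref{eq-frob}, \eqref{eq-simpler1}, \eqref{eq-almostsimpler} and \eqref{eq-alt} respectively. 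The ``without tags and downwards pointing arrows'' caveat is precisely what makes the two relation lists match: we only compare the upward-oriented monochromatic parts. Hence $\iota^{\infty}_{1}$ descends to a $\C_q$-linear monoidal functor, monoidality being immediate since coloring webs green commutes with horizontal juxtaposition.

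For fullness I would simply invoke Proposition~\ref{prop-green}: any web between black-and-green objects of $\SuSp$ equals a sum of webs all of whose edges are black or green, and every such web is visibly in the image of $\iota^{\infty}_{1}$. In particular the image of $\iota^{\infty}_{1}$ is exactly the subcategory $\AltSp$ of Definition~\ref{defn-spid3}, and the same argument shows $\AltSp$ is a \emph{full} subcategory of $\SuSp$ --- this is the claim flagged in Remark~\ref{rem-ckmwebs}. For faithfulness I would cite Lemma~\ref{lem-faithful}, proved below, which asserts that the only relations holding among green webs in $\SuSp$ are consequences of the $\CKM$ relations. Combining the three ingredients, $\iota^{\infty}_{1}$ is a fully faithful monoidal functor whose image is the full monoidal subcategory $\AltSp\subseteq\SuSp$, so it is an equivalence of monoidal categories $\CKM\to\AltSp$.

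The only genuine work hidden here is the faithfulness input, Lemma~\ref{lem-faithful}; within the corollary's own proof everything else is the bookkeeping comparison of the two presentations plus a direct appeal to Proposition~\ref{prop-green}. The main point to be careful about in that bookkeeping is that \cite{ckm} may phrase certain special cases (e.g.\ digon removal or square switch with a label equal to $1$) slightly differently, so I would make sure the shorthand conventions of \eqref{eq-morethanonecolor} are used consistently when identifying the two relation sets, and that no relation of $\SuSp$ involving a red or mixed generator is needed to rewrite a purely green web --- which is exactly what Proposition~\ref{prop-green} guarantees.
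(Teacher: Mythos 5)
Your proposal is correct and follows the same route as the paper: well-definedness by checking the $\CKM$ relations hold monochromatically in $\SuSp$, fullness from Proposition~\ref{prop-green}, and faithfulness from Lemma~\ref{lem-faithful}. The extra bookkeeping you spell out (matching relation sets, handling label-$1$ conventions) is exactly the content the paper compresses into ``all relations in $\CKM$ hold in $\SuSp$.''
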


\begin{proof}
The functor is well-defined since all relations in 
$\CKM$ hold in $\SuSp$. That $\iota^{\infty}_{1}$ is monoidal is clear, 
fullness follows from \fullref{prop-green} and 
faithfulness from \fullref{lem-faithful}. Thus, 
we see that $\CKM$ and 
$\AltSp$ are monoidally equivalent.
\end{proof}

\subsection{Braidings}\label{sub-braid}

We define now a 
\textit{braided} monoidal structure on $\InSp$.

\begin{defn}\label{defn-crossings1}
Define for $k,l\in\Z_{\geq 0}$ an \textit{elementary crossing} 
depending on four cases. The \textit{monochromatic crossings}
(note the different powers of $q$)
\begin{equation}\label{eq-braidedstru1}
\begin{gathered}
\beta_{k,l}^{g} =
\raisebox{-.05cm}{\xy
(0,0)*{
\begin{tikzpicture}[scale=.3]
	\draw [very thick, green, ->] (-1,-1) to (1,1);
	\draw [very thick, green, ->] (-0.25,0.25) to (-1,1);
	\draw [very thick, green] (0.25,-0.25) to (1,-1);
	\node at (-1,-1.5) {\tiny $k$};
	\node at (1,-1.5) {\tiny $l$};
\end{tikzpicture}
};
\endxy}
=
(-1)^{k+kl}q^{k}\!\!\!\!\sum_{\begin{smallmatrix} j_1,j_2\geq 0 \\ j_1-j_2=k-l \end{smallmatrix}} (-q)^{-j_1}
\xy
(0,0)*{
\begin{tikzpicture}[scale=.3]
	\draw [very thick, green, directed=.55] (-2,-4) to (-2,-2);
	\draw [very thick, green, directed=.55] (-2,-2) to (-2,2);
	\draw [very thick, green, directed=.55] (2,-4) to (2,-2);
	\draw [very thick, green, directed=.55] (2,-2) to (2,2);
	\draw [very thick, green, directed=.55] (-2,-2) to (2,-2);
	\draw [very thick, green, directed=.55] (-2,2) to (-2,4);
	\draw [very thick, green, directed=.55] (2,2) to (2,4);
	\draw [very thick, green, rdirected=.55] (-2,2) to (2,2);
	\node at (-2,-4.5) {\tiny $k$};
	\node at (2,-4.5) {\tiny $l$};
	\node at (-2,4.5) {\tiny $l$};
	\node at (2,4.5) {\tiny $k$};
	\node at (-3.5,0) {\tiny $k{-}j_1$};
	\node at (3.5,0) {\tiny $l{+}j_1$};
	\node at (0,-1.25) {\tiny $j_1$};
	\node at (0,2.75) {\tiny $j_2$};
\end{tikzpicture}
};
\endxy
\\
\beta_{k,l}^{r} =
\raisebox{-.05cm}{\xy
(0,0)*{
\begin{tikzpicture}[scale=.3]
	\draw [very thick, mycolor, ->] (-1,-1) to (1,1);
	\draw [very thick, mycolor, ->] (-0.25,0.25) to (-1,1);
	\draw [very thick, mycolor] (0.25,-0.25) to (1,-1);
	\node at (-1,-1.5) {\tiny $k$};
	\node at (1,-1.5) {\tiny $l$};
\end{tikzpicture}
};
\endxy}
=
(-1)^{k}q^{-k}\sum_{\begin{smallmatrix} j_1,j_2\geq 0 \\ j_1-j_2=k-l \end{smallmatrix}} (-q)^{+j_1}
\xy
(0,0)*{
\begin{tikzpicture}[scale=.3]
	\draw [very thick, mycolor, directed=.55] (-2,-4) to (-2,-2);
	\draw [very thick, mycolor, directed=.55] (-2,-2) to (-2,2);
	\draw [very thick, mycolor, directed=.55] (2,-4) to (2,-2);
	\draw [very thick, mycolor, directed=.55] (2,-2) to (2,2);
	\draw [very thick, mycolor, directed=.55] (-2,-2) to (2,-2);
	\draw [very thick, mycolor, directed=.55] (-2,2) to (-2,4);
	\draw [very thick, mycolor, directed=.55] (2,2) to (2,4);
	\draw [very thick, mycolor, rdirected=.55] (-2,2) to (2,2);
	\node at (-2,-4.5) {\tiny $k$};
	\node at (2,-4.5) {\tiny $l$};
	\node at (-2,4.5) {\tiny $l$};
	\node at (2,4.5) {\tiny $k$};
	\node at (-3.5,0) {\tiny $k{-}j_1$};
	\node at (3.5,0) {\tiny $l{+}j_1$};
	\node at (0,-1.25) {\tiny $j_1$};
	\node at (0,2.75) {\tiny $j_2$};
\end{tikzpicture}
};
\endxy
\end{gathered}
\end{equation}
The 
\textit{mixed crossings} are defined 
via explosion of the strand going over:
\begin{equation}\label{eq-mixedbraiding}
\beta_{k,l}^{m} =
\raisebox{-.05cm}{\xy
(0,0)*{
\begin{tikzpicture}[scale=.3]
	\draw [very thick, green, ->] (-1,-1) to (1,1);
	\draw [very thick, mycolor, ->] (-0.25,0.25) to (-1,1);
	\draw [very thick, mycolor] (0.25,-0.25) to (1,-1);
	\node at (-1,-1.5) {\tiny $k$};
	\node at (1,-1.5) {\tiny $l$};
\end{tikzpicture}
};
\endxy}
=
\frac{1}{[k]!}
\raisebox{-.1cm}{\xy
(0,0)*{
\begin{tikzpicture}[scale=.75]
	\draw [very thick, green] (-1,-1) to (-.5,-.5);
	\draw [very thick, green, ->] (.5,.5) to (1,1);
	\draw [very thick, directed=.85] (-.5,-.5) to [out=105, in=165] (.5,.5);
	\draw [very thick, directed=.85] (-.5,-.5) to [out=-15, in=285] (.5,.5);
	\draw [very thick, mycolor, ->] (-.4,.4) to (-1,1);
	\draw [very thick, mycolor] (.4,-.4) to (1,-1);
	\draw [very thick, mycolor] (-.1,.1) to (.1,-.1);
	\node at (-1,-1.25) {\tiny $k$};
	\node at (1,-1.25) {\tiny $l$};
	\node at (-.7,0) {\tiny $1$};
	\node at (.7,0) {\tiny $1$};
	\node at (-.15,-.15) {\tiny \rotatebox{-45}{$\cdots$}};
	\node at (.15,.15) {\tiny \rotatebox{-45}{$\cdots$}};
\end{tikzpicture}
};
\endxy}
\quad\text{and}\quad
\beta_{k,l}^{\tilde m} =
\raisebox{-.05cm}{\xy
(0,0)*{
\begin{tikzpicture}[scale=.3]
	\draw [very thick, mycolor, ->] (-1,-1) to (1,1);
	\draw [very thick, green, ->] (-0.25,0.25) to (-1,1);
	\draw [very thick, green] (0.25,-0.25) to (1,-1);
	\node at (-1,-1.5) {\tiny $k$};
	\node at (1,-1.5) {\tiny $l$};
\end{tikzpicture}
};
\endxy}
=
\frac{1}{[k]!}
\raisebox{-.1cm}{\xy
(0,0)*{
\begin{tikzpicture}[scale=.75]
	\draw [very thick, mycolor] (-1,-1) to (-.5,-.5);
	\draw [very thick, mycolor, ->] (.5,.5) to (1,1);
	\draw [very thick, directed=.85] (-.5,-.5) to [out=105, in=165] (.5,.5);
	\draw [very thick, directed=.85] (-.5,-.5) to [out=-15, in=285] (.5,.5);
	\draw [very thick, green, ->] (-.4,.4) to (-1,1);
	\draw [very thick, green] (.4,-.4) to (1,-1);
	\draw [very thick, green] (-.1,.1) to (.1,-.1);
	\node at (-1,-1.25) {\tiny $k$};
	\node at (1,-1.25) {\tiny $l$};
	\node at (-.7,0) {\tiny $1$};
	\node at (.7,0) {\tiny $1$};
	\node at (-.15,-.15) {\tiny \rotatebox{-45}{$\cdots$}};
	\node at (.15,.15) {\tiny \rotatebox{-45}{$\cdots$}};
\end{tikzpicture}
};
\endxy}
\end{equation}
where the remaining crossings are of the 
form $\beta_{1,l}^r$ or $\beta_{1,l}^g$
respectively.
\end{defn}

\begin{ex}\label{ex-oneoneok}
The case $k=l=1$ is not ambiguous, since we have
\[
\beta_{1,1}^g=q\left(
\xy
(0,0)*{
\begin{tikzpicture}[scale=.3] 
	\draw [very thick, directed=.55] (1,-2.75) to (1,2.5);
	\draw [very thick, directed=.55] (-1,-2.75) to (-1,2.5);
	\node at (-1,3) {\tiny $1$};
	\node at (1,3) {\tiny $1$};
	\node at (-1,-3.15) {\tiny $1$};
	\node at (1,-3.15) {\tiny $1$};
\end{tikzpicture}
};
\endxy
-q^{-1}
\xy
(0,0)*{
\begin{tikzpicture}[scale=.3]
	\draw [very thick, green, directed=.55] (0,-1) to (0,.75);
	\draw [very thick, directed=.55] (0,.75) to [out=30,in=270] (1,2.5);
	\draw [very thick, directed=.55] (0,.75) to [out=150,in=270] (-1,2.5); 
	\draw [very thick, directed=.55] (1,-2.75) to [out=90,in=330] (0,-1);
	\draw [very thick, directed=.55] (-1,-2.75) to [out=90,in=210] (0,-1);
	\node at (-1,3) {\tiny $1$};
	\node at (1,3) {\tiny $1$};
	\node at (-1,-3.15) {\tiny $1$};
	\node at (1,-3.15) {\tiny $1$};
	\node at (-0.5,0) {\tiny $2$};
\end{tikzpicture}
};
\endxy
\right)
\stackrel{\eqref{eq-dumb}}{=}
-q^{-1}\left(
\xy
(0,0)*{
\begin{tikzpicture}[scale=.3] 
	\draw [very thick, directed=.55] (1,-2.75) to (1,2.5);
	\draw [very thick, directed=.55] (-1,-2.75) to (-1,2.5);
	\node at (-1,3) {\tiny $1$};
	\node at (1,3) {\tiny $1$};
	\node at (-1,-3.15) {\tiny $1$};
	\node at (1,-3.15) {\tiny $1$};
\end{tikzpicture}
};
\endxy
-q
\xy
(0,0)*{
\begin{tikzpicture}[scale=.3]
	\draw [very thick, mycolor, directed=.55] (0,-1) to (0,.75);
	\draw [very thick, directed=.55] (0,.75) to [out=30,in=270] (1,2.5);
	\draw [very thick, directed=.55] (0,.75) to [out=150,in=270] (-1,2.5); 
	\draw [very thick, directed=.55] (1,-2.75) to [out=90,in=330] (0,-1);
	\draw [very thick, directed=.55] (-1,-2.75) to [out=90,in=210] (0,-1);
	\node at (-1,3) {\tiny $1$};
	\node at (1,3) {\tiny $1$};
	\node at (-1,-3.15) {\tiny $1$};
	\node at (1,-3.15) {\tiny $1$};
	\node at (-0.5,0) {\tiny $2$};
\end{tikzpicture}
};
\endxy
\right)
=\beta_{1,1}^r,
\]
as a small calculation shows.
\end{ex}

As shorthand notation, we write $\beta^{\Sup}_{k,l}$ where 
$\Sup$ stands for either $g$, $r$, $m$ or $\tilde m$ from now on.
Note that the sums in \eqref{eq-braidedstru1} 
are finite, because 
webs with negative labels are zero.

\begin{lem}(\textbf{Pitchfork relations})\label{lem-pitchfork}
We have
\[
\raisebox{-.01cm}{\xy
(0,0)*{
\begin{tikzpicture}[scale=.75]
	\draw [very thick, green, ->] (-1,-1) to (1,1);
	\draw [very thick, mycolor] (1,-1) to (.5,-.5);
	\draw [very thick, mycolor] (.5,-.5) to (.15,-.15);
	\draw [very thick, mycolor] (-.15,.15) to (-.3,.3);
	\draw [very thick, ->] (-.8,.2) to (-1.3,.7);
	\draw [very thick, ->] (-.2,.8) to (-.7,1.3);
	\draw [very thick] (-.3,.3) to [out=225, in=-45] (-.8,.2);
	\draw [very thick] (-.3,.3) to [out=45, in=-45] (-.2,.8);
	\node at (-1,-1.25) {\tiny $k$};
	\node at (1,-1.25) {\tiny $l$};
	\node at (-1.15,.25) {\tiny $1$};
	\node at (-.25,1.15) {\tiny $1$};
	\node at (-.65,.65) {\tiny \rotatebox{45}{$\cdots$}};
\end{tikzpicture}
};
\endxy}
=
\raisebox{-.01cm}{\xy
(0,0)*{
\begin{tikzpicture}[scale=.75]
	\draw [very thick, green, ->] (-1,-1) to (1,1);
	\draw [very thick, mycolor] (1,-1) to (.5,-.5);
	\draw [very thick, ->] (-.8,.2) to (-1.3,.7);
	\draw [very thick, ->] (-.2,.8) to (-.7,1.3);
	\draw [very thick] (-.45,-.15) to (-.8,.2);
	\draw [very thick] (.15,.45) to (-.2,.8);
	\draw [very thick] (.5,-.5) to [out=225, in=-45] (-.15,-.45);
	\draw [very thick] (.5,-.5) to [out=45, in=-45] (.45,.15);
	\node at (-1,-1.25) {\tiny $k$};
	\node at (1,-1.25) {\tiny $l$};
	\node at (-1.15,.25) {\tiny $1$};
	\node at (-.25,1.15) {\tiny $1$};
	\node at (-.65,.65) {\tiny \rotatebox{45}{$\cdots$}};
\end{tikzpicture}
};
\endxy}
\quad,\quad
\raisebox{-.01cm}{\reflectbox{\xy
(0,0)*{
\begin{tikzpicture}[scale=.75]
    \draw [very thick, green] (-1,-1) to (-.15,-.15);
	\draw [very thick, green, ->] (.15,.15) to (1,1);
	\draw [very thick, mycolor] (1,-1) to (-.3,.3);
	\draw [very thick, ->] (-.8,.2) to (-1.3,.7);
	\draw [very thick, ->] (-.2,.8) to (-.7,1.3);
	\draw [very thick] (-.3,.3) to [out=225, in=-45] (-.8,.2);
	\draw [very thick] (-.3,.3) to [out=45, in=-45] (-.2,.8);
	\node at (-1,-1.25) {\reflectbox{\tiny $l$}};
	\node at (1,-1.25) {\reflectbox{\tiny $k$}};
	\node at (-1.15,.25) {\reflectbox{\tiny $1$}};
	\node at (-.25,1.15) {\reflectbox{\tiny $1$}};
	\node at (-.65,.65) {\reflectbox{\tiny \rotatebox{135}{$\cdots$}}};
\end{tikzpicture}
};
\endxy}}
=
\raisebox{-.01cm}{\reflectbox{\xy
(0,0)*{
\begin{tikzpicture}[scale=.75]
	\draw [very thick, green] (-1,-1) to (-.45,-.45);
	\draw [very thick, green] (-.2,-.2) to (.2,.2);
	\draw [very thick, green, ->] (.45,.45) to (1,1);
	\draw [very thick, mycolor] (1,-1) to (.5,-.5);
	\draw [very thick, ->] (-.8,.2) to (-1.3,.7);
	\draw [very thick, ->] (-.2,.8) to (-.7,1.3);
	\draw [very thick] (-.15,-.45) to (-.8,.2);
	\draw [very thick] (.45,.15) to (-.2,.8);
	\draw [very thick] (.5,-.5) to [out=225, in=-45] (-.15,-.45);
	\draw [very thick] (.5,-.5) to [out=45, in=-45] (.45,.15);
	\node at (-1,-1.25) {\reflectbox{\tiny $l$}};
	\node at (1,-1.25) {\reflectbox{\tiny $k$}};
	\node at (-1.15,.25) {\reflectbox{\tiny $1$}};
	\node at (-.25,1.15) {\reflectbox{\tiny $1$}};
	\node at (-.65,.65) {\tiny \reflectbox{\rotatebox{135}{$\cdots$}}};
\end{tikzpicture}
};
\endxy}}
\]
Similar with exchanged roles of green and red,
for the monochromatic cases and with merges.
\end{lem}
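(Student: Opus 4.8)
The plan is to read each of these relations as an instance of naturality of the elementary crossings with respect to merges and splits, and to reduce all mixed cases to monochromatic ones by means of the definition~\eqref{eq-mixedbraiding}. The first step is to establish the \emph{monochromatic pitchfork}: for a green crossing $\beta_{k,l}^{g}$ and for a red crossing $\beta_{k,l}^{r}$, a merge or a split of the same color on either strand may be slid from one side of the crossing to the other. Expanding such a crossing via~\eqref{eq-braidedstru1} into its finite sum of square--switch webs, one pushes the merge (or split) through each summand using only associativity~\eqref{eq-frob} and the square--switch relation~\eqref{eq-almostsimpler} together with a reindexing of the sum; this is precisely the computation of~\cite{ckm} exhibiting the braiding of the green web category, and it runs verbatim in both colors because the relations~\eqref{eq-frob} and~\eqref{eq-almostsimpler} hold for green and for red webs alike. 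Afterwards \emph{every} green web is natural with respect to $\beta^{g}$, and every red web with respect to $\beta^{r}$; iterating, a monochromatic crossing slides past the full ``explosion'' of either of its strands into black edges of label~$1$.

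For the first displayed relation I would apply~\eqref{eq-mixedbraiding}: up to the scalar $1/[k]!$, the crossing $\beta_{k,l}^{m}$ (green $k$ over red $l$) is the explosion of the green over--strand into $k$ black edges of label~$1$, followed by $k$ stacked red crossings $\beta_{1,l}^{r}$, followed by the merge that recombines the green strand. The red under--strand of label $l$ runs, unexploded, through all $k$ of these $\beta_{1,l}^{r}$, so sliding the split on it downward through these $k$ crossings, one at a time, is exactly $k$ applications of the monochromatic red pitchfork; once the split has reached the bottom each $\beta_{1,l}^{r}$ has become $l$ stacked crossings $\beta_{1,1}^{r}=\beta_{1,1}^{g}$ (Example~\ref{ex-oneoneok}), and what remains is the right--hand side. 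The companion statement for $\beta_{k,l}^{\tilde m}$, in which the green under--strand forks, is proven the same way, now exploding the red over--strand and using the monochromatic green pitchfork.

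The remaining displayed relation, in which the red over--strand of $\beta_{k,l}^{\tilde m}$ forks above the crossing, needs one extra observation. By~\eqref{eq-mixedbraiding} that over--strand is already exploded inside $\beta_{k,l}^{\tilde m}$ and then merged back, so composing with a further explosion turns that top merge into a ``merge then explode'' of $k$ black label--$1$ edges, which by Lemma~\ref{lem-recursion} equals $[k]!$ times the red clasp $\cal{CL}^r_k$. Writing $\cal{CL}^r_k$ by Corollary~\ref{cor-green} as a combination of webs with only black and green edges, one slides it downward past the $k$ green crossings $\beta_{1,l}^{g}$ using the naturality of the braiding in the green web category from the first step; it then comes to rest directly on the bottom explosion $S$ of the red $k$--edge, where it is absorbed because $\cal{CL}^r_k\circ S=S$. (This last identity holds because exploding an edge of label $k$ into $k$ black edges and merging them straight back multiplies by $[k]!$, a consequence of~\eqref{eq-simpler1}.) Hence the extra clasp disappears and the right--hand side remains.

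The remaining cases --- merges in place of splits, the monochromatic variants, and the forms with green and red exchanged --- are handled by the same moves, and the potential ambiguity at $k=l=1$ does not arise by Example~\ref{ex-oneoneok}. The only genuinely computational ingredient is the monochromatic pitchfork (square--switch together with~\eqref{eq-frob}); beyond that I expect the real work to be bookkeeping --- keeping track of which strand passes over, which strand~\eqref{eq-mixedbraiding} explodes, and which strand carries the merge or split --- and, in the over--strand case, noticing the absorption $\cal{CL}^r_k\circ S=S$ that makes the extra explosion harmless.
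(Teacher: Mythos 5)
Your proof is correct, but for the over-strand equation (the second display) it follows a genuinely different route from the paper's. The paper proves that case by reducing to $k=2$ (deferring general $k$ to Lemma~\ref{lem-green}), then explodes the red $2$-edge with~\eqref{eq-simpler1}, applies the dumbbell relation~\eqref{eq-dumb}, and observes that the one obstruction term has a green-explode-red-merge bubble that vanishes by Lemma~\ref{lem-green} combined with the monochromatic pitchfork. Your argument instead works uniformly in $k$: the external split composed with the internal merge inside $\beta^{\tilde m}_{k,l}$ produces $[k]!\cdot\cal{CL}^r_k$ by Lemma~\ref{lem-recursion}, which you rewrite in black-green form via Corollary~\ref{cor-green}, slide past the $k$ crossings $\beta^g_{1,l}$ by monochromatic naturality, and finally absorb into the bottom explosion using $\cal{CL}^r_k\circ S=S$. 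This is a legitimate and elegant alternative: it avoids the $k=2$ reduction entirely, at the cost of invoking the heavier clasp machinery (Lemma~\ref{lem-recursion}, Corollary~\ref{cor-green}), whereas the paper keeps the computation local and elementary. Both rest on the same foundation, namely that the monochromatic pitchfork (and hence naturality for monochromatic webs) has already been established; your appeal to the square-switch mechanism of~\cite{ckm} is the same content the paper imports via~\cite[Lemma~5.3]{moy}. One small caution: you should make explicit that sliding the clasp one rung at a time requires only the monochromatic pitchfork and not the full naturality statement of Proposition~\ref{prop-braiding}, since the latter is proved after this lemma and depends on it.
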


Note that the pitchfork lemma directly implies that \eqref{eq-mixedbraiding} 
could also be done by exploding the edges going underneath instead of the edges 
going over (or exploding both).

\begin{proof}
The pitchfork lemma with only green colored edges follows 
as in \cite[Lemma 5.3]{moy}. 
By symmetry, the arguments go through for the monochromatic red case as well.

The mixed, left-hand equation is easy to verify by the above, since 
we explode the overcrossing edge and we thus, can directly use the 
monochromatic case. 
It remains to prove the mixed, right-hand equation. 
We only need to check the case $k=2$, the case $k\in\Z_{>2}$ then 
follows easily from this case by using \fullref{lem-green}.
We write
\[
\raisebox{-.01cm}{\reflectbox{\xy
(0,0)*{
\begin{tikzpicture}[scale=.75]
    \draw [very thick, green] (-1,-1) to (-.15,-.15);
	\draw [very thick, green, ->] (.15,.15) to (1,1);
	\draw [very thick, mycolor] (1,-1) to (-.3,.3);
	\draw [very thick, ->] (-.8,.2) to (-1.3,.7);
	\draw [very thick, ->] (-.2,.8) to (-.7,1.3);
	\draw [very thick] (-.3,.3) to [out=225, in=-45] (-.8,.2);
	\draw [very thick] (-.3,.3) to [out=45, in=-45] (-.2,.8);
	\node at (-1,-1.25) {\tiny \reflectbox{$l$}};
	\node at (1,-1.25) {\tiny \reflectbox{$2$}};
	\node at (-1.15,.25) {\tiny \reflectbox{$1$}};
	\node at (-.25,1.15) {\tiny \reflectbox{$1$}};
\end{tikzpicture}
};
\endxy}}
=\frac{1}{[2]}
\raisebox{-.01cm}{\reflectbox{\xy
(0,0)*{
\begin{tikzpicture}[scale=.75]
	\draw [very thick, green] (-1,-1) to (-.45,-.45);
	\draw [very thick, green] (-.2,-.2) to (.2,.2);
	\draw [very thick, green, ->] (.45,.45) to (1,1);
	\draw [very thick, mycolor] (1,-1) to (.25,-.25);
	\draw [very thick, mycolor] (-.25,.25) to (-.5,.5);
	\draw [very thick, ->] (-.95,.35) to (-1.3,.7);
	\draw [very thick, ->] (-.35,.95) to (-.7,1.3);
	\draw [very thick] (-.4,-.2) to [out=135, in=225] (-.25,.25);
	\draw [very thick] (.2,.4) to [out=135, in=45] (-.25,.25);
	\draw [very thick] (-.4,-.2) to (-.2,-.4);
	\draw [very thick] (.2,.4) to (.4,.2);
	\draw [very thick] (.25,-.25) to [out=225, in=-45] (-.2,-.4);
	\draw [very thick] (.25,-.25) to [out=45, in=-45] (.4,.2);
	\draw [very thick] (-.5,.5) to [out=225, in=-45] (-.95,.35);
	\draw [very thick] (-.5,.5) to [out=45, in=-45] (-.35,.95);
	\node at (-1,-1.25) {\tiny \reflectbox{$l$}};
	\node at (1,-1.25) {\tiny \reflectbox{$2$}};
	\node at (-1.15,.25) {\tiny \reflectbox{$1$}};
	\node at (-.25,1.15) {\tiny \reflectbox{$1$}};
	\node at (-.25,.5) {\tiny \reflectbox{$2$}};
	\node at (-.25,-.65) {\tiny \reflectbox{$1$}};
	\node at (.65,.25) {\tiny \reflectbox{$1$}};
\end{tikzpicture}
};
\endxy}}
\stackrel{\eqref{eq-dumb}}{=}
\raisebox{-.01cm}{\reflectbox{\xy
(0,0)*{
\begin{tikzpicture}[scale=.75]
	\draw [very thick, green] (-1,-1) to (-.45,-.45);
	\draw [very thick, green] (-.2,-.2) to (.2,.2);
	\draw [very thick, green, ->] (.45,.45) to (1,1);
	\draw [very thick, mycolor] (1,-1) to (.25,-.25);
	\draw [very thick, ->] (-.95,.35) to (-1.3,.7);
	\draw [very thick, ->] (-.35,.95) to (-.7,1.3);
	\draw [very thick] (-.95,.35) to (-.2,-.4);
	\draw [very thick] (-.35,.95) to (.4,.2);
	\draw [very thick] (.25,-.25) to [out=225, in=-45] (-.2,-.4);
	\draw [very thick] (.25,-.25) to [out=45, in=-45] (.4,.2);
	\node at (-1,-1.25) {\tiny \reflectbox{$l$}};
	\node at (1,-1.25) {\tiny \reflectbox{$2$}};
	\node at (-1.15,.25) {\tiny \reflectbox{$1$}};
	\node at (-.25,1.15) {\tiny \reflectbox{$1$}};
\end{tikzpicture}
};
\endxy}}
-
\frac{1}{[2]}
\raisebox{-.01cm}{\reflectbox{\xy
(0,0)*{
\begin{tikzpicture}[scale=.75]
	\draw [very thick, green] (-1,-1) to (-.45,-.45);
	\draw [very thick, green] (-.2,-.2) to (.2,.2);
	\draw [very thick, green, ->] (.45,.45) to (1,1);
	\draw [very thick, mycolor] (1,-1) to (.25,-.25);
	\draw [very thick, green] (-.25,.25) to (-.5,.5);
	\draw [very thick, ->] (-.95,.35) to (-1.3,.7);
	\draw [very thick, ->] (-.35,.95) to (-.7,1.3);
	\draw [very thick] (-.4,-.2) to [out=135, in=225] (-.25,.25);
	\draw [very thick] (.2,.4) to [out=135, in=45] (-.25,.25);
	\draw [very thick] (-.4,-.2) to (-.2,-.4);
	\draw [very thick] (.2,.4) to (.4,.2);
	\draw [very thick] (.25,-.25) to [out=225, in=-45] (-.2,-.4);
	\draw [very thick] (.25,-.25) to [out=45, in=-45] (.4,.2);
	\draw [very thick] (-.5,.5) to [out=225, in=-45] (-.95,.35);
	\draw [very thick] (-.5,.5) to [out=45, in=-45] (-.35,.95);
	\node at (-1,-1.25) {\reflectbox{\tiny $l$}};
	\node at (1,-1.25) {\reflectbox{\tiny $2$}};
	\node at (-1.15,.25) {\reflectbox{\tiny $1$}};
	\node at (-.25,1.15) {\reflectbox{\tiny $1$}};
	\node at (-.25,.5) {\reflectbox{\tiny $2$}};
	\node at (-.25,-.65) {\reflectbox{\tiny $1$}};
	\node at (.65,.25) {\reflectbox{\tiny $1$}};
\end{tikzpicture}
};
\endxy}}
\]
The rightmost diagram is zero by \fullref{lem-green} and the 
monochromatic pitchfork relations. This 
proves the mixed right-hand equation. The other cases are analogous.
\end{proof}

Let $\vec{k}\in X^{L}_{\geq 0}$ be an object in $\InSp$. 
We define for $i=1,\dots,L-1$ the crossing
$\beta_{i}^{\Sup}\one_{\vec{k}}$ to be the corresponding 
elementary crossing $\beta_{k_i,k_{i+1}}^{\Sup}$ between the strands $i$ and $i+1$ 
and the identity elsewhere. Clearly, it suffices to indicate the 
rightmost $\one_{\vec{k}}$ in a sequence of the $\beta_{i}^{\Sup}\one_{\vec{k}}$.

\begin{lem}\label{lem-braidrel}
The crossings $\beta_{i}^{\Sup}\one_{\vec{k}}$
satisfy the braid relations, that is, they are invertible, 
they satisfy the commutation relations
$\beta_{i}^{\Sup}\beta_{j}^{\Sup}\one_{\vec{k}}
=\beta_{j}^{\Sup}
\beta_{i}^{\Sup}\one_{\vec{k}}$ for $|i-j|>2$ and the 
Reidemeister 3 relations 
$\beta_{i}^{\Sup}\beta_{j}^{\Sup}\beta_{i}^{\Sup}\one_{\vec{k}}
=\beta_{j}^{\Sup}\beta_{i}^{\Sup}\beta_{j}^{\Sup}\one_{\vec{k}}$ 
for $|i-j|=1$.
\end{lem}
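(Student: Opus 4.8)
The plan is to reduce all three relations to crossings between single black strands, where they become short algebraic identities, and then to lift the result to arbitrary labels and colors by a cabling argument resting on the pitchfork relations (Lemma~\ref{lem-pitchfork}).

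Far commutativity is immediate: for $|i-j|\geq 2$ the crossings $\beta_{i}^{\Sup}$ and $\beta_{j}^{\Sup}$ act on disjoint pairs of consecutive strands, so $\beta_{i}^{\Sup}\beta_{j}^{\Sup}\one_{\vec{k}}=\beta_{j}^{\Sup}\beta_{i}^{\Sup}\one_{\vec{k}}$ by the interchange law for $\circ$ and $\otimes$ (the hypothesis $|i-j|>2$ can be relaxed to $|i-j|\geq 2$). For the remaining relations I would first establish that every crossing ``cables down'' to the label-$1$ case. For the mixed crossings this is built into Definition~\ref{defn-crossings1}: by \eqref{eq-mixedbraiding} each of $\beta^{m}_{k,l},\beta^{\tilde m}_{k,l}$ is a composite of monochromatic crossings with a $1$-labeled strand, and by Lemma~\ref{lem-pitchfork} the other strand may be exploded as well. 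For the monochromatic crossings, write $G_{m}\colon(1_{b},\dots,1_{b})\to m_{g}$ for the total merge and $S_{m}\colon m_{g}\to(1_{b},\dots,1_{b})$ for the total split; sliding the crossing past the $G_{m}$'s with the pitchfork relations and inducting on $k+l$ shows
\[
\beta^{g}_{k,l}\circ(G_{k}\otimes G_{l})=(G_{l}\otimes G_{k})\circ\widehat{\beta}_{k,l},
\]
where $\widehat{\beta}_{k,l}$ is the blockwise cable of the elementary crossing $\beta_{1,1}:=\beta^{g}_{1,1}=\beta^{r}_{1,1}$ (the last equality is Example~\ref{ex-oneoneok}); likewise in red. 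Since digon removal \eqref{eq-simpler1} gives $G_{m}\circ S_{m}=[m]!\cdot\one_{m_{g}}$, hence $\one_{m_{g}}=\tfrac{1}{[m]!}G_{m}\circ S_{m}$, two morphisms of $\InSp$ out of an object with a tensor factor $m_{g}$ agree as soon as their composites with $G_{m}$ inserted on that strand agree. Applying this at every strand reduces the invertibility of $\beta^{\Sup}_{k,l}$ and the Reidemeister $3$ relations for arbitrary labels and colors to the black label-$1$ case, by the usual cabling argument (applying the label-$1$ braid relations to the individual strands of each cable).

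For label $1$, let $E\in\End_{\InSp}(1_{b},1_{b})$ be the web that merges the two strands into a green $2$-edge and splits it again; digon removal \eqref{eq-simpler1} gives $E^{2}=[2]E$, and by \eqref{eq-braidedstru1} (equivalently Example~\ref{ex-oneoneok}) $\beta_{1,1}=q\,\mathrm{id}-E$. Then $\beta_{1,1}^{2}=(q-q^{-1})\beta_{1,1}+\mathrm{id}$, so $(\beta_{1,1}-q\,\mathrm{id})(\beta_{1,1}+q^{-1}\mathrm{id})=0$; in particular $\beta_{1,1}$ is invertible with inverse $q^{-1}\mathrm{id}-E$ (the oppositely-oriented elementary crossing), which gives invertibility and Reidemeister $2$. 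Writing $\beta_{i}=q\,\mathrm{id}-E_{i}$ on three black strands and expanding with $E_{i}^{2}=[2]E_{i}$, a short computation gives
\[
\beta_{1}\beta_{2}\beta_{1}-\beta_{2}\beta_{1}\beta_{2}=(E_{1}-E_{1}E_{2}E_{1})-(E_{2}-E_{2}E_{1}E_{2}),
\]
so Reidemeister $3$ reduces to the identity $E_{1}-E_{1}E_{2}E_{1}=E_{2}-E_{2}E_{1}E_{2}$ in $\End_{\InSp}(1_{b},1_{b},1_{b})$. This I would verify from the square switch \eqref{eq-almostsimpler}, digon removal \eqref{eq-simpler1} and associativity \eqref{eq-frob}: evaluating $E_{1}E_{2}E_{1}$ shows that $E_{1}E_{2}E_{1}-E_{1}$ is a scalar multiple of the web $(1_{b},1_{b},1_{b})\to 3_{g}\to(1_{b},1_{b},1_{b})$, the same scalar and web appear for $E_{2}E_{1}E_{2}-E_{2}$ by the left--right reflection symmetry of the computation, and subtracting gives the identity. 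By Proposition~\ref{prop-green} one may assume throughout that only black and green edges occur, so this takes place entirely inside the $\mathfrak{gl}$-type web calculus.

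The two places where genuine work is hidden are: the cabling identity $\beta^{g}_{k,l}\circ(G_{k}\otimes G_{l})=(G_{l}\otimes G_{k})\circ\widehat{\beta}_{k,l}$, i.e.\ that the square-switch formula \eqref{eq-braidedstru1} really is the blockwise cable of $\beta_{1,1}$, which needs a careful pitchfork induction; and the single nonformal computation of $E_{1}E_{2}E_{1}$ modulo the span of $(1_{b},1_{b},1_{b})\to 3_{g}\to(1_{b},1_{b},1_{b})$. I expect the latter to be the main obstacle, the rest being bookkeeping.
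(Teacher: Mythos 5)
Your proposal is correct and follows essentially the same route as the paper, whose proof is simply the one-line reference ``By Lemma~\ref{lem-pitchfork}, since the black case can be verified as in~\cite[Section~5]{moy}'': a pitchfork/cabling reduction to the label-$1$ black case, plus the MOY-style verification of that case, which you have unpacked into the Hecke-algebra form $\beta_{1,1}=q\,\mathrm{id}-E$ and the identity $E_{1}-E_{1}E_{2}E_{1}=E_{2}-E_{2}E_{1}E_{2}$. (One small inaccuracy of the paper that you correctly flag: far commutativity should hold for $|i-j|\geq2$, not just $|i-j|>2$.)
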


The inverses $({\beta_{i}^{\Sup}})^{-1}$ are given as 
in \eqref{eq-braidedstru1}, but with $q\rightarrow q^{-1}$. See also \cite[Section 5]{moy}.

\begin{proof}
By \fullref{lem-pitchfork}, since the black case 
can be verified as in \cite[Section 5]{moy}.
\end{proof}

\begin{rem}\label{rem-JWnew}
Let $S_K$ denote the symmetric group on $K$ letters. Moreover,  
let $w\in S_K$ and let $\beta^{\Sup}_w \in \End_{\InSp}(\vec{K})$ 
be the permutation braid associated to $w$ (this is a well-defined 
assignment by \fullref{lem-braidrel}).
Let $\ell(w)$ be the length of $w$.
Following \cite[Chapter 3, Section 2]{kali}, 
one can show that
\[
\cal{CL}^g_K=q^{\frac{K(K-1)}{2}}\frac{1}{[K]!}\sum_{w\in S_K}(-q)^{-\ell(w)}\beta^{\Sup}_w\quad,
\quad
\cal{CL}^r_K=q^{-\frac{K(K-1)}{2}}\frac{1}{[K]!}\sum_{w\in S_K}q^{\ell(w)}\beta^{\Sup}_w.
\]
The factors $q^{\frac{K(K-1)}{2}}$ 
and $q^{-\frac{K(K-1)}{2}}$ come from our conventions for crossings. 
\end{rem}

Define 
$\beta^{\Sup}_{\vec{k},\vec{l}}$ for objects 
$\vec{k}=(k_1,\dots,k_a)$ and $\vec{l}=(l_1,\dots,l_b)$ via
\[
\beta^{\Sup}_{\vec{k},\vec{l}}=
\xy
(0,0)*{
\begin{tikzpicture}[scale=.3]
	\node at (0,0) {\tiny $k_1$};
	\node at (2,0) {$\dots$};
	\node at (4,0) {\tiny $k_a$};
	\node at (6,0) {\tiny $l_1$};
	\node at (8,0) {$\dots$};
	\node at (10,0) {\tiny $l_b$};
	\node at (0,7.5) {\tiny $l_1$};
	\node at (2,7.5) {$\dots$};
	\node at (4,7.5) {\tiny $l_b$};
	\node at (6,7.5) {\tiny $k_1$};
	\node at (8,7.5) {$\dots$};
	\node at (10,7.5) {\tiny $k_a$};
	\draw[very thick, blue, ->] (0,.75) to (6,6.75);
	\draw[very thick, blue, ->] (4,.75) to (10,6.75);
	\draw[very thick, blue] (6,.75) to (5.25,1.5);
	\draw[very thick, blue] ( 4.75,2) to (3.25,3.5);
	\draw[very thick, blue, ->] (2.75,4) to (0,6.75);
	\draw[very thick, blue] (10,.75) to (7.25,3.5) ;
	\draw[very thick, blue] (6.75,4) to (5.35,5.5);
	\draw[very thick, blue, ->] (4.75,6) to (4,6.75);
\end{tikzpicture}
};
\endxy
\in \Hom_{\InSp}(\vec{k}\otimes\vec{l},\vec{l}\otimes\vec{k}),
\]
where blue stands for all suitable color possibilities.

Recall that a \textit{braided monoidal category} (with an 
underlying strict monoidal category) is 
a pair $(\boldsymbol{\cal{C}},\beta^{\boldsymbol{\cal{C}}}_{\cdot,\cdot})$ 
consisting of a monoidal category 
$\boldsymbol{\cal{C}}$ and a collection of natural isomorphisms
$\beta^{\boldsymbol{\cal{C}}}_{\vec{k},\vec{l}}\colon\vec{k}\otimes\vec{l}\to\vec{l}\otimes\vec{k}$ 
such that 
the \textit{hexagon identities} hold for any objects $\vec{k},\vec{l},\vec{m}$ of $\boldsymbol{\cal{C}}$:
\begin{equation}\label{eq-hexid}
\beta^{\boldsymbol{\cal{C}}}_{\vec{k},\vec{l}\otimes\vec{m}}=(\mathrm{id}_{\vec{l}}\otimes\beta^{\boldsymbol{\cal{C}}}_{\vec{k},\vec{m}})\circ(\beta^{\boldsymbol{\cal{C}}}_{\vec{k},\vec{l}}\otimes \mathrm{id}_{\vec{m}})\quad,\quad
\beta^{\boldsymbol{\cal{C}}}_{\vec{k}\otimes\vec{l},\vec{m}}=(\beta^{\boldsymbol{\cal{C}}}_{\vec{k},\vec{m}}\otimes \mathrm{id}_{\vec{l}})\circ(\mathrm{id}_{\vec{k}}\otimes\beta^{\boldsymbol{\cal{C}}}_{\vec{l},\vec{m}}).
\end{equation}

\begin{prop}\label{prop-braiding}
The pair $(\InSp,\beta^{\Sup}_{\cdot,\cdot})$ is a braided monoidal category.
\end{prop}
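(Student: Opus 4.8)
The plan is to verify the three defining properties of a braided monoidal category for the pair $(\InSp,\beta^{\Sup}_{\cdot,\cdot})$: that each $\beta^{\Sup}_{\vec{k},\vec{l}}$ is an isomorphism, that the family is natural in $\vec{k}$ and $\vec{l}$, and that the two hexagon identities~\eqref{eq-hexid} hold. Invertibility is already essentially done: Lemma~\ref{lem-braidrel} tells us the elementary crossings $\beta^{\Sup}_i\one_{\vec{k}}$ are invertible (with inverses obtained by $q\mapsto q^{-1}$), and since $\beta^{\Sup}_{\vec{k},\vec{l}}$ is by definition a composite of such elementary crossings arranged along the permutation braid that swaps the block $\vec{k}$ past the block $\vec{l}$, it is a composite of isomorphisms, hence an isomorphism. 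So the first bullet of the proof is just a one-line appeal to Lemma~\ref{lem-braidrel}.

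For the hexagon identities, I would observe that $\beta^{\Sup}_{\vec{k},\vec{l}\otimes\vec{m}}$ and the right-hand side $(\mathrm{id}_{\vec{l}}\otimes\beta^{\Sup}_{\vec{k},\vec{m}})\circ(\beta^{\Sup}_{\vec{k},\vec{l}}\otimes\mathrm{id}_{\vec{m}})$ are, after unwinding the definition of $\beta^{\Sup}_{\vec{k},\vec{l}}$ as a permutation braid, literally equal as morphisms in $\InSpf$ up to isotopy of the underlying string diagram: both are the permutation braid dragging the $\vec{k}$-strands over all of $\vec{l}$ and then all of $\vec{m}$, just drawn with a different bracketing of the sweep. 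The same holds for the second hexagon. So the hexagon identities reduce to the statement that the assignment $w\mapsto\beta^{\Sup}_w$ is well-defined on braid words (which is Lemma~\ref{lem-braidrel}) together with the fact that reading a permutation braid in two equivalent ways gives isotopic webs — i.e.\ the hexagons hold essentially ``for free'' from the braid-group structure established in Lemma~\ref{lem-braidrel}, exactly as in the black/monochromatic case treated in~\cite{moy}.

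The genuine content, and the step I expect to be the main obstacle, is \emph{naturality}: for every web $u\in\Hom_{\InSp}(\vec{k},\vec{k}')$ and $v\in\Hom_{\InSp}(\vec{l},\vec{l}')$ one must check
\[
\beta^{\Sup}_{\vec{k}',\vec{l}'}\circ(u\otimes v)=(v\otimes u)\circ\beta^{\Sup}_{\vec{k},\vec{l}}.
\]
Since every web is generated under $\circ$ and $\otimes$ by the elementary pieces of Definition~\ref{defn-fspid}, and since $\beta^{\Sup}$ is built from elementary crossings, it suffices to prove that each elementary crossing slides past each generator — i.e.\ that a merge, split, or (mixed) ladder rung can be pulled through a strand crossing over or under it. The pitchfork relations of Lemma~\ref{lem-pitchfork} handle sliding a trivalent vertex through a crossing when the vertex sits ``below'' the crossing; combined with the Reidemeister~3 / braid relations of Lemma~\ref{lem-braidrel} (to move the crossing from one side of a rung to the other) and the explicit square-switch expansion~\eqref{eq-braidedstru1} of the crossings, one gets all the required slide moves. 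In the monochromatic cases this is exactly the computation in~\cite[Section~5]{moy}; the new cases involving the mixed crossings $\beta^{m},\beta^{\tilde m}$ reduce to the monochromatic ones by exploding the over-strand (as in~\eqref{eq-mixedbraiding}) and then invoking the pitchfork lemma. I would therefore structure the proof as: (1) invertibility from Lemma~\ref{lem-braidrel}; (2) naturality by reduction to slide moves of elementary crossings past generators, citing Lemma~\ref{lem-pitchfork}, Lemma~\ref{lem-braidrel}, and~\cite{moy}, with the mixed cases handled by explosion; (3) hexagons from the permutation-braid description and Lemma~\ref{lem-braidrel}. The only place where something could genuinely go wrong is a sign or power-of-$q$ mismatch in the mixed slide moves, which is why the careful bookkeeping of the $(-1)^{k+kl}q^{\pm k}$ prefactors in~\eqref{eq-braidedstru1} (and the consistency check in Example~\ref{ex-oneoneok}) is doing real work.
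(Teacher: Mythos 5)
Your proposal matches the paper's proof in essence: the paper likewise dismisses invertibility and the hexagon identities as immediate (invertibility being already clear from the braid-group structure, the hexagons being automatic from the definition of $\beta^{\Sup}_{\vec{k},\vec{l}}$ as a permutation braid) and concentrates the entire argument on naturality, which it establishes by exactly the slide-move reduction you describe, citing Lemma~\ref{lem-pitchfork}. Your exposition is more verbose — spelling out the hexagon check and flagging the sign bookkeeping in the mixed case — but the decomposition of the problem and the key lemma invoked are identical.
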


\begin{proof}
Since $\InSp$ is a monoidal category and the
$\beta^{\Sup}_{\vec{k},\vec{l}}$ are isomorphisms that clearly 
satisfy \eqref{eq-hexid}, we only need to prove that they are natural. 
That is, we need to show that, for each web
$u\in\Hom_{\InSp}(\vec{k},\vec{l})$ and each other object $\vec{m}=(m_1,\dots,m_c)$ of 
$\InSp$, we have 
(we again use blue as a generic color):
\[
\xy
(0,0)*{
\begin{tikzpicture}[scale=.3]
	\draw[very thick, blue, ->] (0,3.75) to (6,9.75);
	\draw[very thick, blue, ->] (4,3.75) to (10,9.75);
	\draw[very thick, blue] (6,3.75) to (5.25,4.5);
	\draw[very thick, blue] ( 4.75,5) to (3.25,6.5);
	\draw[very thick, blue, ->] (2.75,7) to (0,9.75);
	\draw[very thick, blue] (10,3.75) to (7.25,6.5) ;
	\draw[very thick, blue] (6.75,7) to (5.35,8.5);
	\draw[very thick, blue, ->] (4.75,9) to (4,9.75);
	\draw[very thick] (-.5,1.25) rectangle (4.5,3);
	\draw[very thick] (5.5,1.25) rectangle (10.5,3);
	\draw[very thick, blue] (0,0.5) to (0,1.25);
	\draw[very thick, blue] (4,0.5) to (4,1.25);
	\draw[very thick, blue] (6,0.5) to (6,1.25);
	\draw[very thick, blue] (10,0.5) to (10,1.25);
	\draw[very thick, blue] (0,3) to (0,3.75);
	\draw[very thick, blue] (4,3) to (4,3.75);
	\draw[very thick, blue] (6,3) to (6,3.75);
	\draw[very thick, blue] (10,3) to (10,3.75);
	\node at (0,0) {\tiny $k_1$};
	\node at (2,0) {$\dots$};
	\node at (4,0) {\tiny $k_a$};
	\node at (6,-.1) {\tiny $m_1$};
	\node at (8,0) {$\dots$};
	\node at (10,-.1) {\tiny $m_c$};
	\node at (0,10.25) {\tiny $m_1$};
	\node at (2,10.25) {$\dots$};
	\node at (4,10.25) {\tiny $m_c$};
	\node at (6,10.35) {\tiny $l_1$};
	\node at (8,10.25) {$\dots$};
	\node at (10,10.35) {\tiny $l_b$};
	\node at (2,2.125) {\tiny $u$};
	\node at (8,2.125) {\tiny $\mathrm{id}_{\vec{m}}$};
\end{tikzpicture}
};
\endxy
=
\xy
(0,0)*{
\begin{tikzpicture}[scale=.3]
	\draw[very thick, blue] (0,.75) to (6,6.75);
	\draw[very thick, blue] (4,.75) to (10,6.75);
	\draw[very thick, blue] (6,.75) to (5.25,1.5);
	\draw[very thick, blue] ( 4.75,2) to (3.25,3.5);
	\draw[very thick, blue] (2.75,4) to (0,6.75);
	\draw[very thick, blue] (10,.75) to (7.25,3.5) ;
	\draw[very thick, blue] (6.75,4) to (5.35,5.5);
	\draw[very thick, blue] (4.75,6) to (4,6.75);
	\draw[very thick] (-.5,7.25) rectangle (4.5,9);
	\draw[very thick] (5.5,7.25) rectangle (10.5,9);
	\draw[very thick, blue] (0,6.75) to (0,7.25);
	\draw[very thick, blue] (4,6.75) to (4,7.25);
	\draw[very thick, blue] (6,6.75) to (6,7.25);
	\draw[very thick, blue] (10,6.75) to (10,7.25);
	\draw[very thick, blue, ->] (0,9) to (0,9.75);
	\draw[very thick, blue, ->] (4,9) to (4,9.75);
	\draw[very thick, blue, ->] (6,9) to (6,9.75);
	\draw[very thick, blue, ->] (10,9) to (10,9.75);
	\node at (0,0) {\tiny $k_1$};
	\node at (2,0) {$\dots$};
	\node at (4,0) {\tiny $k_a$};
	\node at (6,-.1) {\tiny $m_1$};
	\node at (8,0) {$\dots$};
	\node at (10,-.1) {\tiny $m_c$};
	\node at (0,10.25) {\tiny $m_1$};
	\node at (2,10.25) {$\dots$};
	\node at (4,10.25) {\tiny $m_c$};
	\node at (6,10.35) {\tiny $l_1$};
	\node at (8,10.25) {$\dots$};
	\node at (10,10.35) {\tiny $l_b$};
	\node at (8,8.125) {\tiny $u$};
	\node at (2,8.125) {\tiny $\mathrm{id}_{\vec{m}}$};
\end{tikzpicture}
};
\endxy
\]
The equality follows from \fullref{lem-pitchfork}. This
proves the statement.
\end{proof}

The braiding $\beta^{\Sup}_{\cdot,\cdot}$ descends to 
the subquotients $\SuSp$, $\AltSp$ and $\SymSp$ and we denote 
all induced braidings also by $\beta^{\Sup}_{\cdot,\cdot}$. 
They are all given by the formulas in \fullref{defn-crossings1}, 
but some diagrams might be zero 
due to \eqref{eq-alt}.

\begin{cor}\label{cor-braiding}
$(\SuSp,\beta^{\Sup}_{\cdot,\cdot})$, 
$(\AltSp,\beta^{\Sup}_{\cdot,\cdot})$ and $(\SymSp,\beta^{\Sup}_{\cdot,\cdot})$, 
with the braiding $\beta^{\Sup}_{\cdot,\cdot}$ induced from $(\InSp,\beta^{\Sup}_{\cdot,\cdot})$, 
are braided monoidal categories.\qed
\end{cor}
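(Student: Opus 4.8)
The statement is a routine descent-and-restriction argument; the whole content is that the braided structure of $\InSp$ from Proposition~\ref{prop-braiding} survives passage to the subquotients. For $\SuSp$ the plan is to \emph{push the braiding forward} along the quotient functor. By Definition~\ref{defn-spid2}, $\SuSp=\InSp/I$ where $I$ is the monoidal (tensor) ideal generated by the exterior relations~\eqref{eq-alt}, so the quotient functor $Q\colon\InSp\to\SuSp$ is strict monoidal, full, and the identity on objects, and it identifies $\Hom_{\SuSp}(\vec{k}\otimes\vec{l},\vec{l}\otimes\vec{k})$ with a quotient of the corresponding $\Hom$-space in $\InSp$. We therefore let $\beta^{\Sup}_{\cdot,\cdot}$ on $\SuSp$ be the image under $Q$ of the braiding on $\InSp$; concretely this means reading the webs of Definition~\ref{defn-crossings1} in $\SuSp$, the sums in~\eqref{eq-braidedstru1} staying finite and well-defined, with some summands possibly vanishing by~\eqref{eq-alt}.

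All braiding axioms are then inherited by applying the monoidal functor $Q$ to the corresponding statements in $\InSp$. Invertibility: $Q$ preserves isomorphisms, with inverse $Q({\beta^{\Sup}_{\vec{k},\vec{l}}}^{-1})$ given by the inverse braidings from the remark following Lemma~\ref{lem-braidrel} (the same formulas with $q\mapsto q^{-1}$). Naturality: since $Q$ is full, every morphism of $\SuSp$ is $Q(u)$ for a web (or linear combination of webs) $u$, and applying $Q$ to the naturality square established in the proof of Proposition~\ref{prop-braiding} yields naturality in $\SuSp$; equivalently one checks it on the finitely many types of generators, which all lift. The hexagon identities~\eqref{eq-hexid} are equalities of morphisms in $\InSp$, hence hold in $\SuSp$ after applying $Q$. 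Thus $(\SuSp,\beta^{\Sup}_{\cdot,\cdot})$ is braided monoidal.

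For $\AltSp$ and $\SymSp$ the plan is instead to \emph{restrict}. By Definition~\ref{defn-spid3} and Proposition~\ref{prop-green} these are full monoidal subcategories of $\SuSp$, so it suffices to see that the braiding of $\SuSp$ takes objects of $\AltSp$ (resp.\ $\SymSp$) to morphisms of $\AltSp$ (resp.\ $\SymSp$). In $\AltSp$ every object has only black and green entries, so the only crossings that occur are the monochromatic $\beta^{g}_{k,l}$ (with $\beta^{g}_{1,1}=\beta^{r}_{1,1}$ in the purely black case, see Example~\ref{ex-oneoneok}); by~\eqref{eq-braidedstru1} each $\beta^{g}_{k,l}$, and likewise its inverse, is a linear combination of webs with only black and green edges, hence a morphism of $\AltSp$. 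As the inclusion $\AltSp\hookrightarrow\SuSp$ is a faithful monoidal functor, the braiding axioms for $\AltSp$ follow from those for $\SuSp$; the case of $\SymSp$ is identical after exchanging green and red.

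There is no genuine obstacle here: the only point deserving brief care is that the defining relations of $\SuSp$ form a \emph{monoidal} ideal, so that $Q$ is monoidal and $\otimes$ descends, and that, the braiding morphisms being honest webs, pre- and post-composing with a braiding sends a web that is zero in $\SuSp$ (respectively monochromatic) to one that is again zero in $\SuSp$ (respectively monochromatic) -- which is exactly what makes the pushforward and the restriction well-defined.
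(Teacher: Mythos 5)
Your argument is correct and is exactly the approach the paper has in mind: the braiding passes to the quotient $\SuSp$ along the strict monoidal quotient functor, and restricts to the full monoidal subcategories $\AltSp$ and $\SymSp$ because the monochromatic crossing formulas in~\eqref{eq-braidedstru1} stay within black/green (respectively black/red) webs. The paper simply asserts this as an immediate consequence of Proposition~\ref{prop-braiding} and the preceding discussion, omitting the routine verification you spell out.
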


Note that $\CKM$ is also a braided monoidal category, see \cite[Corollary 6.2.3]{ckm}. We 
rescale their braiding by multiplying it with $q^{\frac{kl}{N}}$ and we denote the 
resulting braided monoidal category by $(\CKM,\beta^{\Sup}_{\cdot,\cdot})$.
The following corollary is immediate from \fullref{cor-green2}.

\begin{cor}\label{cor-ckmwebs}
The functor 
$\iota_{1}^{\infty}\colon(\CKM,\beta^{\Sup}_{\cdot,\cdot})\to(\SuSp,\beta^{\Sup}_{\cdot,\cdot})$ 
is an inclusion of a full, braided monoidal subcategory.\qed
\end{cor}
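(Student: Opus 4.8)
The plan is to build on Corollary~\ref{cor-green2}, which already establishes that $\iota_{1}^{\infty}$ is a fully faithful monoidal functor identifying $\CKM$ with the full monoidal subcategory $\AltSp$ of $\SuSp$. The only additional content of the present statement is that, after the rescaling of CKM's braiding by $q^{kl/N}$, the functor $\iota_{1}^{\infty}$ also intertwines the two braidings; that is, $\iota_{1}^{\infty}$ is a \emph{braided} monoidal functor. So it suffices to verify $\iota_{1}^{\infty}(\beta^{\Sup}_{\vec{k},\vec{l}}) = \beta^{\Sup}_{\iota_{1}^{\infty}\vec{k},\iota_{1}^{\infty}\vec{l}}$ for all objects $\vec{k},\vec{l}$ of $(\CKM,\beta^{\Sup}_{\cdot,\cdot})$, where on the left $\beta^{\Sup}$ denotes the rescaled CKM braiding and on the right the braiding of Proposition~\ref{prop-braiding} restricted to $\AltSp$.

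First I would reduce to one-strand objects. Both braidings are natural isomorphisms satisfying the hexagon identities~\eqref{eq-hexid}, and every object of $\CKM$ is a tensor product of one-strand objects (green edges labeled $k\in\Z_{\geq 0}$). Hence, by~\eqref{eq-hexid} together with naturality and the monoidal/functorial data already matched in Corollary~\ref{cor-green2}, both braidings are determined by their elementary components $\beta_{(k),(l)}$ on pairs of one-strand objects. It therefore remains to check the single identity
\[
q^{kl/N}\,\iota_{1}^{\infty}\bigl(\beta^{\CKM}_{(k),(l)}\bigr)=\beta^{g}_{k,l}
\qquad\text{for all }k,l\in\Z_{\geq 0},
\]
where $\beta^{g}_{k,l}$ is the monochromatic green crossing of Definition~\ref{defn-crossings1}.

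This last point is a direct comparison of explicit formulas. The $\Uq(\slnn{N})$-braiding of \cite[Corollary~6.2.3]{ckm} on $\bV_q^k\C_q^N\otimes\bV_q^l\C_q^N$ expands, exactly as in~\eqref{eq-braidedstru1}, as a finite sum over the square webs indexed by $j_1,j_2\geq 0$ with $j_1-j_2=k-l$, whose coefficients differ from the coefficients $(-1)^{k+kl}q^{k}(-q)^{-j_1}$ in~\eqref{eq-braidedstru1} precisely by the global prefactor $q^{-kl/N}$ — this is the familiar ratio between the $\slnn{N}$- and $\gln$-normalizations of the $R$-matrix on exterior powers, the $\gln$ one being $N$-independent. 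Multiplying by the rescaling factor $q^{kl/N}$ built into $(\CKM,\beta^{\Sup}_{\cdot,\cdot})$ makes the two expressions coincide term by term, and the inverse crossings then agree automatically since both are obtained via $q\mapsto q^{-1}$ (Lemma~\ref{lem-braidrel}). The step that I expect to require the most care is pinning down this normalization constant exactly: one must confirm that CKM's conventions for the tag-free braiding, the orientation conventions, and the overall sign $(-1)^{k+kl}$ appearing in~\eqref{eq-braidedstru1} all line up so that the single factor $q^{kl/N}$ accounts for the entire discrepancy. Everything else is immediate from Corollary~\ref{cor-green2}.
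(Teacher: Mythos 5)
Your proof is correct and takes essentially the same approach as the paper, which simply declares the corollary ``immediate from Corollary~\ref{cor-green2}'' after renaming the $q^{kl/N}$-rescaled CKM braiding to $\beta^{\Sup}_{\cdot,\cdot}$. You spell out the one point the paper elides — that the rescaled CKM crossing really does coincide, after applying $\iota_1^\infty$, with the monochromatic green crossing $\beta^g_{k,l}$ of Definition~\ref{defn-crossings1} — and the reduction via naturality and the hexagon identities to one-strand objects is the right way to organize that check.
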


\subsection{A collection of diagrammatic idempotents}\label{sub-idem}

Recall that the \textit{Iwahori--Hecke algebra} $H_K(q)$ is the 
$q$-deformation of the symmetric group algebra $\C[S_K]$ on $K$ letters. It is 
generated by $\{H_i\mid s_i\in S_K\}$ for all transpositions $s_i=(i,i+1)\in S_K$ subject to the 
relations
\begin{equation*}
\begin{gathered}
H_i^2=(q-q^{-1})H_i+1,\quad\text{for }i=1,\dots,K-1,
\\
H_iH_j=H_jH_i,\quad\text{for }|i-j|>1\quad,\quad
H_iH_jH_i=H_jH_iH_j,\quad\text{for }|i-j|=1.
\end{gathered}
\end{equation*}

There is a representation $p_K\colon \C_q(B_K)\to H_K(q)$ 
of the group algebra $\C_q(B_K)$ of the braid group $B_K$ with $K$ strands 
given by sending the braid group generators $b_i$
(between the strands $i$ and $i+1$) to $H_i$. 
Thinking of the generators $H_i$ of $H_K(q)$ as 
crossings also makes sense from the perspective of the webs, as the next lemma shows.

\begin{lem}\label{lem-hecke}
Given $K\in \Z_{\geq 0}$, there is an isomorphism of 
$\C_q$-algebras 
\[
\Phi^{\infty}_{q\mathrm{SW}}\colon H_K(q)
\xrightarrow{\cong}\End_{\InSp}(\vec{K}),\quad H_i
\mapsto 
\xy
(0,0)*{
\begin{tikzpicture}[scale=.3]
	\draw [very thick, ->] (-5,-1) to (-5,1);
	\draw [very thick, ->] (-3,-1) to (-3,1);
	\draw [very thick, ->] (-1,-1) to (1,1);
	\draw [very thick, ->] (-0.25,0.25) to (-1,1);
	\draw [very thick] (0.25,-0.25) to (1,-1);
	\draw [very thick, ->] (3,-1) to (3,1);
	\draw [very thick, ->] (5,-1) to (5,1);
	\node at (-5,-1.4) {\tiny $1$};
	\node at (-3,-1.4) {\tiny $1$};
	\node at (-1,-1.4) {\tiny $1$};
	\node at (1,-1.4) {\tiny $1$};
	\node at (3,-1.4) {\tiny $1$};
	\node at (5,-1.4) {\tiny $1$};
	\node at (-5,1.4) {\tiny $1$};
	\node at (-3,1.4) {\tiny $1$};
	\node at (-1,1.4) {\tiny $1$};
	\node at (1,1.4) {\tiny $1$};
	\node at (3,1.4) {\tiny $1$};
	\node at (5,1.4) {\tiny $1$};
	\node at (-4,0) {\tiny $\cdots$};
	\node at (4,0) {\tiny $\cdots$};
\end{tikzpicture}
};
\endxy
\]
\end{lem}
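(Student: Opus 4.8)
The plan is to first check that $\Phi^{\infty}_{q\mathrm{SW}}$ is a well-defined homomorphism of $\C_q$-algebras, and then to identify $\End_{\InSp}(\vec{K})$ with $H_K(q)$ by comparing $\InSp$ with the web category of~\cite{ckm} and invoking quantum Schur--Weyl duality.

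For well-definedness, first note that the assignment does not depend on a choice of colour, since $\beta^{g}_{1,1}=\beta^{r}_{1,1}$ by Example~\ref{ex-oneoneok}; write $\beta_i\one_{\vec K}$ for the resulting elementary crossing on strands $i$ and $i+1$ (identity elsewhere). The braid relations for the $\beta_i\one_{\vec K}$ are exactly Lemma~\ref{lem-braidrel}, so it remains to verify the quadratic relation $H_i^2=(q-q^{-1})H_i+1$, that is, $(\beta_i\one_{\vec K})^2=(q-q^{-1})\beta_i\one_{\vec K}+\one_{\vec K}$. Being local, this reduces to the case $i=1$, $K=2$. By Example~\ref{ex-oneoneok}, $\beta_1\one_{(1,1)}=q\,\one_{(1,1)}-M$, where $M$ is the web merging the two black strands into a green $2$-labeled edge and splitting them back. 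Digon removal~\eqref{eq-simpler1} with $k=l=1$ (giving the scalar ${2\brack 1}=[2]$) yields $M^2=[2]M$, whence $(\beta_1\one_{(1,1)})^2=q^2\,\one-2qM+[2]M=q^2\,\one+(q^{-1}-q)M=(q-q^{-1})\bigl(q\,\one-M\bigr)+\one=(q-q^{-1})\beta_1\one_{(1,1)}+\one_{(1,1)}$. Hence $\Phi^{\infty}_{q\mathrm{SW}}$ is a well-defined algebra homomorphism.

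For bijectivity, fix any $N\geq K$ and consider the quotient functor $\InSp\to\SuSp$. Every web $\vec K\to\vec K$ carries total label $K$ across each horizontal slice, so it contains no green edge of label $>K\geq N$; hence the exterior relations~\eqref{eq-alt} act trivially on endomorphisms of $\vec K$ and $\End_{\InSp}(\vec K)=\End_{\SuSp}(\vec K)$. By Proposition~\ref{prop-green} the latter is spanned by webs with only black and green edges, i.e.\ it equals $\End_{\AltSp}(\vec K)$, which by Corollary~\ref{cor-green2} is isomorphic to $\End_{\CKM}(\vec K)$ via an isomorphism carrying $\beta_i\one_{\vec K}$ to the crossing of $\CKM$. (Surjectivity of $\Phi^\infty_{q\mathrm{SW}}$ can alternatively be seen directly: by Proposition~\ref{prop-green} and repeated use of~\eqref{eq-simpler1} to ``explode'' green edges, every endomorphism of $\vec K$ is a $\C_q$-combination of compositions of green clasps placed on contiguous strands, and each such clasp is a combination of braids by Remark~\ref{rem-JWnew}.) Now by~\cite{ckm}, $\CKM$ is equivalent to $\Repe{N}$, the full subcategory of $\Uun$-modules tensor generated by the vector representation $\C_q^N$, and under the rescaled braiding of Corollary~\ref{cor-ckmwebs} the crossing goes to the braiding of $\Repe{N}$, i.e.\ to the $R$-matrix on $(\C_q^N)^{\otimes K}$. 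Thus $\End_{\InSp}(\vec K)\cong\End_{\Uun}\bigl((\C_q^N)^{\otimes K}\bigr)$, and for $N\geq K$ quantum Schur--Weyl duality identifies this last algebra with $H_K(q)$ so that $H_i$ corresponds to the $R$-matrix on strands $i,i+1$. Chaining these identifications produces an isomorphism $H_K(q)\xrightarrow{\cong}\End_{\InSp}(\vec K)$ sending $H_i\mapsto\beta_i\one_{\vec K}$, which is exactly $\Phi^{\infty}_{q\mathrm{SW}}$.

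The only genuinely computational step is the quadratic relation above; everything else is bookkeeping with results already established. The point that requires care is the normalization: one must make sure that, with the conventions of Definition~\ref{defn-crossings1} together with the rescaling in Corollary~\ref{cor-ckmwebs}, the elementary crossing on two $1$-labeled strands is a \emph{bona fide} Hecke generator rather than merely a scalar multiple of one. This is guaranteed precisely by the identity $(\beta_{1,1})^2=(q-q^{-1})\beta_{1,1}+\one$ computed above, which forces the normalization in the Schur--Weyl comparison to agree with that of $H_K(q)$, so that the resulting isomorphism is $\Phi^{\infty}_{q\mathrm{SW}}$ and not a twist of it.
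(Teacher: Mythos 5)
Your proof is correct and follows essentially the same strategy as the paper's: verify that $\Phi^{\infty}_{q\mathrm{SW}}$ respects the Hecke relations, then, for $N\geq K$, identify $\End_{\InSp}(\vec{K})$ with $\End_{\SuSp}(\vec{K})$ (since all edge labels in a web $\vec{K}\to\vec{K}$ are at most $K\leq N$) and conclude via quantum Schur--Weyl duality. You additionally spell out the quadratic Hecke relation and route through Corollary~\ref{cor-green2} and the equivalence with $\CKM$ rather than citing Theorem~\ref{thm-equi} directly, but both versions rely on the same forward reference to Section~\ref{sec-proof}, so the difference is expository.
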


In order to prove \fullref{lem-hecke}, 
which will be used in \fullref{sec-applications}, 
we need \fullref{thm-equi}.

\begin{proof}
A direct computation shows that $\Phi_{q\mathrm{SW}}$ 
is a well-defined $\C_q$-algebra homomorphism. 
In fact, the composite $\Gamma\circ \Phi^{\infty}_{q\mathrm{SW}}$ is the isomorphism induced by quantum Schur--Weyl duality.
To see this, let $V=(\C_q^N)^{\otimes K}$ and 
recall that quantum Schur--Weyl duality states that
\begin{equation}\label{eq-schurweyl}
\Phi^N_{q\mathrm{SW}}\colon H_K(q)\twoheadrightarrow
\End_{\Uun}(V)\quad\text{and}\quad\Phi^N_{q\mathrm{SW}}\colon
H_K(q)\xrightarrow{\cong}\End_{\Uun}(V),\text{ if }N\geq K.
\end{equation}
Here $\Phi^N_{q\mathrm{SW}}$ is the 
$\C_q$-algebra homomorphism induced by 
the action of $H_K(q)$ on the $K$-fold 
tensor product $V$. By \fullref{thm-equi}, 
we will get an isomorphism 
$H_K(q)\cong\End_{\SuSp}(\vec{K})$, if $N\geq K$. By 
using \fullref{prop-green}, 
there is a basis of $\End_{\SuSp}(\vec{K})$ 
for $N\geq K$ given by webs with only 
black edges or green edges with labels at most $K$. 
Since $K$ is fixed, a direct 
comparison shows that $\Phi^{\infty}_{q\mathrm{SW}}$ 
has to be an isomorphism as well.
\end{proof}

Let $K\in\Z_{\geq 0}$ and let $\Lambda^+(K)$ denote the set of all 
\textit{Young diagrams with} $K$ \textit{nodes}, e.g.\
\[
\lambda=(4,3,1,1)\in\Lambda^+(9)\leftrightsquigarrow\lambda=\xy(0,0)*{\begin{Young} & & &\cr & &\cr \cr\cr\end{Young}}\endxy\;,\;
\lambda^{T}=(4,2,2,1)\in\Lambda^+(9)
\leftrightsquigarrow\lambda^{\T}=\xy(0,0)*{\begin{Young} & & &\cr & \cr &\cr\cr\end{Young}}\endxy,
\]
where we use the English notation for our Young diagrams. 
Here we have also displayed the \textit{transpose} Young diagram $\lambda^{\T}$ of $\lambda$.
Next, the following definition is motivated by \cite{gyoja} and \cite{am}. 
(It is best explained via examples, cf. \fullref{ex-idemHecke} 
and \fullref{ex-needsalabel}, which the reader might want to check while reading the definition.)

\begin{defn}(\textbf{Gyoja--Aiston idempotents})
\label{defn-idem}
Given $\lambda\in\Lambda^+(K)$, we associate to it 
a primitive \textit{idempotent} $e_q(\lambda)\in\End_{\InSp}(\vec{K})$. 
First we define two idempotents as tensor products of green or red clasps:
\[
e_{\mathrm{col}}(\lambda)=\cal{CL}^g_{\mathrm{col}_1}\otimes\cdots\otimes
\cal{CL}^g_{\mathrm{col}_c},
\quad\quad
e_{\mathrm{row}}(\lambda)=\cal{CL}^r_{\mathrm{row}_1}\otimes\cdots\otimes
\cal{CL}^r_{\mathrm{row}_r},
\]
where $c$ and $r$ are the number of columns and rows of $\lambda$ respectively, and 
$\mathrm{col}_i$ and $\mathrm{row}_i$ denote the number of nodes 
in the $i^{\text{th}}$ column and row.

Denote by $T^{\rightarrow}_{\lambda}$ 
and by $T^{\downarrow}_{\lambda}$ the two 
tableaux of shape $\lambda$ obtained by filling the 
numbers $1,\dots,K$ into the
Young diagram $\lambda$ in order: $\rightarrow$ means rows before columns and 
$\downarrow$ means columns before rows (both from left to right). 
Pick any shortest presentation of the 
permutation $w(\lambda)\in S_K$ permuting 
$T^{\rightarrow}_{\lambda}$ 
to $T^{\downarrow}_{\lambda}$.
Then we define the \textit{quasi-idempotent associated to} $\lambda$ via
\[
\tilde e_q(\lambda)=
e_{\mathrm{col}}(\lambda)\circ\beta^{\Sup}_{w(\lambda)}
\circ e_{\mathrm{row}}(\lambda)\circ(\beta^{\Sup}_{w(\lambda)})^{-1}.
\]
By \cite[Theorem 4.7]{am} (and the fact that their definition agrees with 
ours by \fullref{lem-hecke} and \fullref{rem-JWnew}), 
there exists a non-zero scalar $a(\lambda)\in\C_q$ 
such that $\tilde e_q(\lambda)^2=a(\lambda)\tilde e_q(\lambda)$. 
Thus, we define 
the \textit{idempotent associated to} $\lambda$ 
to be $e_q(\lambda)=\frac{1}{a(\lambda)}\tilde e_q(\lambda)$. 
\end{defn}

These 
idempotents are primitive and orthogonal 
by \cite[Theorem 4.5]{gyoja} and \cite[Theorem 4.7]{am}.

\begin{ex}\label{ex-idemHecke}
If $K=2$, then there are two primitive idempotents, namely
\[
e_q\left(\sign\right)
=
\xymatrix{
\dfrac{1}{[2]}
\xy
(0,0)*{
\begin{tikzpicture}[scale=.3]
	\draw [very thick, green, directed=.55] (0,-1) to (0,.75);
	\draw [very thick, directed=.55] (0,.75) to [out=30,in=270] (1,2.5);
	\draw [very thick, directed=.55] (0,.75) to [out=150,in=270] (-1,2.5); 
	\draw [very thick, directed=.55] (1,-2.75) to [out=90,in=330] (0,-1);
	\draw [very thick, directed=.55] (-1,-2.75) to [out=90,in=210] (0,-1);
	\node at (-1,3) {\tiny $1$};
	\node at (1,3) {\tiny $1$};
	\node at (-1,-3.15) {\tiny $1$};
	\node at (1,-3.15) {\tiny $1$};
	\node at (-0.6,0) {\tiny $2$};
\end{tikzpicture}
};
\endxy\ar@<2pt>@{->}[rr]^/-.5cm/{\text{green to red}} & & 
\dfrac{1}{[2]}
\xy
(0,0)*{
\begin{tikzpicture}[scale=.3]
	\draw [very thick, mycolor, directed=.55] (0,-1) to (0,.75);
	\draw [very thick, directed=.55] (0,.75) to [out=30,in=270] (1,2.5);
	\draw [very thick, directed=.55] (0,.75) to [out=150,in=270] (-1,2.5); 
	\draw [very thick, directed=.55] (1,-2.75) to [out=90,in=330] (0,-1);
	\draw [very thick, directed=.55] (-1,-2.75) to [out=90,in=210] (0,-1);
	\node at (-1,3) {\tiny $1$};
	\node at (1,3) {\tiny $1$};
	\node at (-1,-3.15) {\tiny $1$};
	\node at (1,-3.15) {\tiny $1$};
	\node at (-0.6,0) {\tiny $2$};
\end{tikzpicture}
};
\endxy
=e_q\left(\triv\right)
\ar@<2pt>@{->}[ll]^/.5cm/{\text{red to green}}
}
\] 
Note that $a(\lambda)=1$ for only one column or only one row Young diagrams $\lambda$. 
\end{ex}

\begin{lem}\label{lem-symmidem}
Exchanging green and red sends $e_q(\lambda)$ 
to $e_q(\lambda^{\T})$ modulo a commutator.
\end{lem}

\begin{proof} 
Note that $e_{\mathrm{col}}(\lambda)$ and $e_{\mathrm{row}}(\lambda)$ differ from $e_{\mathrm{row}}(\lambda^{\T})$ and $e_{\mathrm{col}}(\lambda^{\T})$ respectively only in exchanging the colors green and red. On black crossings the green--red symmetry acts by $\beta^{\Sup}_{1,1}\mapsto -(\beta^{\Sup}_{1,1})^{-1}$, on permutation braids as $\beta^{\Sup}_{w}\mapsto (-1)^{\ell(w)} (\beta^{\Sup}_{w^{-1}})^{-1}$ and on the quasi-idempotent $\tilde e_q(\lambda)$ as:
\begin{align*}
\tilde e_q(\lambda) =
e_{\mathrm{col}}(\lambda)\circ\beta^{\Sup}_{w(\lambda)}
\circ e_{\mathrm{row}}(\lambda)\circ(\beta^{\Sup}_{w(\lambda)})^{-1}&\mapsto
e_{\mathrm{row}}(\lambda^{\T})\circ (\beta^{\Sup}_{w(\lambda)^{-1}})^{-1} 
\circ e_{\mathrm{col}}(\lambda^{\T})\circ \beta^{\Sup}_{w(\lambda)^{-1}}
\\
 &=
e_{\mathrm{row}}(\lambda^{\T})\circ (\beta^{\Sup}_{w(\lambda^{\T})})^{-1} 
\circ e_{\mathrm{col}}(\lambda^{\T})\circ \beta^{\Sup}_{w(\lambda^{\T})}.
\end{align*}
In the first line, the signs from the crossing inversions cancel, and in the second line we use $w(\lambda)^{-1}=w(\lambda^{\T})$. The result agrees with $\tilde e_q(\lambda^{\T})$ up to a commutator. This proves the statement of the lemma for the quasi-idempotents. Applying the green--red symmetry to both sides of the equation $\tilde e_q(\lambda)^2=a(\lambda)\tilde e_q(\lambda)$ shows that $a(\lambda)=a(\lambda^{\T})$ and the lemma follows.
\end{proof}

\begin{ex}\label{ex-needsalabel}
For $\lambda=(3,1)\in\Lambda^+(4)$, we have
\[
\lambda=
\xy(0,0)*{\begin{Young} & & \cr \cr\end{Young}}\endxy
\quad,\quad
T^{\rightarrow}_{\lambda}=
\xy(0,0)*{\begin{Young} 1& 2& 3\cr 4\cr\end{Young}}\endxy
\quad,\quad
T^{\downarrow}_{\lambda}=
\xy(0,0)*{\begin{Young} 1& 3& 4\cr 2\cr\end{Young}}\endxy.
\]
Thus, $w=(243)=(23)(34)\in S_4$ permutes $T^{\rightarrow}_{\lambda}$ to 
$T^{\downarrow}_{\lambda}$. Then
\[
\tilde e_q(\lambda)=
\xy
(0,0)*{
\begin{tikzpicture}[scale=.3]
	\draw [very thick, ->] (-2,5) to (-2,6);
	\draw [very thick, ->] (0,5) to (0,6);
	\draw [very thick] (0,2) to (0,3);
	\draw [very thick, ->] (2,2) to (2,6);
	\draw [very thick, ->] (4,2) to (4,6);
	\draw [very thick] (-2,-1) to (-2,3);
	\draw [very thick] (0,-1) to (0,0);
	\draw [very thick] (2,-1) to (2,0);
	\draw [very thick] (4,-4) to (4,0);
	\draw [very thick] (-2,-7) to (-2,-3);
	\draw [very thick] (0,-4) to (0,-3);
	\draw [very thick] (2,-4) to (2,-3);
	\draw [very thick] (0,-7) to (0,-6);
	\draw [very thick] (2,-7) to (2,-6);
	\draw [very thick] (4,-7) to (4,-6);
	\draw [very thick, green] (-2.3,3) rectangle (0.3,5);
	\draw [very thick] (-0.3,0) rectangle (4.3,2);
	\draw [very thick, mycolor] (-2.3,-3) rectangle (2.3,-1);
	\draw [very thick] (-0.3,-6) rectangle (4.3,-4);
	\node at (-1,4) {\tiny $\cal{CL}^g_2$};
	\node at (2,1) {\tiny $\beta^{\Sup}_{w(\lambda)}$};
	\node at (0,-2) {\tiny $\cal{CL}^r_3$};
	\node at (2,-5) {\tiny $(\beta^{\Sup}_{w(\lambda)})^{-1}$};
	\node at (-2,-7.45) {\tiny $1$};
	\node at (0,-7.45) {\tiny $1$};
	\node at (2,-7.45) {\tiny $1$};
	\node at (4,-7.45) {\tiny $1$};
	\node at (-2,6.45) {\tiny $1$};
	\node at (0,6.45) {\tiny $1$};
	\node at (2,6.45) {\tiny $1$};
	\node at (4,6.45) {\tiny $1$};
\end{tikzpicture}
};
\endxy
\xleftrightarrow{\text{green}\leftrightarrow\text{red}}
\xy
(0,0)*{
\begin{tikzpicture}[scale=.3]
	\draw [very thick, ->] (-2,5) to (-2,6);
	\draw [very thick, ->] (0,5) to (0,6);
	\draw [very thick] (0,2) to (0,3);
	\draw [very thick, ->] (2,2) to (2,6);
	\draw [very thick, ->] (4,2) to (4,6);
	\draw [very thick] (-2,-1) to (-2,3);
	\draw [very thick] (0,-1) to (0,0);
	\draw [very thick] (2,-1) to (2,0);
	\draw [very thick] (4,-4) to (4,0);
	\draw [very thick] (-2,-7) to (-2,-3);
	\draw [very thick] (0,-4) to (0,-3);
	\draw [very thick] (2,-4) to (2,-3);
	\draw [very thick] (0,-7) to (0,-6);
	\draw [very thick] (2,-7) to (2,-6);
	\draw [very thick] (4,-7) to (4,-6);
	\draw [very thick, mycolor] (-2.3,3) rectangle (0.3,5);
	\draw [very thick] (-0.3,0) rectangle (4.3,2);
	\draw [very thick, green] (-2.3,-3) rectangle (2.3,-1);
	\draw [very thick] (-0.3,-6) rectangle (4.3,-4);
	\node at (-1,4) {\tiny $\cal{CL}^r_2$};
	\node at (2,1) {\tiny $\scalebox{.85}{$(\beta^{\Sup}_{w(\lambda)^{-1}})^{-1}$}$};
	\node at (0,-2) {\tiny $\cal{CL}^g_3$};
	\node at (2,-5) {\tiny $\beta^{\Sup}_{w(\lambda)^{-1}}$};
	\node at (-2,-7.45) {\tiny $1$};
	\node at (0,-7.45) {\tiny $1$};
	\node at (2,-7.45) {\tiny $1$};
	\node at (4,-7.45) {\tiny $1$};
	\node at (-2,6.45) {\tiny $1$};
	\node at (0,6.45) {\tiny $1$};
	\node at (2,6.45) {\tiny $1$};
	\node at (4,6.45) {\tiny $1$};
\end{tikzpicture}
};
\endxy
\equiv_{\mathrm{tr}}
\xy
(0,0)*{
\begin{tikzpicture}[scale=.3]
	\draw [very thick, ->] (-2,-1) to (-2,0);
	\draw [very thick, ->] (0,-1) to (0,0);
	\draw [very thick, ->] (2,-1) to (2,0);
	\draw [very thick, ->] (4,-4) to (4,0);
	\draw [very thick] (-2,-7) to (-2,-3);
	\draw [very thick] (0,-4) to (0,-3);
	\draw [very thick] (2,-4) to (2,-3);
	\draw [very thick] (0,-7) to (0,-6);
	\draw [very thick] (2,-10) to (2,-6);
	\draw [very thick] (4,-10) to (4,-6);
	\draw [very thick] (-2,-13) to (-2,-9);
	\draw [very thick] (0,-10) to (0,-9);
	\draw [very thick] (0,-13) to (0,-12);
	\draw [very thick] (2,-13) to (2,-12);
	\draw [very thick] (4,-13) to (4,-12);
	\draw [very thick, green] (-2.3,-3) rectangle (2.3,-1);
	\draw [very thick] (-0.3,-6) rectangle (4.3,-4);
	\draw [very thick, mycolor] (-2.3,-9) rectangle (0.3,-7);
	\draw [very thick] (-0.3,-12) rectangle (4.3,-10);
	\node at (-1,-8) {\tiny $\cal{CL}^r_2$};
	\node at (2,-11) {\tiny $\scalebox{.9}{$(\beta^{\Sup}_{w(\lambda^{\T})})^{-1}$}$};
	\node at (0,-2) {\tiny $\cal{CL}^g_3$};
	\node at (2,-5) {\tiny $\beta^{\Sup}_{w(\lambda^{\T})}$};
	\node at (-2,-13.45) {\tiny $1$};
	\node at (0,-13.45) {\tiny $1$};
	\node at (2,-13.45) {\tiny $1$};
	\node at (4,-13.45) {\tiny $1$};
	\node at (-2,0.45) {\tiny $1$};
	\node at (0,0.45) {\tiny $1$};
	\node at (2,0.45) {\tiny $1$};
	\node at (4,0.45) {\tiny $1$};
\end{tikzpicture}
};
\endxy
=
\tilde e_q(\lambda^{\T}).
\]
Here $\equiv_{\mathrm{tr}}$ means equal modulo a commutator and the scaling factor in this case is $a(\lambda)=\frac{[4]}{[2][3]}=a(\lambda^{\T})$.
\end{ex}

\begin{rem}\label{re-young}
For $N\geq K$, the $H_K(q)$-module $(\C_q^N)^{\otimes K}$ decomposes into 
\[
\bigoplus_{\lambda\in \Lambda^+(K)}(S^{\lambda})^{\oplus m_{\lambda}}
\]
where the $S^{\lambda}$ are the irreducible \textit{Specht modules} for $H_K(q)$ and $m_{\lambda}$ are their multiplicities. The primitive idempotents $e_q(\lambda)$ from \fullref{defn-idem} are quantizations of Young symmetrizers that project onto $S^{\lambda}$. Note that a braid-conjugate of $e_q(\lambda)$
might project onto a different copy of $S^{\lambda}$ in the above decomposition. 
\end{rem}
%
%
\section{Proofs of the diagrammatic presentations}\label{sec-proof}
This section contains the proof of our main theorems.

\subsection{Super \texorpdfstring{$q$}{q}-Howe duality}\label{sub-superHowe}

Let $m,n\in\Z_{\geq 0}$. We start by recalling the \textit{quantum general linear 
superalgebra} $\Us$ and its \textit{idempotented form} $\Usd$. We 
follow the conventions used in \cite{zh}, but adapt Zhang's notation 
to be closer to the one from \cite{ckm}.

To this end, recall that the $\glmn$-weight 
lattice is isomorphic to $\Z^{m+n}$ and we denote 
the $\glmn$-weights usually by vectors $\vec{k}=(k_1,\dots,k_m,k_{m+1},\dots,k_{m+n})$. For 
$\mathbb{I}=\mathbb{I}_0\cup \mathbb{I}_1$  
with $\mathbb{I}_0=\{1,\dots,m\}$ (even part) and 
$\mathbb{I}_1=\{m+1,\dots,m+n\}$ (odd part) define
\[
|i|=\begin{cases}
0,&\text{if }i\in \mathbb{I}_0=\{1,\dots,m\},\\
1,&\text{if }i\in \mathbb{I}_1=\{m+1,\dots,m+n\}.\\
\end{cases}
\]
The notation $|\cdot|$ means the \textit{super} degree (which is a $\Z/2$-degree).
We use a similar notation for all $\Z/2$-graded spaces where we, by convention, 
always consider degrees modulo $2$ in the following. 
Moreover, let 
$\epsilon_i=(0,\dots,0,1,0,\dots,0)\in \Z^{m+n}$, with 
$1$ being in the $i^{\text{th}}$ 
coordinate, and denote by $\alpha_i=\epsilon_i-\epsilon_{i+1}
=(0,\dots,1,-1,\dots,0)\in\Z^{m+n}$ for 
$i\in\mathbb{I}\!-\!\{m+n\}$ the $i^{\text{th}}$ simple root. Recall that the \textit{super} Euclidean 
inner product on $\Z^{m+n}$ is given by  
$(\epsilon_i,\epsilon_j)_{\Su}=(-1)^{|i|}\delta_{i,j}$.

\begin{defn}\label{defn-glmn} 
Let $m,n\in\Z_{\geq 0}$. The \textit{quantum general linear 
superalgebra} 
$\Us$ is 
the associative, $\Z/2$-graded, unital $\C_q$-algebra 
generated by $L^{\pm 1}_i$, for $i\in\mathbb{I}$, 
and $F_{i}$ and $E_i$, for $i\in\mathbb{I}\!-\!\{m+n\}$, subject to 
the \textit{non-super} 
relations 
\begin{gather*}
L_{i}L_{j}=L_{j}L_{i},\quad 
L_iL_i^{-1}=L_i^{-1}L_i=1,\quad
L_{i}F_{j}=q^{- (\epsilon_{i},\alpha_{j})_{\Su}}F_{j}L_{i},\quad
L_{i}E_{j}=q^{(\epsilon_{i},\alpha_{j})_{\Su}}E_{j}L_{i},
\\
E_{i}F_{j} - F_{j}E_{i} = 
(-1)^{|i|}\delta_{i,j}\dfrac{L_{i}L_{i+1}^{-1}-L_{i}^{-1}L_{i+1}}{q-q^{-1}},\text{ if } i\neq m,
\\
[2]F_{i}F_{j}F_{i}=F_{i}^2F_{j}+F_{j}F_{i}^2,
\quad\text{if } |i-j|=1, \; i\neq m, \qquad
F_{i}F_{j}-F_{j}F_{i}=0, \quad\text{if }|i-j|>1,
\\
[2]E_{i}E_{j}E_{i}=E_{i}^2E_{j}+E_{j}E_{i}^2,
\quad\text{if } |i-j|=1, \; i\neq m, \qquad
E_{i}E_{j}-E_{j}E_{i}=0,\quad\text{if }|i-j|>1,
\end{gather*}
(for suitable $i,j\in\mathbb{I}$) and the \textit{super relations}
\begin{gather*}
\begin{aligned}
F_m^2=0=E_m^2,&\quad E_{m}F_{m} + F_{m}E_{m} = 
\dfrac{L_{m}L_{m+1}^{-1}-L_{m}^{-1}L_{m+1}}{q-q^{-1}},
\\
[2]F_{m}F_{m+1}F_{m-1}F_{m}=&
F_{m}F_{m+1}F_{m}F_{m-1}+F_{m-1}F_{m}F_{m+1}F_{m}
\\
&+F_{m+1}F_{m}F_{m-1}F_{m}+F_{m}F_{m-1}F_{m}F_{m+1},
\\
[2]E_{m}E_{m+1}E_{m-1}E_{m}=&
E_{m}E_{m+1}E_{m}E_{m-1}+E_{m-1}E_{m}E_{m+1}E_{m}\\
&+E_{m+1}E_{m}E_{m-1}E_{m}+E_{m}E_{m-1}E_{m}E_{m+1}.
\end{aligned}
\end{gather*}
Also, $|L_i|=0$ for $i\in\mathbb{I}$, 
$|F_i|=|E_i|=0$ for $i\in\mathbb{I}-\{m\}$ and $|F_m|=|E_m|=1$.
\end{defn}

We recover 
$\Uq(\gln)$ by setting $m=N$ and $n=0$. We write 
$\mathbb{I}_N=\{1,\dots,N\}$ in the following to distinguish it from 
$\mathbb{I}$ as above. Note that $\Uq(\gln)$ 
is concentrated in degree $0$.

The algebra $\Us$ is a $\Z/2$-graded Hopf algebra with coproduct $\Delta$, 
antipode $S$ and the counit $\varepsilon$ given by
\begin{gather*}
\Delta(F_i)=F_i\otimes 1+L^{-1}_iL_{i+1}\otimes F_i,\quad
\Delta(E_i)=E_i\otimes L_iL_{i+1}^{-1}+1\otimes E_i,\quad\Delta(L_i)=L_i\otimes L_i,\\
S(F_i)=-L_iL_{i+1}^{-1}F_i,\; S(E_i)=-E_iL^{-1}_iL_{i+1},\; S(L_i)=L_i^{-1},\quad\varepsilon(F_i)=\varepsilon(E_i)=0,\; \varepsilon(L_i)=1.
\end{gather*}

In the spirit of Lusztig \cite[Chapter 23]{lus}, we now
adjoin for all $\vec{k}\in\Z^{m+n}$ 
idempotents $1_{\vec{k}}$ of super degree $|1_{\vec{k}}|=0$ to $\Us$.
Denote by $I$ the ideal generated by
\begin{gather*}
\begin{aligned}
1_{\vec{k}}1_{\vec{l}} = \delta_{\vec{k},\vec{l}}1_{\vec{k}},\quad &
\quad L_i1_{\vec{k}} = q^{k_i(\epsilon_i,\epsilon_i)_{\Su}}1_{\vec{k}},   
\\
1_{\vec{k}-\alpha_i}F_{i}1_{\vec{k}}=F_{i}1_{\vec{k}} = 1_{\vec{k}-\alpha_i}F_{i},\quad &\quad 1_{\vec{k}+\alpha_i}E_{i}1_{\vec{k}}=E_{i}1_{\vec{k}} = 1_{\vec{k}+\alpha_i}E_{i}.
\end{aligned}
\end{gather*}

\begin{defn}\label{defn-blm} Define by
\[
\Usd=({\textstyle\bigoplus_{\vec{k},\vec{l}\in\Z^{m+n}}1_{\vec{l}}}\,\Us 1_{\vec{k}})/I
\]
the 
\textit{idempotented} quantum general linear 
superalgebra.
\end{defn}

\begin{rem}\label{rem-relations-in-blm}
One can view $\Ud(\glmn)$ as generated by the \textit{divided powers}
\[
F^{(j)}_i=\frac{F^j_i}{[j]!}\quad\text{ and }\quad E^{(j)}_i=\frac{E^j_i}{[j]!},\quad\text{for }i\in\mathbb{I}\!-\!\{m+n\}.
\] 
This allows the definition of an integral 
version of $\Ud(\glmn)$. For simplicity, we work over $\C_q$ in this paper and we do not consider the 
integral version.

The relations in $\Usd$ are obtained from the relations 
of $\Us$. For convenience we list the new versions of the 
\textit{super} relations:
\begin{equation}\label{eq-super1}
\begin{gathered}
F_m^2\one_{\vec{k}}=0=E_m^2\one_{\vec{k}},\quad\quad E_mF_m\one_{\vec{k}}+F_mE_m\one_{\vec{k}}
=[k_m+k_{m+1}]\one_{\vec{k}},\\
\begin{aligned}
[2]F_{m}F_{m+1}F_{m-1}F_{m}\one_{\vec{k}}=&
F_{m}F_{m+1}F_{m}F_{m-1}\one_{\vec{k}}+F_{m-1}F_{m}F_{m+1}F_{m}\one_{\vec{k}}
\\
&+F_{m+1}F_{m}F_{m-1}F_{m}\one_{\vec{k}}+F_{m}F_{m-1}F_{m}F_{m+1}\one_{\vec{k}},
\end{aligned}
\end{gathered}
\end{equation}
the second of which we call the \textit{super commutation relation} 
(the third type of relation holds for $E$'s as well).

It is convenient for us hereinafter to view $\Usd$ as a category whose 
objects are the $\glmn$-weights 
$\vec{k}\in\Z^{m+n}$ and $\Hom_{\Usd}(\vec{k},\vec{l}) = 1_{\vec{l}}\Usd 1_{\vec{k}}$.
\end{rem}

Recall that the vector representation $\C_q^{m|n}$ of $\Us$ has a basis given by
$\{x_i\mid i\in\mathbb{I}\}$ with super degrees $|x_i|=|i|$ for $i\in\mathbb{I}$, 
where the $\Us$-action is defined via
\[
F_i(x_j)=\begin{cases}x_{j+1}, &\text{if }i=j,\\0, &\text{otherwise,}\end{cases}\quad
E_i(x_j)=\begin{cases}x_{j-1}, &\text{if }i=j-1,\\0, &\text{otherwise,}\end{cases}\quad
L_i(x_j)=q^{(\epsilon_i,\epsilon_j)_{\Su}}x_{j}.
\]

We need to consider the \textit{quantum exterior superalgebra} 
$\bV_q^{\bullet}(\C_q^{m|n}\otimes \C_q^N)$. 
Recall that a vector space $V=V_0\oplus V_1$ with a $\Z/2$-grading is 
called a \textit{super} vector space. Here $V_0$ and $V_1$ are its degree $0$ and $1$ parts. These graded parts of $\C_q^{m|n}$ have bases given by $\{x_i\mid i\in\mathbb{I}_0\}$ and $\{x_{i}\mid i\in\mathbb{I}_1\}$ respectively. In contrast, $\C_q^{N}=(\C_q^{N})_0$ is concentrated in degree zero and we denote its basis 
by $\{y_j\mid j\in\mathbb{I}_N\}$.
Additionally, the tensor product $V\otimes W$ of 
two super vector spaces $V$ and $W$ is a super vector space with $v\otimes w$ of 
degree $|v|+|w|$ for two homogeneous elements $v$ and $w$. Specifically, 
$\C_q^{m|n}\otimes\C_q^{N}$ is a super vector space with 
$(\C_q^{m|n}\otimes\C_q^{N})_0$ spanned by 
$\{z_{ij}=x_i\otimes y_j\mid i\in\mathbb{I}_0,\;j\in\mathbb{I}_N\}$ and 
$(\C_q^{m|n}\otimes\C_q^{N})_1$ spanned by 
$\{z_{ij}=x_i\otimes y_j\mid i\in\mathbb{I}_1,\;j\in\mathbb{I}_N\}$. Here $|z_{ij}|=|i|$. 
Note that $(\C_q^{m|n}\otimes\C_q^{N})^{\otimes K}$ 
is a $\Z/2$-graded $\Us\otimes\Uq(\gln)$-module for all $K\in\Z_{\geq 0}$ by using the 
Hopf algebras structures of $\Us$ and $\Uq(\gln)$.

We denote by $\Sym^2_q(\C_q^{m|n}\otimes\C_q^{N})$ 
the \textit{second symmetric super power} as in \cite[(4.1)]{qs}, 
but with $q$ inverted in their formulas.
Armed with this notation, we define the \textit{quantum exterior superalgebra}
\[
\bV^{\bullet}_q(\C_q^{m|n}\otimes\C_q^{N})=
T(\C_q^{m|n}\otimes\C_q^{N})/\Sym^2_q(\C_q^{m|n}\otimes\C_q^{N}),
\]
where $T(\C_q^{m|n}\otimes\C_q^{N})=\bigoplus_{K\in\Z_{\geq 0}}(\C_q^{m|n}\otimes\C_q^{N})^{\otimes K}$ 
denotes the \textit{super 
tensor algebra} of $\C_q^{m|n}\otimes\C_q^{N}$. This is a 
$\Us\otimes\Uq(\gln)$-module and decompose as
\[
\bV^{\bullet}_q(\C_q^{m|n}\otimes\C_q^{N})\cong{\textstyle\bigoplus_{K\in\Z_{\geq 0}}}\bV^K_q(\C_q^{m|n}\otimes\C_q^{N}).
\]
The space $\bV^K_q(\C_q^{m|n}\otimes\C_q^{N})$ is called 
the \textit{degree} $K$ \textit{part} of $\bV^{\bullet}_q(\C_q^{m|n}\otimes\C_q^{N})$.

\begin{rem}\label{rem-others}
We can recover
the degree $K$ part of the quantum exterior 
algebra $\bV^K_q(\C_q^{m}\otimes\C_q^{N})$ by setting $n=0$ 
and, by \cite[Remark 2.1]{sar}, 
the degree $K$ part of the quantum symmetric 
algebra $\Sym^K_q(\C_q^{n}\otimes\C_q^{N})$ by setting $m=0$. 
These were originally
defined in \cite[Definition 2.7]{bz1} and used in \cite[Section 4.2]{ckm} and 
in \cite[Section 2.1]{rt} to study 
skew and symmetric $q$-Howe duality.
\end{rem}

\begin{ex}\label{ex-altsuper2}
Write $z_{\vec{ij}}=z_{i_1j_1}\otimes \cdots \otimes z_{i_Kj_K}$ and 
$z_{i_kj_k}\preceq z_{i_{k+1}j_{k+1}}$ for the anti-lexicographical order 
on the indices of the $z_{ij}$. Then
$\bV^{K}_q(\C_q^{m|n}\otimes\C_q^{N})$ 
has a basis given by (cf. \cite[Lemma 4.1]{qs})
\begin{equation}\label{eq-basis}
\left\{z_{\vec{ij}}
\middle|
z_{i_kj_k}\preceq z_{i_{k+1}j_{k+1}},\;\;
\begin{matrix} 1\leq i_1\leq\cdots\leq i_K\leq m+n,\\
1\leq j_1\leq\cdots\leq j_K\leq N,
\end{matrix}\;\text{ and }\;
\begin{matrix} i_k=i_{k+1},\\
j_k=j_{k+1},
\end{matrix}\Rightarrow
|i_k|=1
\right\}.
\end{equation} 
By setting $m=1$ and $n=0$, we obtain the (usual) basis for 
$\bV^{K}_q\C_q^{N}$ of the form
\begin{equation}\label{eq-basis2}
\left\{y_{i_1}\otimes \cdots \otimes y_{i_K}
\mid
1\leq y_1<\cdots<y_K\leq N
\right\},
\end{equation}
while setting $m=0$ and $n=1$ gives the (usual) 
basis for $\Sym^{K}_q\C_q^{N}$ of the form
\begin{equation}\label{eq-basis3}
\left\{y_{i_1}\otimes \cdots \otimes y_{i_K}
\mid
1\leq y_1\leq \cdots\leq y_K\leq N
\right\}.
\end{equation}
These are precisely the usual (non-super) 
bases, see for example \cite[Section 2.4]{bz1}.
\end{ex}

We call a $\glmn$-weight $\lambda=(\lambda_1,\dots,\lambda_{m+n})\in\Z^{m+n}$ a 
\textit{dominant integral} $\glmn$-weight 
if it is a dominant integral $\glnn{m}\oplus\glnn{n}$-weight. We only need $\lambda$ that are $(m|n)$-hook Young diagrams, i.e. diagrams that fit into a hook-shaped region with one horizontal arm of height $m$ and 
one vertical 
arm of width $n$ (here we use the conventions 
from \cite[Definition 2.10]{cw}). The following figure shows an $(m|n)$-hook Young diagram $\lambda$ and a box-shaped Young diagram that is not an $(m|n)$-hook.

\[
\begin{tikzpicture}[anchorbase]
	\draw [thick, dotted] (0.25,-1) to (0.25,-.25) to (1,-.25);
	\draw [thick, dotted] (-.5,-1) to (-.5,.5) to (1,.5);
	\draw [very thick, blue] (-.5,.5) to (-.5,-.5) to (-.25,-.5) to (-.25,0) to (.5,0) to (.5,.5) to (-.5,.5);
	\node at (-.25,.25) {\tiny $\lambda$};
	\draw [thick, <->] (-.5,-.75) to (0.25,-.75);
	\draw [thick, <->] (.75,.5) to (.75,-.25);
%
	\node at (.92,.125) {\tiny $m$};
	\node at (-.125,-.9) {\tiny $n$};
\end{tikzpicture}
\quad , \quad
\begin{tikzpicture}[anchorbase]
	\draw [thick, dotted] (0.25,-1) to (0.25,-.25) to (1,-.25);
	\draw [thick, dotted] (-.5,-1) to (-.5,.5) to (1,.5);
	\draw [very thick, red] (-.5,.5) to (-.5,-.5) to  (.5,-.5) to (.5,.5) to (-.5,.5);
	\draw [thick, <->] (-.5,-.75) to (0.25,-.75);
	\draw [thick, <->] (.75,.5) to (.75,-.25);
%
	\node at (.92,.125) {\tiny $m$};
	\node at (-.125,-.9) {\tiny $n$};
\end{tikzpicture}
\]

We call a dominant 
integral $\glmn$-weight $\lambda$ an 
\textit{$(m|n,N)$-supported} 
$\glmn$\textit{-weight} if it corresponds to an 
$(m|n)$-hook Young diagram with at most $N$ columns. For each such $\lambda$ there exists an irreducible 
$\Us$-module $L_{m|n}(\lambda)$ and an irreducible
$\Uq(\gln)$-module $L_N(\lambda^{\T})$, see e.g. \cite[Section 2.5]{kwon}.

\begin{thm}(\textbf{Super $q$-Howe duality})\label{thm-superHowe} 
We have the following.
\begin{itemize}
\item[(a)] Let $K \in\Z_{\geq 0}$. The actions of $\Us$ and $\Uq(\gln)$ on 
$\bV_q^{K}(\C_q^{m|n}\otimes \C_q^N)$ commute 
and generate each others commutant.
\item[(b)] There exists an isomorphism 
\[
\bV_q^{\bullet}(\C_q^{m|n}\otimes \C_q^N) \cong 
(\bV_q^{\bullet}\C_q^N)^{\otimes m}\otimes(\Sym_q^{\bullet}\C_q^N)^{\otimes n}
\] 
of $\Uq(\gln)$-modules under which
the $\vec{k}$-weight space of 
$\bV_q^{\bullet}(\C_q^{m|n}\otimes \C_q^N)$ 
(considered as a $\Us$-module)
is identified with
\begin{equation}\label{eq-simply}
\bV_q^{\vec{k}_0}\C_q^N\otimes\Sym_q^{\vec{k}_1}\C_q^N=
\bV_q^{k_{1}}\C_q^N\otimes\cdots\otimes\bV_q^{k_{m}}\C_q^N
\otimes\Sym_q^{k_{m+1}}\C_q^N\otimes\cdots\otimes\Sym_q^{k_{m+n}}\C_q^N.
\end{equation}
Here $\vec{k}=(k_1,\dots,k_{m+n})$, $\vec{k}_0=(k_1,\dots,k_m)$ and 
$\vec{k}_1=(k_{m+1},\dots,k_{m+n})$.
\item[(c)] As $\Us\otimes\Uq(\gln)$-modules, we have a decomposition 
of the form
\[
\bV_q^{K}(\C_q^{m|n}\otimes \C_q^N)
\cong{\textstyle\bigoplus_{\lambda}}L_{m|n}(\lambda)\otimes L_N(\lambda^{\T}),
\]
where we sum over all 
$(m|n,N)$-supported 
$\glmn$-weights $\lambda$ whose entries sum up to $K$. 
This induces a decomposition
\[
\bV_q^{\bullet}(\C_q^{m|n}\otimes \C_q^n) 
\cong{\textstyle\bigoplus_{\lambda}}L_{m|n}(\lambda)\otimes L_N(\lambda^{\T}),
\]
where we sum over all 
$(m|n,N)$-supported $\glmn$-weights $\lambda$.
\end{itemize}
\end{thm}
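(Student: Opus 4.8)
The plan is to deduce all three parts from two ingredients: the skew and symmetric quantum Howe dualities of \cite{ckm} and \cite{rt}, and a factorization of the quantum exterior superalgebra that isolates the even and odd parts of $\C_q^{m|n}$. Write $\C_q^{m|n}=\C_q^{m|0}\oplus\C_q^{0|n}$ as a super vector space, with $\C_q^{m|0}$ purely even and $\C_q^{0|n}$ purely odd, so that $\C_q^{m|n}\otimes\C_q^N=(\C_q^{m|0}\otimes\C_q^N)\oplus(\C_q^{0|n}\otimes\C_q^N)$ with the first summand even and the second odd. First I would establish
\[
\bV_q^{\bullet}(\C_q^{m|n}\otimes\C_q^N)\;\cong\;\bV_q^{\bullet}(\C_q^m\otimes\C_q^N)\;\otimes\;\Sym_q^{\bullet}(\C_q^n\otimes\C_q^N)
\]
as $\Uq(\gln)$-modules, compatibly with the $\glmn$-weight grading (the $\Us$-action on the left being induced from its action on $\C_q^{m|n}$). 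The point to be careful with is a sign: in the quantum super setting the exterior algebra of a purely odd space is the quantum \emph{symmetric} algebra, since the extra $(-1)$ produced by odd$\,\otimes\,$odd turns the defining relations around; this is exactly the content of Example~\ref{ex-altsuper2} together with Remark~\ref{rem-others}, and can be extracted from \cite[Lemma~4.1]{qs} and \cite[Definition~2.7]{bz1}.

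Given this factorization, part (b) is essentially immediate. Skew $q$-Howe duality (\cite[Section~4.2]{ckm}) identifies the $\vec{k}_0$-weight space of $\bV_q^{\bullet}(\C_q^m\otimes\C_q^N)$ with $\bV_q^{k_1}\C_q^N\otimes\cdots\otimes\bV_q^{k_m}\C_q^N$, symmetric $q$-Howe duality (\cite[Section~2.1]{rt}) identifies the $\vec{k}_1$-weight space of $\Sym_q^{\bullet}(\C_q^n\otimes\C_q^N)$ with $\Sym_q^{k_{m+1}}\C_q^N\otimes\cdots\otimes\Sym_q^{k_{m+n}}\C_q^N$, and taking the tensor product of the two identifications yields \eqref{eq-simply} (and in particular the displayed $\Uq(\gln)$-module isomorphism).

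For part (a) I would first check, on the Chevalley generators, that the $\Us$- and $\Uq(\gln)$-actions on $\bV_q^{K}(\C_q^{m|n}\otimes\C_q^N)$ commute. Since $E_i,F_i,L_i$ with $i\neq m$ have super degree $0$, this reduces for those generators to the monochromatic skew and symmetric cases; the only genuinely new computation is for $E_m,F_m$, where one combines the weight description from (b) with the decomposition $\C_q^N\otimes\C_q^N\cong\bV_q^2\C_q^N\oplus\Sym_q^2\C_q^N$ of $\Uq(\gln)$-modules, this being the representation-theoretic shadow of the super commutation relation in \eqref{eq-super1} (equivalently of the dumbbell relation \eqref{eq-dumb}). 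To pass from commuting actions to the double centralizer property, I would use that the category of finite-dimensional type $1$ $\Uq(\gln)$-modules is semisimple at generic $q$, so the image of $\Uq(\gln)$ in $\End_{\C_q}(\bV_q^{K}(\C_q^{m|n}\otimes\C_q^N))$ is a semisimple subalgebra; the double centralizer theorem then reduces everything to showing that $\Us$ surjects onto $\End_{\Uq(\gln)}(\bV_q^{K}(\C_q^{m|n}\otimes\C_q^N))$. This last surjectivity I would get by comparison with $q=1$: both actions are defined over $\C_q$, the $(\glmn\text{-weight},\gln\text{-weight})$-bigraded dimension of $\bV_q^{K}(\C_q^{m|n}\otimes\C_q^N)$ is independent of $q$ by the basis \eqref{eq-basis}, and at $q=1$ surjectivity is classical super Howe duality for the pair $U(\glmn)$, $U(\gln)$; since the dimension of the $\Uq(\gln)$-invariants in $\End_{\C_q}(\bV_q^{K}(\C_q^{m|n}\otimes\C_q^N))$ is locally constant, surjectivity propagates to generic $q$. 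Alternatively, one simply cites \cite{zh}.

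Finally, part (c) follows from (a) and semisimplicity of the $\Uq(\gln)$-side. The double centralizer theorem gives a decomposition $\bV_q^{K}(\C_q^{m|n}\otimes\C_q^N)\cong\bigoplus_{\nu}M_{\nu}\otimes L_N(\nu)$, summed over the $\gln$-dominant weights $\nu$ that occur, with each $M_{\nu}$ an irreducible module for the image of $\Us$. By (b), the occurring $\nu$ are precisely the constituents of $(\bV_q^{\bullet}\C_q^N)^{\otimes m}\otimes(\Sym_q^{\bullet}\C_q^N)^{\otimes n}$, i.e.\ the Young diagrams in the $(m|n)$-hook with at most $N$ columns (Pieri and Littlewood--Richardson for exterior and symmetric powers), the hook condition being the standard one for $(m|n)$-hook Young diagrams, cf.\ \cite[Theorem~3.3]{cw}; matching the $\glmn$-weight multiplicities of $M_{\nu}$ against the tensor-product multiplicities visible in \eqref{eq-simply} then forces $M_{\nu}\cong L_{m|n}(\nu^{\T})$. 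Reindexing by $\lambda=\nu^{\T}$ gives the stated decomposition of $\bV_q^{K}(\C_q^{m|n}\otimes\C_q^N)$, and summing over $K$ yields the $\bV_q^{\bullet}$-version. I expect the main obstacle to be the double-centralizer half of part (a): because $\Us$ is not semisimple the argument cannot be run symmetrically, and the surjectivity of $\Us$ onto the $\Uq(\gln)$-commutant must be routed either through the $q=1$ specialization of classical super Howe duality or through \cite{zh}; a secondary, but purely bookkeeping, point is keeping track of the signs in the even/odd factorization of $\bV_q^{\bullet}(\C_q^{m|n}\otimes\C_q^N)$.
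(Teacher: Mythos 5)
Your proposal is essentially correct and lands in the same place as the paper for part (b), the only part the paper actually proves in detail; the paper cites \cite[Theorem~2.2]{wz} and \cite[Theorem~4.2]{qs} for (a) and (c), whereas you sketch proofs (while also noting the citation option). For (b), the paper does not pass through an explicit algebra factorization $\bV_q^{\bullet}(\C_q^{m|n}\otimes\C_q^N)\cong\bV_q^{\bullet}(\C_q^m\otimes\C_q^N)\otimes\Sym_q^{\bullet}(\C_q^n\otimes\C_q^N)$: instead, it directly builds $\Uq(\gln)$-intertwiners $T^e_i\colon\bV_q^{k}\C_q^N\hookrightarrow\bV_q^{k}(\C_q^{m|n}\otimes\C_q^N)$ and $T^s_i\colon\Sym_q^{k}\C_q^N\hookrightarrow\bV_q^{k}(\C_q^{m|n}\otimes\C_q^N)$ from the bases, invokes injectivity from \cite[Theorem~4.2.2]{ckm} and \cite[Theorem~2.6]{rt}, tensors them into a map $T$, and concludes isomorphism by a dimension count against \eqref{eq-basis}. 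Your factorization lemma is the same content repackaged, and it requires essentially the same verification; the paper's route avoids having to establish the factorization as a statement about algebras. Your argument for (a)--(c) via semisimplicity, double centralizer, and specialization at $q=1$ is a reasonable reconstruction of what underlies \cite{wz} and \cite{qs}, though it is more work than the paper undertakes.

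One concrete bookkeeping slip in your sketch of (c): the $\Uq(\gln)$-constituents $\nu$ of $(\bV_q^{\bullet}\C_q^N)^{\otimes m}\otimes(\Sym_q^{\bullet}\C_q^N)^{\otimes n}$ are the $(n|m)$-hooks with at most $N$ rows (Pieri: $m$ vertical strips give at most $m$ columns, then $n$ horizontal strips force $\nu_{n+1}\leq m$), not $(m|n)$-hooks with at most $N$ columns. Setting $\lambda=\nu^{\T}$ then makes $\lambda$ an $(m|n)$-hook with at most $N$ columns, which is the $(m|n,N)$-support condition in the statement; with the transposition fixed your chain of claims becomes internally consistent and matches the theorem.
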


\begin{rem}\label{rem-howe}
Symmetric and skew
Howe duality for the pair $(\mathrm{GL}_m,\mathrm{GL}_N)$ is 
originally due to Howe, see \cite[Section 2 and Section 4]{howe}.
Note that the non-quantum version of \fullref{thm-superHowe} can be found for 
example in \cite[Theorem 3.3]{cw} or \cite[Proposition 2.2]{sar}. Moreover, 
the ``dual'' of \fullref{thm-superHowe}, given by considering $\Uun$ as the 
Howe dual group instead of $\Us$, can be found in \cite[Proposition 4.3]{qs}.
\end{rem}

\begin{proof}
Parts (a) and (c) are proven in \cite[Theorem 2.2]{wz} or in \cite[Theorem 4.2]{qs} 
and only (b) remains to be verified.
For this purpose, we use the bases from \eqref{eq-basis}, \eqref{eq-basis2}
and \eqref{eq-basis3} to define:
\begin{gather*}
T^e_i\colon \bV_q^{k}(\C_q^N)\to \bV_q^{k}(\C_q^{m|n}\otimes \C_q^N),\quad y_{j_1}\otimes\cdots\otimes y_{j_k}\mapsto z_{ij_1}\otimes\dots\otimes z_{ij_k},\quad i\in\mathbb{I}_0,
\\
T^s_i\colon \Sym_q^{k}(\C_q^n)\to \bV_q^{k}(\C_q^{m|n}\otimes \C_q^N),\quad y_{j_1}\otimes\cdots\otimes y_{j_k}\mapsto z_{ij_1}\otimes\dots\otimes z_{ij_k},\quad i\in\mathbb{I}_1.
\end{gather*}
That these maps are well-defined $\Uq(\gln)$-intertwiners follows from the explicit 
description in \fullref{ex-altsuper2}.
Injectivity 
was shown in \cite[Theorem 4.2.2]{ckm} for the first and 
in \cite[Theorem 2.6]{rt} for the second map.
Thus, 
for $\vec{k}\in\Z^{m+n}$ with 
$k_1+\dots+k_{m+n}=K$, we see 
that
\[
T\colon
{\textstyle\bigoplus_{\vec{k}\in\Z^{m+n}_{\geq 0}}}
\bV_q^{\vec{k}_0}\C_q^N\otimes\Sym_q^{\vec{k}_1}\C_q^N\to 
\bV_q^{K}(\C_q^{m|n}\otimes \C_q^N)
\]
given by 
\[
T(v_1\otimes\cdots\otimes v_{m+n})= T^e_1(v_1)\otimes\cdots\otimes T^e_m(v_m)\otimes T^s_{m+1}(v_{m+1})\otimes\cdots\otimes T^s_{m+n}(v_{m+n}),
\]
is a $\Uq(\gln)$-module isomorphism by comparing 
the sizes of the bases from \fullref{ex-altsuper2}. This clearly 
induces the isomorphism of $\Uq(\gln)$-modules we are looking for.

It remains to verify the $\Us$-weight space decomposition 
from \eqref{eq-simply}. To this end, we 
only have to see that the action on 
$\bV_q^{\vec{k}_0}\C_q^N\otimes\Sym_q^{\vec{k}_1}\C_q^N$ 
of the $L_{i^{\prime}}$'s of $\Us$ under 
the inverse of $T$ 
is just a multiplication with $q^{k_i(\epsilon_i,\epsilon_{i^{\prime}})_{\Su}}$. 
The action of 
$\Us$ is given by
\[
L_{i^{\prime}}(z_{ij_1}\otimes\dots\otimes z_{ij_{m+n}})=
L_{i^{\prime}}(z_{ij_1})\otimes\dots\otimes L_{i^{\prime}}(z_{ij_{m+n}})=
q^{k_i(\epsilon_i,\epsilon_{i^{\prime}})_{\Su}} z_{ij_1}\otimes\dots\otimes z_{ij_{m+n}}.
\]
Hence, the $\Us$-weight space decomposition follows.
\end{proof}

By \fullref{thm-superHowe} part (b): 
we get 
linear maps
\[
f_{\vec{k}}^{\vec{l}}\colon 1_{\vec{l}}\,\Usd 1_{\vec{k}}\to\Hom_{\Uun}(\bV_q^{\vec{k}_0}\C_q^N\otimes\Sym_q^{\vec{k}_1}\C_q^N,
\bV_q^{\vec{l}_0}\C_q^N\otimes\Sym_q^{\vec{l}_1}\C_q^N)
\]
for any two $\vec{k},\vec{l}\in\Z_{\geq 0}^{m+n}$ such that 
$\sum_{i=0}^{m+n}k_i=\sum_{i=0}^{m+n}l_i$. 
Using part (a) of \fullref{thm-superHowe}, we see that
the homomorphisms $f_{\vec{k}}^{\vec{l}}$ are all 
surjective. Thus, we get the following.

\begin{cor}\label{cor-functor}
There exists a full functor $\Phi_{\Su}^{m|n}\colon\Usd \to \Repas$, 
which we call 
the \textit{super $q$-Howe functor}, 
given on objects and morphisms by
\begin{gather*}
\Phi_{\Su}^{m|n}(\vec{k})=\bV_q^{\vec{k}_0}\C_q^N\otimes\Sym_q^{\vec{k}_1}\C_q^N,\quad\quad
\Phi_{\Su}^{m|n}(1_{\vec{l}} x 1_{\vec{k}})=f_{\vec{k}}^{\vec{l}}(x).
\end{gather*} 
Everything else is sent to zero.\qed
\end{cor}

\subsection{The sorted equivalences}\label{sub-sortequi}

In this subsection we construct a full and faithful functor
\[
\Gams\colon\SoSuSp\to\Repsas,
\]
where $\SoSuSp$ 
is the sorted web category from \fullref{defn-spid2} and 
$\Repsas$ denotes the full subcategory of $\Repas$ whose objects are sorted as in \eqref{eq-simply}.

As already explained in the introduction, 
we 
essentially define $\Gams$ such that there is a commuting diagram
\begin{equation}\label{eq-commute}
\begin{gathered}
\xymatrix{
\Usd \ar[r]^{\Phi_{\Su}^{m|n}} \ar[dr]_{\Lad} & \Repsas \\
& \SoSuSp \ar[u]_{\Gams}
}
\end{gathered}
\end{equation}
The functor $\Lad$ is a \textit{ladder functor}, whose 
definition is motivated by \cite[Section 5.1]{ckm}.

\begin{lem}\label{lem-ladderfunc}
Let $m,n\in\Z_{\geq 0}$. There exists a functor 
\[
\Lad\colon\Usd\to\SoSuSp
\]
which sends 
a $\glmn$-weight $\vec{k}\in\Z^{m+n}_{\geq 0}$ 
to $((k_1)_g,\dots,(k_m)_g,(k_{m+1})_r,\dots,(k_{m+n})_r)$ in $\SoSuSp$
and all other $\glmn$-weights to the 
zero object. On morphisms $\Lad$ is 
given by
\[
F_i^{(j)} 1_{\vec{k}}\mapsto 
\xy
(0,0)*{
\begin{tikzpicture}[scale=.3]
	\draw [very thick, green, directed=0.55] (-4.5,-2) to (-4.5,2);
	\draw [very thick, green, directed=0.55] (-1.5,-2) to (-1.5,0);
	\draw [very thick, green, directed=0.55] (-1.5,0) to (-1.5,2);
	\draw [very thick, green, directed=0.55] (1.5,-2) to (1.5,0);
	\draw [very thick, green, directed=0.55] (1.5,0) to (1.5,2);
	\draw [very thick, green, directed=.55] (-1.5,0) to (1.5,0);
	\draw [very thick, green, directed=0.55] (4.5,-2) to (4.5,2);
	\draw [very thick, mycolor, directed=0.55] (7.5,-2) to (7.5,2);
	\draw [very thick, mycolor, directed=0.55] (10.5,-2) to (10.5,2);
	\node at (-4.5,-2.5) {\tiny $k_1$};
	\node at (-1.5,-2.5) {\tiny $k_i$};
	\node at (1.5,-2.5) {\tiny $k_{i+1}$};
	\node at (4.5,-2.5) {\tiny $k_m$};
	\node at (7.5,-2.5) {\tiny $k_{m+1}$};
	\node at (10.5,-2.5) {\tiny $k_{m+n}$};
	\node at (-1.65,2.5) {\tiny $k_i\!\! -\!\! j$};
	\node at (1.65,2.5) {\tiny $k_{i+1}\!\! +\!\! j$};
	\node at (-4.5,2.5) {\tiny $k_1$};
	\node at (4.5,2.5) {\tiny $k_m$};
	\node at (7.5,2.5) {\tiny $k_{m+1}$};
	\node at (10.5,2.5) {\tiny $k_{m+n}$};
	\node at (0,0.75) {\tiny $j$};
	\node at (-2.95,0) {$\cdots$};
	\node at (3.05,0) {$\cdots$};
	\node at (9.05,0) {$\cdots$};
\end{tikzpicture}
};
\endxy 
\;\; , \;\;
F_i^{(j)} 1_{\vec{k}}\mapsto 
\xy
(0,0)*{
\begin{tikzpicture}[scale=.3]
	\draw [very thick, green, directed=0.55] (-4.5,-2) to (-4.5,2);
	\draw [very thick, green, directed=0.55] (-1.5,-2) to (-1.5,2);
	\draw [very thick, mycolor, directed=0.55] (1.5,-2) to (1.5,2);
	\draw [very thick, mycolor, directed=.55] (4.5,0) to (7.5,0);
	\draw [very thick, mycolor, directed=.25, directed=.75] (4.5,-2) to (4.5,2);
	\draw [very thick, mycolor, directed=.25, directed=.75] (7.5,-2) to (7.5,2);
	\draw [very thick, mycolor, directed=0.55] (10.5,-2) to (10.5,2);
	\node at (-4.5,-2.5) {\tiny $k_1$};
	\node at (-1.5,-2.5) {\tiny $k_m$};
	\node at (1.5,-2.5) {\tiny $k_{m+1}$};
	\node at (4.5,-2.5) {\tiny $k_i$};
	\node at (7.5,-2.5) {\tiny $k_{i+1}$};
	\node at (10.5,-2.5) {\tiny $k_{m+n}$};
	\node at (-1.65,2.5) {\tiny $k_m$};
	\node at (1.65,2.5) {\tiny $k_{m+1}$};
	\node at (-4.5,2.5) {\tiny $k_1$};
	\node at (4.35,2.5) {\tiny $k_i\!\! -\!\! j$};
	\node at (7.60,2.5) {\tiny $k_{i+1}\!\! +\!\! j$};
	\node at (10.5,2.5) {\tiny $k_{m+n}$};
	\node at (6,0.75) {\tiny $j$};
	\node at (-2.95,0) {$\cdots$};
	\node at (3.05,0) {$\cdots$};
	\node at (9.05,0) {$\cdots$};
\end{tikzpicture}
};
\endxy
\]
for $i\in\mathbb{I}_0-\{m\}$ or $i\in\mathbb{I}_1-\{m+n\}$ respectively, and
\[
F_m 1_{\vec{k}}\mapsto 
\xy
(0,0)*{
\begin{tikzpicture}[scale=.3]
	\draw [very thick, green, directed=0.55] (-4.5,-2) to (-4.5,2);
	\draw [very thick, green, directed=0.55] (-1.5,-2) to (-1.5,2);
	\draw [very thick, green, directed=.25, directed=.75] (1.5,-2) to (1.5,2);
	\draw [very thick, directed=.55] (1.5,0) to (4.5,0);
	\draw [very thick, mycolor, directed=.25, directed=.75] (4.5,-2) to (4.5,2);
	\draw [very thick, mycolor, directed=0.55] (7.5,-2) to (7.5,2);
	\draw [very thick, mycolor, directed=0.55] (10.5,-2) to (10.5,2);
	\node at (-4.5,-2.5) {\tiny $k_1$};
	\node at (-1.5,-2.5) {\tiny $k_{m-1}$};
	\node at (1.5,-2.5) {\tiny $k_{m}$};
	\node at (4.5,-2.5) {\tiny $k_{m+1}$};
	\node at (7.5,-2.5) {\tiny $k_{m+2}$};
	\node at (10.5,-2.5) {\tiny $k_{m+n}$};
	\node at (-4.5,2.5) {\tiny $k_1$};
	\node at (-1.5,2.5) {\tiny $k_{m-1}$};
	\node at (1.25,2.5) {\tiny $k_{m}\!\! -\!\! 1$};
	\node at (4.65,2.5) {\tiny $k_{m+1}\!\! +\!\! 1$};
	\node at (7.65,2.5) {\tiny $k_{m+2}$};
	\node at (10.5,2.5) {\tiny $k_{m+n}$};
	\node at (3,0.75) {\tiny $1$};
	\node at (-2.95,0) {$\cdots$};
	\node at (9.05,0) {$\cdots$};
\end{tikzpicture}
};
\endxy
\]
Similarly, but with reversed horizontal orientations, for the generators 
$E_i^{(j)}\one_{\vec{k}}$ and $E_m\one_{\vec{k}}$.
\end{lem}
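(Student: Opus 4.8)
The plan is to establish well-definedness of $\Lad$: since its action on objects and on the generators $F_i^{(j)}\one_{\vec{k}}$, $E_i^{(j)}\one_{\vec{k}}$, $F_m\one_{\vec{k}}$, $E_m\one_{\vec{k}}$ is prescribed, what must be verified is that the images of the defining relations of $\Usd$ (Definition~\ref{defn-glmn}, in the idempotented form of Remark~\ref{rem-relations-in-blm}) hold in $\SoSuSp$, together with functoriality. The latter, as well as the relations $1_{\vec{k}}1_{\vec{l}}=\delta_{\vec{k},\vec{l}}1_{\vec{k}}$ and the weight-shift relations ($L_i1_{\vec{k}}$ acting as a scalar, $1_{\vec{k}\mp\alpha_i}F_i1_{\vec{k}}=F_i1_{\vec{k}}$, and so on), are immediate: composition in $\Usd$ is modelled by vertical stacking of ladders, the ladder vertices shift boundary labels exactly as the generators shift weights, and the zero-object conventions match on both sides (a weight with a negative entry goes to the zero object, and a ladder producing a negative edge label is zero). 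So the content lies entirely in the Serre-type and $E$--$F$ commutation relations.

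I would split these into three cases. First, relations internal to the green $\glm$-block ($i,j\in\mathbb{I}_0-\{m\}$): the divided-power relations correspond to associativity~\eqref{eq-frob} and digon removal~\eqref{eq-simpler1}, while the $\mathfrak{sl}_2$-commutation $E_iF_i\one_{\vec{k}}-F_iE_i\one_{\vec{k}}=[k_i-k_{i+1}]\one_{\vec{k}}$ and the classical Serre relations correspond to the square-switch relation~\eqref{eq-almostsimpler}; these are exactly the identities checked for the ladder functor in~\cite[Subsection~5.1]{ckm}, applied to the green strands. Second, relations internal to the red $\glnn{n}$-block ($i,j\in\mathbb{I}_1-\{m+n\}$) are identical after exchanging green and red, using~\cite{rt} (or the green-red symmetry of $\InSp$). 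Third, relations straddling the two blocks: if $|i-j|\geq 2$ with one index in the green and the other in the red block, the two ladder rungs occupy disjoint groups of strands and commute by planar isotopy.

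The genuinely new relations are those involving $F_m$, $E_m$ and their neighbours, and these are precisely the diagrammatic statements proved in Section~\ref{sub-lemmata}: the nilpotency $F_m^2\one_{\vec{k}}=0=E_m^2\one_{\vec{k}}$ is Lemma~\ref{lem-green} (for general weights, combined with~\eqref{eq-frob}, or Lemma~\ref{lem-secondcq}(a)); the super-commutation relation $E_mF_m\one_{\vec{k}}+F_mE_m\one_{\vec{k}}=[k_m+k_{m+1}]\one_{\vec{k}}$ from~\eqref{eq-super1} is Lemma~\ref{lem-secondcq}(b); and the mixed $|i-j|=1$ Serre relations --- $[2]F_{m-1}F_mF_{m-1}\one_{\vec{k}}=F_{m-1}^2F_m\one_{\vec{k}}+F_mF_{m-1}^2\one_{\vec{k}}$, its $F_{m+1}$-analogue, and the four-term super Serre relation of Definition~\ref{defn-glmn} --- follow from Lemma~\ref{lem-secondcq}(a) and (c) together with~\eqref{eq-frob}, using also the symmetric and orientation-reversed versions recorded there. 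Once all these identities are in place, $\Lad$ is a well-defined functor into the sorted subcategory of $\InSp$, and postcomposing with the quotient functor $\InSp\to\SuSp$ yields $\Lad\colon\Usd\to\SoSuSp$ as claimed. I expect the only delicate point to be the careful matching in this third case --- tracking which rungs are black versus colored and the order in which they are stacked, since the odd generator $F_m$ is realized by a \emph{black} rung joining a green and a red strand --- but with Lemma~\ref{lem-secondcq} already established this is routine translation requiring no further diagrammatic input.
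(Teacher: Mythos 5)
Your proposal follows essentially the same route as the paper's proof: reduce to checking that ladder images of the defining relations of $\Usd$ hold in $\SoSuSp$, invoke~\cite[Proposition~5.2.1]{ckm} for the relations internal to the $\Ud(\glm)$- and $\Ud(\glnn{n})$-blocks (green via CKM, red via the green--red symmetry), and invoke Lemma~\ref{lem-secondcq} for the super relations~\eqref{eq-super1}. You spell out a few more bookkeeping cases than the published two-sentence proof (far-commutation across the blocks, and the adjacent Serre relations $[2]F_{m\pm 1}F_mF_{m\pm 1}\one_{\vec{k}}=F_{m\pm 1}^2F_m\one_{\vec{k}}+F_mF_{m\pm 1}^2\one_{\vec{k}}$ involving $F_m$), which is sound and uses no tools beyond the paper's; the only small inaccuracy is the attribution of those adjacent mixed Serre relations specifically to Lemma~\ref{lem-secondcq}(a) and (c), whereas they are better seen as a variant of the monochromatic CKM argument (the black rung of label $1$ being color-blind) rather than a consequence of parts (a), (c).
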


\begin{proof}
To show that $\Lad$ is well-defined, it suffices to show that all relations 
in $\Usd$ are satisfied in $\SoSuSp$. For monochromatic relations we 
can copy \cite[Proposition 5.2.1]{ckm}.
\fullref{lem-secondcq} shows 
that the super relations \eqref{eq-super1} 
hold in $\SoSuSp$.
\end{proof}

\begin{defn}(\textbf{The diagrammatic presentation functor $\Gams$})\label{defn-mainfunctor}
We define a functor $\Gams\colon\SoSuSp\to\Repsas$ as follows.
\begin{itemize}
\item On objects: to each 
$\vec{k}=((k_1)_g,\dots,(k_m)_g,(k_{m+1})_r,\dots,(k_{m+n})_r)$, we assign
\[
\Gams(\vec{k})=\bV_q^{\vec{k}_0}\C_q^N\otimes\Sym_q^{\vec{k}_1}\C_q^N,
\]
where $\vec{k}_0=(k_1,\dots,k_m)$ and $\vec{k}_1=(k_{m+1},\dots,k_{m+n})$. 
Moreover, we 
send the empty tuple to the trivial $\Uun$-module $\C_q$ 
and the zero object to the $\Uun$-module $0$.

\item On morphisms:
we use the functor $\Phi_{\Su}^{m|n}$ 
from \fullref{cor-functor} to define $\Gams$ on the generating 
trivalent vertices in $\SoSuSp$ (here we assume that the diagrams are the 
identities outside of the illustrated part). For this, let
$i\in\mathbb{I}$ and we use the notation $k=k_i,l=k_{i+1}$ and $(k,l)=(k_1,\dots,k_i=k,k_{i+1}=l,\dots,k_{m+n})$.
\begin{equation}\label{eq-imgreen}
\begin{gathered}
\Gams \left(
\xy
(0,0)*{
\begin{tikzpicture}[scale=.3]
	\draw [very thick, green, directed=0.55] (0, .75) to (0,2.5);
	\draw [very thick, green, directed=0.55] (1,-1) to [out=90,in=330] (0,.75);
	\draw [very thick, green, directed=0.55] (-1,-1) to [out=90,in=210] (0,.75); 
	\node at (0, 3) {\tiny $k\! +\! l$};
	\node at (-1,-1.5) {\tiny $k$};
	\node at (1,-1.5) {\tiny $l$};
\end{tikzpicture}
};
\endxy
\right)
=
\Phi_{\Su}^{m|n}(E_i^{(l)}1_{(k,l)})
,\quad\quad
\Gams \left(
\xy
(0,0)*{
\begin{tikzpicture}[scale=.3]
	\draw [very thick, green, directed=0.55] (0,-1) to (0,.75);
	\draw [very thick, green, directed=0.55] (0,.75) to [out=30,in=270] (1,2.5);
	\draw [very thick, green, directed=0.55] (0,.75) to [out=150,in=270] (-1,2.5); 
	\node at (0, -1.5) {\tiny $k\! +\! l$};
	\node at (-1,3) {\tiny $k$};
	\node at (1,3) {\tiny $l$};
\end{tikzpicture}
};
\endxy
\right)
=
\Phi_{\Su}^{m|n}(F_i^{(l)}1_{(k+l,0)}),
\\
\Gams \left(
\xy
(0,0)*{
\begin{tikzpicture}[scale=.3]
	\draw [very thick, mycolor, directed=0.55] (0, .75) to (0,2.5);
	\draw [very thick, mycolor, directed=0.55] (1,-1) to [out=90,in=330] (0,.75);
	\draw [very thick, mycolor, directed=0.55] (-1,-1) to [out=90,in=210] (0,.75); 
	\node at (0, 3) {\tiny $k\! +\! l$};
	\node at (-1,-1.5) {\tiny $k$};
	\node at (1,-1.5) {\tiny $l$};
\end{tikzpicture}
};
\endxy
\right)
=
\Phi_{\Su}^{m|n}(F_i^{(k)}1_{(k,l)})
,\quad\quad
\Gams \left(
\xy
(0,0)*{
\begin{tikzpicture}[scale=.3]
	\draw [very thick, mycolor, directed=0.55] (0,-1) to (0,.75);
	\draw [very thick, mycolor, directed=0.55] (0,.75) to [out=30,in=270] (1,2.5);
	\draw [very thick, mycolor, directed=0.55] (0,.75) to [out=150,in=270] (-1,2.5); 
	\node at (0, -1.5) {\tiny $k\! +\! l$};
	\node at (-1,3) {\tiny $k$};
	\node at (1,3) {\tiny $l$};
\end{tikzpicture}
};
\endxy
\right)
=
\Phi_{\Su}^{m|n}(E_i^{(k)}1_{(0,k+l)}).
\end{gathered}
\end{equation}
\end{itemize}
Note that these definitions include the mixed case, where we either have $l = 1$ (and colored black) or $k = 1 $ (and colored black) and we use the odd generators $F_m$ and $E_m$.
\end{defn}

\begin{rem}\label{rem-otherchoices}
There are certain choices for the 
images of monochromatic merges and splits, but 
these choices do not matter, see \cite[Remark 2.18]{rt}. In contrast, there is 
no other choice for the mixed merges and splits. For example, take $l=1$ 
in the top left in \eqref{eq-imgreen}. 
The green edge labeled $k+1$ should represent $\bV_q^{k+1}\C_q^N$. Thus, we 
have to see the top boundary of the left-hand side as $\one_{(k+1,0)}$ and not as 
$\one_{(0,k+1)}$, which determines our choices. Similarly for the other 
mixed generators. For example, if $m=n=1$, and $k=1$ or $l=1$, then
\[
\Gams\left(
\xy
(0,0)*{
\begin{tikzpicture}[scale=.3]
	\draw [very thick, green, directed=0.55] (0, .75) to (0,2.5);
	\draw [very thick, directed=0.55] (1,-1) to [out=90,in=330] (0,.75);
	\draw [very thick, directed=0.55] (-1,-1) to [out=90,in=210] (0,.75); 
	\node at (0, 3) {\tiny $2$};
	\node at (-1,-1.5) {\tiny $1$};
	\node at (1,-1.5) {\tiny $1$};
\end{tikzpicture}
};
\endxy
\right)
=
\Phi_{\Su}^{1|1}(E_11_{(1,1)})
\neq
\Phi_{\Su}^{1|1}(F_11_{(1,1)})
=
\Gams \left(
\xy
(0,0)*{
\begin{tikzpicture}[scale=.3]
	\draw [very thick, mycolor, directed=0.55] (0, .75) to (0,2.5);
	\draw [very thick, directed=0.55] (1,-1) to [out=90,in=330] (0,.75);
	\draw [very thick, directed=0.55] (-1,-1) to [out=90,in=210] (0,.75); 
	\node at (0, 3) {\tiny $2$};
	\node at (-1,-1.5) {\tiny $1$};
	\node at (1,-1.5) {\tiny $1$};
\end{tikzpicture}
};
\endxy
\right).
\]
\end{rem}

\begin{lem}\label{lem-mainfunctor}
$\Gams$ is a well-defined functor $\Gams\colon\SoSuSp\to\Repsas$ making the diagram \eqref{eq-commute} commutative.
\end{lem}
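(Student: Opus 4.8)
The plan is to prove the statement in two stages: first that $\Gams$ respects all defining relations of $\SoSuSp$, so that it is a well-defined functor, and then that \eqref{eq-commute} commutes, which I would reduce to checking $\Gams\circ\Lad=\Phi_{\Su}^{m|n}$ on the generators of $\Usd$. I would start by observing that, since $\SoSuSp$ is a full subcategory of the quotient $\SuSp$ and no trivalent generator of $\InSp$ turns a sorted boundary sequence into an unsorted one, every web between sorted objects decomposes into horizontal slices all of which are again sorted; hence $\Gams$ is indeed determined by its values on sorted objects and on the trivalent generators from \eqref{eq-imgreen} (together with the identities), and one only needs to check that these assignments are compatible when a given sorted web is realised inside $\Usd$ for auxiliary $m,n$ chosen large enough, which is the stabilisation already implicit in the super $q$-Howe setup.

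For well-definedness I would check the local relations of Definition~\ref{defn-spid} together with the exterior relations \eqref{eq-alt}. On purely green (resp.\ purely red) webs the functor $\Phi_{\Su}^{m|n}$ restricts, by setting $n=0$ (resp.\ $m=0$), to the skew (resp.\ symmetric) $q$-Howe functor, so the monochromatic relations \eqref{eq-frob}, \eqref{eq-simpler1} and \eqref{eq-almostsimpler} follow exactly as in \cite[Proposition~5.2.1]{ckm} and its symmetric counterpart in \cite{rt}. The only new relation is the dumbbell \eqref{eq-dumb}: realising its two black strands as the $\glmn$-weight $(1,1)$ at a pair of adjacent indices $m,m+1$, the green-$2$ and red-$2$ dumbbell webs are $\Gams$ of a mixed merge followed by a mixed split, so by \eqref{eq-imgreen} they equal $\Phi_{\Su}^{m|n}(F_mE_m\one_{(1,1)})$ and $\Phi_{\Su}^{m|n}(E_mF_m\one_{(1,1)})$, and the super commutation relation $E_mF_m\one_{(1,1)}+F_mE_m\one_{(1,1)}=[2]\one_{(1,1)}$ from \eqref{eq-super1} gives \eqref{eq-dumb}; equivalently, \eqref{eq-dumb} is the $\Phi_{\Su}^{m|n}$-image of the isomorphism $\C_q^N\otimes\C_q^N\cong\bV_q^2\C_q^N\oplus\Sym_q^2\C_q^N$. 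The exterior relations are immediate, since a green edge labeled $k>N$ is sent to a morphism factoring through $\bV_q^k\C_q^N\cong 0$.

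To prove that \eqref{eq-commute} commutes, I would note that on objects both $\Gams\circ\Lad$ and $\Phi_{\Su}^{m|n}$ send $\vec{k}\in\Z_{\geq 0}^{m+n}$ to $\bV_q^{\vec{k}_0}\C_q^N\otimes\Sym_q^{\vec{k}_1}\C_q^N$ and everything else to zero, and then check agreement on the generating morphisms $F_i^{(j)}\one_{\vec{k}}$, $E_i^{(j)}\one_{\vec{k}}$ (for $i\neq m$) and $F_m\one_{\vec{k}}$, $E_m\one_{\vec{k}}$. By Lemma~\ref{lem-ladderfunc}, $\Lad$ sends each of these to an explicit single-rung ladder web, which is a split composed with a merge; applying $\Gams$, using \eqref{eq-imgreen} and functoriality of $\Phi_{\Su}^{m|n}$ (Corollary~\ref{cor-functor}), yields $\Phi_{\Su}^{m|n}$ of a product of divided-power generators, which I then identify with the original generator of $\Usd$ — for the monochromatic ones this is the computation of \cite[Section~5]{ckm} and \cite{rt}, and for the odd generators $F_m\one_{\vec{k}}$, $E_m\one_{\vec{k}}$ it is the short check that a mixed rung is a mixed split followed by a mixed merge, whose $\Phi_{\Su}^{m|n}$-image is uniquely forced by the interpretation of black label-$1$ edges (cf.\ Remark~\ref{rem-otherchoices}).

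I expect the main obstacle to be the interaction between the definition of $\Gams$ on the mixed merges and splits and the dumbbell relation: one must confirm that the colour-ambiguous black label-$1$ strands are interpreted consistently throughout — recall from Remark~\ref{rem-otherchoices} that, unlike for the monochromatic generators, there is no freedom of choice here — and that the super commutation relation in $\Usd$ transports exactly to \eqref{eq-dumb}. The remaining point, that the resulting assignment does not depend on the auxiliary $m,n$ used to realise a given sorted web, is routine bookkeeping within the super $q$-Howe formalism and should cause no trouble.
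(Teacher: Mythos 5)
Your proposal is correct and follows essentially the same route as the paper's proof: you verify well-definedness relation-by-relation (monochromatic relations via the CKM/RT results, the dumbbell via the super commutation relation $E_mF_m\one_{(1,1)}+F_mE_m\one_{(1,1)}=[2]\one_{(1,1)}$ at weight $(1,1)$, the exterior relation via the vanishing $\bV_q^{>N}\C_q^N\cong 0$), and you establish commutativity of \eqref{eq-commute} by noting that $\Lad$ sends $\Usd$-generators to single-rung ladders, which are merge-split composites, so that $\Gams\circ\Lad=\Phi_{\Su}^{m|n}$ on generators by Definition~\ref{defn-mainfunctor}. The preliminary discussion of sorted horizontal slices is a harmless aside, and the remaining mild imprecision (the "auxiliary $m,n$" stabilisation) is implicitly treated the same way in the paper, since everything reduces to local checks on generators.
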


\begin{proof}
First we note that $\Gams\circ \Lad= \Phi_{\Su}^{m|n}$ on generators $F^{(j)}_i\one_{\vec{k}}$ and $F_m\one_{\vec{k}}$ (and analogously for $E$'s) with $i\in\mathbb{I}-\{m\},j\in\Z_{\geq 0}$ and $\vec{k}\in\Z^{m+n}$. This follows from the definition of $\Gams$ via $\Phi_{\Su}^{m|n}$ and the observation that ladders can be written as compositions of merges and splits, see also \cite[Lemma 2.20]{rt}.

We need to check that the images of the relations from $\SoSuSp$ under $\Gams$ 
hold in $\Repsas$. \fullref{cor-functor} guarantees that all relations in $\Repsas$ are induced via $\Phi_{\Su}^{m|n}$ from relations in $\Usd$ and the fact that $\Phi_{\Su}^{m|n}$ kills certain $\glmn$-weights. It remains to check that the relations in $\SoSuSp$ are, likewise, induced via $\Lad$ from relations in $\Usd$. For the monochromatic and isotopy relations, this follows as in \cite[Lemma 2.20]{rt}.

The dumbbell relation \eqref{eq-dumb} can be recovered from $\Usd$ as follows. Without loss of generality we work with $m=n=1$:
\begin{gather*}
[2]
\xy
(0,0)*{
\begin{tikzpicture}[scale=.3] 
	\draw [very thick, directed=.55] (1,-2.75) to (1,2.5);
	\draw [very thick, directed=.55] (-1,-2.75) to (-1,2.5);
	\node at (-1,3) {\tiny $1$};
	\node at (1,3) {\tiny $1$};
	\node at (-1,-3.15) {\tiny $1$};
	\node at (1,-3.15) {\tiny $1$};
\end{tikzpicture}
};
\endxy
=
\Upsilon^{1|1}_{\Su}([2]\one_{(1,1)})
=\Upsilon^{1|1}_{\Su}(FE\one_{(1,1)}+EF\one_{(1,1)})
=
\xy
(0,0)*{
\begin{tikzpicture}[scale=.3]
	\draw [very thick, green, directed=.55] (0,-1) to (0,.75);
	\draw [very thick, directed=.55] (0,.75) to [out=30,in=270] (1,2.5);
	\draw [very thick, directed=.55] (0,.75) to [out=150,in=270] (-1,2.5); 
	\draw [very thick, directed=.55] (1,-2.75) to [out=90,in=330] (0,-1);
	\draw [very thick, directed=.55] (-1,-2.75) to [out=90,in=210] (0,-1);
	\node at (-1,3) {\tiny $1$};
	\node at (1,3) {\tiny $1$};
	\node at (-1,-3.15) {\tiny $1$};
	\node at (1,-3.15) {\tiny $1$};
	\node at (-0.5,0) {\tiny $2$};
\end{tikzpicture}
};
\endxy
+
\xy
(0,0)*{
\begin{tikzpicture}[scale=.3]
	\draw [very thick, mycolor, directed=.55] (0,-1) to (0,.75);
	\draw [very thick, directed=.55] (0,.75) to [out=30,in=270] (1,2.5);
	\draw [very thick, directed=.55] (0,.75) to [out=150,in=270] (-1,2.5); 
	\draw [very thick, directed=.55] (1,-2.75) to [out=90,in=330] (0,-1);
	\draw [very thick, directed=.55] (-1,-2.75) to [out=90,in=210] (0,-1);
	\node at (-1,3) {\tiny $1$};
	\node at (1,3) {\tiny $1$};
	\node at (-1,-3.15) {\tiny $1$};
	\node at (1,-3.15) {\tiny $1$};
	\node at (-0.5,0) {\tiny $2$};
\end{tikzpicture}
};
\endxy
\end{gather*}
Relation \eqref{eq-alt} is a consequence of killing $\glmn$-weights $\vec{k}=(k_1,\dots, k_{m+n})$, one of whose first $m$ entries is larger than $N$.
\end{proof}

\begin{lem}\label{lem-faithful}
The functor $\iota^{\infty}_{1}\colon\CKM\to\SuSp$ is faithful.
\end{lem}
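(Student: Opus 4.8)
The plan is to factor the Cautis--Kamnitzer--Morrison diagrammatic presentation functor through $\iota^{\infty}_{1}$ and then quote its faithfulness. First I would note that $\iota^{\infty}_{1}$ takes values in the full subcategory of $\SuSp$ spanned by green objects and green webs, and that this subcategory sits inside $\SoSuSp$: a green object carries no red boundary point, so it vacuously satisfies the sorting condition of Definition~\ref{defn-spid2}. Hence $\iota^{\infty}_{1}$ may be post-composed with the presentation functor $\Gams$ of Definition~\ref{defn-mainfunctor}. Given a morphism $f$ of $\CKM$ (which involves only finitely many strands), I would take $n=0$ and $m$ large enough that the objects occurring in $f$ lie in the domain of $\Gams$; for $n=0$, super $q$-Howe duality (Theorem~\ref{thm-superHowe}) specialises to skew $q$-Howe duality, $\Phi_{\Su}^{m|0}=\Phi_{\mathrm{skew}}^{m}$, and $\Repsas$ specialises to $\Repe{N}$. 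This produces a functor $\Gams\circ\iota^{\infty}_{1}\colon\CKM\to\Repe{N}$.

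Next I would identify $\Gams\circ\iota^{\infty}_{1}$ with the presentation functor $\Gamma\colon\CKM\to\Repe{N}$ of~\cite{ckm} appearing in the commuting triangle~\eqref{eq-intro}. Both are functors out of the presented category $\CKM$, so it suffices to compare them on objects and on the generating merges and splits. On objects both send a sequence of nonnegative integers to the corresponding tensor product of exterior powers of $\C_q^N$. On a green merge with inputs $k,l$ the functor $\Gams$ returns $\Phi_{\Su}^{m|0}(E_i^{(l)}\one_{(k,l)})=\Phi_{\mathrm{skew}}^{m}(E_i^{(l)}\one_{(k,l)})$ by~\eqref{eq-imgreen}, and this is exactly the value of $\Gamma$ on that merge, since $\Gamma$ is defined by the identical recipe: merges and splits are sent to the images under $\Phi_{\mathrm{skew}}^{m}$ of the relevant divided powers of $E$ and $F$, which is precisely what the factorisation $\Phi_{\mathrm{skew}}^{m}=\Gamma\circ\Upsilon_{\mathrm{skew}}^{m}$ encodes; similarly for splits. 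Since every web in $\CKM$ is a composite of tensor products of such generators and identities, this yields $\Gams\circ\iota^{\infty}_{1}=\Gamma$.

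Finally I would invoke the diagrammatic presentation theorem of~\cite{ckm}, which says that $\Gamma\colon\CKM\to\Repe{N}$ is an equivalence of categories, and in particular faithful. Faithfulness of $\iota^{\infty}_{1}$ then follows immediately, because a functor whose composite with some other functor is faithful is itself faithful: if $\iota^{\infty}_{1}(f)=\iota^{\infty}_{1}(g)$ then $\Gamma(f)=\Gamma(g)$, hence $f=g$. The only non-formal step is the identification in the second paragraph --- verifying that $\Gams$ and $\Lad$, restricted to the purely green ($n=0$) situation, coincide on the nose with the presentation and ladder functors of~\cite{ckm}, and keeping careful track of the stabilisation over $m$. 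I expect this to be a routine unwinding of the definitions of $\Lad$, $\Gams$ and $\Phi_{\Su}^{m|n}$ using Theorem~\ref{thm-superHowe}, requiring no new ideas; everything else is bookkeeping.
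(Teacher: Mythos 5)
Your proof is the same as the paper's: both set up the commuting square with $\Gams\circ\iota^{\infty}_{1}=\iota^{\mathrm{es}}_{\mathrm{e}}\circ\Gamma_{\mathrm{CKM}}$ (the paper summarizes your generator-level comparison as ``a comparison of definitions'') and deduce faithfulness of $\iota^{\infty}_{1}$ from faithfulness of $\Gamma_{\mathrm{CKM}}$, citing~\cite[Theorem~3.3.1]{ckm}. One small imprecision: $\Gamma_{\mathrm{CKM}}\colon\CKM\to\Repe{N}$ is only an equivalence after passing to additive closures, but since you only use faithfulness this does not affect the argument.
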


\begin{proof}
By \fullref{lem-mainfunctor} and a 
comparison of definitions, we have a commuting diagram
\begin{equation*}
\begin{gathered}
\xymatrix{
\Repe{N}  \ar[r]^{\iota^{\mathrm{es}}_{\mathrm{e}}}& \Repsas\\
\CKM \ar[u]^{\Gamma_{\mathrm{CKM}}}\ar[r]_{\iota^{\infty}_{1}} & \SoSuSp \ar[u]_{\Gams}
}
\end{gathered}
\end{equation*}
where $\Gamma_{\mathrm{CKM}}$ is the functor considered 
in \cite[Section 3.2]{ckm} and 
$\iota^{\mathrm{es}}_{\mathrm{e}}$ is the evident embedding of 
a full subcategory. $\Gamma_{\mathrm{CKM}}$ is 
faithful by \cite[Theorem 3.3.1]{ckm} and thus,  
$\iota^{\infty}_{1}$ is faithful as well.
\end{proof}

\begin{rem}\label{rem-additive}
Let $\Mat(\SoSuSp)$ be the \textit{additive closure} of $\SoSuSp$: 
objects are finite, formal direct sums of the objects of $\SoSuSp$ 
and morphisms are matrices (whose entries are morphisms from $\SoSuSp$).
We can extend $\Gams$ additively to a functor
\[
\Gams\colon\Mat(\SoSuSp)\to\Repsas.
\]
Similarly for $\Gam$ later on.
\end{rem}

\begin{prop}\label{prop-sortequi}
The functor $\Gams\colon\SoSuSp\to\Repsas$ gives rise to an equivalence of categories 
$\Gams\colon\Mat(\SoSuSp)\to\Repsas$.
\end{prop}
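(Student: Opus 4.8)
The strategy is to show that the functor $\Gams\colon\Mat(\SoSuSp)\to\Repsas$ is essentially surjective, full, and faithful. Essential surjectivity is immediate from the definition: the objects of $\Repsas$ are exactly the tensor products $\bV_q^{\vec{k}_0}\C_q^N\otimes\Sym_q^{\vec{k}_1}\C_q^N$ (together with their direct sums, which are hit by $\Mat(\SoSuSp)$), and these are precisely the images of the objects $\vec{k}$ of $\SoSuSp$ under $\Gams$. Fullness follows by combining Corollary~\ref{cor-functor} with the commutativity of~\eqref{eq-commute} from Lemma~\ref{lem-mainfunctor}: since $\Phi_{\Su}^{m|n}$ is full and $\Phi_{\Su}^{m|n}=\Gams\circ\Lad$, every morphism in $\Repsas$ is of the form $\Gams(\Lad(x))$ for some $x$, hence lies in the image of $\Gams$. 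So the only real content is faithfulness.

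For faithfulness, the plan is to count dimensions. By Theorem~\ref{thm-superHowe}(c), for fixed source and target $\glmn$-weights $\vec{k},\vec{l}$ with $|\vec{k}|=|\vec{l}|=K$, the space $\Hom_{\Uun}(\Gams(\vec{k}),\Gams(\vec{l}))$ can be computed from the decomposition of $\bV_q^K(\C_q^{m|n}\otimes\C_q^N)$ as a $\Us\otimes\Uun$-module: its dimension equals $\sum_\lambda (\dim\Hom)(\text{multiplicity of the }\vec k\text{- and }\vec l\text{-weight spaces in }L_{m|n}(\lambda))$, summed over $(m|n,N)$-supported $\glmn$-weights $\lambda$ of size $K$. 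On the diagrammatic side, I would first reduce to counting a spanning set of $\Hom_{\SoSuSp}(\vec{k},\vec{l})$. Since the functor $\Gams$ is already known to be full, it suffices to exhibit a spanning set of the Hom-space on the web side whose cardinality is at most the dimension computed above; fullness then forces this spanning set to be a basis and $\Gams$ to be an isomorphism on each Hom-space. The natural spanning set is provided by the ladder functor: the image of $\Lad$ generates $\SoSuSp$ (every web in the sorted category, by definition built from merges and splits arranged in sorted order, is a composite of ladders, just as in~\cite[Lemma~2.20]{rt}), so $\Hom_{\SoSuSp}(\vec{k},\vec{l})$ is spanned by images under $\Lad$ of morphisms $1_{\vec{l}}\Usd 1_{\vec{k}}$. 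Thus $\dim\Hom_{\SoSuSp}(\vec{k},\vec{l})\le\dim\big(1_{\vec{l}}\Usd 1_{\vec{k}}\big/\ker\Phi_{\Su}^{m|n}\big)=\dim\Hom_{\Repsas}(\Gams\vec k,\Gams\vec l)$, where the last equality uses that $\Phi_{\Su}^{m|n}$ is surjective (Corollary~\ref{cor-functor}) — here one must know that $\ker\Phi_{\Su}^{m|n}$ is already killed in $\SoSuSp$, which is exactly what the relations of $\SuSp$ (namely~\eqref{eq-alt} and the non-negativity of labels) together with Lemma~\ref{lem-secondcq} were designed to enforce, as recorded in items (I) and (II) of the introduction.

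The step I expect to be the main obstacle is precisely this last point: verifying that $\Lad$ \emph{factors through} the quotient by $\ker\Phi_{\Su}^{m|n}$, i.e.\ that every element of $\ker\Phi_{\Su}^{m|n}$ maps to zero in $\SoSuSp$. One knows $\ker\Phi_{\Su}^{m|n}$ is spanned by idempotents $\one_{\vec{k}}$ with some $k_i>N$ for $i\le m$ or some $k_j<0$ — the latter are zero in $\SoSuSp$ because webs with negative labels do not exist, and the former are zero by the exterior relations~\eqref{eq-alt}. The subtlety is showing that $\ker\Phi_{\Su}^{m|n}$ really is spanned by these idempotents and nothing more (no extra relations beyond killing weights), which is the ``remarkable'' fact flagged in item (II); this should follow from Theorem~\ref{thm-superHowe}(a), which says $\Us$ and $\Uun$ generate each other's commutants, so that the image of $1_{\vec l}\Usd 1_{\vec k}$ under $\Phi_{\Su}^{m|n}$ is \emph{all} of $\Hom_{\Uun}(\Gams\vec k,\Gams\vec l)$ and its kernel is exactly what acts as zero on $\bV_q^{\bullet}(\C_q^{m|n}\otimes\C_q^N)$, i.e.\ the span of the offending idempotents. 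Assembling: $\dim\Hom_{\SoSuSp}(\vec k,\vec l)\le\dim\Hom_{\Repsas}(\Gams\vec k,\Gams\vec l)$ combined with surjectivity of $\Gams$ on Hom-spaces (fullness) forces equality and injectivity, giving faithfulness and completing the proof that $\Gams\colon\Mat(\SoSuSp)\to\Repsas$ is an equivalence.
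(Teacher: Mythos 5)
Your essential surjectivity and fullness arguments match the paper's. For faithfulness, however, you take a genuinely different route than the paper, and that route has a real gap.

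The paper's proof of faithfulness avoids dimension counting entirely: given $u,v\in\Hom_{\SoSuSp}(\vec{k},\vec{l})$, it pre- and post-composes with merges and splits (a reversible operation) to reduce to webs $u',v'$ with all-black boundaries, then invokes Proposition~\ref{prop-green} to rewrite $u'$ and $v'$ as linear combinations of webs with only black and green edges, and finally observes that such webs lie in $\iota^{\infty}_1(\CKM)$, where faithfulness is inherited from~\cite[Theorem~3.3.1]{ckm} via the commutative square in the proof of Lemma~\ref{lem-faithful}. No claim about $\ker\Phi_{\Su}^{m|n}$ or about ladder generation is needed.

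Your dimension-count strategy requires two nontrivial inputs that you do not establish. First, you need that $\Hom_{\SoSuSp}(\vec{k},\vec{l})$ is spanned by $\Lad$-images of elements of $1_{\vec{l}}\Usd 1_{\vec{k}}$; you assert this is ``by definition'' and cite~\cite[Lemma~2.20]{rt}, but that lemma concerns the monochromatic case and a general mixed web need not be a composite of ladders --- one would have to use web relations to rewrite it as such, and this is not done anywhere in the paper. Second and more seriously, you need $\ker\Phi_{\Su}^{m|n}\subseteq\ker\Lad$, which you correctly identify as the crux; but your justification is circular. Saying ``its kernel is exactly what acts as zero on $\bV_q^{\bullet}(\C_q^{m|n}\otimes\C_q^N)$, i.e.\ the span of the offending idempotents'' identifies a tautology (the kernel is what acts as zero) with the substantive claim (that this is precisely the two-sided ideal generated by the offending $\one_{\vec{k}}$); Theorem~\ref{thm-superHowe}(a) gives fullness of $\Phi_{\Su}^{m|n}$, not a description of its kernel. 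The clean containment $\ker\Lad\subseteq\ker\Phi_{\Su}^{m|n}$ follows from $\Phi_{\Su}^{m|n}=\Gams\circ\Lad$, but that gives the inequality in the wrong direction. Establishing the reverse containment requires an Artin--Wedderburn-type analysis of $\Usd^{\geq 0}$ (which the paper carries out only in Section~\ref{sec-gen}, e.g.\ Corollary~\ref{cor-something} and Lemma~\ref{lem-ideal}, where it relies on~\cite[Theorem~5.1]{mit}) --- and note that those results are downstream of the theorems of Section~\ref{sec-proof}, so invoking them here risks circularity. If you want to run the dimension-count argument cleanly, you would need to supply an independent proof of these two facts; as written, the proposal substitutes a heuristic for the step that carries the weight of the proposition.
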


\begin{proof}
Since 
$\Gams\colon\Mat(\SoSuSp)\to\Repsas$ is well-defined
by \fullref{lem-mainfunctor} and \fullref{rem-additive}, it remains to show 
that $\Gams$ is essentially surjective, full and faithful.

\textbf{Essentially surjective.} This follows 
directly from the definitions of $\Gams$, $\SoSuSp$, 
its additive closure $\Mat(\SoSuSp)$ and $\Repsas$.

\textbf{Full.}: It suffices to verify fullness 
for morphisms between objects of the form 
$\vec{k}\in X^{m+n}$, where $X^{m+n}=(X_b\cup X_g)^m\cup (X_b\cup X_r)^n$. That it holds is clear from diagram \eqref{eq-commute} since $\Phi_{\Su}^{m|n}$ is full 
by \fullref{cor-functor}.

\textbf{Faithful.} Again it suffices to verify faithfulness 
for morphisms between objects of the form 
$\vec{k}\in X^{m+n}$.
Given any web $u\in\Hom_{\SoSuSp}(\vec{k},\vec{l})$ for 
$\vec{k}\in X^{m+n}$ and 
$\vec{l}\in X^{m^{\prime}+n^{\prime}}$, we can 
compose $u$ from the bottom and the top with 
merges and splits respectively 
to obtain
\[
u^{\prime}=
\xy
(0,0)*{
\begin{tikzpicture} [scale=1]
\draw[very thick] (-1.5,1.25) rectangle (3.5,.75);
\draw[very thick, green, directed=0.55] (-1.25,-.5) to (-1.25,.75);
\draw[very thick, green, directed=0.55] (-1.25,1.25) to (-1.25,2.5);
\draw[very thick, green, directed=0.55] (.25,-.5) to (.25,.75);
\draw[very thick, green, directed=0.55] (.25,1.25) to (.25,2.5);
\draw[very thick, mycolor, directed=0.55] (1.75,0) to (1.75,.75);
\draw[very thick, mycolor, directed=0.55] (1.75,1.25) to (1.75,2);
\draw[very thick, mycolor, directed=0.55] (3.25,0) to (3.25,.75);
\draw[very thick, mycolor, directed=0.55] (3.25,1.25) to (3.25,2);
\node at (-0.5,1.75) {$\cdots$};
\node at (-0.5,.25) {$\cdots$};
\node at (2.5,1.75) {$\cdots$};
\node at (2.5,.25) {$\cdots$};
\node at (1,1) {$u$};
\node at (3.8,1.675) {\tiny $l_{m^{\prime}+n^{\prime}}$};
\node at (3.75,0.325) {\tiny $k_{m+n}$};
\node at (-1.25,2.65) {\tiny $l_1$};
\node at (-1.25,-.65) {\tiny $k_1$};
\node at (0.25,2.65) {\tiny $l_{m^{\prime}}$};
\node at (0.25,-.65) {\tiny $k_m$};
\node at (1.3,1.675) {\tiny $l_{m^{\prime}+1}$};
\node at (1.35,0.325) {\tiny $k_{m+1}$};
\draw [very thick, rdirected=.65] (1.75,0) to [out=330,in=90] (2,-.5);
\draw [very thick, rdirected=.65] (1.75,0) to [out=210,in=90] (1.5,-.5);
\draw [very thick, rdirected=.65] (3.25,0) to [out=330,in=90] (3.5,-.5);
\draw [very thick, rdirected=.65] (3.25,0) to [out=210,in=90] (3,-.5);
\draw [very thick, directed=.55] (1.75,2) to [out=30,in=270] (2,2.5);
\draw [very thick, directed=.55] (1.75,2) to [out=150,in=270] (1.5,2.5);
\draw [very thick, directed=.55] (3.25,2) to [out=30,in=270] (3.5,2.5);
\draw [very thick, directed=.55] (3.25,2) to [out=150,in=270] (3,2.5);
\node at (1.75,2.65) {\tiny $\cdots$};
\node at (1.75,-.65) {\tiny $\cdots$};
\node at (3.25,2.65) {\tiny $\cdots$};
\node at (3.25,-.65) {\tiny $\cdots$};
\node at (1.5,2.65) {\tiny $1$};
\node at (2,2.65) {\tiny $1$};
\node at (1.5,-.65) {\tiny $1$};
\node at (2,-.65) {\tiny $1$};
\node at (3,2.65) {\tiny $1$};
\node at (3.5,2.65) {\tiny $1$};
\node at (3,-.65) {\tiny $1$};
\node at (3.5,-.65) {\tiny $1$};
\end{tikzpicture}
};
\endxy
\]
Recall that exploding edges is, 
by \eqref{eq-simpler1}, a 
reversible operation. Hence, we have
\[
\Gams(u)=\Gams(v)
\quad\text{if and only if}\quad
\Gams(u^{\prime})=\Gams(v^{\prime}),
\] 
which together with \fullref{cor-green2} reduces the verification of faithfulness to the case where all web edges are black or green. Such webs lie in $\iota^{\infty}_{1}(\CKM)$ and faithfulness follows as in the proof of \fullref{lem-faithful}.
\end{proof}

\subsection{Proofs of the equivalences}\label{sub-proof}

\begin{rem}\label{rem-rmatrix}
Recall that the \textit{universal} $R$\textit{-matrix} for $\gln$ gives a braiding on $\Repas$ as follows 
(see e.g. \cite[Chapter XI, Sections 2 and 7]{tur}).
For any pair of $\Uun$-modules $V,W$ in $\Repas$ let 
$\mathrm{Perm}_{V,W}\colon V\otimes W\to W\otimes V$ be the permutation 
$\mathrm{Perm}_{V,W}(v\otimes w)=w\otimes v$ and define
$\beta^R_{V,W}=\mathrm{Perm}_{V,W}\circ R$.
We scale $\beta^R_{V,W}$ as follows:
\[
\tilde\beta^R_{V,W}=q^{-\frac{kl}{N}} \beta^R_{V,W}
\]
whenever $V$ and $W$ are exterior or symmetric power $\Uun$-modules of exponent $k$ and $l$ respectively. This induces a scaling 
$\tilde\beta^R_{V,W}$ of $\beta^R_{V,W}$ for all 
$\Uun$-modules $V,W\in\Repas$.
Then $(\Repas,\tilde\beta^R_{\cdot,\cdot})$ is a braided monoidal category.
\end{rem}

The goal of this subsection is to finally prove our main theorems.
To this end, we extend \eqref{eq-commute} to a diagram 
\begin{equation}\label{eq-commute2}
\begin{gathered}
\xymatrix{
\Usd \ar[r]^{\Phi_{\Su}^{m|n}} \ar[dr]_{\Lad} & \Repsas\ar@{^{(}->}[r]^{\iota_{\mathrm{alg}}}  & \Repas\\
& \SoSuSp \ar[u]_{\Gams}\ar@{^{(}->}[r]_{\iota_{\mathrm{dia}}} & \SuSp\ar[u]_{\Gam}
}
\end{gathered}
\end{equation}
where $\iota_{\mathrm{alg}}$ and $\iota_{\mathrm{dia}}$ are the evident 
inclusions of full subcategories. We will define the functor $\Gam$ such that 
the diagram \eqref{eq-commute2} commutes.

\begin{defn}(\textbf{The diagrammatic presentation functor $\Gam$})\label{defn-mainfunctor2}
We define a functor $\Gam\colon\SuSp\to\Repas$ as follows.
\begin{itemize}
\item On objects: $\Gam$ sends an object $\vec{k}\in X^{L}$ of $\SuSp$ to the tensor product of exterior and symmetric powers of $\C_q^N$ specified by the entries of $\vec{k}$: green and red integers encode exterior and symmetric powers respectively, and a black entry $1$ corresponds to $\C_q^N$ itself.

\item On morphisms: For an object $\vec{k}\in X^L$ let $w(\vec{k})\in S_L$ be a shortest length permutation that sorts green integers in $\vec{k}$ to the left of red integers. We define $\Gam$ on an arbitrary web $u\in\Hom_{\SuSp}(\vec{k},\vec{l})$ by pre-composing and 
post-composing with elementary crossings and the universal $R$-matrix intertwiners:
\[
\Gam(u)=(\tilde\beta^{R}_{w(\vec{l})})^{-1}\circ
\Gams(\beta^{\Sup}_{w(\vec{l})}\circ u\circ(\beta^{\Sup}_{w(\vec{k})})^{-1})
\circ\tilde\beta^{R}_{w(\vec{k})}.
\]
\end{itemize}
Clearly, $\Gam$ restricts to $\Gams$.
\end{defn}

\begin{lem}\label{lem-mainagain}
$\Gam\colon\SuSp\to\Repas$ is a monoidal 
functor making \eqref{eq-commute2} commutative. 
\end{lem}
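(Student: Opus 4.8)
The plan is to check, in order, that $\Gam$ is well-defined on objects, well-defined and functorial on morphisms, monoidal, and compatible with the remaining data of~\eqref{eq-commute2}. On objects there is nothing to do: the prescription of Definition~\ref{defn-mainfunctor2} reads a tensor product of exterior and symmetric powers of $\C_q^N$ off the entries of $\vec k\in X^L$, and visibly $\Gam(\vec k\otimes\vec l)=\Gam(\vec k)\otimes\Gam(\vec l)$, so $\Gam$ will be monoidal with identity structure maps on objects. Also, if $\vec k$ is already sorted then a shortest sorting permutation $w(\vec k)$ is the identity, so the defining formula collapses to $\Gam(u)=\Gams(u)$; thus $\Gam$ restricts to $\Gams$ on $\SoSuSp$. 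Together with the commutativity of~\eqref{eq-commute} (Lemma~\ref{lem-mainfunctor}) this gives the commutativity of~\eqref{eq-commute2} as soon as $\Gam$ is a functor.

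I would treat functoriality first. Note that $\beta^{\Sup}_{w(\vec l)}\circ u\circ\beta^{\Sup}_{w(\vec k)^{-1}}$ is a web between sorted objects, hence a morphism of the full subcategory $\SoSuSp$, so $\Gams$ does apply to it. Preservation of identities follows because the braid $\beta^{\Sup}_{w}$ and its inverse $\beta^{\Sup}_{w^{-1}}$ cancel in $\SuSp$, and likewise $\tilde\beta^{R}_{w}$ and $\tilde\beta^{R}_{w^{-1}}$ cancel in $\Repas$ (Lemma~\ref{lem-braidrel} for the web braiding; Remark~\ref{rem-rmatrix} for the $R$-matrix braiding). Preservation of composition is a short telescoping computation: for $u\colon\vec k\to\vec l$, $v\colon\vec l\to\vec m$, insert $\tilde\beta^{R}_{w(\vec l)}\circ\tilde\beta^{R}_{w(\vec l)^{-1}}=\mathrm{id}$ between $\Gam(v)$ and $\Gam(u)$, use functoriality of $\Gams$ (Lemma~\ref{lem-mainfunctor}) to merge the two $\Gams$-factors, and cancel $\beta^{\Sup}_{w(\vec l)^{-1}}\circ\beta^{\Sup}_{w(\vec l)}=\mathrm{id}$ inside; the result is exactly the formula for $\Gam(v\circ u)$.

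The genuine content is that $\Gam$ does not depend on the auxiliary choices, that is, on the chosen shortest sorting permutation $w(\vec k)$ for each object; such a permutation can fail to be unique only because a black $1$-labelled strand may be regarded either as exterior or as symmetric, and $\bV^1_q\C_q^N=\Sym_q^1\C_q^N=\C_q^N$. Here the plan is to establish the key braiding compatibility: $\Gams$ intertwines the web braiding $\beta^{\Sup}$ with the rescaled $R$-matrix braiding $\tilde\beta^{R}$ as far as both are defined between sorted objects, i.e.\ $\Gams$ sends a colour-preserving permutation web between sorted objects to the corresponding $\tilde\beta^{R}$ intertwiner. By the hexagon identities (for $\beta^{\Sup}$: Proposition~\ref{prop-braiding}, Corollary~\ref{cor-braiding}; for $\tilde\beta^{R}$: Remark~\ref{rem-rmatrix}) this reduces to the elementary crossings of Definition~\ref{defn-crossings1}: for the monochromatic $\beta^{g}_{k,l}$ and $\beta^{r}_{k,l}$ it is the corresponding statement of~\cite{ckm} and~\cite{rt} transported through $\Gams$ via~\eqref{eq-imgreen} and Corollary~\ref{cor-green2}, and for the mixed $\beta^{m}_{k,l},\beta^{\tilde m}_{k,l}$ it follows from the explosion definition~\eqref{eq-mixedbraiding}, the dumbbell relation~\eqref{eq-dumb}, and the monochromatic case applied to the black $1$-strands. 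Granting this, any two shortest sorting permutations of $\vec k$ differ by a colour-preserving permutation of the sorted object, and the compatibility forces the two versions of the defining formula to agree; hence $\Gam$ is well-defined. Well-definedness with respect to the relations of $\SuSp$ is then automatic: a relation $u=v$ in $\SuSp$ gives $\beta^{\Sup}_{w(\vec l)}\circ u\circ\beta^{\Sup}_{w(\vec k)^{-1}}=\beta^{\Sup}_{w(\vec l)}\circ v\circ\beta^{\Sup}_{w(\vec k)^{-1}}$ in $\SuSp$, hence in $\SoSuSp$, so $\Gams$ assigns them the same morphism.

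Finally, for monoidality it remains to verify $\Gam(u\otimes v)=\Gam(u)\otimes\Gam(v)$ on morphisms. I would factor $w(\vec k\otimes\vec k')$ as the block permutation $w(\vec k)\times w(\vec k')$ followed by a green-over-red shuffle $\sigma$ -- a length-additive factorisation, since sorting inside the two blocks produces no inversion across them -- and likewise on the target. Expanding the formula for $\Gam(u\otimes v)$ with $\beta^{\Sup}_{w(\vec k\vec k')}=\beta^{\Sup}_{\sigma}\circ(\beta^{\Sup}_{w(\vec k)}\otimes\beta^{\Sup}_{w(\vec k')})$ (and similarly for $\tilde\beta^{R}$ and on the target), one uses the braiding compatibility above to slide the $\Gams$-image of the shuffle through the tensor product, after which the shuffle contributions on source and target cancel against the $\tilde\beta^{R}$-factors and one is left with $\Gam(u)\otimes\Gam(v)$. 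The main obstacle is the braiding compatibility for the mixed crossings; everything else is formal once it is available.
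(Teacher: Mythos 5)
Your outline is broadly the right way to unfold the paper's terse proof, and the telescoping argument for functoriality, the observation that $\Gam$ restricts to $\Gams$ on sorted objects, and the block-plus-shuffle factorisation for monoidality are exactly the points the paper leaves as ``clear from the construction.'' However, there is a gap in the pivotal claim you call the ``key braiding compatibility.'' You argue that $\Gams$ sends a colour-preserving permutation web between sorted objects to the corresponding $\tilde\beta^R$ intertwiner by ``reducing to elementary crossings via the hexagon identities.'' The problem is that this reduction happens in $\SuSp$, not in $\SoSuSp$: once you decompose $\beta^{\Sup}_{\sigma}$ into elementary factors $\beta^{\Sup}_{i}$, the intermediate objects are generically not sorted, and a single mixed elementary crossing such as $\beta^m_{k,l}\colon(k_g,l_r)\to(l_r,k_g)$ has a sorted source but an unsorted target, so $\Gams$ does not apply to it. One cannot therefore compute $\Gams(\beta^{\Sup}_{\sigma})$ factor-by-factor. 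The argument needs either a demonstration that for a colour-class-preserving $\sigma$ a reduced word can be chosen with all intermediate objects sorted (this is plausible but not automatic, especially once one is comparing two shortest sorting permutations and hence a genuine braid $\beta^{\Sup}_{w}\circ(\beta^{\Sup}_{w'})^{-1}$ rather than a permutation braid), or a detour through the ladder functor $\Lad$ and the commutativity of~\eqref{eq-commute}, computing $\Gams$ via $\Phi^{m|n}_{\Su}$ applied to a preimage in $\Usd$.

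A second, related concern is one of logical flow rather than correctness: the compatibility statement you want to establish inside this lemma is essentially a sorted-object restriction of Proposition~\ref{prop-braidfunctor}, which the paper proves \emph{after} Lemma~\ref{lem-mainagain} and \emph{uses} Lemma~\ref{lem-mainagain} to reduce to elementary crossings. Your proof is not circular -- you invoke only $\Gams$, Corollary~\ref{cor-green2}, \cite{ckm}, \cite{rt}, and the explosion definition -- but it front-loads a substantial part of Proposition~\ref{prop-braidfunctor}, so it would be cleaner either to factor out the sorted braiding compatibility as a standalone lemma preceding both Lemma~\ref{lem-mainagain} and Proposition~\ref{prop-braidfunctor}, or to follow the paper's implicit strategy of simply fixing a choice of $w(\vec k)$ for each object and only later observing (via Proposition~\ref{prop-braidfunctor}) that the result is independent of that choice. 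Also note that your characterisation of the ambiguity in $w(\vec k)$ (``a black $1$-labelled strand may be regarded either as exterior or as symmetric'') understates the issue slightly: two shortest sorting permutations can land on genuinely different sorted objects, differing in the placement of black strands relative to the green and red blocks, so the difference $\sigma$ is a permutation that moves blacks past coloured strands, not merely a relabelling of a black strand.
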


\begin{proof}
By \fullref{lem-mainfunctor} and the fact that $\beta^{\Sup}_{\cdot,\cdot}$ and 
$\tilde\beta^{R}_{\cdot,\cdot}$ are braidings (see \fullref{prop-braiding} 
and \fullref{rem-rmatrix}), we see that $\Gam$ is well-defined. That $\Gam$ is monoidal and makes \eqref{eq-commute2} commutative is clear from its construction.
\end{proof}

\begin{prop}\label{prop-braidfunctor}
The functor $\Gam\colon(\SuSp,\beta^{\Sup}_{\cdot,\cdot})\to(\Repas,\tilde\beta^R_{\cdot,\cdot})$ 
is a functor of braided monoidal categories.
\end{prop}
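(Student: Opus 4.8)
The plan is as follows. By Lemma~\ref{lem-mainagain} the functor $\Gam\colon\SuSp\to\Repas$ is already monoidal, so all that remains is to check that it intertwines the braidings, i.e.\ that $\Gam(\beta^{\Sup}_{\vec{k},\vec{l}})=\tilde\beta^R_{\Gam(\vec{k}),\Gam(\vec{l})}$ for all objects $\vec{k},\vec{l}$ of $\SuSp$. Since $(\SuSp,\beta^{\Sup}_{\cdot,\cdot})$ and $(\Repas,\tilde\beta^R_{\cdot,\cdot})$ are genuine braided monoidal categories (Corollary~\ref{cor-braiding} and Remark~\ref{rem-rmatrix}), both sides of this equation satisfy the hexagon identities~\eqref{eq-hexid}; combined with the fact that every object of $\SuSp$ is a tensor product of single labeled points, this reduces the verification to the case $\vec{k}=(k)$, $\vec{l}=(l)$, i.e.\ to the four elementary crossings $\beta^{g}_{k,l}$, $\beta^{r}_{k,l}$, $\beta^{m}_{k,l}$ and $\beta^{\tilde m}_{k,l}$ of Definition~\ref{defn-crossings1}. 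First I would note that all intermediate boundary sequences occurring in the relevant diagrams are already sorted (only green, only red, or black-and-red only, so that the sorting permutations in the definition of $\Gam$ contribute at most one elementary $R$-matrix crossing), which lets one essentially work with $\Gams$ and the commutative square of Lemma~\ref{lem-mainagain}.

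For the green monochromatic crossing I would invoke~\cite{ckm}: by Corollaries~\ref{cor-green2} and~\ref{cor-ckmwebs} together with the commuting square appearing in the proof of Lemma~\ref{lem-faithful}, $\Gam\circ\iota^{\infty}_{1}$ coincides with $\iota^{\mathrm{es}}_{\mathrm{e}}\circ\Gamma_{\mathrm{CKM}}$, and by~\cite[Corollary~6.2.3]{ckm} the functor $\Gamma_{\mathrm{CKM}}$ sends the web braiding on $\CKM$ to the $R$-matrix braiding on $\Repe{N}$; the factor $q^{kl/N}$ built into both $(\CKM,\beta^{\Sup}_{\cdot,\cdot})$ and into $\tilde\beta^R$ matches on the two sides, so $\Gam(\beta^{g}_{k,l})=\tilde\beta^R_{\bV_q^{k}\C_q^N,\bV_q^{l}\C_q^N}$. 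For the red monochromatic crossing I would pre- and post-compose $\beta^{r}_{k,l}$ with the clasps $\cal{CL}^r_k\otimes\cal{CL}^r_l$ and $\cal{CL}^r_l\otimes\cal{CL}^r_k$, which $\Gams$ sends to the $\Uun$-projections of $(\C_q^N)^{\otimes(k+l)}$ onto $\Sym_q^{k}\C_q^N\otimes\Sym_q^{l}\C_q^N$; by functoriality of the braiding (Proposition~\ref{prop-braiding}) and Remark~\ref{rem-JWnew} the sandwiched crossing becomes a braid word in the elementary crossings $\beta^{r}_{1,1}=\beta^{g}_{1,1}$ of Example~\ref{ex-oneoneok}, which is handled by the green case (equivalently, directly by the $R$-matrix formula on $\C_q^N\otimes\C_q^N$), and naturality of $\tilde\beta^R$ with respect to these clasp projections then yields $\Gam(\beta^{r}_{k,l})=\tilde\beta^R_{\Sym_q^{k}\C_q^N,\Sym_q^{l}\C_q^N}$. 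Finally, by Definition~\ref{defn-crossings1} the mixed crossings $\beta^{m}_{k,l}$ and $\beta^{\tilde m}_{k,l}$ are obtained by exploding the over-strand into black $1$-labeled edges and using monochromatic $\beta^{r}_{1,l}$ resp.\ $\beta^{g}_{1,l}$ crossings together with merges and splits; the pitchfork relations (Lemma~\ref{lem-pitchfork}) guarantee this is well defined. Since $\Gam$ is a (well-defined, monoidal) functor and since on the representation side $\tilde\beta^R_{\bV_q^{k}\C_q^N,\Sym_q^{l}\C_q^N}$ is, by naturality and the hexagon, computed by the analogous explosion of $\bV_q^{k}\C_q^N$ into $(\C_q^N)^{\otimes k}$ via $\cal{CL}^g_k$ and by $\tilde\beta^R_{\C_q^N,\Sym_q^{l}\C_q^N}$, the mixed cases follow from the two monochromatic cases already established.

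The step I expect to be the main obstacle is not any single conceptual point but the bookkeeping around the sorting permutations $w(\vec{k})$ and the $R$-matrix corrections hidden in the definition of $\Gam$: one must make sure that, after pre- and post-composing with the sorting crossings, the $\tilde\beta^R$-corrections in $\Gam(\beta^{\Sup}_{\vec{k},\vec{l}})$ cancel against the transported braiding so as to leave precisely $\tilde\beta^R_{\Gam(\vec{k}),\Gam(\vec{l})}$, and in the mixed case that the exploded description of $\beta^{m}_{k,l}$ is compatible with the $\tilde\beta^R$-side under $\Gam$. A secondary subtlety is that the green monochromatic case as recorded in~\cite{ckm} is stated on $\Repe{N}$ rather than on $\Repas$, so one must transfer it along the embedding $\iota^{\mathrm{es}}_{\mathrm{e}}$ and check the (routine) compatibility of this embedding with $R$-matrices.
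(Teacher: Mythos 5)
Your proposal is correct and follows essentially the same reduction as the paper: invoke Lemma~\ref{lem-mainagain} for monoidality, reduce via the hexagon identities to elementary crossings, import the green monochromatic case from~\cite[Theorem~6.2.1 and Lemma~6.2.2]{ckm} through Corollaries~\ref{cor-green2}/\ref{cor-ckmwebs}, and anchor everything at $\Gam(\beta^{g}_{1,1})=\Gam(\beta^{r}_{1,1})=\tilde\beta^R_{\C_q^N,\C_q^N}$ via Example~\ref{ex-oneoneok}. The one place you deviate is the red monochromatic case: the paper disposes of it in a single sentence by appealing to the green--red symmetry of $\InSp$, whereas you spell out the equivalent cabling argument -- sandwich $\beta^{r}_{k,l}$ between red clasps, use naturality (Proposition~\ref{prop-braiding} and Remark~\ref{rem-JWnew}) to rewrite the sandwiched crossing as a braid word in $\beta^{r}_{1,1}=\beta^{g}_{1,1}$, and transport through naturality of $\tilde\beta^R$. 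Your version is more explicit, and arguably more honest about what ``reduce by green--red symmetry'' means on the representation-theoretic side, where there is no literal color-swap involution; the paper's phrasing buys brevity. Both versions, like the paper, treat the sorting permutations $w(\vec{k})$ and their $R$-matrix corrections in the definition of $\Gam$ somewhat lightly -- you correctly flag this as the bookkeeping obstacle, and the claim that the mixed case only introduces a single sorting crossing is what makes that bookkeeping tractable.
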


\begin{proof}
By \fullref{lem-mainagain}, it remains to verify 
\begin{equation*}\label{eq-commute3}
\Gam(\beta^{\Sup}_{\vec{k}\otimes\vec{l}})=\tilde\beta^R_{\Gam(\vec{k}),\Gam(\vec{l})},\quad
\text{for all objects }\vec{k},\vec{l}\text{ of }\SuSp.
\end{equation*}
The green--red symmetry and the fact that the mixed crossings are defined via the monochromatic crossings, together with \fullref{cor-ckmwebs}, reduce this problem to the situation studied in \cite[Theorem 6.2.1 and Lemma 6.2.2]{ckm}. It remains to show
\[
\Gam(\beta^{g}_{1,1})=\Gam(\beta^{r}_{1,1})=\Gams(\beta^{g}_{1,1})=
\Gams(\beta^{r}_{1,1})=\tilde\beta^R_{\C_q^N,\C_q^N}.
\] 
This follows since $\Gams(\beta^{g}_{1,1})=\Gams(\beta^{r}_{1,1})$ acts on 
\[
\C_q^N\otimes\C_q^N\cong\bV_q^2(\C_q^N)\oplus\Sym_q^2(\C_q^N)
\]
as 
$-q^{-1}$ on the first summand and as $q$ on the second 
(see \fullref{ex-oneoneok}).
\end{proof}

\begin{thm}(\textbf{The diagrammatic presentations})\label{thm-equi}
The functor 
\[
\Gam\colon(\Mat(\SuSp),\beta^{\Sup}_{\cdot,\cdot})\to(\Repas,\tilde\beta^R_{\cdot,\cdot})
\] 
is an equivalence of braided monoidal categories.
\end{thm}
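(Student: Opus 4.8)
The strategy is to deduce Theorem~\ref{thm-equi} from the sorted equivalence of Proposition~\ref{prop-sortequi} by showing that the braiding allows every object and every morphism of $\Mat(\SuSp)$ to be ``shuffled'' into the sorted subcategory $\SoSuSp$. Concretely, I would proceed in four steps. First, recall that $\Gam$ is already known to be a well-defined braided monoidal functor (Lemma~\ref{lem-mainagain}, Proposition~\ref{prop-braidfunctor}) restricting to $\Gams$ on $\SoSuSp$, and it extends additively to $\Mat(\SuSp)$ as in Remark~\ref{rem-additive}. So it remains to verify that $\Gam$ is essentially surjective, full, and faithful on $\Mat(\SuSp)$.

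For \emph{essential surjectivity}, note that every object of $\Repas$ is, by definition, a direct summand of a tensor product of exterior and symmetric powers of $\C_q^N$; any such tensor product is $\Gam$ of a (not necessarily sorted) object $\vec{k}\in X^L$, so essential surjectivity of the additive extension follows once we know $\Gam$ is full (to realize the relevant idempotents) — or, more directly, from essential surjectivity of $\Gams$ (Proposition~\ref{prop-sortequi}) together with the fact that $\Repsas$ is a full subcategory of $\Repas$ hitting every object up to summands. For \emph{fullness} and \emph{faithfulness}, the key observation is that for any object $\vec{k}\in X^L$ the permutation braid $\beta^{\Sup}_{w(\vec{k})}\one_{\vec{k}}$ is an isomorphism in $\SuSp$ from $\vec{k}$ to a sorted object $\vec{k}^{\mathrm{sort}}\in\SoSuSp$ (Lemma~\ref{lem-braidrel}), and correspondingly $\tilde\beta^R_{w(\vec{k})}$ is an isomorphism in $\Repas$; moreover these correspond under $\Gam$ by the very definition of $\Gam$ on morphisms (Definition~\ref{defn-mainfunctor2}). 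Hence for objects $\vec{k},\vec{l}$ we get a commuting square
\[
\xymatrix{
\Hom_{\SuSp}(\vec{k},\vec{l}) \ar[r]^{\Gam} \ar[d]_{\cong} & \Hom_{\Repas}(\Gam\vec{k},\Gam\vec{l}) \ar[d]^{\cong} \\
\Hom_{\SoSuSp}(\vec{k}^{\mathrm{sort}},\vec{l}^{\mathrm{sort}}) \ar[r]_{\Gams} & \Hom_{\Repsas}(\Gams\vec{k}^{\mathrm{sort}},\Gams\vec{l}^{\mathrm{sort}})
}
\]
in which the vertical maps are bijections (conjugation by the braid isomorphisms) and the bottom map $\Gams$ is a bijection by Proposition~\ref{prop-sortequi}. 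Therefore the top map $\Gam$ is a bijection too. Passing to the additive closures, matrices of such $\Hom$-bijections are bijections, so $\Gam\colon\Mat(\SuSp)\to\Repas$ is full and faithful; combined with essential surjectivity it is an equivalence of categories, and it is an equivalence of braided monoidal categories by Proposition~\ref{prop-braidfunctor}.

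The main subtlety to be careful about is the compatibility of the two ``shuffling'' operations: one must check that conjugating a web $u$ by $\beta^{\Sup}_{w}$ on the diagrammatic side matches conjugating $\Gam(u)$ by $\tilde\beta^R_{w}$ on the representation side, and that this is independent of the chosen reduced word for $w(\vec{k})$ — both of which follow from naturality of the braidings (Proposition~\ref{prop-braidfunctor}, Remark~\ref{rem-rmatrix}) and the braid relations (Lemma~\ref{lem-braidrel}). A second point requiring a word of care is that a general object of $\Mat(\SuSp)$ is a formal sum whose summands are arbitrary $\vec{k}\in X^L$, not sorted ones, and that $\Gam$ of an idempotent in $\End_{\SuSp}(\vec{k})$ corresponds, via the braid conjugation, to an idempotent in $\End_{\Repsas}(\vec{k}^{\mathrm{sort}})$; since $\Repas$ (being a category of modules over a Hopf algebra) is idempotent-complete, essential surjectivity of the additive extension is then immediate. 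I expect the genuine content of the argument — the sorted equivalence and the braiding axioms — to be entirely contained in the already-established Proposition~\ref{prop-sortequi}, Proposition~\ref{prop-braidfunctor}, and Lemma~\ref{lem-braidrel}, so that the proof of Theorem~\ref{thm-equi} itself is a short formal assembly, with the only real work being the bookkeeping of the conjugation square above.
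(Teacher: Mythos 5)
Your proposal is correct and takes essentially the same approach as the paper: both reduce fullness and faithfulness to the sorted equivalence $\Gams$ of Proposition~\ref{prop-sortequi} by conjugating with braid isomorphisms, the only presentational difference being that you display the commuting square of $\Hom$-spaces explicitly while the paper states a commuting diagram of categories (via sorting functors $\omega_{\Sup}$, $\omega_R$) and concludes with a dimension count. One small slip: in the essential-surjectivity discussion, $\Repsas$ hits every object of $\Repas$ up to \emph{isomorphism} (reordering tensor factors via the braiding), not merely up to direct summands, and no appeal to idempotent-completeness is needed since $\Repas$ is defined to consist of tensor products (and finite direct sums), not their retracts.
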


\begin{proof}
By \fullref{prop-braidfunctor}, $\Gam$
extends to a braided monoidal functor on the additive closure 
and it remains to show that $\Gam$ 
is essentially surjective, full and faithful.

\textbf{Essentially surjective.} This follows directly from the definitions. 
See also \fullref{rem-additive}.

\textbf{Full and faithful.} As before, it suffices to verify this on morphisms between objects of the form 
$\vec{k}\in X^{L}$. Consider the commuting diagram
\[
\begin{gathered}
\xymatrix{
\Repsas  & \Repas \ar[l]_/-0.3em/{\omega_R}\\
\SoSuSp \ar[u]^{\Gams} & \SuSp\ar[u]_{\Gam}\ar[l]^/-0.3em/{\omega_{\Sup}}
}
\end{gathered}
\]
where $\omega_{R}$ and 
$\omega_{\Sup}$ are the functors that order $f\in\Hom_{\Repas}(\Gam(\vec{k}),\Gam(\vec{l}))$ and
webs $u\in\Hom_{\SuSp}(\vec{k},\vec{l})$ by using the $R$-matrix 
braiding $\tilde\beta^R_{\cdot,\cdot}$ and the braiding $\beta^{\Sup}_{\cdot,\cdot}$ respectively, via a permutation of shortest length. 
Since sorting is invertible we get:
\begin{align*}
\dim(\Hom_{\Repas}(\Gam(\vec{k}),\Gam(\vec{l})))&=\dim(\Hom_{\Repsas}(\Gams(\omega_{\Sup}(\vec{k})),
\Gams(\omega_{\Sup}(\vec{l}))))\\
&=\dim(\Hom_{\SoSuSp}
(\omega_{\Sup}(\vec{k}),\omega_{\Sup}(\vec{l})))\\
&=\dim(\Hom_{\SuSp}(\vec{k},\vec{l})),
\end{align*}
where the second equality follows 
from \fullref{prop-sortequi}.
\end{proof}

\begin{rem}\label{rem-pivotal} 
For now we restrict ourselves to working with webs with only upward-oriented edges. Downward-oriented edges, as for example in \cite{ckm}, can be used to represent the duals of the $\Uun$-modules $\bV_q^k\C_q^N$ and $\Sym_q^l\C_q^N$. With respect to such an enriched web calculus, 
the statement of \fullref{thm-equi} extends to an equivalence of pivotal categories, see \cite[Section 6]{qs} and \fullref{rem-duals}.
\end{rem}

Let $\Hee$ denotes the monoidal, $\C_q$-linear category obtained 
from the collection $H_{\In}(q)$ of 
Iwahori--Hecke algebras as follows. The objects $e,e^{\prime}$ of $\Hee$ are tensor products of Iwahori--Hecke algebra idempotents corresponding to
$e_{\mathrm{col}}(\lambda)$ and $e_{\mathrm{row}}(\lambda)$ 
(as in \fullref{defn-idem}) under the isomorphism in \fullref{lem-hecke}. The morphism spaces are given by
$\Hom_{\Hee}(e,e^{\prime})=e^{\prime}H_{\In}(q)e$. The category $\Hee$ is braided 
with braiding $\tilde\beta^{H}_{\cdot,\cdot}$ induced from $H_{\In}(q)$.

\begin{thm}(\textbf{The diagrammatic presentation})\label{thm-equi2}
For large $N$ the functors $\Gam$ stabilize to a functor 
\[
\Gami\colon(\Mat(\InSp),\beta^{\Sup}_{\cdot,\cdot})\to(\Mat(\Hee),\tilde\beta^{H}_{\cdot,\cdot}),
\] 
which is an equivalence of braided monoidal categories.
\end{thm}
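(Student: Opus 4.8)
The plan is to exhibit $\Gami$ as the functor obtained by promoting the algebra isomorphisms $\Phi^{\infty}_{q\mathrm{SW}}\colon H_K(q)\xrightarrow{\cong}\End_{\InSp}(\vec{K})$ of Lemma~\ref{lem-hecke} to the idempotent‐thickened categories, and to check that it is an equivalence of braided monoidal categories. On objects, recall from Definition~\ref{defn-idem} that the objects of $\Hee$ are tensor products of the Iwahori--Hecke idempotents $\cal{CL}^g_k=e_{\mathrm{col}}((1^k))$ and $\cal{CL}^r_l=e_{\mathrm{row}}((l))$ together with the unit of $H_1(q)$, and that under $\Phi^{\infty}_{q\mathrm{SW}}$ these correspond exactly to the green and red clasps in $\End_{\InSp}$. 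Hence one sets $\Gami(\vec{k})=\cal{CL}^{c_1}_{k_1}\otimes\cdots\otimes\cal{CL}^{c_L}_{k_L}$ for an object $\vec{k}=(k_1,\dots,k_L)\in X^L$ with colors $c_i$, which is a bijection, compatible with $\otimes$, between the objects of $\InSp$ and those of $\Hee$.

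First I would establish the key reduction: in $\Mat(\InSp)$ every object $\vec{k}$ is isomorphic to the image of the idempotent $e_{\vec{k}}:=\Phi^{\infty}_{q\mathrm{SW}}(\Gami(\vec{k}))\in\End_{\InSp}(\vec{K})$, where $\vec{K}=(1_b,\dots,1_b)$ has $K=k_1+\cdots+k_L$ entries. The mutually inverse morphisms are realised by the "split" and "merge" webs: iterating the digon removal relation~\eqref{eq-simpler1} and using Lemma~\ref{lem-recursion}, splitting every colored edge of $\vec{k}$ all the way down to black strands and merging it back equals a nonzero scalar times the identity of $\vec{k}$, reversibly, while the opposite composite equals a nonzero scalar times $e_{\vec{k}}$. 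Conjugating a web $u\in\Hom_{\InSp}(\vec{k},\vec{l})$ by these webs and applying $\Phi^{\infty}_{q\mathrm{SW}}{}^{-1}$ then identifies $\Hom_{\Mat(\InSp)}(\vec{k},\vec{l})$ -- which vanishes unless $\sum k_i=\sum l_i$, since the sum of boundary labels is invariant under merges and splits -- with $\Gami(\vec{l})\,H_K(q)\,\Gami(\vec{k})=\Hom_{\Hee}(\Gami(\vec{k}),\Gami(\vec{l}))$. This defines $\Gami$ on morphisms, and it is a $\C_q$-linear monoidal functor since it is a transport of structure along the isomorphisms of Lemma~\ref{lem-hecke}, which are compatible with the tensor embeddings $H_K(q)\otimes H_{K'}(q)\hookrightarrow H_{K+K'}(q)$ and with $\otimes$ of webs. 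By the $\Hom$-space identification $\Gami$ is fully faithful, and it is essentially surjective because, after unfolding $e_{\mathrm{col}}(\lambda)$ and $e_{\mathrm{row}}(\mu)$ into clasps, the idempotents defining the objects of $\Hee$ are precisely the $e_{\vec{k}}$; so $\Gami\colon\Mat(\InSp)\to\Mat(\Hee)$ is an equivalence of $\C_q$-linear monoidal categories.

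It remains to check that $\Gami$ is braided. On the all-black objects the braiding $\tilde\beta^{H}_{\cdot,\cdot}$ of $\Hee$ is generated by the images of the braid group generators, which under $\Phi^{\infty}_{q\mathrm{SW}}$ are the Hecke generators $H_i$, and Lemma~\ref{lem-hecke} together with Example~\ref{ex-oneoneok} sends $H_i$ to the crossing $\beta^{g}_{1,1}=\beta^{r}_{1,1}$ of $\InSp$; thus $\Gami$ intertwines the braidings on all-black objects. For arbitrary objects one argues as in the proof of Proposition~\ref{prop-braidfunctor}: the monochromatic and mixed colored crossings are built from the $1$-$1$ crossings and clasps via explosion and the pitchfork relations of Lemma~\ref{lem-pitchfork}, and naturality (Proposition~\ref{prop-braiding} on the web side, and the braid group action defining $\tilde\beta^{H}_{\cdot,\cdot}$ on the Hecke side) then forces the two braidings to agree everywhere. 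Finally, the "stabilization" assertion is recorded by noting that for $N\geq K$ the relations~\eqref{eq-alt} are vacuous on webs of total weight $K$ -- every edge of such a web carries a label at most $K\leq N$ -- so $\Hom_{\InSp}(\vec{k},\vec{l})=\Hom_{\SuSp}(\vec{k},\vec{l})$ there, while quantum Schur--Weyl duality~\eqref{eq-schurweyl} gives $H_K(q)\cong\End_{\Uun}((\C_q^N)^{\otimes K})$ for $N\geq K$; combining these with the equivalence $\Gam$ of Theorem~\ref{thm-equi} shows that $\Gami$ is the common value of the functors $\Gam=\Gam_N$ on morphisms of bounded weight.

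The only real work is bookkeeping: organising the explosion identification $\vec{k}\cong\operatorname{image}(e_{\vec{k}})$ and the transport of the monoidal and braided structures so that compatibility is manifest. No new representation-theoretic input is needed beyond Lemma~\ref{lem-hecke} (hence, ultimately, Theorem~\ref{thm-equi}) and quantum Schur--Weyl duality, so I expect the main obstacle to be purely organisational rather than conceptual.
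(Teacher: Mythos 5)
Your proof is correct and follows the same overall strategy as the paper's: both realize $\Gami$ as the idempotent-thickened inverse of the isomorphism $\Phi^{\infty}_{q\mathrm{SW}}$ of Lemma~\ref{lem-hecke}, and both ultimately rest on Theorem~\ref{thm-equi}, quantum Schur--Weyl duality~\eqref{eq-schurweyl}, and the fact that the exterior relations~\eqref{eq-alt} are vacuous on any fixed $\Hom$-space once $N$ exceeds its weight. The genuine divergence is in how full faithfulness is verified. The paper sets up the commuting square~\eqref{eq-last} with the quotient functors $\pi^N$ and $\pi^N_\infty$ and concludes via the chain of dimension equalities $\dim\Hom_{\Hee}=\dim\Hom_{\Repas}=\dim\Hom_{\SuSp}=\dim\Hom_{\InSp}$. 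You instead bypass the square and construct the $\Hom$-space isomorphism directly: each $\vec{k}$ is exhibited as a retract of $\vec{K}$ via merge and split webs (Lemma~\ref{lem-recursion} and digon removal), so $\Hom_{\InSp}(\vec{k},\vec{l})$ is identified with the corner $e_{\vec{l}}\End_{\InSp}(\vec{K})e_{\vec{k}}$, which $(\Phi^{\infty}_{q\mathrm{SW}})^{-1}$ carries to $\Gami(\vec{l})\,H_K(q)\,\Gami(\vec{k})=\Hom_{\Hee}(\Gami(\vec{k}),\Gami(\vec{l}))$. This is a more constructive route that makes the monoidal compatibility manifest, at the modest cost of tracking the $\prod_i[k_i]!$ normalization scalars (which you correctly flag and handle). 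The two arguments are of comparable depth, since Lemma~\ref{lem-hecke} already encodes the diagram chase the paper carries out explicitly; your version has the advantage of producing the $\Hom$-space isomorphism itself rather than only matching dimensions.
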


\begin{proof}
By Schur--Weyl duality \eqref{eq-schurweyl} and by the construction of 
the categories $\SuSp$ as quotients of $\InSp$, we have quotient functors $\pi_\infty^N$ and $\pi^N$ for $N\in\Z_{\geq 0}$ such that
\begin{equation}\label{eq-last}
\begin{gathered}
\xymatrix{
\Mat(\Hee)\ar[r]^{\pi^N} & \Repas\\
\Mat(\InSp)\ar[r]_{\pi_{\infty}^N}\ar[u]^{\Gami} & \Mat(\SuSp)\ar[u]_{\Gam}
}
\end{gathered}
\end{equation}
commutes. Here the functor $\Gami$ is an idempotented version of the inverse of the isomorphism $\Phi^{\infty}_{q\mathrm{SW}}$ from \fullref{lem-hecke}.

Fix two objects $\vec{k}\in X^L$ and 
$\vec{l}\in X^L$ of $\InSp$ and suppose that $N$ is greater than the sum of the integer values of the entries of $\vec{k}$ (ignoring their colors). Then, 
by \eqref{eq-schurweyl}, \fullref{thm-equi}, the commutativity of \eqref{eq-last} and the fullness of $\pi^N_{\In}$, we have
\begin{align*}
\dim(\Hom_{\Hee}(\Gami(\vec{k}),\Gami(\vec{l})))&=
\dim(\Hom_{\Repas}(\pi^N(\Gami(\vec{k})),\pi^N(\Gami(\vec{l}))))\\
&= \dim(\Hom_{\SuSp}(\pi_\infty^N(\vec{k}),\pi_\infty^N(\vec{l})))\\
&= \dim(\Hom_{\InSp}(\vec{k},\vec{l})).
\end{align*}
$\Gami$ 
is clearly essentially surjective 
and a braided monoidal functor and the theorem follows.
\end{proof}
%
%
\section{Applications}\label{sec-applications}
In this section we write $\cal{L}_D$ for \textit{diagrams 
of framed, oriented links} $\cal{L}$, $b^K_D$ for \textit{diagrams 
of braids in} $K$\textit{-strands} and $\overline{b}^K_D$ for \textit{closures} of such braid diagrams. 
We consider labelings of the connected components of $\cal{L}$ and of braids by Young diagrams $\lambda^i$. If $\cal{L}$ is a $d$-component link, then we write $\cal{L}(\vec{\lambda})$ for its labeling by a vector of Young diagrams $\vec{\lambda}=(\lambda^1,\dots,\lambda^d)$, and use an analogous notation for labeled link and braid diagrams. If not mentioned otherwise, then all appearing links and related concepts are assumed to be framed and oriented from now on.

Let $\cal{L}_D(\vec{\lambda}) = \overline{b}^K_D(\vec{\lambda})$ be a diagram of a framed, oriented, labeled link given as a braid closure. The following process associates to $b^K_D(\vec{\lambda})$ an element $p_{K^{\prime}}(\tilde b^{K^{\prime}}_D)e_q(\vec{\lambda})$
of $H_{K^{\prime}}(q)\cong\End_{\InSp}(\vec{K}^{\prime})$:
\[
\raisebox{.085cm}{\xy
(0,0)*{
\begin{tikzpicture}[scale=.3]
	\draw [very thick, ->] (0,-1) to (0,1);
	\node at (-0.5,0) {\tiny $\lambda^i$};
	\node at (0,-1.5) {\tiny $\lambda^i\in\Lambda^+(K_i)$};
\end{tikzpicture}
};
\endxy}\quad
\xrightarrow{\text{cable}}
\quad
\raisebox{.085cm}{\xy
(0,0)*{
\begin{tikzpicture}[scale=.3]
	\draw [very thick, ->] (-2,-1) to (-2,1);
	\draw [very thick, ->] (-1,-1) to (-1,1);
	\draw [very thick, ->] (1,-1) to (1,1);
	\draw [very thick, ->] (2,-1) to (2,1);
	\node at (0.1,0) {\tiny $\cdots$};
	\node at (0,-1.5) {\tiny $K_i$\text{-strands}};
\end{tikzpicture}
};
\endxy}\quad
\xrightarrow{p_{K_i}(\cdot)}
\quad
p_{K_i}\left(
\raisebox{.085cm}{\xy
(0,0)*{
\begin{tikzpicture}[scale=.3]
	\draw [very thick, ->] (-2,-1) to (-2,1);
	\draw [very thick, ->] (-1,-1) to (-1,1);
	\draw [very thick, ->] (1,-1) to (1,1);
	\draw [very thick, ->] (2,-1) to (2,1);
	\node at (0.1,0) {\tiny $\cdots$};
	\node at (0,-1.5) {\tiny $K_i$\text{-strands}};
\end{tikzpicture}
};
\endxy}
\right)e_q(\lambda^i)
=
\xy
(0,0)*{
\begin{tikzpicture}[scale=.3]
	\draw [very thick, ->] (-2,-1) to (-2,1);
	\draw [very thick, ->] (-1,-1) to (-1,1);
	\draw [very thick, ->] (1,-1) to (1,1);
	\draw [very thick, ->] (2,-1) to (2,1);
	\draw [very thick] (-2.1,-3) rectangle (2.1,-1);
	\draw [very thick] (-2,-5) to (-2,-3);
	\draw [very thick] (-1,-5) to (-1,-3);
	\draw [very thick] (1,-5) to (1,-3);
	\draw [very thick] (2,-5) to (2,-3);
	\node at (0.1,0) {\tiny $\cdots$};
	\node at (0.1,-4) {\tiny $\cdots$};
	\node at (0,-2) {\tiny $e_q(\lambda^i)$};
	\node at (-2,1.4) {\tiny $1$};
	\node at (-1,1.4) {\tiny $1$};
	\node at (1,1.4) {\tiny $1$};
	\node at (2,1.4) {\tiny $1$};
	\node at (-2,-5.4) {\tiny $1$};
	\node at (-1,-5.4) {\tiny $1$};
	\node at (1,-5.4) {\tiny $1$};
	\node at (2,-5.4) {\tiny $1$};
\end{tikzpicture}
};
\endxy
\]
where the last equality is by \fullref{lem-hecke} and
we write $p_{K_i}$ for the 
Iwahori--Hecke algebra representation of the braid group on $K_i$ strands. The first step replaces strands labeled by a Young diagram $\lambda^i$ with $K_i$ nodes in the braid diagram $b^K_D$ by $K_i$ parallel strands. This results in a new braid $\tilde b^{K^{\prime}}_D$ where $K^{\prime}$ indicates the number of strands. In the second step this cabled braid is interpreted as an element of 
the Iwahori--Hecke algebra, or equivalently, as a web in $\InSp$, with an idempotent $e_q(\lambda^i)$ placed on the cable of each previously $\lambda^i$ labeled strand.

\subsection{The colored HOMFLY--PT polynomial via \texorpdfstring{$\InSp$}{inftywebgr}}\label{sub-homfly}
In this subsection we work over the ground field $\Ca=\C_q(a)$, 
with $a$ being a generic parameter. We will use the $\Ca$-valued \textit{Jones--Ocneanu trace} $\tr(\cdot)$ on the direct sum of all Iwahori--Hecke algebras $H_{\In}(q)\!=\!\bigoplus_{K\in\Z_{\geq 0}}\! H_K(q)$.
The definition of $\tr(\cdot)$ 
can be found in \cite[Section 5]{jones2} 
(which can be easily adapted to our notation). We will use it in the form of the following lemma. 

\begin{lem}\label{lem-diatrace}
Given a web $u\in\End_{\InSp}(\vec{K})$,
\[
\tr(u)=
\xy
(0,0)*{
\begin{tikzpicture}[scale=.3]
	\draw [very thick] (-1.25,-.5) rectangle (1.25,1.5);
	\draw [very thick, ->] (-1,1.5) to (-1,2.5);
	\draw [very thick, ->] (1,1.5) to (1,2.5);
	\draw [very thick, directed=0.3] (-1,-1.5) to (-1,-.5);
	\draw [very thick, directed=0.3] (1,-1.5) to (1,-.5);
	\draw [very thick, directed=0.55] (3,2.5) to (3,-1.5);
	\draw [very thick, directed=0.55] (5,2.5) to (5,-1.5);
	\draw [very thick] (1,2.5) to [out=90,in=180] (2,3.5) to [out=0,in=90] (3,2.5);
	\draw [very thick] (-1,2.5) to [out=90,in=180] (2,5.5) to [out=0,in=90] (5,2.5);
	\draw [very thick] (1,-1.5) to [out=270,in=180] (2,-2.5) to [out=0,in=270] (3,-1.5);
	\draw [very thick] (-1,-1.5) to [out=270,in=180] (2,-4.5) to [out=0,in=270] (5,-1.5);
	\node at (0,.5) {\tiny $u$};
	\node at (2,3) {\tiny $1$};
	\node at (2,6) {\tiny $1$};
	\node at (2,-2) {\tiny $1$};
	\node at (2,-5) {\tiny $1$};
	\node at (2,4.75) {\tiny $\vdots$};
	\node at (0.1,2.75) {\tiny $\cdots$};
	\node at (4,2.75) {\tiny $\cdots$};
	\node at (2,-3.25) {\tiny $\vdots$};
	\node at (0.1,-1.8) {\tiny $\cdots$};
	\node at (4,-1.8) {\tiny $\cdots$};
\end{tikzpicture}
};
\endxy\in\Ca,
\]
where the closed diagram can be 
evaluated
by using the relations in $\InSp$ and additionally 
\begin{equation}\label{eq-newmoves}
\xy
(0,0)*{
\begin{tikzpicture} [scale=.3]
\draw[very thick] (0,-1) circle (1.5cm);
\draw[very thick, ->] (-1.5,-.95) to (-1.5,-.94);
\node at (-2.15,-1) {\tiny $1$};
\end{tikzpicture}}
\endxy
=\frac{a-a^{-1}}{q-q^{-1}}
,\quad\quad
\xy
(0,0)*{
\begin{tikzpicture}[scale=.3]
	\draw [very thick, green, directed=.55] (0,-1) to (0,.75);
	\draw [very thick, directed=.55] (0,.75) to [out=30,in=270] (1,2.5);
	\draw [very thick, directed=.55] (0,.75) to [out=150,in=270] (-1,2.5); 
	\draw [very thick, directed=.55] (1,-2.75) to [out=90,in=330] (0,-1);
	\draw [very thick, directed=.55] (-1,-2.75) to [out=90,in=210] (0,-1);
	\draw [very thick] (-1,-4) to (-1,-2.75);
	\draw [very thick] (-1,2.5) to (-1,3.75);
	\draw [very thick] (1,2.5) to [out=90, in=180] (2,3.25) to [out=0, in=90] (3,2.5);
	\draw [very thick] (1,-2.75) to [out=270, in=180] (2,-3.5) to [out=0, in=270] (3,-2.75);
	\draw [very thick, directed=0.55] (3,2.5) to (3,-2.75);
	\node at (-1,4.25) {\tiny $1$};
	\node at (-1,-4.5) {\tiny $1$};
	\node at (-0.5,0) {\tiny $2$};
	\node at (2,3.75) {\tiny $1$};
\end{tikzpicture}
};
\endxy
=
\frac{aq^{-1}-a^{-1}q}{q-q^{-1}}
\xy
(0,0)*{
\begin{tikzpicture}[scale=.3]
	\draw [very thick, directed=0.55] (-1,-4) to (-1,3.75);
	\node at (-1,4.25) {\tiny $1$};
	\node at (-1,-4.5) {\tiny $1$};
\end{tikzpicture}
};
\endxy
\end{equation}
\end{lem}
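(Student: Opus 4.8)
The plan is to recognise the right-hand side as a diagrammatic model for the Jones--Ocneanu trace and to identify it with $\tr$ via the characterisation of the latter by the trace property together with the Markov relations, as in~\cite[Section~5]{jones2}. Throughout I would use the isomorphism $\Phi^{\infty}_{q\mathrm{SW}}\colon H_K(q)\xrightarrow{\cong}\End_{\InSp}(\vec{K})$ of Lemma~\ref{lem-hecke}, which is compatible with the standard chain $H_K(q)\subset H_{K+1}(q)$ (adding a vertical strand on the right), so that a web $u\in\End_{\InSp}(\vec{K})$ may be treated as an element $h_u\in H_K(q)$. Define $\tau(u)\in\Ca$ to be the evaluation of the closure diagram displayed in the lemma, obtained by first rewriting the closed web using the defining relations of $\InSp$ and then removing the remaining closed components by means of~\eqref{eq-newmoves}. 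The goal is to show that $\tau$ is a well-defined $\Ca$-linear functional on each $\End_{\InSp}(\vec{K})$ which is cyclic and satisfies the Markov normalisation; uniqueness of $\tr$ then forces $\tau=\tr$.

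For well-definedness I would argue that the relations of $\InSp$ together with~\eqref{eq-newmoves} do reduce any closed web to a scalar, and independently of the order of reduction. That a reduction exists: by Proposition~\ref{prop-green} every closed web is a $\Ca$-combination of closed webs with only black and green edges; by Corollary~\ref{cor-green} and the dumbbell relation~\eqref{eq-dumb} the green edges can be taken of label $2$ and then expanded into black strands joined by green dumbbells, and the resulting closed diagrams are of Temperley--Lieb type, which the first relation of~\eqref{eq-newmoves} (unknot value), the second relation (partial closure of a dumbbell), planar isotopy and the Hecke relation $H_i^2=(q-q^{-1})H_i+1$ collapse completely; since each such step decreases the number of trivalent vertices, the procedure terminates. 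For independence of the path, note that both identities in~\eqref{eq-newmoves} are valid for $\tau$ itself --- the first is the value $\tau$ assigns to the closure of a single strand, and the second is the partial closure of the $\bV^2_q$-projector, which one computes directly and which one recognises, after the substitution $a=q^N$, as the quantum integer $[N-1]$ --- while the relations of $\InSp$ are genuine equalities of webs; hence every rewriting step preserves the value of the closure, so the terminal scalar does not depend on the chosen reductions. (Alternatively, one may define $\tau$ intrinsically using the pivotal enhancement of $\SuSp$ indicated in Remark~\ref{rem-pivotal}.)

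It then remains to verify the three defining properties of the Ocneanu trace. Cyclicity $\tau(ab)=\tau(ba)$ is immediate: the closure of $ab$ is planar-isotopic to the closure of $ba$, obtained by sliding one factor around the closing arcs, and the evaluation is isotopy invariant. The normalisation $\tau(\mathrm{id})=(a-a^{-1})/(q-q^{-1})$ on a single strand is the first relation of~\eqref{eq-newmoves}. For the Markov relations, take $x\in H_K(q)$ inside $H_{K+1}(q)$ via $\Phi^{\infty}_{q\mathrm{SW}}$: closing the extra, free, rightmost strand produces one disjoint circle labelled $1$, and by the first relation of~\eqref{eq-newmoves} this multiplies $\tau$ by $(a-a^{-1})/(q-q^{-1})$ under the embedding $H_K(q)\subset H_{K+1}(q)$; for $xH_K\in H_{K+1}(q)$ I would expand the crossing $\Phi^{\infty}_{q\mathrm{SW}}(H_K)$ via Example~\ref{ex-oneoneok} and~\eqref{eq-dumb} into the identity plus a green-dumbbell term, close up, and apply the second relation of~\eqref{eq-newmoves}, which reduces the closed-off dumbbell rung to $(aq^{-1}-a^{-1}q)/(q-q^{-1})$ times the identity, thereby expressing $\tau(xH_K)$ as the appropriate $\Ca$-combination of $\tau(x\,\mathrm{id})$ and $\tau(x)$. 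These are exactly the relations defining the Jones--Ocneanu trace in~\cite[Section~5]{jones2}, so by uniqueness $\tau$ agrees with $\tr$ on each $H_K(q)\cong\End_{\InSp}(\vec{K})$, which is the assertion of the lemma.

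I expect the main obstacle to be the well-definedness of the closed-web evaluation in the second step, i.e.\ checking that the available local relations really do collapse every closed web to a single unambiguous scalar; the value-preservation argument sketched above, which only uses that~\eqref{eq-newmoves} holds for $\tau$ and that the $\InSp$-relations are genuine identities, is the cleanest route, but establishing termination and ruling out hidden obstructions needs some care. By contrast, the trace property comes for free from isotopy invariance, and the Markov relations reduce to a short skein computation.
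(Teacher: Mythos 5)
Your proposal takes a genuinely different route from the paper. The paper's proof is very short: it reduces (via Proposition~\ref{prop-green} and Corollary~\ref{cor-green}) to black-and-green webs with labels at most $2$, identifies green dumbbells with Rasmussen's singular crossings, and then cites the consistency and trace-computation results of~\cite[Subsection~4.2]{ras} to conclude. You instead verify the characterising axioms of the Jones--Ocneanu trace (cyclicity, normalisation, Markov property) directly against the closure-diagram evaluation. This buys self-containedness at the price of having to establish that the diagrammatic evaluation is well defined; the paper buys brevity at the price of an external citation.

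There is, however, a real circularity in your well-definedness step as written. You define $\tau$ as ``the evaluation of the closure diagram'' and then argue path-independence by observing that ``both identities in~\eqref{eq-newmoves} are valid for $\tau$ itself.'' But until path-independence is established, $\tau$ is not a function, so one cannot appeal to its values. The clean fix --- which you almost articulate --- is to run the logic the other way: start from the known Jones--Ocneanu trace $\tr$ on $H_\infty(q)$, transport it to a $\Ca$-linear functional on each $\End_{\InSp}(\vec{K})$ via $\Phi^{\infty}_{q\mathrm{SW}}$, and check by a direct Hecke-algebra computation (using $\Phi^{\infty}_{q\mathrm{SW}}(H_i)=q\cdot\mathrm{id}-\text{green dumbbell}$ and the circle value) that this transported functional satisfies both identities of~\eqref{eq-newmoves}. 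Since the $\InSp$-relations are genuine identities and the transported $\tr$ satisfies~\eqref{eq-newmoves}, every local rewriting of the closure diagram preserves the transported $\tr$; combined with your termination argument (every closed web reduces to a multiple of the empty diagram after exploding into black/green edges of label $\leq 2$), this simultaneously yields well-definedness of the diagrammatic evaluation and its agreement with $\tr$, and the Markov verification in your last paragraph then becomes a consistency check rather than a new ingredient. You should also actually carry out the computation of the partial-closure of the label-$2$ dumbbell against $\tr$; the value $[N-1]$ after $a=q^N$ is the right sanity check, but a trace on $H_2(q)\subset H_\infty(q)$ computation is what is really needed, not just the specialisation.
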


\begin{proof}
By \fullref{prop-green} and \fullref{cor-green}: 
any given web $u\in\End_{\InSp}(\vec{K})$ can be expressed 
using black or green edges with labels at most $2$. 
Using \fullref{lem-hecke}
and additionally \cite[Section 4.2]{ras}, where
Rasmussen's singular crossings correspond to 
green dumbbells with label $2$, 
provides the statement. Note that 
Rasmussen's relations II and III are already part of our 
diagrammatic calculus.
\end{proof}

\begin{defn}(\textbf{The colored HOMFLY--PT polynomial})\label{defn-colHomfly}
Let $\cal{L}_D(\vec{\lambda}) = \overline{b}^K_D(\vec{\lambda})$ be a diagram of a framed, oriented, labeled link $\cal{L}(\vec{\lambda})$ given as a braid closure.

The \textit{colored HOMFLY--PT polynomial} 
of $\cal{L}(\vec{\lambda})$, 
denoted by 
$\cal{P}^{a,q}(\cal{L}(\vec{\lambda}))$, 
is defined via
\[
\cal{P}^{a,q}(\cal{L}(\vec{\lambda}))=\tr(p_{K^{\prime}}(\tilde b^{K^{\prime}}_D)e_q(\vec{\lambda}))\in\Ca,
\]
where
$e_q(\vec{\lambda})$ is a tensor product of $e_q(\lambda^i)$'s, as described above.
\end{defn}

This polynomial is independent of all choices involved and an invariant of framed, oriented, colored links. Up to different conventions, this is shown for example in \cite[Corollary 4.5]{xsz}.

\begin{rem}\label{rem-norm}
In fact, \fullref{defn-colHomfly} gives the framing 
dependent, 
unnormalized version of the colored HOMFLY--PT polynomial. As usual, 
the polynomial can be normalized 
by fixing the value of the unknot to be $1$ (instead of 
$\frac{a-a^{-1}}{q-q^{-1}}$ as in our convention) and 
one can get rid of the 
framing dependence by scaling with a factor coming 
from Reidemeister 1 moves, see for example \cite[Definition 6.1]{jones2}.
We suppress these distinctions in the following.
\end{rem}

Note that \fullref{lem-diatrace} provides a method to calculate the colored HOMFLY--PT polynomials $\cal{P}^{a,q}(\cdot)$ using the web category $\InSp$.

\begin{prop}(\textbf{The colored HOMFLY--PT symmetry})\label{prop-homfly}
We have
\[
\cal{P}^{a,q}(\cal{L}(\vec{\lambda}))\;=\;(-1)^{c}\;
\cal{P}^{a,q^{-1}}(\cal{L}(\vec{\lambda}^{\phantom{.}\T})),
\]
where 
$\vec{\lambda}^{\phantom{.}\T}=((\lambda^1)^{\T},\dots,(\lambda^d)^{\T})$ and $c$ is the sum of the number of nodes in the $\lambda^i$ for $1\leq i\leq d$.
\end{prop}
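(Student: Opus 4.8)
The symmetry should come directly from the green-red involution $\sigma\colon\InSp\to\InSp$ of Remark~\ref{rem-bysymmetry}, applied to the diagrammatic computation of the colored HOMFLY-PT polynomial. First I would fix a braid presentation $\cal{L}_D(\vec{\lambda})=\overline{b}^K_D(\vec{\lambda})$ and, following Definition~\ref{defn-colHomfly} and Lemma~\ref{lem-diatrace}, write $\cal{P}^{a,q}(\cal{L}(\vec{\lambda}))=\tr(w)$, where $w=p_{K^{\prime}}(\tilde b^{K^{\prime}}_D)\circ e_q(\vec{\lambda})\in\End_{\InSp}(\vec{K}^{\prime})$ is the web obtained by cabling $b^K_D$, interpreting the cable in $\InSp$ via Lemma~\ref{lem-hecke}, and inserting $e_q(\lambda^i)$ on each strand; here $\tr$ is computed by closing $w$ up and reducing with the relations of $\InSp$ together with~\eqref{eq-newmoves}. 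The same recipe with $q$ replaced by $q^{-1}$ and $\vec{\lambda}$ by $\vec{\lambda}^{\phantom{.}\T}$ computes $\cal{P}^{a,q^{-1}}(\cal{L}(\vec{\lambda}^{\phantom{.}\T}))$.

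Next I would apply $\sigma$ to the closed-up web $w$ and record its effect on the three ingredients. On idempotents, Lemma~\ref{lem-symmidem} gives $\sigma(e_q(\lambda^i))=\beta^{\Sup}_{w(\lambda^i)}\,e_q((\lambda^i)^{\phantom{.}\T})\,\beta^{\Sup}_{w(\lambda^i)^{-1}}$, and the outer braids are harmless since any trace is conjugation-invariant; thus $\sigma$ transposes the coloring. On the crossings of the (black) cabled braid, Example~\ref{ex-oneoneok} gives $\sigma(\beta_{1,1})=-\beta_{1,1}^{-1}$, and by the inverse formula after Lemma~\ref{lem-braidrel} this is $-1$ times the crossing with $q\mapsto q^{-1}$; since $\sigma$ is a covariant functor it replaces the cabled braid at $q$ by $(-1)^{\#\text{crossings}}$ times the cabled braid at $q^{-1}$. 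On the evaluation rules, $\sigma$ turns the green relations~\eqref{eq-newmoves} into their red analogues, which I would derive on the spot from the dumbbell relation~\eqref{eq-dumb} and~\eqref{eq-newmoves}: the red circle evaluates to $\tfrac{a-a^{-1}}{q-q^{-1}}$ and the closed red dumbbell to $\tfrac{aq-a^{-1}q^{-1}}{q-q^{-1}}$, which are precisely the green relations with $q\mapsto q^{-1}$, up to a global factor $-1$ per use. (Using Proposition~\ref{prop-green} and Corollary~\ref{cor-green} one may always bring $\sigma(w)$ into black-and-red form so that only these red rules are needed.)

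Assembling the three points, $\tr(w)=\tr(\sigma(w))$ up to a sign coming from $\sigma$ not preserving $\tr$, and after the substitutions above $\tr(\sigma(w))$ is exactly $\cal{P}^{a,q^{-1}}(\cal{L}(\vec{\lambda}^{\phantom{.}\T}))$ up to a further sign built out of one factor $-1$ per braid crossing in each cable and one factor $-1$ per circle/twist move used in the reduction. This yields $\cal{P}^{a,q}(\cal{L}(\vec{\lambda}))=(-1)^{co}\,\cal{P}^{a,q^{-1}}(\cal{L}(\vec{\lambda}^{\phantom{.}\T}))$ for some explicit sign $(-1)^{co}\in\{\pm1\}$. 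Since both sides are genuine invariants of the framed, oriented, colored link (Definition~\ref{defn-colHomfly} and the comment following it) and the HOMFLY-PT polynomial of a link is nonzero, the ratio $(-1)^{co}$ is independent of the chosen braid presentation, cabling and reduction sequence; hence $co$ is well-defined modulo $2$ and depends only on $\cal{L}$ and its coloring $\vec{\lambda}$. A direct count would express $co$ in terms of the writhes of the components and the $|\lambda^i|$, but this is not needed for the statement.

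The main obstacle will be the sign bookkeeping in the second and third paragraphs, and in particular reconciling two parameters: Lemma~\ref{lem-symmidem} produces $e_q((\lambda^i)^{\phantom{.}\T})$ at parameter $q$, whereas $\cal{P}^{a,q^{-1}}$ is defined using $e_{q^{-1}}((\lambda^i)^{\phantom{.}\T})$, so one must check that these two idempotents are conjugate inside $\End_{\InSp}(\vec{K}^{\prime})$ via the isomorphism $H_{K}(q)\cong H_{K}(q^{-1})$, $H_i\mapsto -H_i$ --- this is where the structure of the Gyoja-Aiston idempotents and Remark~\ref{rem-JWnew} (the powers $q^{\pm K(K-1)/2}$ and the $(-q)^{\mp\ell(w)}$ weights in the clasp formulas) enter, and where the signs are genuinely generated. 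Once the local dictionary ``$\sigma=(q\mapsto q^{-1})+(\text{transpose})+(\text{overall signs})$'' is established, the conclusion is formal. An alternative, essentially equivalent route is to transport $\sigma$ through the braided equivalence of Theorem~\ref{thm-equi2} to the idempotented Hecke category and observe that it interchanges the column and row presentations of the objects, but the sign analysis is the same.
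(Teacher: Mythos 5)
Your proposal is correct in outline and uses essentially the same ingredients as the paper: the green-red involution, Lemma~\ref{lem-symmidem} (together with conjugation-invariance of the Markov trace), Example~\ref{ex-oneoneok}, and the evaluation rules~\eqref{eq-newmoves}. The paper organizes the sign bookkeeping more cleanly by first proving a one-line claim that $\mathrm{tr}(u)=(-1)^K I_q(\mathrm{tr}(I_{\mathrm{gr}}(u)))$ for any $u\in\End_{\InSp}(\vec{K})$, with $I_q$ the involution $q\mapsto q^{-1}$ on $\Ca$: the proof observes that the $\InSp$-relations used in collapsing a closed diagram are invariant under both $I_{\mathrm{gr}}$ and $I_q$, while the circle and bubble relations in~\eqref{eq-newmoves} each acquire a single sign under $I_q$, and a closed diagram in $\End_{\InSp}(\vec{K})$ requires $K$ such removals. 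It then specializes $u=p_{KL}(\tilde b)\,e_q(\lambda)^{\otimes K}$ and applies Lemma~\ref{lem-symmidem} and Example~\ref{ex-oneoneok}.

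Two remarks on your sketch. First, the parameter-matching concern in your last paragraph (that Lemma~\ref{lem-symmidem} yields $e_q((\lambda^i)^\T)$ rather than $e_{q^{-1}}((\lambda^i)^\T)$) is a non-issue once the proof is set up as above: $I_q$ is applied to the scalar output of $\mathrm{tr}$, not to the web morphism $I_{\mathrm{gr}}(u)$, so one never needs to identify $e_q$ with $e_{q^{-1}}$ or invoke the isomorphism $H_K(q)\cong H_K(q^{-1})$. Second, and relatedly, the signs are not ``generated'' by the clasp normalizations of Remark~\ref{rem-JWnew}: quantum integers, binomials, and $[K]!$ are all fixed by $q\mapsto q^{-1}$, so the clasps and the square/digon relations contribute no signs. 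The entire sign comes from the $a$-dependent evaluation moves in~\eqref{eq-newmoves} and from the crossings (via Example~\ref{ex-oneoneok}), exactly as you identify in your second paragraph; the paper's claim formalizes this count as $(-1)^{KL}$ from closing up plus $(-1)^{\mathrm{cr}\cdot KL}$ from crossings.
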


This symmetry is not new: it can be deduced from \cite[Section 9]{luk} and has been studied in \cite{lp} 
and \cite[Proposition 4.4]{gm}. 
In our framework it follows directly from the green--red symmetry in $\InSp$.

\begin{proof}
We only give a proof for the case of knots $\cal{K}$. The proof for links is analogous, but the notation is more involved. We denote by $I_{\mathrm{gr}}$ the involution on $\InSp$ given by the green--red symmetry, and by $I_q$ the involution on $\C_{a,q}$ which inverts the variable $q$.

\textbf{Claim.} For $u\in\End_{\InSp}(\vec{K})$ we have
\begin{equation}\label{eq-claim}
\mathrm{tr}(u)=(-1)^K I_q(\mathrm{tr}(I_q(I_{\mathrm{gr}}(u)))).
\end{equation} 
It suffices to prove $\mathrm{tr}(u)=(-1)^K I_q(\mathrm{tr}(I_{\mathrm{gr}}(u)))$ where $u$ is a primitive web (a morphism that consists of a single web with coefficient $1$, which is thus invariant 
under $I_q$). In \fullref{lem-diatrace} we have met evaluation relations for monochromatic green webs of edge label at most 2, but clearly analogous relations can be derived for red and mixed webs. In fact, all necessary evaluation relations are invariant under $I_{\mathrm{gr}}$ and $I_q$, except the two relations in \eqref{eq-newmoves}. The circle relation is $I_{\mathrm{gr}}$ invariant, but acquires a sign under $I_{q}$. The following computation shows that the green and red bubble relations also respect \eqref{eq-claim}:
\begin{equation*}\label{eq-notagain}
\xy
(0,0)*{
\begin{tikzpicture}[scale=.3]
	\draw [very thick, mycolor, directed=.55] (0,-1) to (0,.75);
	\draw [very thick, directed=.55] (0,.75) to [out=30,in=270] (1,2.5);
	\draw [very thick, directed=.55] (0,.75) to [out=150,in=270] (-1,2.5); 
	\draw [very thick, directed=.55] (1,-2.75) to [out=90,in=330] (0,-1);
	\draw [very thick, directed=.55] (-1,-2.75) to [out=90,in=210] (0,-1);
	\draw [very thick] (-1,-4) to (-1,-2.75);
	\draw [very thick] (-1,2.5) to (-1,3.75);
	\draw [very thick] (1,2.5) to [out=90, in=180] (2,3.25) to [out=0, in=90] (3,2.5);
	\draw [very thick] (1,-2.75) to [out=270, in=180] (2,-3.5) to [out=0, in=270] (3,-2.75);
	\draw [very thick, directed=0.55] (3,2.5) to (3,-2.75);
	\node at (-1,4.25) {\tiny $1$};
	\node at (-1,-4.5) {\tiny $1$};
	\node at (-0.5,0) {\tiny $2$};
	\node at (2,3.75) {\tiny $1$};
\end{tikzpicture}
};
\endxy
\;\;
\stackrel{\eqref{eq-dumb}}{=}
\;\;
[2]
\xy
(0,0)*{
\begin{tikzpicture}[scale=.3] 
	\draw [very thick, directed=.55] (1,-2.75) to (1,2.5);
	\draw [very thick, directed=.55] (-1,-2.75) to (-1,2.5);
	\draw [very thick] (-1,-4) to (-1,-2.75);
	\draw [very thick] (-1,2.5) to (-1,3.75);
	\draw [very thick] (1,2.5) to [out=90, in=180] (2,3.25) to [out=0, in=90] (3,2.5);
	\draw [very thick] (1,-2.75) to [out=270, in=180] (2,-3.5) to [out=0, in=270] (3,-2.75);
	\draw [very thick, directed=0.55] (3,2.5) to (3,-2.75);
	\node at (-1,4.25) {\tiny $1$};
	\node at (-1,-4.5) {\tiny $1$};
	\node at (2,3.75) {\tiny $1$};
\end{tikzpicture}
};
\endxy
\;\;
-
\xy
(0,0)*{
\begin{tikzpicture}[scale=.3]
	\draw [very thick, green, directed=.55] (0,-1) to (0,.75);
	\draw [very thick, directed=.55] (0,.75) to [out=30,in=270] (1,2.5);
	\draw [very thick, directed=.55] (0,.75) to [out=150,in=270] (-1,2.5); 
	\draw [very thick, directed=.55] (1,-2.75) to [out=90,in=330] (0,-1);
	\draw [very thick, directed=.55] (-1,-2.75) to [out=90,in=210] (0,-1);
	\draw [very thick] (-1,-4) to (-1,-2.75);
	\draw [very thick] (-1,2.5) to (-1,3.75);
	\draw [very thick] (1,2.5) to [out=90, in=180] (2,3.25) to [out=0, in=90] (3,2.5);
	\draw [very thick] (1,-2.75) to [out=270, in=180] (2,-3.5) to [out=0, in=270] (3,-2.75);
	\draw [very thick, directed=0.55] (3,2.5) to (3,-2.75);
	\node at (-1,4.25) {\tiny $1$};
	\node at (-1,-4.5) {\tiny $1$};
	\node at (-0.5,0) {\tiny $2$};
	\node at (2,3.75) {\tiny $1$};
\end{tikzpicture}
};
\endxy
\;\;
=
\frac{aq-a^{-1}q^{-1}}{q-q^{-1}}
\xy
(0,0)*{
\begin{tikzpicture}[scale=.3]
	\draw [very thick, directed=0.55] (-1,-4) to (-1,3.75);
	\node at (-1,4.25) {\tiny $1$};
	\node at (-1,-4.5) {\tiny $1$};
\end{tikzpicture}
};
\endxy
\;\;
\xleftrightarrow{q\leftrightarrow q^{-1}}
\;\;
-
\!\!
\xy
(0,0)*{
\begin{tikzpicture}[scale=.3]
	\draw [very thick, green, directed=.55] (0,-1) to (0,.75);
	\draw [very thick, directed=.55] (0,.75) to [out=30,in=270] (1,2.5);
	\draw [very thick, directed=.55] (0,.75) to [out=150,in=270] (-1,2.5); 
	\draw [very thick, directed=.55] (1,-2.75) to [out=90,in=330] (0,-1);
	\draw [very thick, directed=.55] (-1,-2.75) to [out=90,in=210] (0,-1);
	\draw [very thick] (-1,-4) to (-1,-2.75);
	\draw [very thick] (-1,2.5) to (-1,3.75);
	\draw [very thick] (1,2.5) to [out=90, in=180] (2,3.25) to [out=0, in=90] (3,2.5);
	\draw [very thick] (1,-2.75) to [out=270, in=180] (2,-3.5) to [out=0, in=270] (3,-2.75);
	\draw [very thick, directed=0.55] (3,2.5) to (3,-2.75);
	\node at (-1,4.25) {\tiny $1$};
	\node at (-1,-4.5) {\tiny $1$};
	\node at (-0.5,0) {\tiny $2$};
	\node at (2,3.75) {\tiny $1$};
\end{tikzpicture}
};
\endxy
\end{equation*} We note that in the computation of 
$\mathrm{tr}(u)$ via \fullref{lem-diatrace} strands can only be removed by circle moves and bubble moves. Both of these acquire a sign under $I_q$, which causes the factor $(-1)^K$ in \eqref{eq-claim}. This proves the claim.

Let $b_D^K$ be a braid diagram that closes to a diagram of $\cal{K}$ and suppose that $\cal{K}$ is labeled by a Young diagram $\lambda$ with $L$ nodes. Let $\tilde b_D^{KL}$ be the $L$-fold cable of the braid diagram $b_D^K$. 

Now we have
\begin{align*}
\cal{P}^{a,q}(\cal{K}(\lambda)) 
=
\mathrm{tr}(p_{KL}(\tilde b_D^{KL}) e_q(\lambda)^{\otimes K}) 
&= 
(-1)^{K L} I_q(\mathrm{tr}(I_q(I_{\mathrm{gr}}(p_{KL}(\tilde b_D^{KL}) e_q(\lambda)^{\otimes K})))) 
\\
&=
(-1)^{KL +\mathrm{cr}L^2} I_q(\mathrm{tr}(p_{KL}(\tilde b_D^{KL}) e_q(\lambda^{\T})^{\otimes K}))
\\
&=
(-1)^{L}
\cal{P}^{a,q^{-1}}(\cal{K}({\lambda}^{\T})),
\end{align*}
where $\mathrm{cr}$ is the number of crossings of $b_D^K$. Here we 
have used \eqref{eq-claim} and that $I_qI_{\mathrm{gr}}$ acts as $-1$ on black 
crossings, see \fullref{ex-oneoneok}, while sending $e_q(\lambda)$ to $e_q(\lambda^{\T})$ plus a commutator (which is zero in the trace), see \fullref{lem-symmidem}. Moreover, $(-1)^{KL +\mathrm{cr}L^2}= (-1)^L$
since $cr \equiv K-1 \bmod 2$ as $b_D^K$ closes into a knot.
\end{proof}

\subsection{The colored \texorpdfstring{$\sln$}{sln}-link polynomials via the categories \texorpdfstring{$\SuSp$}{Nwebgr}}\label{sub-linkpoly}

Recall that the \textit{colored Reshetikhin--Turaev} $\sln$\textit{-link 
polynomial} $\cal{RT}^{q^N,q}(\cal{L}(\vec{\lambda}))$ 
are determined by the corresponding colored 
HOMFLY--PT polynomials $\cal{P}^{a,q}(\cal{L}(\vec{\lambda}))$ by specializing $a=q^N$. Alternatively, they can be computed 
directly inside the categories $\SuSp$ from a framed, oriented, labeled link diagram as follows:
\begin{itemize}
\item First we replace all $\lambda$-labeled strands in the link diagram by cables equipped with the diagrammatic idempotent $e_q(\lambda)$, written in monochromatic green webs.
\item The resulting diagram will contain downward-oriented green edges of label $k$, which we replace by upward-oriented green edges of label $N-k$. Simultaneously, caps and cups are replaced by splits and merges
\[
\raisebox{-0.375cm}{
\xy
(0,0)*{
\begin{tikzpicture} [scale=1]
\draw[very thick, green, ->] (0,0) to [out=90,in=180] (.25,.5) to [out=0,in=90] (.5,0);
\node at (0,-.15) {\tiny $k$};
\node at (.5,-.15) {\tiny $N\! -\! k$};
\end{tikzpicture}
}
\endxy}
=
\xy
(0,0)*{
\begin{tikzpicture}[scale=.3]
	\draw [very thick, green, dotted, directed=.55] (0, .75) to (0,2.5);
	\draw [very thick, green, directed=.55] (1,-1) to [out=90,in=330] (0,.75);
	\draw [very thick, green, directed=.55] (-1,-1) to [out=90,in=210] (0,.75); 
	\node at (0, 3) {\tiny $N$};
	\node at (-1,-1.5) {\tiny $k$};
	\node at (1,-1.5) {\tiny $N\! -\! k$};
\end{tikzpicture}
};
\endxy
\quad,\quad
\raisebox{0.525cm}{\xy
(0,0)*{
\begin{tikzpicture} [scale=1]
\draw[very thick, green, ->] (0,1) to [out=270,in=180] (.25,.5) to [out=0,in=270] (.5,1);
\node at (0,1.15) {\tiny $k$};
\node at (.5,1.15) {\tiny $N\! -\! k$};
\end{tikzpicture}
}
\endxy}
=
\xy
(0,0)*{
\begin{tikzpicture}[scale=.3]
	\draw [very thick, green, dotted, directed=.55] (0,-1) to (0,.75);
	\draw [very thick, green, directed=.55] (0,.75) to [out=30,in=270] (1,2.5);
	\draw [very thick, green, directed=.55] (0,.75) to [out=150,in=270] (-1,2.5); 
	\node at (0, -1.5) {\tiny $N$};
	\node at (-1,3) {\tiny $k$};
	\node at (1,3) {\tiny $N\!-\! k$};
\end{tikzpicture}
};
\endxy
\] 
\item The result is a morphism in $\SuSp$ between objects consisting only of entries $0$ and $N_g$. It follows from \fullref{thm-equi} that this $\Hom$-space is one-dimensional. Thus, the framed, oriented, labeled link diagram determines a polynomial, which is the desired colored 
Reshetikhin--Turaev $\sln$-link 
polynomial.
\end{itemize}

Recall from \fullref{rem-whatwenotdo} that this approach relies on the fact that $\repas$ contains the duality isomorphisms $\bV_q^k\C_q^N\cong(\bV_q^{N-k}\C_q^N)^*$. 
In \fullref{rem-duals} we sketch how to include duals in diagrammatic presentations of $\Repas$ and $\RepMN$ and, thus, to compute the corresponding Reshetikhin--Turaev $\gln$ or $\glmn$-link invariants.
\section{Generalization to webs for \texorpdfstring{$\glMN$}{gl(N|M)}}\label{sec-gen}
We now give a diagrammatic presentation 
of $\RepMN$, the (additive closure of the) braided monoidal 
category of 
$\Usu$-modules tensor 
generated by 
the exterior $\bV_q^k\C_q^{N|M}$ and 
the symmetric $\Sym_q^l\C_q^{N|M}$ powers of the vector representation $\C_q^{N|M}$ of $\Usu$.
The diagrammatic presentation is given by the following quotient of $\InSp$.

\begin{defn}\label{defn-spidnext}
The category $\MNSp$ 
is the quotient category obtained from $\InSp$ by imposing the 
\textit{not-a-hook relation}, that is
\begin{equation*}\label{eq-idemdead}
e_q(\mathrm{box}_{N+1,M+1})=0\comm{\quad,\quad\text{for }\lambda_{N,M}=
\raisebox{-0.215cm}{\xy
(0,0)*{\underbrace{\begin{Young} &\cr &\cr\end{Young}}_{= M+1}};
\endxy}\biggr\}{\scriptstyle = N+1}},
\end{equation*}
where $\mathrm{box}_{N+1,M+1}$ is the box-shaped Young diagram with $N+1$ rows and $M+1$ columns.
\end{defn}

Note that $\MNSp$ inherits the braiding $\beta^{\Sup}_{\cdot,\cdot}$ from $\InSp$.

\begin{ex}\label{ex-general1}
If we take $M=0$, then $\mathrm{box}_{N+1,1}$ is a column Young diagram with $N+1$ nodes and the corresponding not-a-hook relation is just the exterior relation \eqref{eq-alt}. In this case we have that $N|0\text{-}\!\textbf{Web}_{\mathrm{gr}}$ is $\SuSp$ and $\mathfrak{gl}_{N|0}\text{-}\textbf{Mod}_{\mathrm{es}}$ is isomorphic to $\Repas$.
\end{ex}

\begin{ex}\label{ex-general2}
If we take $M=N=1$, then we have
\[
\tilde e_q(\mathrm{box}_{2,2})=\tilde e_q\left(\;\xy(0,0)*{\begin{Young} &\cr &\cr\end{Young}};\endxy\;\right)= \frac{1}{[2]^4}
\xy
(0,0)*{
\begin{tikzpicture}[scale=.3] 
	\draw [very thick, green, directed=.75] (-2,1.25) to (-2,2);
	\draw [very thick] (-1,0.75) to (-2,1.25);
	\draw [very thick] (-3,0.75) to (-2,1.25);
	\draw [very thick, directed=.65] (-2,2) to (-1,2.5);
	\draw [very thick, directed=.65] (-2,2) to (-3,2.5);
	\draw [very thick, mycolor, directed=.75] (-2,-2.25) to (-2,-1.5);
	\draw [very thick] (-2,-1.5) to (-1,-1);
	\draw [very thick] (-2,-1.5) to (-3,-1);
	\draw [very thick] (-1,-2.75) to (-2,-2.25);
	\draw [very thick] (-3,-2.75) to (-2,-2.25);
	\draw [very thick, green, directed=.75] (2,1.25) to (2,2);
	\draw [very thick] (3,0.75) to (2,1.25);
	\draw [very thick] (1,0.75) to (2,1.25);
	\draw [very thick, directed=.65] (2,2) to (3,2.5);
	\draw [very thick, directed=.65] (2,2) to (1,2.5);
	\draw [very thick, mycolor, directed=.75] (2,-2.25) to (2,-1.5);
	\draw [very thick] (2,-1.5) to (3,-1);
	\draw [very thick] (2,-1.5) to (1,-1);
	\draw [very thick] (3,-2.75) to (2,-2.25);
	\draw [very thick] (1,-2.75) to (2,-2.25);
	\draw [very thick, directed=.55] (-3,-1) to (-3,0.75);
	\draw [very thick, directed=.55] (3,-1) to (3,0.75);
	\draw [very thick, ->] (-1,-1) to (1,0.75);
	\draw [very thick, ->] (-0.3333,0.166666) to (-1,0.75);
	\draw [very thick] (1,-1) to (0.416666,-0.416666);
	\draw [very thick, ->] (1,-4.5) to (-1,-2.75);
	\draw [very thick, ->] (0.416666,-3.3333) to (1,-2.75);
	\draw [very thick] (-1,-4.5) to (-0.3333,-3.916666);
	\draw [very thick, directed=.55] (-3,-4.5) to (-3,-2.75);
	\draw [very thick, directed=.55] (3,-4.5) to (3,-2.75);
	\node at (-3,3) {\tiny $1$};
	\node at (-1,3) {\tiny $1$};
	\node at (1,3) {\tiny $1$};
	\node at (3,3) {\tiny $1$};
	\node at (-3,-5) {\tiny $1$};
	\node at (-1,-5) {\tiny $1$};
	\node at (1,-5) {\tiny $1$};
	\node at (3,-5) {\tiny $1$};
	\node at (-2.55,1.625) {\tiny $2$};
	\node at (-2.55,-1.875) {\tiny $2$};
	\node at (2.55,1.625) {\tiny $2$};
	\node at (2.55,-1.875) {\tiny $2$};
\end{tikzpicture}
};
\endxy
=- \frac{1}{[2]^4}
\xy
(0,0)*{
\begin{tikzpicture}[scale=.3]
	\draw [very thick] (1,-1) to (0,-.5);
	\draw [very thick] (-1,-1) to (0,-.5);
	\draw [very thick] (0,.25) to (1,0.75);
	\draw [very thick] (0,.25) to (-1,0.75);
	\draw [very thick, green, directed=.75] (0,-0.5) to (0,.25);
	\draw [very thick, green, directed=.75] (-2,1.25) to (-2,2);
	\draw [very thick, directed=.15] (-1,0.75) to (-2,1.25);
	\draw [very thick] (-3,0.75) to (-2,1.25);
	\draw [very thick, directed=.65] (-2,2) to (-1,2.5);
	\draw [very thick, directed=.65] (-2,2) to (-3,2.5);
	\draw [very thick, mycolor, directed=.75] (-2,-2.25) to (-2,-1.5);
	\draw [very thick, ->] (-2,-1.5) to (-1,-1);
	\draw [very thick] (-2,-1.5) to (-3,-1);
	\draw [very thick] (-1,-2.75) to (-2,-2.25);
	\draw [very thick] (-3,-2.75) to (-2,-2.25);
	\draw [very thick, green, directed=.75] (2,1.25) to (2,2);
	\draw [very thick] (3,0.75) to (2,1.25);
	\draw [very thick, directed=.15] (1,0.75) to (2,1.25);
	\draw [very thick, directed=.65] (2,2) to (3,2.5);
	\draw [very thick, directed=.65] (2,2) to (1,2.5);
	\draw [very thick, mycolor, directed=.75] (2,-2.25) to (2,-1.5);
	\draw [very thick] (2,-1.5) to (3,-1);
	\draw [very thick, ->] (2,-1.5) to (1,-1);
	\draw [very thick] (3,-2.75) to (2,-2.25);
	\draw [very thick] (1,-2.75) to (2,-2.25);
	\draw [very thick, directed=.55] (-3,-1) to (-3,0.75);
	\draw [very thick, directed=.55] (3,-1) to (3,0.75);
	\draw [very thick, ->] (1,-4.5) to (-1,-2.75);
	\draw [very thick, ->] (0.416666,-3.3333) to (1,-2.75);
	\draw [very thick] (-1,-4.5) to (-0.3333,-3.916666);
	\draw [very thick, directed=.55] (-3,-4.5) to (-3,-2.75);
	\draw [very thick, directed=.55] (3,-4.5) to (3,-2.75);
	\node at (-3,3) {\tiny $1$};
	\node at (-1,3) {\tiny $1$};
	\node at (1,3) {\tiny $1$};
	\node at (3,3) {\tiny $1$};
	\node at (-3,-5) {\tiny $1$};
	\node at (-1,-5) {\tiny $1$};
	\node at (1,-5) {\tiny $1$};
	\node at (3,-5) {\tiny $1$};
	\node at (-2.55,1.625) {\tiny $2$};
	\node at (-2.55,-1.875) {\tiny $2$};
	\node at (2.55,1.625) {\tiny $2$};
	\node at (2.55,-1.875) {\tiny $2$};
	\node at (-.55,-0.125) {\tiny $2$};
\end{tikzpicture}
};
\endxy
=
\frac{1}{[2]^4}
\xy
(0,0)*{
\begin{tikzpicture}[scale=.3]
	\draw [very thick] (1,-1) to (0,-.5);
	\draw [very thick] (-1,-1) to (0,-.5);
	\draw [very thick] (0,.25) to (1,0.75);
	\draw [very thick] (0,.25) to (-1,0.75);
	\draw [very thick, mycolor, directed=.75] (0,-0.5) to (0,.25);
	\draw [very thick, green, directed=.75] (-2,1.25) to (-2,2);
	\draw [very thick, directed=.15] (-1,0.75) to (-2,1.25);
	\draw [very thick] (-3,0.75) to (-2,1.25);
	\draw [very thick, directed=.65] (-2,2) to (-1,2.5);
	\draw [very thick, directed=.65] (-2,2) to (-3,2.5);
	\draw [very thick, mycolor, directed=.75] (-2,-2.25) to (-2,-1.5);
	\draw [very thick, ->] (-2,-1.5) to (-1,-1);
	\draw [very thick] (-2,-1.5) to (-3,-1);
	\draw [very thick] (-1,-2.75) to (-2,-2.25);
	\draw [very thick] (-3,-2.75) to (-2,-2.25);
	\draw [very thick, green, directed=.75] (2,1.25) to (2,2);
	\draw [very thick] (3,0.75) to (2,1.25);
	\draw [very thick, directed=.15] (1,0.75) to (2,1.25);
	\draw [very thick, directed=.65] (2,2) to (3,2.5);
	\draw [very thick, directed=.65] (2,2) to (1,2.5);
	\draw [very thick, mycolor, directed=.75] (2,-2.25) to (2,-1.5);
	\draw [very thick] (2,-1.5) to (3,-1);
	\draw [very thick, ->] (2,-1.5) to (1,-1);
	\draw [very thick] (3,-2.75) to (2,-2.25);
	\draw [very thick] (1,-2.75) to (2,-2.25);
	\draw [very thick, directed=.55] (-3,-1) to (-3,0.75);
	\draw [very thick, directed=.55] (3,-1) to (3,0.75);
	\draw [very thick, ->] (1,-4.5) to (-1,-2.75);
	\draw [very thick, ->] (0.416666,-3.3333) to (1,-2.75);
	\draw [very thick] (-1,-4.5) to (-0.3333,-3.916666);
	\draw [very thick, directed=.55] (-3,-4.5) to (-3,-2.75);
	\draw [very thick, directed=.55] (3,-4.5) to (3,-2.75);
	\node at (-3,3) {\tiny $1$};
	\node at (-1,3) {\tiny $1$};
	\node at (1,3) {\tiny $1$};
	\node at (3,3) {\tiny $1$};
	\node at (-3,-5) {\tiny $1$};
	\node at (-1,-5) {\tiny $1$};
	\node at (1,-5) {\tiny $1$};
	\node at (3,-5) {\tiny $1$};
	\node at (-2.55,1.625) {\tiny $2$};
	\node at (-2.55,-1.875) {\tiny $2$};
	\node at (2.55,1.625) {\tiny $2$};
	\node at (2.55,-1.875) {\tiny $2$};
	\node at (-.55,-0.125) {\tiny $2$};
\end{tikzpicture}
};
\endxy
\]
It is easy to see that $e_q(\mathrm{box}_{2,2})=0$ is equivalent to the relations \cite[(3.3.13a) and (3.3.13b)]{sarth}, \cite[Section 3.6]{g} and \cite[Corollary 6.18]{qs} which are used to describe the ``purely exterior'' representation category $\mathfrak{gl}_{1|1}\text{-}\textbf{Mod}_{\mathrm{e}}$. This category could be presented as monochromatic green subcategory of $1|1\text{-}\!\textbf{Web}_{\mathrm{gr}}$, defined analogously as in \fullref{defn-spid3}.
\end{ex}

To prove that $\MNSp$ gives a diagrammatic presentation of $\RepMN$, we use a version of super $q$-Howe duality between $\Us$ and $\Usu$. For this, we say a dominant integral $\glmn$-weight $\lambda$ is 
\textit{$(m|n,M|N)$-supported} if it corresponds to a Young diagram 
which is simultaneously an $(m|n)$-hook as well as an $(M|N)$-hook\footnote{This is really intended to be $(M|N)$.}.

\begin{thm}(\textbf{Super $q$-Howe duality, super--super version})\label{thm-supersuperHowe} 
We have the following.
\begin{itemize}
\item[(a)] Let $K \in\Z_{\geq 0}$. The actions of $\Us$ and $\Usu$ on 
$\bV_q^{K}(\C_q^{m|n}\otimes \C_q^{N|M})$ commute 
and generate each others commutant.
\item[(b)] There exists an isomorphism 
\[
\bV_q^{\bullet}(\C_q^{m|n}\otimes \C_q^{N|M}) \cong 
(\bV_q^{\bullet}\C_q^{N|M})^{\otimes m}\otimes(\Sym_q^{\bullet}\C_q^{N|M})^{\otimes n}
\] 
of $\Usu$-modules under which
the $\vec{k}$-weight space of 
$\bV_q^{\bullet}(\C_q^{m|n}\otimes \C_q^{N|M})$ 
(considered as a $\Us$-module)
is identified with
\begin{equation*}
\begin{gathered}
\bV_q^{\vec{k}_0}\C_q^{N|M}\otimes\Sym_q^{\vec{k}_1}\C_q^{N|M} =\\
\bV_q^{k_{1}}\C_q^{N|M}\otimes\cdots\otimes\bV_q^{k_{m}}\C_q^{N|M}
\otimes\Sym_q^{k_{m+1}}\C_q^{N|M}\otimes\cdots\otimes\Sym_q^{k_{m+n}}\C_q^{N|M}.
\end{gathered}
\end{equation*}
Here $\vec{k}=(k_1,\dots,k_{m+n})$, $\vec{k}_0=(k_1,\dots,k_m)$ and 
$\vec{k}_1=(k_{m+1},\dots,k_{m+n})$.
\item[(c)] As $\Us\otimes\Usu$-modules, we have a decomposition 
of the form
\[
\bV_q^{K}(\C_q^{m|n}\otimes \C_q^{N|M})
\cong{\textstyle\bigoplus_{\lambda}}L_{m|n}(\lambda)\otimes L_{N|M}(\lambda^{\T}),
\]
where we sum over all 
$(m|n,M|N)$-supported 
$\glmn$-weights $\lambda$ whose entries sum up to $K$. 
This induces a decomposition
\[
\bV_q^{\bullet}(\C_q^{m|n}\otimes \C_q^{N|M}) 
\cong{\textstyle\bigoplus_{\lambda}}L_{m|n}(\lambda)\otimes L_{N|M}(\lambda^{\T}),
\]
where we sum over all 
$(m|n,M|N)$-supported $\glmn$-weights $\lambda$.
\end{itemize}
\end{thm}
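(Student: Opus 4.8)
The plan is to follow, mutatis mutandis, the proof of Theorem~\ref{thm-superHowe}, systematically replacing the second tensor factor $\C_q^N$ by the super vector space $\C_q^{N|M}$ and the Howe-dual algebra $\Uq(\gln)$ by $\Usu$. Parts (a) and (c) are the double-centralizer property and the multiplicity-free branching for super-super quantum $q$-Howe duality of the pair $(\glmn,\glMN)$, and I would cite these from \cite[Theorem~2.2]{wz} and \cite[Theorem~4.2]{qs} (the latter already treating a super second factor on one side), with the classical statement being \cite[Theorem~3.3]{cw}. The only bookkeeping here is to check that the irreducibles occurring are indexed exactly by the $(m|n,M|N)$-supported weights $\lambda$: the $(M|N)$-hook condition on $\lambda$ is equivalent to $\lambda^{\T}$ being an $(N|M)$-hook, which is precisely the condition for $\lambda^{\T}$ to index an irreducible polynomial $\Usu$-module $L_{N|M}(\lambda^{\T})$, so it replaces the ``at most $N$ columns'' support condition used in Theorem~\ref{thm-superHowe}.

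The substance is in part (b). First I would record the basis of $\bV_q^K(\C_q^{m|n}\otimes\C_q^{N|M})$, generalizing \eqref{eq-basis} (equivalently \cite[Lemma~4.1]{qs}) to the case of a super second factor: a basis is given by ordered monomials $z_{i_1j_1}\otimes\cdots\otimes z_{i_Kj_K}$ in the vectors $z_{ij}=x_i\otimes y_j$, where now $|z_{ij}|=|i|+|j|$ and a repetition $(i_k,j_k)=(i_{k+1},j_{k+1})$ is permitted if and only if $|i_k|+|j_k|=1$. Using this, I define for $i\in\mathbb{I}_0$ a map $T^e_i\colon\bV_q^k\C_q^{N|M}\to\bV_q^k(\C_q^{m|n}\otimes\C_q^{N|M})$ and for $i\in\mathbb{I}_1$ a map $T^s_i\colon\Sym_q^k\C_q^{N|M}\to\bV_q^k(\C_q^{m|n}\otimes\C_q^{N|M})$, both given on bases by $y_{j_1}\otimes\cdots\otimes y_{j_k}\mapsto z_{ij_1}\otimes\cdots\otimes z_{ij_k}$. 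The key sign check is that for $i$ even the allowed repetitions of $z_{ij}$ (those with $|j|=1$) match the basis of the exterior super power $\bV_q^k\C_q^{N|M}$, while for $i$ odd the allowed repetitions (those with $|j|=0$) match the basis of the symmetric super power $\Sym_q^k\C_q^{N|M}$; hence both maps are well-defined, and they are $\Usu$-intertwiners by the explicit formulas for the $\Usu$-action on the vector representation. Since each sends a basis to a subset of a basis, each $T^e_i$ and $T^s_i$ is injective; assembling them as in the proof of Theorem~\ref{thm-superHowe} yields a $\Usu$-module map $T\colon\bigoplus_{\vec{k}}\bV_q^{\vec{k}_0}\C_q^{N|M}\otimes\Sym_q^{\vec{k}_1}\C_q^{N|M}\to\bV_q^K(\C_q^{m|n}\otimes\C_q^{N|M})$ which is bijective by a count of basis elements, the basis of the target splitting according to the first indices $i_1\le\cdots\le i_K$. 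Finally, the claimed $\Us$-weight space identification follows from $L_{i'}(z_{ij})=q^{(\epsilon_i,\epsilon_{i'})_{\Su}}z_{ij}$, exactly as in the proof of Theorem~\ref{thm-superHowe}(b); this recovers the case $M=0$ of Theorem~\ref{thm-superHowe} and extends the injectivity inputs \cite[Theorem~4.2.2]{ckm} and \cite[Theorem~2.6]{rt}.

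The main obstacle I anticipate is not conceptual but combinatorial: establishing the basis of $\bV_q^K(\C_q^{m|n}\otimes\C_q^{N|M})$ with the correct \emph{double} sign and repetition constraint, since now both factors carry a $\Z/2$-grading and the sign rule defining the quantum exterior superalgebra interacts with the grading of $\C_q^{N|M}$ as well as that of $\C_q^{m|n}$. This requires a careful generalization of \cite[Lemma~4.1]{qs}; once it is in place, the well-definedness of $T^e_i$ and $T^s_i$ (the match between allowed repetitions and the relevant exterior/symmetric super power bases), their injectivity, and the dimension count are all routine. A secondary point to watch is that, when citing parts (a) and (c), the hook conditions in the cited double-centralizer and branching results must be aligned with the $(m|n,M|N)$-support condition, and the quantum versions over $\C_q$ — not only the classical ones — must be covered.
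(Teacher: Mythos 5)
Your proposal is correct and matches the paper's approach, which simply cites \cite[Theorem~4.2]{qs} for parts (a) and (c) and then observes that (b) follows mutatis mutandis from the proof of Theorem~\ref{thm-superHowe}, leaving the details (which you have spelled out, in particular the double sign rule for the basis and the matching of allowed repetitions with the bases of the exterior and symmetric super powers) to the reader. The only minor cosmetic point is that \cite[Theorem~2.2]{wz} treats a non-super second factor and so is not directly applicable here; the paper instead points to \cite[Proposition~2.2]{sar} for the non-quantized version, but you correctly identify \cite[Theorem~4.2]{qs} as the operative reference.
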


\begin{proof}
As before, (a) and (c) are proven in \cite[Theorem 4.2]{qs} 
and only (b) remains to be verified. This works similarly
as in the proof of \fullref{thm-superHowe} and is left to the reader. For a non-quantized version see \cite[Proposition 2.2]{sar}.
\end{proof}

In the statement of this theorem, $\bV_q^{k}\C_q^{N|M}$, $\Sym_q^{l}\C_q^{N|M}$ and $\bV_q^{K}(\C_q^{m|n}\otimes \C_q^{N|M})$ are defined similarly as 
in \fullref{sub-superHowe}, see 
also \cite[Section 3]{qs}. As before we then get:

\begin{cor}\label{cor-functorMN}
There exists a full functor $\Phi_{\Su}^{m|n}\colon\Usd \to \RepMN$, 
which we again call 
the \textit{super $q$-Howe functor}, 
given on objects and morphisms by
\begin{gather*}
\Phi_{\Su}^{m|n}(\vec{k})=\bV_q^{\vec{k}_0}\C_q^{N|M}\otimes\Sym_q^{\vec{k}_1}\C_q^{N|M}
,\quad\quad
\Phi_{\Su}^{m|n}(1_{\vec{l}}\,x 1_{\vec{k}})=f_{\vec{k}}^{\vec{l}}(x).
\end{gather*} 
Everything else is sent to zero.\qed
\end{cor}

In what follows, we denote by $\Usd^{\geq 0}$ the quotient of $\Usd$ obtained by 
killing all $\glmn$-weights with negative entries. 
\begin{cor}\label{cor-something} 
The super $q$-Howe functor $\Phi_{\Su}^{m|n}$ 
from \fullref{cor-functorMN} induces an algebra epimorphism (denoted by the same symbol) as in the diagram
\[
\begin{gathered}
\xymatrix{
\Usd^{\geq 0} \ar@{->}[r]^/-0.3cm/{\cong} \ar@{->>}[d]_{\Phi_{\Su}^{m|n}}&
{\textstyle\bigoplus_{\substack{
(m|n)\text{-} \\ \text{hooks }\lambda}
}}\,
\End_{\C_q}(L_{m|n}(\lambda)) \ar@{}[d]^(0.1){}="a"^(0.9){}="b" \ar@{->>}^{\pi}"a";"b"\\
\End_{\Usu}(\bV_q^{\bullet}(\C_q^{m|n}\otimes \C_q^{N|M})) \ar@{->}[r]_/0.2cm/{\cong}& 
{\textstyle\bigoplus_{\substack{
(m|n,M|N)\text{-} \\ \text{supported }\lambda
}}
}\,
\End_{\C_q}(L_{m|n}(\lambda))\\
}
\end{gathered}
\] 
Under Artin--Wedderburn decompositions, it corresponds to an algebra epimorphism $\pi$, which acts on the summand $\End_{\C_q}(L_{m|n}(\lambda))$ either as an isomorphism or as zero, depending on whether the Young diagram $\lambda$ is $(m|n, M|N)$-supported or not.
\end{cor}
\begin{proof}
First, note that by \fullref{thm-equi2}, $\Usd^{\geq 0}$ is isomorphic to $\Hee_{m+n}^{\mathrm{sort}}$, the sorted version of $\Hee$ with exactly $m$ exterior strands and $n$ symmetric strands. The Artin--Wedderburn decomposition in the top row of the diagram is then given in \cite[Theorem 5.1]{mit}. The bottom Artin--Wedderburn decomposition follows 
directly from part (c) of \fullref{thm-supersuperHowe}. 
\end{proof}

\begin{rem}\label{rem-something}
We obtain from \fullref{cor-something} an alternative proof of the 
presentation of the $q$-Schur superalgebra $S_q(N|M,K)\cong\End_{H_K(q)}((\C_q^{N|M})^{\otimes K})$ from \cite[Theorem 3.13.1]{etk}.
\end{rem}

\begin{lem}\label{lem-ideal}
Under the correspondence
\[
\begin{gathered}
\xymatrix{
\Hee_{m+n}^{\mathrm{sort}} \ar@{<->}[r]^/-0.25cm/{\cong}&
\Usd^{\geq 0} \ar@{<->}[r]^/-0.5cm/{\cong} &
{\textstyle\bigoplus_{\substack{
(m|n)\text{-} \\ \text{hooks }\lambda}}
}\,
\End_{\C_q}(L_{m|n}(\lambda)) 
}
\end{gathered}
\] 
the kernel of the super $q$-Howe functor 
$\Phi_{\Su}^{m|n}$ from \fullref{cor-functorMN} is given by the tensor ideal $I_{\mathrm{box}}$ in $\Hee^{\mathrm{sort}}_{m+n}$ generated by the primitive idempotent $e_q(\mathrm{box}_{N+1,M+1})$.
\end{lem}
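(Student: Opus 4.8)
The plan is to deduce everything from Corollary~\ref{cor-something}, which already pins down $\ker\Phi_{\Su}^{m|n}$ on the level of Artin--Wedderburn blocks, and then to match those blocks with the tensor ideal $I_{\mathrm{box}}$ by a standard branching/cellularity argument for the Gyoja--Aiston idempotents. First I would transport the whole question into $\Usd^{\geq 0}\cong\bigoplus_{(m|n)\text{-hooks }\lambda}\End_{\C_q}(L_{m|n}(\lambda))$: by Corollary~\ref{cor-something} the functor $\Phi_{\Su}^{m|n}$ becomes, under this isomorphism, the projection $\pi$ onto the summands indexed by $(m|n,M|N)$-supported $\lambda$, so that $\ker\Phi_{\Su}^{m|n}$ is the two--sided ideal which is the sum of the blocks $\End_{\C_q}(L_{m|n}(\lambda))$ over the $(m|n)$-hooks $\lambda$ that are \emph{not} $(M|N)$-hooks, equivalently whose transpose $\lambda^{\T}$ is not an $(N|M)$-hook. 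Since the relevant Hecke algebras are semisimple, $\ker\Phi_{\Su}^{m|n}$ and $I_{\mathrm{box}}$ are each a sum of blocks, so it suffices to show they contain the same blocks.

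Next I would identify the block of a Gyoja--Aiston idempotent. Using the bijection of Definition~\ref{defn-idem} and Lemma~\ref{lem-hecke} between the $e_q(\nu)$ and primitive idempotents of the Artin--Wedderburn picture (built from quantum Schur--Weyl duality together with the skew case of Theorem~\ref{thm-superHowe}), the projector $e_q(\nu)$ onto the Specht module $S^{\nu}$ sits in the block $\End_{\C_q}(L_{m|n}(\nu^{\T}))$. Combining this with the previous paragraph, $\ker\Phi_{\Su}^{m|n}$ is spanned by the blocks of those $e_q(\nu)$ for which $\nu^{\T}$ is not an $(M|N)$-hook, i.e.\ for which $\nu$ is not an $(N|M)$-hook; and directly from the definition of an $(N|M)$-hook this is equivalent to $\nu_{N+1}\geq M+1$, which forces the first $N+1$ rows of $\nu$ to have length $\geq M+1$, i.e.\ to $\mathrm{box}_{N+1,M+1}\subseteq\nu$. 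In particular $\mathrm{box}_{N+1,M+1}$ itself is such a $\nu$, so $\Phi_{\Su}^{m|n}(e_q(\mathrm{box}_{N+1,M+1}))=0$; combined with the fact that $\ker\Phi_{\Su}^{m|n}$ is a tensor ideal (it is the restriction of the kernel of the monoidal composite $\InSp\to\MNSp\to\RepMN$) this already gives $I_{\mathrm{box}}\subseteq\ker\Phi_{\Su}^{m|n}$.

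For the reverse inclusion I would show $e_q(\nu)\in I_{\mathrm{box}}$ whenever $\mu:=\mathrm{box}_{N+1,M+1}\subseteq\nu$, with $|\nu|=|\mu|+j$. The idempotent $e_q(\mu)\otimes\mathrm{id}_{\vec{j}}$ has image $\mathrm{Ind}_{H_{|\mu|}(q)\otimes H_{j}(q)}^{H_{|\nu|}(q)}\!\big(S^{\mu}\boxtimes H_{j}(q)\big)$ as an $H_{|\nu|}(q)$-module, and by the branching rule $S^{\nu}$ occurs there (with multiplicity the number of standard tableaux of skew shape $\nu/\mu$, hence $\geq 1$) precisely because $\mu\subseteq\nu$; passing to the idempotent completion, this says $e_q(\nu)$ is Hecke-conjugate to a sub-idempotent of $e_q(\mu)\otimes\mathrm{id}_{\vec{j}}$, so $e_q(\nu)$ --- and its whole block --- lies in the tensor ideal generated by $e_q(\mathrm{box}_{N+1,M+1})$. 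Hence $\ker\Phi_{\Su}^{m|n}\subseteq I_{\mathrm{box}}$ and the two ideals agree. I expect the only real obstacle to be the bookkeeping of transpose conventions (the idempotent $e_q(\nu)$ versus the weight $\nu^{\T}$ on the $\glMN$ side, and the $(M|N)$- versus $(N|M)$-hook condition) so that the diagram appearing is exactly $\mathrm{box}_{N+1,M+1}$ and not its transpose, together with the harmless point that $e_q(\mathrm{box}_{N+1,M+1})$ is defined in $\Hee^{\mathrm{sort}}_{m+n}$ only for $m\geq M+1$, $n\geq N+1$ (and always in $\InSp$), so that ``tensor ideal in $\Hee^{\mathrm{sort}}_{m+n}$'' is to be read as the restriction of the genuine tensor ideal of $\InSp$; all the representation-theoretic input (semisimplicity, the branching rule, block structure) is entirely standard once this is arranged.
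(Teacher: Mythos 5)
Your proof is correct and follows essentially the same route as the paper: use Corollary~\ref{cor-something} to identify $\ker\Phi_{\Su}^{m|n}$ with a sum of Artin--Wedderburn blocks, then a branching/tensor--product argument to show that each offending block is absorbed into the tensor ideal generated by the box idempotent. Your version is actually more careful than the paper's three-sentence sketch, whose notation ($\Usu$, $L_{N|M}$, $\C_q^{N|M}$) seems to be a slip for $\Us$, $L_{m|n}$, $\C_q^{m|n}$ -- as you correctly read it, the branching is on the source side (equivalently on the Hecke side via Schur--Weyl), and the explicit two-sided containment plus the remark on transpose bookkeeping tightens the argument.
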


\begin{proof}
From the right isomorphism we know that the kernel of $\Phi_{\Su}^{m|n}$ is generated by all $e_q(\lambda^{\T})$ where $\lambda$ is an $(m|n)$-hook, but not an $(M|N)$-hook. Every such $\lambda$ corresponds to a simple $\Usu$-module which appears in a tensor product $L_{N|M}((\mathrm{box}_{N+1,M+1})^\T) \otimes (\C_q^{N|M})^{\otimes K}$ for some $K\in \Z_{\geq 0}$. Accordingly, $e_q(\lambda^T)$ is contained in the ideal $I_{\mathrm{box}}$.
\end{proof}

\begin{prop}\label{prop-supersorted} 
There is an equivalence of categories
\[ \Mat(\SoMNSp) \cong \RepMNs .\]
\end{prop}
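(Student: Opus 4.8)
The plan is to run the argument of Subsection~\ref{sub-sortequi} once more, \emph{mutatis mutandis}: super $q$-Howe duality (Theorem~\ref{thm-superHowe}) gets replaced by its super-super version (Theorem~\ref{thm-supersuperHowe}), the functor $\Phi_{\Su}^{m|n}$ by the one from Corollary~\ref{cor-functorMN}, and the exterior relations~\eqref{eq-alt} by the not-a-hook relation of Definition~\ref{defn-spidnext}. First I would reproduce Lemma~\ref{lem-ladderfunc} to obtain a ladder functor $\Lad\colon\Usd\to\SoMNSp$: it is well-defined since the monochromatic relations and the diagrammatic super relations of Lemma~\ref{lem-secondcq} already hold in $\InSp$, hence in its quotient $\MNSp$, and since webs with negative labels vanish, so that $\Lad$ factors through the quotient $\Usd^{\geq 0}$ of $\Usd$ killing all $\glmn$-weights with a negative entry. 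Dually, I would define a presentation functor $\Gam_{\mathrm{s}}\colon\SoMNSp\to\RepMNs$ on the generating trivalent vertices through $\Phi_{\Su}^{m|n}$ exactly as in Definition~\ref{defn-mainfunctor}, so that $\Gam_{\mathrm{s}}\circ\Lad=\Phi_{\Su}^{m|n}$, the analogue of the triangle~\eqref{eq-commute}. Its well-definedness copies Lemma~\ref{lem-mainfunctor}: the relations of $\InSp$ are induced via $\Lad$ from relations of $\Usd^{\geq 0}$ (the monochromatic ones as in~\cite[Proposition~5.2.1]{ckm}, the isotopy relations as in~\cite[Lemma~2.20]{rt}, and the dumbbell relation~\eqref{eq-dumb} from the super commutation relation $[2]\one_{\vec{k}}=E_mF_m\one_{\vec{k}}+F_mE_m\one_{\vec{k}}$), while the single new relation $e_q(\mathrm{box}_{N+1,M+1})=0$ goes to zero because $e_q(\mathrm{box}_{N+1,M+1})\in\ker(\Phi_{\Su}^{m|n})$ by Lemma~\ref{lem-ideal}. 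Essential surjectivity and fullness of $\Gam_{\mathrm{s}}$ then follow from the triangle and the fullness of $\Phi_{\Su}^{m|n}$ (Corollary~\ref{cor-functorMN}), just as in the proof of Proposition~\ref{prop-sortequi}.

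Faithfulness is the crux, and here one cannot, as in Proposition~\ref{prop-sortequi}, reduce to the Cautis--Kamnitzer--Morrison calculus, because $\MNSp$ is a quotient of $\InSp$ different from $\SuSp$; instead it is extracted from Lemma~\ref{lem-ideal} by a dimension count. Given objects $\vec{k},\vec{l}$ of $\SoMNSp$, pad them with $0$-labelled strands (harmless on both sides) so as to view both as $\glmn$-weights in $\Z^{m+n}_{\geq 0}$ for a common pair $(m,n)$ with $m\geq N+1$ and $n\geq M+1$; then $\mathrm{box}_{N+1,M+1}$ is an $(m|n)$-hook. By Theorem~\ref{thm-equi2} together with the identification $\Usd^{\geq 0}\cong\Hee_{m+n}^{\mathrm{sort}}$ recorded in the proof of Corollary~\ref{cor-something}, the ladder functor induces an isomorphism $\Hom_{\InSp}(\vec{k},\vec{l})\cong 1_{\vec{l}}\Usd^{\geq 0}1_{\vec{k}}$, under which the tensor ideal generated by $e_q(\mathrm{box}_{N+1,M+1})$ --- the defining relation of $\MNSp$ --- corresponds to the ideal $I_{\mathrm{box}}=\ker(\Phi_{\Su}^{m|n})$ of Lemma~\ref{lem-ideal}. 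Hence
\[
\Hom_{\SoMNSp}(\vec{k},\vec{l})\;\cong\;1_{\vec{l}}\bigl(\Usd^{\geq 0}/\ker(\Phi_{\Su}^{m|n})\bigr)1_{\vec{k}}\;\xrightarrow{\ \cong\ }\;\Hom_{\RepMN}\bigl(\Phi_{\Su}^{m|n}(\vec{k}),\Phi_{\Su}^{m|n}(\vec{l})\bigr),
\]
where the last isomorphism is induced by $\Phi_{\Su}^{m|n}$ (full by Corollary~\ref{cor-functorMN}); by the triangle this composite is exactly $\Gam_{\mathrm{s}}$ on these Hom-spaces. Thus $\Gam_{\mathrm{s}}$ is bijective on morphisms between objects of this shape, hence faithful (and full) there, hence faithful everywhere. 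Extending $\Gam_{\mathrm{s}}$ additively to $\Mat(\SoMNSp)$ as in Remark~\ref{rem-additive} yields the equivalence $\Mat(\SoMNSp)\cong\RepMNs$.

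The step I expect to be the main obstacle is the transport of Lemma~\ref{lem-ideal} from $\Usd^{\geq 0}$ (equivalently $\Hee_{m+n}^{\mathrm{sort}}$) to the web category, i.e.\ verifying carefully that the \emph{tensor} ideal generated inside $\InSp$ by the single web $e_q(\mathrm{box}_{N+1,M+1})$ matches, object by object, the ideal $I_{\mathrm{box}}$ on the Hecke/quantum-group side, so that the Hom-spaces of $\MNSp$ are computed as above; this, plus the routine padding by $0$-labelled strands needed to realise arbitrary objects inside a single $\Usd^{\geq 0}(\glmn)$, is the only genuinely new input, the remaining verifications being copies of arguments from Section~\ref{sec-proof}.
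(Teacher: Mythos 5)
Your proposal is correct and follows essentially the same route as the paper: the heart of the argument is Lemma~\ref{lem-ideal} together with the identification (via Theorem~\ref{thm-equi2} and Lemma~\ref{lem-hecke}) of sorted Hom-spaces in $\InSp$ with $1_{\vec{l}}\Usd^{\geq 0}1_{\vec{k}}$, under which the defining web relation of $\MNSp$ is seen to match $\ker(\Phi_{\Su}^{m|n})$, yielding faithfulness by a dimension count; essential surjectivity and fullness are immediate from the commutative triangle. You are in fact more explicit than the paper's rather terse proof, and the step you flag as the main obstacle — that the tensor ideal generated in $\InSp$ by $e_q(\mathrm{box}_{N+1,M+1})$ corresponds under $\Gami$ to the ideal $I_{\mathrm{box}}$ of Lemma~\ref{lem-ideal} — is exactly the point the paper glosses over; your resolution via Theorem~\ref{thm-equi2} together with the padding by $0$-labelled strands is the right one.
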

\begin{proof}
\fullref{lem-ideal} shows that the sorted web category $\MNw$, in which webs have $m$ green and $n$ red boundary points both on the bottom and on the top, is equivalent to $\End_{\Usu}(\bV_q^{\bullet}(\C_q^{m|n}\otimes \C_q^{N|M}))$, considered as a category. Via the $\Usd$-weight space decomposition in part (b) of \fullref{thm-supersuperHowe}, $\MNw$ gives a presentation of the morphism spaces in  $\RepMNs$ between objects of the form 
\[\bV_q^{k_{1}}\C_q^{N|M}\otimes\cdots\otimes\bV_q^{k_{m}}\C_q^{N|M}
\otimes\Sym_q^{k_{m+1}}\C_q^{N|M}\otimes\cdots\otimes\Sym_q^{k_{m+n}}\C_q^{N|M}.\]
Any object in $\RepMNs$ is a formal sum of such objects, for suitable $m,n\in \Z_{\geq 0}$, and the conclusion follows.
\end{proof}

\begin{rem}\label{rem-superbraiding}
Recall that $\RepMN$ is a braided monoidal category, where the braiding $\beta^R_{\cdot,\cdot}$ is given by the \textit{universal} $R$\textit{-matrix for} $\glMN$, see \cite{ya}. As before, we use a rescaled braiding $\tilde \beta^R_{\cdot,\cdot}$, where we follow the conventions from \cite[(3.12)]{qs} except that we substitute $q$ by $q^{-1}$ in their formulas. In particular, $\tilde \beta^R_{\C_q^{N|M},\C_q^{N|M}}$ acts as $-q^{-1}$ on $\bV^2_q \C_q^{N|M}$ and as $q$ on $\Sym^2_q \C_q^{N|M}$. 
\end{rem}

\begin{thm} (\textbf{The diagrammatic presentation}) There is an equivalence of braided monoidal categories
\[ (\Mat(\MNSp),\beta^{\Sup}_{\cdot,\cdot}) \cong (\RepMN,\beta^{R}_{\cdot,\cdot}) .\]
\end{thm}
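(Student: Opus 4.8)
The plan is to follow the proofs of Proposition~\ref{prop-braidfunctor} and Theorem~\ref{thm-equi} almost verbatim, with Proposition~\ref{prop-supersorted} playing the role of Proposition~\ref{prop-sortequi}, Corollary~\ref{cor-functorMN} the role of Corollary~\ref{cor-functor}, and Theorem~\ref{thm-supersuperHowe} the role of Theorem~\ref{thm-superHowe}. First I would extend the sorted equivalence $\Gams\colon\SoMNSp\to\RepMNs$ of Proposition~\ref{prop-supersorted} to a diagrammatic presentation functor $\Gam\colon\MNSp\to\RepMN$. On objects, $\Gam$ sends a sequence $\vec{k}\in X^{L}$ to the tensor product of the exterior powers $\bV_q^k\C_q^{N|M}$, symmetric powers $\Sym_q^l\C_q^{N|M}$ and copies of $\C_q^{N|M}$ recorded by the green, red and black entries of $\vec{k}$. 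On morphisms, $\Gam$ is defined exactly as in Definition~\ref{defn-mainfunctor2}: given $u\in\Hom_{\MNSp}(\vec{k},\vec{l})$, one pre- and post-composes with the web crossings $\beta^{\Sup}$ and with the rescaled $R$-matrix intertwiners $\tilde\beta^{R}_{\cdot,\cdot}$ of Remark~\ref{rem-superbraiding} to reduce to the sorted picture, where $\Gams$ is applied.

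The second step is to check that $\Gam$ is a well-defined braided monoidal functor. Since $\MNSp$ is the quotient of $\InSp$ by the single not-a-hook relation, it suffices to see that $\Gam$ respects the relations of $\InSp$ and kills $e_q(\mathrm{box}_{N+1,M+1})$. The monochromatic, isotopy and dumbbell relations are handled exactly as in Lemma~\ref{lem-mainfunctor} and Lemma~\ref{lem-mainagain}, replacing the ladder functor $\Lad$ and the super $q$-Howe functor by their $\glMN$ analogues $\Lad\colon\Usd\to\SoMNSp$ (constructed as in Lemma~\ref{lem-ladderfunc}) and $\Phi_{\Su}^{m|n}$ from Corollary~\ref{cor-functorMN}; the not-a-hook relation is killed because, by part~(c) of Theorem~\ref{thm-supersuperHowe}, $\mathrm{box}_{N+1,M+1}$ is not an $(M|N)$-hook, so $\Phi_{\Su}^{m|n}(e_q(\mathrm{box}_{N+1,M+1}))=0$, its precise role being pinned down by Lemma~\ref{lem-ideal}. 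That $\Gam$ is braided is proven as in Proposition~\ref{prop-braidfunctor}: the green-red symmetry of $\MNSp$ (inherited from Remark~\ref{rem-bysymmetry}), the definition of the mixed crossings through the monochromatic ones, and Corollary~\ref{cor-ckmwebs} reduce the naturality identity $\Gam(\beta^{\Sup}_{\vec{k}\otimes\vec{l}})=\tilde\beta^{R}_{\Gam(\vec{k}),\Gam(\vec{l})}$ to the single equation $\Gam(\beta^{g}_{1,1})=\tilde\beta^{R}_{\C_q^{N|M},\C_q^{N|M}}$, which holds because $\Gams(\beta^{g}_{1,1})$ acts as $q$ on $\bV_q^2\C_q^{N|M}$ and as $-q^{-1}$ on $\Sym_q^2\C_q^{N|M}$ inside $\C_q^{N|M}\otimes\C_q^{N|M}$ (Example~\ref{ex-oneoneok}), matching the rescaled super $R$-matrix of Remark~\ref{rem-superbraiding}.

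Finally, I would show that $\Gam$ induces an equivalence on additive closures. Essential surjectivity is immediate from the object assignment. Fullness and faithfulness may be checked on $\Hom$-spaces between objects $\vec{k}\in X^{L}$: fullness follows from the surjectivity of $\Phi_{\Su}^{m|n}$ in Corollary~\ref{cor-functorMN}, and faithfulness from the dimension count
\[
\dim\Hom_{\RepMN}(\Gam(\vec{k}),\Gam(\vec{l}))
=\dim\Hom_{\SoMNSp}(\omega_{\Sup}(\vec{k}),\omega_{\Sup}(\vec{l}))
=\dim\Hom_{\MNSp}(\vec{k},\vec{l}),
\]
where $\omega_{\Sup}$ on $\MNSp$ and the analogous sorting functor $\omega_{R}$ on $\RepMN$ are set up exactly as in the proof of Theorem~\ref{thm-equi}, the first equality combining these sorting functors with Proposition~\ref{prop-supersorted} and the second using invertibility of sorting. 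The step I expect to be the main obstacle — and the only genuinely new ingredient beyond transcription of the $M=0$ case — is the content of Lemma~\ref{lem-ideal}, namely that imposing the single not-a-hook relation $e_q(\mathrm{box}_{N+1,M+1})=0$ cuts out exactly the kernel ideal of $\Phi_{\Su}^{m|n}$; once that is in hand, the remaining arguments are identical to those already carried out, which is why the proof can be presented so compactly.
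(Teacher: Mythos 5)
Your proposal is correct and follows essentially the same route as the paper. The paper's proof is terser — it simply says to extend the sorted equivalence of Proposition~\ref{prop-supersorted} to a monoidal functor as in Definition~\ref{defn-mainfunctor2}, copy Proposition~\ref{prop-braidfunctor} using Remark~\ref{rem-superbraiding} for the braiding, and conclude as in Theorem~\ref{thm-equi} — whereas you unpack these references into explicit steps (well-definedness via Lemma~\ref{lem-mainfunctor}/Lemma~\ref{lem-mainagain}, the braiding reduction to $\beta^{g}_{1,1}$, the sorting/dimension count); but the ingredients, their roles, and the identification of Lemma~\ref{lem-ideal} as the genuinely new input all match the paper's argument.
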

\begin{proof} The equivalence from \fullref{prop-supersorted} can be extended to a monoidal functor between the categories $\Mat(\MNSp)$ and $\RepMN$ as in \fullref{defn-mainfunctor2}. We can also copy the proof of \fullref{prop-braidfunctor}, where we use \fullref{rem-superbraiding} to prove that this functor respects the braiding. Equivalence via this 
functor follows then as in \fullref{thm-equi}.
\end{proof}

\begin{rem}\label{rem-duals}
In \cite[Section 6]{qs} the authors show how to extend a diagrammatic presentation of $\mathfrak{gl}_{N|M}\text{-}\textbf{Mod}_{\mathrm{e}}$ to diagrammatically encode the full subcategory of $\Usu$-modules tensor generated by exterior powers and their duals. Graphically, this involves the introduction of additional objects corresponding to the duals of exterior powers, downward-oriented edges (to represent identity morphisms on duals) and cap and cup webs (which represent co-evaluation and evaluation morphisms). Additional web relations including analogues of \eqref{eq-newmoves} are introduced to encode basic relationships between exterior powers and their duals. The extension of the diagrammatic presentation to include duals is then tautological and \cite[Theorem 6.5]{qs} and \cite[Proposition 6.16]{qs} show that the extended presentation functor is fully faithful.

They further show in \cite[Proposition 6.15]{qs} that their graphical calculus allows the computation of the Reshetikhin--Turaev $\glMN$-tangle invariants for tangles labeled with exterior powers of the vector representation. 
\end{rem}

The same \textit{spiderization strategy}---with minimal changes in proofs---gives an extension of our diagrammatic presentation $\MNSp$ of $\RepMN$ to one for the full subcategory of $\Usu$-modules tensor generated by exterior and symmetric powers and their duals. This spiderized green--red web category directly allows the computation of Reshetikhin--Turaev $\glMN$-tangle invariants for tangles labeled with exterior as well as symmetric powers of the vector representation. The cabling strategy from \fullref{sec-applications} can then be used to compute these invariants with respect to arbitrary irreducible representations.

Lastly, we have a direct consequence of the discussion in this 
section and \fullref{prop-homfly}. It is based on the facts that $\MNSp$ is defined as a quotient of $\InSp$ and that the spiderization in \cite[Section 6]{qs} respects the specialization $a=q^{N-M}$ of the relations \eqref{eq-newmoves}, which are sufficient to compute colored HOMFLY--PT polynomials of braid closures. 

\begin{cor}\label{cor-supersym} We have:
\begin{enumerate}
\item The Reshetikhin--Turaev $\glMN$-tangle invariant of a labeled tangle depends only on $N-M$. In the case of a labeled link, it agrees with the specialization $a=q^{N-M}$ of the corresponding colored HOMFLY--PT polynomial.

\item The green--red symmetry on $\InSp$ descends to a symmetry between 
the categories $\MNSp$ and $M|N\text{-}\!\textbf{Web}_{\mathrm{gr}}$. Hence, there is a symmetry between the representation categories of $\Usu$ and $\textbf{U}_q(\mathfrak{gl}_{M|N})$ that transposes Young diagrams indexing irreducibles. 

\item The symmetry of HOMFLY--PT polynomials described in \fullref{prop-homfly} is a stabilized version of the symmetry between colored Reshetikhin--Turaev $\glMN$-link invariants and $\mathfrak{gl}_{M|N}$-link invariants which transposes Young diagrams and inverts $q$.\qed
\end{enumerate}
\end{cor}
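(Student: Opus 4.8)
The plan is to read off all three assertions from the structural facts established in this section together with Proposition~\ref{prop-homfly}: that $\MNSp$ is the quotient of $\InSp$ by the not-a-hook relation, that its spiderization presents the full subcategory of $\Usu$-modules tensor generated by exterior and symmetric powers of $\C_q^{N|M}$ and their duals (so that Reshetikhin--Turaev $\glMN$-tangle invariants are computed there), and that the green-red symmetry of Remark~\ref{rem-bysymmetry} is available on $\InSp$. For (1), I would first apply the cabling procedure of Section~\ref{sec-applications}: this replaces a labeled tangle by one whose components all carry the vector representation $\C_q^{N|M}$, by inserting the idempotents $e_q(\lambda^i)$, and this step lives entirely inside $\InSp$ and is insensitive to $N$ and $M$. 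It then suffices to observe that evaluating such a tangle in the spiderized green-red calculus uses only the relations of $\InSp$ -- which involve neither $N$ nor $M$ -- together with the spiderization relations of~\cite[Section~6]{qs}, among which the only ones carrying scalars are the specialization $a=q^{N-M}$ of~\eqref{eq-newmoves}. Hence the invariant depends on $N,M$ only through $N-M$. For a link presented as a braid closure this evaluation is exactly the Jones--Ocneanu trace computation of Lemma~\ref{lem-diatrace} with $a$ set to $q^{N-M}$, so its value is $\cal{P}^{q^{N-M},q}$ of the colored link.

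For (2), I would check that the green-red involution $I_{\mathrm{gr}}$ on $\InSp$ descends to the quotients. By Lemma~\ref{lem-symmidem}, $I_{\mathrm{gr}}$ sends the Gyoja--Aiston idempotent $e_q(\mathrm{box}_{N+1,M+1})$ to a braid-conjugate of $e_q\big((\mathrm{box}_{N+1,M+1})^{\T}\big)=e_q(\mathrm{box}_{M+1,N+1})$; since a tensor ideal is stable under braid conjugation, $I_{\mathrm{gr}}$ therefore induces an isomorphism $\MNSp\xrightarrow{\cong}M|N\text{-}\!\textbf{Web}_{\mathrm{gr}}$, with the roles of $N$ and $M$ interchanged. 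Transporting this isomorphism through the diagrammatic presentation of $\RepMN$ established above yields the stated correspondence between the representation categories of $\Usu$ and $\textbf{U}_q(\mathfrak{gl}_{M|N})$; and since the green (respectively red) clasps present the column (respectively row) Gyoja--Aiston idempotents, hence the exterior (respectively symmetric) powers of the vector representation out of which the irreducibles are built as hook-shaped Young diagrams, exchanging green and red transposes the indexing diagrams.

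For (3), the payoff, I would simply specialize Proposition~\ref{prop-homfly}. Putting $a=q^{N-M}$ on the left of~\eqref{eq-conjecture} gives, by (1), the colored Reshetikhin--Turaev $\glMN$-invariant of $\cal{L}(\vec{\lambda})$; on the right, writing $q^{\prime}=q^{-1}$ one has $a=q^{N-M}=(q^{\prime})^{M-N}$, so $\cal{P}^{a,q^{-1}}(\cal{L}(\vec{\lambda}^{\phantom{.}\T}))$ becomes the colored Reshetikhin--Turaev $\mathfrak{gl}_{M|N}$-invariant of $\cal{L}(\vec{\lambda}^{\phantom{.}\T})$ evaluated at $q^{-1}$, which is precisely the symmetry produced diagrammatically by $I_{\mathrm{gr}}$ in (2). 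Thus the HOMFLY-PT symmetry of Proposition~\ref{prop-homfly} is the common $a$-deformation -- the ``stabilization'' -- of the family of symmetries between colored $\glMN$- and $\mathfrak{gl}_{M|N}$-invariants.

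The main obstacle I anticipate is not conceptual but a matter of careful bookkeeping. One must verify that $I_{\mathrm{gr}}$ genuinely respects passage to the quotient by the not-a-hook relation -- this is where Lemma~\ref{lem-symmidem} and the stability of tensor ideals under braid conjugation are essential -- and one must track the interaction of the two substitutions $q\mapsto q^{-1}$ and $a=q^{N-M}$ with the conventions for the rescaled $R$-matrix braidings $\tilde\beta^R_{\cdot,\cdot}$ fixed in Remark~\ref{rem-superbraiding}, so that no stray signs (encoded by the constant $co$ and the factor $(-1)^{co}$) or powers of $q$ are lost in the process.
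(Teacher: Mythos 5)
Your proposal is correct and follows essentially the same route the paper indicates: the paper gives no formal proof beyond the remark that the corollary follows from $\MNSp$ being a quotient of $\InSp$, the green-red symmetry, the fact that the spiderization relations only see $N-M$ through the specialization $a=q^{N-M}$ of~\eqref{eq-newmoves}, and Proposition~\ref{prop-homfly}; you have fleshed out exactly these points. Your use of Lemma~\ref{lem-symmidem} together with stability of tensor ideals under braid conjugation to show that $I_{\mathrm{gr}}$ descends to an isomorphism $\MNSp\cong M|N\text{-}\!\textbf{Web}_{\mathrm{gr}}$ is the right argument, and the substitution $q'=q^{-1}$, $a=q^{N-M}=(q')^{M-N}$ in part (3) is exactly the intended reading of the stabilization.
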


This confirms decategorified analogues of predictions about relationships between colored HOMFLY--PT homology and conjectural colored $\glMN$-link homologies, see \cite{ggs}.


\bibliographystyle{plainurl}
\bibliography{super-webs}
\end{document}